\author{Hugo Duminil-Copin\footnote{Institut des Hautes Études Scientifiques and Université de Genève}, Alejandro Rivera\footnote{École Polytechnique Fédérale de Lausanne},\\[0.3em]
Pierre-Fran\c cois Rodriguez\footnote{Imperial College London}, Hugo Vanneuville\footnote{CNRS and Université Grenoble Alpes}}
\title{\textbf{\Large Existence of an unbounded nodal hypersurface for smooth Gaussian fields in dimension $d \geq 3$}}
\date{}
\theoremstyle{plain}
\newtheorem{theorem}{Theorem}[section]
\newtheorem{corollary}[theorem]{Corollary}
\newtheorem{proposition}[theorem]{Proposition}
\newtheorem{conjecture}[theorem]{Conjecture}
\newtheorem{claim}[theorem]{Claim}
\newtheorem{lemma}[theorem]{Lemma}
\theoremstyle{definition}
\newtheorem{definition}[theorem]{Definition}
\newtheorem{remark}[theorem]{Remark}
\newtheorem{ass}[theorem]{Assumption}
\newcommand{\E}{\mathbb{E}}
\newcommand{\N}{\mathbb{N}}
\newcommand{\R}{\mathbb{R}}
\newcommand{\Z}{\mathbb{Z}}
\newcommand{\Pro}{\mathbb{P}}
\newcommand{\cross}{\text{\textup{Cross}}}
\def\calC{\mathcal{C}}
\def\calE{\mathcal{E}}
\def\calF{\mathcal{F}}
\def\calK{\mathcal{K}}
\def\calN{\mathcal{N}}
\def\calP{\mathcal{P}}
\def\calT{\mathcal{T}}
\newcommand{\prob}{\mathbb{P}}
\def\eps{\varepsilon}
\newcommand\footnoteref[1]{\protected@xdef\@thefnmark{\ref{#1}}\@footnotemark}
\begin{document}

\maketitle
\thispagestyle{empty}

\begin{abstract}
For the Bargmann--Fock field on $\R^d$ with $d\ge3$, we prove that the critical level $\ell_c(d)$ of the percolation model formed by the excursion sets $\{ f \ge \ell \}$ is strictly positive. This implies that for every $\ell$ sufficiently close to 0 (in particular for the nodal hypersurfaces corresponding to the case $\ell=0$), $\{f=\ell\}$ contains an unbounded connected component that visits ``most'' of the ambient space. Our findings actually hold for a more general class of positively correlated smooth Gaussian fields with rapid decay of correlations. The results of this paper show that the behaviour of nodal hypersurfaces of these Gaussian fields in $\mathbb R^d$ for $d\ge3$ is very different from the behaviour of nodal lines of their two-dimensional analogues.
\end{abstract}

{\small
\tableofcontents
}

\section{Introduction}

Let $f$ be a stationary, isotropic, centered and smooth Gaussian field on $\R^d$, and let $\ell \in \R$. Percolation properties of level sets $\{ f = \ell \}$ and of excursion sets $\{ f \ge \ell \}$ have been extensively investigated in recent years in the case $d=2$, starting with the work of Beffara and Gayet \cite{BG} on box-crossing properties. In particular, under very mild conditions, it has been proved that the critical level is $0$ if $d=2$, in the sense that if $\ell > 0$ then a.s.\ there is no unbounded component in $\{ f \ge \ell \}$ while such an unbounded component exists a.s.\ if $\ell < 0$ \cite{MRVK}. When $f$ is positively correlated and if $d=2$, it is also known that a.s.\ there is no unbounded component in $\{ f \ge 0 \}$ \cite{Alex}.

\medskip

In the present paper, we study these questions in the case $d \ge 3$. Our main result is that for a class of positively correlated Gaussian fields with fast decay of correlations -- including the Bargmann--Fock field introduced below -- the critical level is strictly larger than $0$. In particular, contrary to the case $d=2$, a.s.\ there exists an unbounded component in $\{ f = 0 \}$. Let us first state our main results in the specific case of the Bargmann--Fock field.

\subsection{Existence of an unbounded nodal hypersurface for the Bargmann--Fock field in dimension $d \ge 3$}\label{ssec:bf}

Let $d \ge 2$. The Bargmann--Fock field in $\R^d$ is the analytic centered Gaussian field $f$ defined by the following covariance kernel:
\[
\forall x,y\in\mathbb R^d,\qquad\mathbb E[f(x)f(y)]=\exp(-\tfrac{1}{2}|x-y|^2).
\]
This field can be realized as the following entire series, where $a_{i_1,\dots,i_d}$ are i.i.d.\ standard Gaussian variables:
\[
f(x)=\exp(-\tfrac{1}{2}|x|^2)\sum_{i_1,\dots,i_d \in \N} a_{i_1,\dots,i_d} \frac{x_1^{i_1} \cdots x_d^{i_d}}{\sqrt{i_1! \dots i_d!}}.
\]
One can note that if $d' \le d$ then $f_{|\R^{d'}}$ is the Bargmann--Fock field in $\R^{d'}$. As explained for instance in the introduction of \cite{BG}, this field arises naturally from real algebraic geometry when considering suitably rescaled random homogenous polynomials of degree $n$ in $d+1$ variables in the limit when $n \to \infty$. In the present paper, given some level $\ell \in \R$, we study the connectivity properties of the random sets
\[
\{ f = \ell \} := \{ x \in \R^d : f(x) = \ell \} \hspace{1em} \text{and} \hspace{1em} \{ f \ge \ell \} := \{ x \in \R^d : f(x) \ge \ell \}.
\]
By ergodicity (see e.g.\ \cite[Theorem 6.5.4]{Adl10}), for every $\ell \in \R$, the event that there is an unbounded component in $\{f \ge \ell\}$ (resp.\ $\{ f = \ell \}$) has probability either $0$ or $1$. The critical level is defined as follows:
\begin{equation}\label{eq:lc}
\ell_c(d) := \sup \big\{ \ell \in \R : \Pro \big[\exists \text{ an unbounded component in } \{f \ge \ell\} \big] = 1 \big\}.
\end{equation}
It is known that $\ell_c(2) = 0$ \cite{RVb}. Moreover, still if $d=2$, it is known that there is no unbounded connected component in $\{ f \ge 0 \}$ \cite{Alex}. In the present paper, we prove that the critical level strictly increases between dimensions $2$ and $3$.
\color{black}

\begin{theorem}\label{thm:existence}
If $d \ge 3$ then $\ell_c(d)>0$.
\end{theorem}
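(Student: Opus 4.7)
My plan is to prove the stronger statement $\ell_c(3)>0$ and deduce the general $d\ge 3$ case by restriction. Since the excerpt observes that $f_{|\R^{d'}}$ is Bargmann--Fock on $\R^{d'}$ for any $d'\le d$, any unbounded component of $\{f_{|\R^3}\ge \ell\}$ embeds into an unbounded component of $\{f\ge \ell\}$ in $\R^d$. Hence $\ell_c(d)\ge \ell_c(3)$, and it suffices to exhibit some $\delta>0$ for which $\{f\ge \delta\}$ percolates in $\R^3$.

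The overall scheme is a block renormalization against a supercritical Bernoulli site percolation on $\Z^3$. Cover $\R^3$ by cubes $Q_L(x)=x+[0,L]^3$, $x\in L\Z^3$, and declare $Q_L(x)$ \emph{good} if a suitable local crossing/connecting event for $\{f\ge\delta\}$ occurs in a slightly enlarged version of $Q_L(x)$. Thanks to the super-exponential decay of the Bargmann--Fock covariance, the goodness process is effectively finite-range, so by the classical Liggett--Schonmann--Stacey / Ligget domination results, one only needs the probability of a good cube to be close enough to $1$ to dominate a supercritical Bernoulli process on $\Z^3$. Positive association (FKG) is available because the covariance is pointwise non-negative, which is the main feature of the fields considered here.

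The key estimate is therefore: for some choice of $\delta>0$ and all $L$ large, a box $[0,\rho L]^2\times [0,L]$ is crossed by $\{f\ge \delta\}$ with probability close to $1$. This is where the third dimension is used essentially. As input from two dimensions, I would invoke $\ell_c(2)=0$ together with the sharpness of the 2D phase transition: for every $\eps>0$ and $\rho\ge 1$, a 2D horizontal slice of $f$ contains a left--right crossing of $[0,\rho L]\times[0,L]$ in $\{f\ge -\eps\}$ with probability $1-e^{-cL}$. A thick 3D slab thus contains many such ``subcritical'' 2D crossings at different heights. To upgrade one of them to a genuine 3D crossing at the \emph{positive} level $\delta$, I would apply a Gaussian sprinkling decomposition $f=\sqrt{1-t}\,g+\sqrt{t}\,h$ into independent copies of the field: one uses $g$ to obtain a 2D crossing at a very low level in a plane, and then uses the independent field $h$ together with the freedom to route the path in the vertical direction, locally re-routing to heights where the values of $h$ bring the sum above $\delta$.

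The main obstacle will be exactly this conversion step, from a ``forest of 2D crossings at level $-\eps$'' into ``a single 3D crossing at level $+\delta$''. Sprinkling in a purely 2D setup shifts the level in the wrong direction, so the gain must come from the extra dimension: one has to control, along a long horizontal 2D crossing, the probability that \emph{no} acceptable vertical detour exists. I expect this to require a multi-scale/hierarchical construction combining Gaussian concentration, FKG, and box-crossing estimates at intermediate scales, and to exploit the fast decay of correlations in an essential way to get independence between attempts at far-apart horizontal positions. Once this crossing estimate is established at a positive level $\delta$, the renormalization described above yields an unbounded component in $\{f\ge\delta\}$, proving $\ell_c(3)\ge \delta>0$ and therefore Theorem~\ref{thm:existence}.
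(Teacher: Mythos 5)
Your reduction to $d=3$ and the outer renormalization layer are sound in spirit (the paper uses a hierarchical ``cascading events'' scheme with sprinkling between scales rather than Liggett--Schonmann--Stacey domination, mainly because $f$ is not finite range and one must track the error $f-f_r$ across scales, but either route could be made to work once the seed estimate is in hand). The problem is that the step you yourself flag as ``the main obstacle'' --- converting a two-dimensional crossing at a slightly negative level into a three-dimensional crossing at a positive level --- is precisely the content of the paper's main innovation, and your proposed mechanism for it does not survive scrutiny. The decomposition $f=\sqrt{1-t}\,g+\sqrt{t}\,h$ with ``local re-routing to heights where $h$ is favorable'' implicitly uses a finite-energy property: after conditioning on the 2D crossing of $g$ (or of $f$ itself), you need a uniform lower bound on the conditional probability that the field admits a favorable vertical detour near a given point of the crossing. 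For a Gaussian field this conditioning biases the field in an uncontrolled way in the neighborhood of the crossing, and no such uniform lower bound is available; the paper states explicitly that the absence of finite energy is the central difficulty. Your fallback (``a multi-scale construction combining Gaussian concentration, FKG, and box-crossing estimates'') does not identify the missing idea.

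What the paper actually does at this step is an entropy/deformation argument inspired by the Mermin--Wagner theorem (Proposition~\ref{prop:mw}): it constructs exponentially many almost-isometric ``ripplings'' $F_\alpha$ of the slab $[0,R]\times[0,R^a]$, shows via a Cameron--Martin-free variance computation that composing the truncated field with $\mathrm{Id}\times F_\alpha\times \mathrm{Id}$ changes it by at most $R^{-2+\theta}$ in sup norm, and then observes that the corresponding arm-type events $A^\alpha_\ell(x,R)$ are pairwise disjoint, forcing the probability that a planar crossing at level $\ell+R^{-2+\theta_0}$ fails to connect vertically to height $R^a$ in $\{f_r\ge\ell\}$ to be at most $CR/|A|=\exp(-R^c)$. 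This is the gain from the third dimension, it is intrinsically a continuum argument (a co-area formula over the deformations replaces finite energy), and nothing in your sketch substitutes for it. A second, smaller gap: even granting vertical escape, connecting two distinct macroscopic planar clusters inside the slab (the two-arms estimate, Proposition~\ref{prop:2arms*}) requires the polynomially-many-contact-points RSW refinement and a delicate conditional FKG argument, again to avoid finite energy; your sketch treats this gluing as routine, which it is not here.
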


We refer to \cite{CR85,GM90,AG} and \cite{DPR18} for analogous results in the context of Bernoulli percolation and the discrete Gaussian free field respectively. Note that Theorem~\ref{thm:existence} is a pure existence result. For these other examples, a (strong) uniqueness result also holds, cf.~\cite{CCN,GM90, DPR18.2}. By analogy, we expect that the unbounded component of the Bargmann--Fock field is unique (see Section \ref{ssec:main_results} for a general conjecture).

\medskip

From Theorem \ref{thm:existence}, we deduce the following result.

\begin{corollary}\label{cor:existence}
There exists $\delta > 0$ such that the following holds for any $\ell \in [-\delta,\delta]$. If $d \ge 3$ then a.s.\ there exists an unbounded component in $\{ f = \ell \}$.
\end{corollary}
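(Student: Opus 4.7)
The strategy combines Theorem~\ref{thm:existence}, the sign symmetry of $f$, and a topological lemma. Take $\delta:=\ell_c(d)/2>0$ and fix $\ell\in[-\delta,\delta]$. Since $|\ell|<\ell_c(d)$, Theorem~\ref{thm:existence} yields that $\{f\ge\ell\}$ contains an unbounded component a.s.; applying the same to $-f$, which has the same critical level by the symmetry $-f\stackrel{d}{=}f$, gives that $\{f\le\ell\}=\{-f\ge-\ell\}$ also contains an unbounded component a.s.

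\textbf{Passing to the nodal set.} Next, I would invoke Bulinskaya's lemma: since the Gaussian map $x\mapsto(f(x),\nabla f(x))$ has non-degenerate one-point distribution, a.s.\ $\nabla f$ does not vanish on $\{f=\ell\}$, so $\ell$ is a regular value of $f$. Hence $N:=\{f=\ell\}$ is a.s.\ a smooth closed properly embedded $(d-1)$-submanifold of $\R^d$ (in particular of zero Lebesgue measure), and $\mathrm{Int}(\{f\ge\ell\})=\{f>\ell\}$. The unbounded components from the previous step then contain disjoint unbounded connected open sets $C^+\subset\{f>\ell\}$ and $C^-\subset\{f<\ell\}$, both contained in $\R^d\setminus N$.

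\textbf{Topological conclusion.} The corollary then follows from the topological assertion that \emph{if $N\subset\R^d$ is a closed smooth $(d-1)$-submanifold all of whose connected components are compact, then $\R^d\setminus N$ has exactly one unbounded connected component.} Indeed, this rules out all components of $N$ being compact in our setting (as we have two disjoint unbounded components $C^\pm$ of $\R^d\setminus N$), forcing $N=\{f=\ell\}$ to contain at least one unbounded component.

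\textbf{Main obstacle.} The topological assertion is the delicate step, since $N$ may a priori have infinitely many compact components accumulating at infinity. To prove it, I would take $x_1,x_2$ in two alleged distinct unbounded components $V_1,V_2$ of $\R^d\setminus N$, connect them by a smooth path $\gamma$ transverse to $N$, and note that by local finiteness of an embedded submanifold, $\gamma$ meets only finitely many (compact) components $K_1,\dots,K_m$ of $N$. Each $K_j$ bounds (by Jordan--Brouwer) a bounded inside $\mathrm{In}(K_j)$ and unbounded outside $\mathrm{Out}(K_j)$, and since $V_1,V_2$ are unbounded connected sets disjoint from $K_j$, both lie in $\mathrm{Out}(K_j)$; consequently $\gamma$ crosses each $K_j$ an even number of times. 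Choosing pairwise disjoint tubular neighborhoods $T_j$ of the $K_j$'s with $T_j\cap N=K_j$ (possible by local finiteness) and replacing each ``dip'' of $\gamma$ into $\mathrm{In}(K_j)$ between paired crossings by a bypass through $T_j\cap\mathrm{Out}(K_j)$, one obtains after finitely many steps a path from $x_1$ to $x_2$ in $\R^d\setminus N$, contradicting $V_1\ne V_2$.
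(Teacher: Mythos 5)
Your proof is correct, and its skeleton is exactly the paper's: use Theorem~\ref{thm:existence} together with the symmetry of $f$ in law under $f\mapsto -f$ to get unbounded components in both $\{f\ge\ell\}$ and $\{f\le\ell\}$ for $|\ell|<\ell_c(d)$, check that $\ell$ is a.s.\ a regular value so that $\{f=\ell\}$ is a properly embedded $C^1$ hypersurface bounding the two excursion sets (this is the paper's Lemma~\ref{lem:mani_d*}), and conclude by a purely deterministic topological separation statement. Where you genuinely diverge is in that last step. The paper's Lemma~\ref{L:top} encodes, for each simply connected open $U$ in a family $\calU$, the components of $U\setminus\Sigma$ and of $\Sigma\cap U$ as the vertices and edges of a tree, and exploits the compatibility of these trees under inclusion; this yields the stronger conclusion that a single component of $\Sigma$ meets \emph{every} $U\in\calU$, which is what the paper needs for the slab statement~i) and the ``giant component'' statement~ii) of Theorem~\ref{thm:main}. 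Your substitute --- if every component of a closed embedded hypersurface $N\subset\R^d$ is compact then $\R^d\setminus N$ has a unique unbounded component, proved via Jordan--Brouwer, a parity count of transversal crossings, and a surgery replacing each dip inside a component by a detour through the outer half of a tubular neighbourhood --- is weaker but entirely sufficient for the corollary as stated, and is more elementary. The points worth spelling out (which you essentially do) are that an unbounded component of the closed set $\{f\ge\ell\}$ contains an unbounded connected open subset of $\{f>\ell\}$ (via the manifold-with-boundary structure), that the outer piece $T_j\cap\mathrm{Out}(K_j)$ of a tubular neighbourhood is connected, and that after bypassing $K_j$ the modified path still crosses each remaining $K_i$ an even number of times, so the surgery can be iterated over the finitely many components met by the path.
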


We actually prove more precise results, namely that
\begin{itemize}[noitemsep]
\item Such an unbounded connected component still exists if we restrict the field to a thick slab;
\item There is an unbounded component that visits ``most of the space'';
\end{itemize}
see Theorem \ref{thm:main} for a formal (and more general) statement, which also directly implies Theorem~\ref{thm:existence} and Corollary \ref{cor:existence}.

\medskip

Note that the level $\ell=0$ plays a special role since it is the planar critical level. To illustrate this, it seems worth restating here the following conjecture. For $t\in \mathbb R$, let $\mathcal{P}_t$ denote the plane $\{ x \in \R^3 : x_3=t\}$.

\begin{conjecture}[\cite{GV}]

Assume that $d=3$ and let $t<t'$. A.s.\ there exists $s \in [t,t']$ such that $\{ f = 0 \} \cap \mathcal{P}_s$ contains an unbounded component. Moreover, the Hausdorff dimension of the set $\{ t \in \R : \{ f = 0 \} \cap \mathcal{P}_t \text{ contains an unbounded component} \}$ equals $2/3$ a.s.
\end{conjecture}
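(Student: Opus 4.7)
\emph{Proof proposal.} The plan is to treat the third coordinate $t$ as a pseudo-time parameterising a (non-Markovian) continuous family of two-dimensional Bargmann--Fock fields $f_s := f|_{\mathcal{P}_s}$, and to adapt the second-moment / energy-method strategy developed for exceptional times in dynamical Bernoulli percolation (Schramm--Steif, Garban--Pete--Schramm). A key structural feature exploited is that, since the kernel $e^{-|x-y|^2/2}$ restricts to the lower-dimensional kernel, each $f_s$ is itself the planar Bargmann--Fock field; thus $\{f_s=0\}$ sits precisely at the 2D critical level $\ell_c(2)=0$, so at every fixed $s$ there is a.s.\ no unbounded component, and the question is of exceptional-$s$ type.

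\emph{Step 1: existence of a percolating slice.} For $R\ge 1$ and $s\in\R$, let $\mathrm{Arm}_R(s)$ denote the event that $\{f_s=0\}$ contains a connected component joining $B_1$ to $\partial B_R$ inside $\mathcal{P}_s$, and set $\mathcal{A}_R := \{ s \in [t,t'] : \mathrm{Arm}_R(s)\}$. By 2D RSW for Bargmann--Fock (Beffara--Gayet, Rivera--Vanneuville), $\Pro[\mathrm{Arm}_R(s)] \ge \alpha_R$ uniformly in $s$, where $\alpha_R$ is the 2D critical one-arm probability at scale $R$. The sets $\mathcal{A}_R$ are decreasing in $R$, so $\mathcal{A}_\infty := \bigcap_R \mathcal{A}_R$ is exactly the set of $s$ for which $\{f_s=0\}$ has an unbounded component meeting $B_1$. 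I would apply Paley--Zygmund to $|\mathcal{A}_R|$ and then extract a subsequential weak limit of the normalised random measures $|\mathcal{A}_R|^{-1}\mathbf{1}_{\mathcal{A}_R}(s)\,ds$; any such limit is a.s.\ supported on $\mathcal{A}_\infty$. The required second-moment bound $\int\!\!\int \Pro[\mathrm{Arm}_R(s)\cap\mathrm{Arm}_R(s')]\,ds\,ds' \le C\,\alpha_R^2$ decomposes into a near-diagonal contribution (handled via a quasi-multiplicative BK-type estimate in the 2D critical regime) and an off-diagonal contribution (handled via decorrelation of $f_s$ and $f_{s'}$ once $|s-s'|$ exceeds the appropriate scale). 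Ergodicity in the $x_3$-direction should then upgrade the positive-probability existence on $[t,t']$ to the a.s.\ statement of the conjecture.

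\emph{Step 2: the exponent $2/3$.} The conjectured dimension arises from the predicted critical exponents of 2D Bargmann--Fock percolation, matching those of planar Bernoulli percolation: the correlation-length exponent $\nu = 4/3$ and the one-arm exponent $5/48$. A Frostman energy argument would give $\dim_H \mathcal{T} \ge 2/3$ provided one proves a sharper two-point estimate of the form $\Pro[\mathrm{Arm}_R(s)\cap\mathrm{Arm}_R(s')] \le C\,|s-s'|^{-2/3+o(1)}\,\alpha_R^2$ on the right range of scales; the matching upper bound $\dim_H \mathcal{T} \le 2/3$ would follow from a first-moment covering argument that exploits the one-arm exponent together with the near-critical window exponent $1/\nu = 3/4$.

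\emph{Main obstacle.} The principal difficulty is that the critical exponents of 2D Bargmann--Fock percolation are at present purely conjectural; without them, the precise value $2/3$ and the sharp two-point bound required in Step 2 are inaccessible. Moreover, even in Step 1 the Gaussian ``dynamics'' in $t$ is not Markovian, so the decorrelation estimate must be established directly, probably via an explicit coupling of $f_s$ and $f_{s'}$ that exploits the rapid decay of the kernel $e^{-(s-s')^2/2}$ in the third direction together with a near-critical stability statement for 2D Bargmann--Fock crossings. Only the qualitative existence part of the conjecture thus appears within reach of current techniques, and even this requires a nontrivial extension of the decorrelation machinery developed in the present paper.
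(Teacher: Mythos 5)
The statement you are addressing is not a theorem of this paper: it is restated verbatim as an open conjecture attributed to \cite{GV}, and the paper offers no proof of it (its own results concern the strict inequality $\ell_c(d)>0$ for $d\ge 3$, which is a different phenomenon). So there is no proof in the paper to compare against, and your text, by your own admission, is a research programme rather than a proof. As such it cannot be accepted as a proof of the statement.

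The gaps are genuine and not merely technical. For Step 2 you need the one-arm and correlation-length exponents of critical planar Bargmann--Fock percolation together with a sharp two-point estimate $\Pro[\mathrm{Arm}_R(s)\cap\mathrm{Arm}_R(s')]\le C|s-s'|^{-2/3+o(1)}\alpha_R^2$; none of these is known (the exponents are not even established for Bernoulli percolation outside the triangular-lattice site model), so the value $2/3$ is out of reach. For Step 1 the situation is subtler than your ``decorrelation once $|s-s'|$ exceeds the appropriate scale'' suggests: the covariance between the slices $\mathcal{P}_s$ and $\mathcal{P}_{s'}$ is $e^{-\frac{1}{2}|x-y|^2}e^{-\frac{1}{2}(s-s')^2}$, i.e.\ a \emph{fixed} positive multiple of the planar kernel, uniformly in the scale $R$. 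Hence the fields $f_s$ and $f_{s'}$ never become asymptotically independent, and the second-moment bound $\int\!\!\int \Pro[\mathrm{Arm}_R(s)\cap\mathrm{Arm}_R(s')]\,ds\,ds'\le C\alpha_R^2$ requires a quantitative noise-sensitivity statement for arm events under this specific correlated perturbation --- which is exactly the hard input that \cite{GV} identify and which remains open. There are further unaddressed points (upper semicontinuity of $s\mapsto \mathbf 1_{\mathrm{Arm}_R(s)}$ so that weak limits of the normalised measures charge $\mathcal{A}_\infty$ rather than only its closure; quasi-multiplicativity for Gaussian arm events near the diagonal), but the noise-sensitivity input is the essential missing idea. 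In short: your outline correctly locates where the difficulty lies, but it does not constitute a proof, and the statement remains a conjecture.
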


\subsection{Extension to a family of Gaussian fields}\label{ssec:main_results}

The previous results extend to a class of positively correlated smooth Gaussian fields with fast decay of correlations. We now state the precise assumptions. Let $d \geq 2$ and $f$ be a continuous modification of
\begin{equation}\label{eq:hh1}
q \star W,
\end{equation}
where $\star$ denotes the convolution, $W$ is a $d$-dimensional $L^2$-white noise and $q : \R^d \rightarrow \R$ satisfies Assumption \ref{ass1} below.
\begin{ass}\label{ass1}
Let $\beta>d$. We say that  $q$ {\em satisfies Assumption~\ref{ass1} for $\beta$} if
\begin{itemize}[noitemsep]
\item (regularity) $q$ is $C^{10}$ and there exists $\eps>0$ such that $|\partial^\alpha q| (x) = O(|x|^{-(d/2+\varepsilon)})$ for every multi-index $\alpha$ with $|\alpha| \leq 10$;
\item (decay of correlations) $|\partial^\alpha q|(x)=O(|x|^{-\beta}) $ for every multi-index $\alpha $ with $|\alpha| \leq 1$;
\item (isotropy) $q$ is radial;
\item (positivity) $q \geq 0$;
\item (non-triviality) $q$ is not identically equal to $0$.
\end{itemize}
\end{ass}

If $q$ satisfies Assumption \ref{ass1} (for some $\beta>d$) then $f$ defined by \eqref{eq:hh1} is a stationary, isotropic and centered Gaussian process with covariance
\[
\E [f(x) f(y) ] = (q \star q) (x-y).
\]
Moreover, $q \star q$ is $C^{10}$ so $f$ is a.s.\ $C^4$ (see for instance Sections A.3 and A.9 of \cite{NS}) and one can check that
\[
(q \star q)(x) = O\big(|x|^{-\beta}\big).
\]
\begin{remark}
In several lemmas, conjectures etc., one could also assume that $\beta \in (d/2,d]$ and ask some conditions on the spectral measure (as e.g.\ in \cite{MV}), but we have chosen to assume that $\beta>d$ to simplify the statements. In particular, this implies that the spectral measure of $f_{|\R^{d'}}$ is continuous and strictly positive at $0$ for all $d' \in [1,d]$.
\end{remark}

\begin{remark} If $q(x):=(2/\pi)^{d/4}e^{-|x|^2}$ (which satisfies Assumption \ref{ass1} for every $\beta$) then $f$ is the Bargmann--Fock field.
\end{remark}

Our main result is the following. Here and elsewhere, if $1 \le d ' \le d$ and $D \subset \R^{d'}$, we identify $D$ with $D \times \{ 0 \}^{d-d'}$.

\begin{theorem}\label{thm:main}
Let $d \ge 3$. There exists $\beta_0 > d$ such that the following holds. Let  $q : \R^d \rightarrow \R$ satisfy Assumption \ref{ass1} for some $\beta > \beta_0$. Then, there exist $L,N,\delta>0$ such that, for every $\ell \in [-\delta,\delta]$, a.s.\
\begin{itemize}
\item[i)] there exists an unbounded component in $\{ f = \ell \} \cap (\R^2 \times [0,L])$,\footnote{Recall the convention stated before the theorem: $\R^2 \times [0,L] = \R^2 \times [0,L] \times \{0\}^{d-3}$.} and
\item[ii)] for every $d' \in [3,d]$ there exist an unbounded component $\mathcal{C}$ of $\{ f = \ell \} \cap \R^{d'}$ and some (random) $R_0>0$ such that, for every $R \ge R_0$, $\mathcal{C}$ intersects all the (Euclidean, closed) balls of radius $(\log R)^N$ that are included in $[-R,R]^{d'}$.
\end{itemize} 
\end{theorem}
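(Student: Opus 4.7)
The plan is to reduce the statement to a single quantitative input: for $|\ell|\le \delta$ small, the excursion set $\{f\ge \ell\}$ percolates in the slab $\R^2\times[0,L]$, with a strong ``finite-size'' crossing estimate that propagates through all scales. From this the nodal statements (i)--(ii) follow by a topological/renormalization argument described below.

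To prove slab percolation of $\{f\ge \ell\}$ for small positive $\ell$ (the case of negative $\ell$ being symmetric), I would carry out a Grimmett--Marstrand-style renormalization adapted to smooth Gaussian fields. The starting point is the two-dimensional RSW theory at the critical level (\cite{BG,RVb}), which gives uniformly positive crossing probabilities of $\{f\ge 0\}$ on all scales in the plane. Working in the thickened slab $\R^2\times[0,L]$ provides extra room for clusters to route around obstacles, and via a Cameron--Martin sprinkling (adding a smooth deterministic bump supported in the slab) a planar crossing at level $0$ can be converted into a slab crossing at a slightly supercritical level $\ell>0$ at bounded Radon--Nikodym cost. Iterating this sprinkling over many scales, invoking the FKG inequality available for positively correlated Gaussian fields, and using the fast-decay assumption $\partial^\alpha q(x)=O(|x|^{-\beta})$ to approximate $f$ by a finite-range Gaussian field up to a small variance remainder, one can boost the crossing probability of $\{f\ge \ell\}$ in a cubical box arbitrarily close to $1$. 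A finite-size criterion in the spirit of Liggett--Schonmann--Stacey then dominates the slab field from below by supercritical Bernoulli site percolation on a coarse-grained lattice, producing an unbounded component $\calC^+\subset\{f\ge \ell\}\cap(\R^2\times[0,L])$.

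Equipped with the finite-size criterion and slab percolation, the ``visits most of space'' statement in (ii) follows from a multi-scale argument: partition $[-R,R]^{d'}$ into the polynomially-many sub-boxes of side $(\log R)^N$; each of them is met by the unbounded cluster $\calC^+$ with probability $\ge 1-R^{-c}$ provided $N$ is chosen large enough, and a Borel--Cantelli argument gives the claim almost surely for all $R\ge R_0$. For (i), by the sign symmetry of $f$, both $\{f\ge \ell\}$ and $\{f\le \ell\}$ percolate in the slab for $\ell\in[-\delta,\delta]$, so their unbounded components $\calC^+$ and $\calC^-$ simultaneously visit every sufficiently large box of the slab; a topological argument (the frontier of $\calC^+$ in $\R^2\times[0,L]$ separates it from $\calC^-$ and lies in $\{f=\ell\}$) then produces an unbounded component of $\{f=\ell\}\cap(\R^2\times[0,L])$.

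The main obstacle is the Cameron--Martin sprinkling step: one has to produce a quantitatively controlled, positively-correlated improvement of crossing probabilities despite the nontrivial long-range dependence of $f$. Sprinkling here is not the simple monotone coupling available in Bernoulli percolation, and one must ensure both that the Radon--Nikodym cost stays uniformly bounded on all relevant scales and that the FKG structure is preserved under the renormalization. The precise threshold $\beta_0$ on the tail exponent reflects this balance: the truncation remainder $f - q_R\star W$ (for $q_R := q\,\mathbf 1_{B(0,R)}$) must have small enough variance on scale $R$ to be absorbed by the sprinkling budget at each step of the multi-scale construction.
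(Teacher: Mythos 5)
There is a genuine gap, and it sits exactly where the paper spends most of its effort. Your outline correctly identifies that crossing probabilities for $\{f\ge 0\}$ in a thick slab $[0,R]^2\times[0,R^a]$ tend to $1$ (the paper notes this too), but you then pass directly from ``crossing probability close to $1$ in each box'' to a Liggett--Schonmann--Stacey domination and an infinite cluster. This step fails as stated: in a quasi-planar slab, two crossings of overlapping thick boxes need not intersect, so high crossing probability for individual boxes does not make the coarse-grained site process percolative in any useful sense. One needs a \emph{uniqueness/gluing} statement -- that with very high probability all macroscopic clusters in $[0,R]^2$ at a slightly sprinkled level are joined inside $[0,R]^2\times[0,R^a]$, cf.\ \eqref{eq:2armsintro} and Proposition \ref{prop:2arms*} -- before any renormalization (Grimmett--Marstrand style or otherwise) can be run. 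Proving that gluing is the hard part, precisely because $f$ has no finite-energy property: conditioning on two nearby macroscopic clusters biases the field in an uncontrolled way, so one cannot locally ``open'' a connecting path as in Bernoulli percolation or the discrete GFF. The Cameron--Martin sprinkling you invoke (Lemma \ref{lem:Cameron-Martin}) only controls level shifts of \emph{monotone} events at cost $O(|t|R)$; it cannot manufacture a connection between two conditioned clusters, and the uniqueness event is not monotone. The paper's resolution is the Mermin--Wagner-type deformation argument of Proposition \ref{prop:mw}, showing that macroscopic clusters at level $\ell+R^{-2+\theta_0}$ typically reach height $R^a$ at level $\ell$, combined with an RSW estimate producing polynomially many contact points (Proposition \ref{prop:polynom_absctract}) and delicate conditional FKG arguments. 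None of this machinery, nor a substitute for it, appears in your proposal.

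The remaining parts of your outline are closer to the mark: deducing item i) from simultaneous percolation of $\{f\ge\ell\}$ and $\{f\le\ell\}$ in the slab via a topological separation argument matches the paper's use of Lemma \ref{L:top}, and the multi-scale control of $(\log R)^N$-balls for item ii) is in the spirit of the renormalization of Sections \ref{sec6}--\ref{sec8} (though for $d'\ge 3$ the paper additionally needs the gluing statement across adjacent two-dimensional faces of cubes, Proposition \ref{prop:2armsbis*}). But since both items rest on the slab percolation input whose proof you have essentially assumed, the proposal as written does not constitute a proof.
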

%

It is instructive to compare with the planar case, for which we have the following result. Let $d \ge 2$ and $q : \R^d \rightarrow \R$ satisfy Assumption~\ref{ass1}. A result from \cite{Alex} implies that for $\ell \ge 0$ there is no unbounded component in $\{ f \ge \ell \} \cap \R^2$ a.s. In particular, for all $\ell \in \R$ there is no unbounded component in $\{ f = \ell \} \cap \R^2$ a.s. Conversely,  \cite{MRVK} (see also \cite{RVb,MV,Riv,GV} for previous yet less general results) implies that for $\ell < 0$, there exists a unique unbounded component in $\{ f \ge \ell \} \cap \R^2$ a.s. The results at $\ell \neq 0$ are also known without any positivity assumption, see \cite{MRVK}.

\medskip

We conjecture that the unbounded component of $\{ f = \ell \}$ (or $\{ f \ge \ell \}$) is a.s.\ unique when it exists. We state this conjecture as we believe it to be of importance to understand further the properties of the nodal hypersurfaces of Gaussian fields.

\begin{conjecture}
Let $d \ge 3$ and $q$ satisfy Assumption \ref{ass1} for some $\beta> d$. Then, the unbounded component of $\{ f = \ell \}$ is a.s.\ unique when it exists (and similarly for $\{ f \ge \ell \}$).
\end{conjecture}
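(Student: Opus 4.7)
The plan is to adapt the Burton--Keane uniqueness argument \cite{CCN,GM90} to this continuum Gaussian setting, with the Cameron--Martin theorem replacing the finite-energy property. Since $q \in C^{10}$, $q \ge 0$ and $q$ is nontrivial, the Cameron--Martin space of $f$ contains a rich family of smooth compactly-supported nonnegative bumps (for example $q\star \psi$ with $\psi\in C_c^\infty$, $\psi\ge 0$); adding such a bump $h$ to $f$ is an absolutely continuous operation and, since $h\ge 0$, can only enlarge $\{f\ge\ell\}$. I first treat $\{f\ge\ell\}$ and write $\mathcal{N}_\ell$ for its number of unbounded components. Because $(q\star q)(x)\to 0$ as $|x|\to\infty$, $f$ is mixing under $\R^d$-translations and hence ergodic, so by \cite[Theorem~6.5.4]{Adl10} $\mathcal{N}_\ell$ is a.s.\ equal to some deterministic $k\in\{0,1,2,\dots,\infty\}$; it suffices to exclude $2\le k<\infty$ and $k=\infty$.

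To rule out $2\le k<\infty$, pick $R$ large enough that with probability at least $1/2$ all $k$ unbounded components of $\{f\ge\ell\}$ meet $B_R$, and choose a Cameron--Martin admissible bump $h$ supported in $B_{R+1}$ with $h\ge \ell+1$ on $B_R$. Under the law of $g=f+h$, mutually absolutely continuous with the law of $f$, the event that all $k$ unbounded components of $\{f\ge \ell\}$ meet $B_R$ still has positive probability; on this event $B_R\subset\{g\ge\ell\}$ and $\{g\ge\ell\}\supseteq\{f\ge\ell\}$, so the $k$ unbounded components merge through $B_R$ and $\mathcal{N}_\ell(g)\le 1$. Absolute continuity then forces $\Pro[\mathcal{N}_\ell(f)\le 1]>0$, contradicting $\mathcal{N}_\ell\equiv k\ge 2$.

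To rule out $k=\infty$, run the classical trifurcation count: fix small $r>0$ and say a cube $Q$ of side $r$ inside $B_R$ is a \emph{trifurcation} if $\{f\ge\ell\}\setminus Q$ contains three distinct unbounded components that all reach $\partial B_R$ and are merged by $Q$. A topological packing argument (the standard Burton--Keane tripod count) bounds the number of disjoint trifurcating cubes in $B_R$ by $O(R^{d-1})$. On the other hand, if $\mathcal{N}_\ell=\infty$, an FKG argument shows that with positive probability three distinct unbounded clusters approach a fixed cube $Q$ from three transversal directions; a Cameron--Martin upward bump near $Q$ then merges them into a genuine trifurcation at $Q$, and translation invariance yields $\Omega(R^d)$ expected trifurcating cubes in $B_R$, a contradiction. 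Uniqueness for $\{f=\ell\}$ follows from uniqueness for $\{f\ge\ell\}$ and $\{f\le\ell\}$ combined with the a.s.\ non-vanishing of $\nabla f$ on the level set (standard for isotropic smooth Gaussian fields with nondegenerate spectral measure): each unbounded component of $\{f=\ell\}$ is a smooth hypersurface lying in $\partial\{f\ge\ell\}\cap\partial\{f\le\ell\}$, so two distinct such components would produce two distinct unbounded components of at least one of the two excursion sets.

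The main obstacle is the trifurcation count in the continuum: Burton--Keane's topological argument is phrased on discrete graphs, so one must either discretize $\{f\ge\ell\}$ at scale $r$ (using the box-percolation techniques underlying Theorem~\ref{thm:main}) and verify that the Cameron--Martin bumps yield \emph{bona fide} discrete trifurcations, or execute the tripod packing directly for closed subsets of $\R^d$. One also needs, in the continuum, a lower bound analogous to the three-arm event --- that three distinct unbounded clusters approach a prescribed cube from three separated directions with positive probability --- relying only on the positivity and polynomial decay supplied by Assumption~\ref{ass1}; this seems to be the most delicate piece.
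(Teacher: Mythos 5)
First, a point of orientation: the statement you are proving appears in the paper as a \emph{conjecture}, not a theorem. The authors explicitly do not prove it; they only offer evidence towards it (the quantitative connectivity in Item ii) of Theorem \ref{thm:main}, Severo's sharpness result) and point to the analogous known results for Bernoulli percolation and the Gaussian free field. So there is no proof in the paper to compare against, and your task amounts to settling an open problem. Your Burton--Keane outline is the natural route, but it contains a gap that is precisely the obstruction the authors single out in their discussion of adapting Bernoulli-percolation arguments to smooth Gaussian fields: the absence of a finite-energy (local surgery) property.

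The concrete problem is your claim that the Cameron--Martin space of $f=q\star W$ ``contains a rich family of smooth compactly-supported nonnegative bumps (for example $q\star\psi$)''. The Cameron--Martin space is $\{q\star g: g\in L^2\}$, and $q\star\psi$ is compactly supported only if $q$ is --- which Assumption \ref{ass1} does not require, and which fails for the motivating example: for the Bargmann--Fock field every Cameron--Martin element is real-analytic, so the only compactly supported one is $0$. This is fatal for the trifurcation step. A trifurcation at a cube $Q$ is not a monotone event: you must connect the three clusters \emph{inside} $Q$ while keeping them three \emph{distinct} unbounded components of $\{f\ge\ell\}\setminus Q$. A nonnegative bump with unbounded support raises the field everywhere and may merge the three clusters outside $Q$, destroying exactly the structure being counted; so ``a Cameron--Martin upward bump near $Q$ then merges them into a genuine trifurcation at $Q$'' does not follow. (The merging step for $2\le k<\infty$ does tolerate a delocalized nonnegative bump, but even there $h\ge\ell+1$ on $B_R$ does not give $f+h\ge\ell$ on $B_R$ --- you must also control $\inf_{B_R}f$ --- and you must rule out that the bump manufactures new unbounded components out of infinitely many bounded ones; both points are repairable, unlike the trifurcation issue.) Absent compactly supported Cameron--Martin directions, one needs a genuinely different local-modification mechanism --- this is why the paper develops the Mermin--Wagner-type deformation of Proposition \ref{prop:mw} even for the existence result, and why uniqueness remains open. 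Your final reduction of nodal-set uniqueness to excursion-set uniqueness is sound (it is the no-multi-edge statement for the tree structure of Lemma \ref{L:top}), but it rests on the unproved excursion-set statement.
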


\begin{remark}
In fact, much more is likely true. In particular, we expect the following local uniqueness statement to hold: the probability that
\begin{equation}\label{eq:localuniqueness}
\begin{array}{c}
\text{$\{f \ge \ell\} \cap [-R,R]^d $ contains two or more macroscopic connected components}\\
\text{(of diameter at least $R/100$, say) which are not connected inside $[-R,R]^d$}\\
\end{array}
\end{equation}  
decays very rapidly in $R$ for all $\ell < \ell_c$ (in particular for $\ell=0$). For Bernoulli or Gaussian free field percolation in $d\geq 3$, such a result holds \cite{GM90, DPR18.2}. Item ii) in Theorem~\ref{thm:main} can be regarded as evidence towards its validity for $f$. Another piece of evidence is the recent sharpness result proved by Severo, see Theorem 1.2 of \cite{severo_2021}: it is shown therein that, for any $f$ satisfying Assumption \ref{ass1}, the connection probabilities decay exponentially fast in the subcritical phase while the probability that $[-R,R]^d$ is not connected to infinity is less than $e^{-cR^{d-1}}$ for some $c>0$ in the supercritical phase.
\end{remark}

We further expect the results of the present paper to hold more generally, notably when $f$ denotes the monochromatic random wave, which is the centered stationary smooth Gaussian field whose covariance function is the Fourier transform of the uniform measure on the $(d-1)$-dimensional sphere. The monochromatic random wave is not positively correlated and its covariance decays as $|x|^{-(d-1)/2}$. The following conjecture was stated in \cite{Sa17}. We refer to video simulations\footnote{\label{Garnett}available at \url{https://math.ethz.ch/fim/activities/conferences/past-conferences/2017/random-geometries-topologies/talks/videos-barnett.html}} by A.~Barnett for supporting numerical evidence.

\begin{conjecture}[Sarnak]\label{conj:sarnak}
Let $d\geq 3$ and let $f$ be the monochromatic random wave in $\R^d$. There exists $\delta > 0$ such that, for any $\ell \in [-\delta,\delta]$, a.s.\ there exists an unbounded component in $\{ f = \ell \} \cap \R^3$.
\end{conjecture}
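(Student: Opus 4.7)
The plan is to attempt to mimic the slab-strategy of Theorem~\ref{thm:main} for the monochromatic random wave $f$ on $\R^d$, $d \ge 3$, while acknowledging that both cornerstones of the present paper -- positive correlation (FKG) and fast polynomial decay of correlations -- fail here: the spectral measure of $f$ is singular (supported on the unit sphere $S^{d-1}$), its covariance is oscillatory and decays only like $|x|^{-(d-1)/2}$. The approach must therefore develop substitute tools at each step, drawing on the specific structure of $f$ as an eigenfunction of the Laplacian, $(\Delta + 1) f = 0$, and on its full rotation invariance.

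First, I would reduce to a three-dimensional problem, since unboundedness of a component inside $\R^3$ implies unboundedness in $\R^d$. The main analytic input to be established is then a Russo--Seymour--Welsh-type estimate for $\{f \ge \ell\}$ across large rectangles in a slab $\R^2 \times [0,L]$, stable in a neighbourhood of $\ell = 0$. The existing proofs of such estimates for smooth Gaussian fields (\cite{BG,MV,RVb,MRVK}) use both FKG and positivity of the spectral density at the origin; both fail for $f$. My plan would be to replace them by Tassion's purely geometric RSW argument, which only requires an $\tfrac12$-type probabilistic input (available by symmetry at $\ell = 0$), combined with a quantitative spectral decoupling: given two events on distant disjoint sets $A_1, A_2$, decompose $f = f_1 + f_2 + r$, where $f_i$ is obtained by integrating the spectral representation of $f$ against a smooth angular cutoff selecting plane-wave directions essentially ``pointing towards'' $A_i$, and control $r$ via Gaussian concentration of suprema. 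This would mimic, without monotonicity, the sprinkled decoupling inequalities used in random interlacements.

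Equipped with such stable planar crossings, I would then run the renormalisation scheme of the present paper to chain crossings across parallel slabs in $\R^3$, again replacing every appeal to FKG gluing by the spectral sprinkling of the previous step, and extend the resulting 3D cluster to the full ambient space by monotonicity. Finally, to convert a finite-size crossing criterion into the almost sure existence of an unbounded component in $\{f = \ell\}$ for all $\ell \in [-\delta,\delta]$, one would need a non-FKG analogue of the sharpness statement of \cite{severo_2021}; a natural route is to rerun Severo's proof with the above sprinkled decoupling replacing the OSSS-FKG step.

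The true obstacle is Step~2. Even the RSW question for the \emph{planar} monochromatic random wave at $\ell = 0$ is, to the best of my knowledge, open; moreover, in $d=3$ the correlation decay exponent $(d-1)/2 = 1$ is very far from the range $\beta > d$ allowed in Assumption~\ref{ass1}, so the covariance is not even integrable, and any scale-by-scale decoupling of the type above can afford only a sub-polynomial loss per scale. Making the sprinkled inequality tight enough to feed a Benjamini--Tassion iteration in this regime would, I expect, require a genuinely new ingredient -- for instance, a Markov-type spatial property or quantitative equidistribution result for random Laplace eigenfunctions -- that lies beyond the scope of the methods developed in the present paper.
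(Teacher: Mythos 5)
You are attempting to prove a statement that the paper itself does not prove: Conjecture~\ref{conj:sarnak} is stated as an open conjecture, and the only progress reported is the companion result of \cite{alejandro}, which establishes it for all sufficiently large $d$ by comparing the restriction of the monochromatic wave to $\R^3$ with the Bargmann--Fock field and reusing the key intermediate result Proposition~\ref{prop:mw}; the case $d=3$ (and any fixed moderate $d$) remains open. So there is no proof in the paper against which your argument could be matched, and your proposal should be judged on its own terms.

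On its own terms it is a programme rather than a proof, and you say as much yourself: the entire load is carried by Step~2, which is exactly the open problem. Two points deserve to be made more precise than in your sketch. First, the angular-cutoff decomposition $f=f_1+f_2+r$ does not produce spatial decoupling in any usable sense: a plane wave whose direction ``points towards'' $A_1$ has modulus one on $A_2$ as well, so $f_1$ is not small on $A_2$ and $r$ is not small anywhere; at best a stationary-phase argument shows that removing directions near $\pm(x-y)/|x-y|$ kills the leading term of the \emph{cross-covariance}, which, in the absence of positive association, cannot be converted into a comparison of crossing probabilities by any known sprinkled-decoupling mechanism (those mechanisms, e.g.\ for interlacements or for the fields of Assumption~\ref{ass1}, all use monotonicity plus summable covariance tails, and for $d=3$ the decay $|x|^{-1}$ is not even square-integrable along a line of contact points). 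Second, the later steps inherit the same defect: the renormalisation of Sections~\ref{sec6}--\ref{sec8} and the sharpness input of \cite{severo_2021} are not ``FKG-free modulo sprinkling''; FKG enters through the RSW theorem of \cite{KT}, through every gluing construction, and through Proposition~\ref{prop:mw} being useful only in conjunction with them, so replacing each occurrence would require new inequalities that you have not formulated. If you want a provable statement along these lines, the realistic target is the regime of \cite{alejandro}: for large $d$ the restriction of the wave to $\R^3$ has covariance close to a positively correlated, rapidly decaying kernel, and one can transfer the conclusion from the Bargmann--Fock case instead of redoing RSW for the wave itself.
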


In a forthcoming companion paper, the second author derives this conjecture for large values of $d$ by using the main intermediate result of the present paper and a comparison argument with the Bargmann-Fock model.
\begin{theorem}[\cite{alejandro}]
There exist $d_0\geq 3$ and $\delta > 0$ such that the following holds for each $d\geq d_0$. Let $f$ be the monochromatic random wave in $\R^d$. Then, for any $\ell \in [-\delta,\delta]$, a.s.\ there exists an unbounded component in $\{ f = \ell \} \cap \R^3$.
\end{theorem}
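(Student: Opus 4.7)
The plan is to combine the main intermediate result of the present paper, viewed as a finite-size criterion for the existence of an unbounded component in a slab, with a comparison showing that the $d$-dimensional monochromatic random wave $f_d$, suitably rescaled and restricted to $\R^3$, converges to the Bargmann--Fock field on $\R^3$ as $d\to\infty$. Since the finite-size criterion depends only on the joint law of the field over a bounded region, once it is established for Bargmann--Fock via Theorem \ref{thm:main} it will transfer to $f_d$ for $d$ large enough.

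First I would carry out a spectral computation. The spectral measure of $f_d$ is the normalised surface measure on $S^{d-1}\subset\R^d$, and the spectral measure of the restriction $f_d|_{\R^3}$ is its marginal on $\R^3$, with density proportional to $(1-|\xi|^2)_+^{(d-5)/2}$ on $\R^3$. After rescaling space by $\sqrt d$, the field $\tilde f_d(x):=f_d(x/\sqrt d)$ has spectral density proportional (up to a normalising prefactor) to $(1-|\xi|^2/d)_+^{(d-5)/2}$ on $\R^3$. Since $(1-|\xi|^2/d)^{(d-5)/2}\to e^{-|\xi|^2/2}$ uniformly on compacts together with all derivatives, Fourier inversion yields uniform-on-compact convergence (with derivatives up to any fixed order) of the covariance of $\tilde f_d$ to that of a rescaled Bargmann--Fock field. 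This gives convergence of $\tilde f_d$ to Bargmann--Fock in the $C^k$ topology on every compact subset of $\R^3$.

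I would then extract from the proof of Theorem \ref{thm:main} a finite-size condition on the law of the field restricted to a bounded window which, once satisfied at some sufficiently large scale $R$, is by itself enough to imply the existence of an unbounded component in $\{f=\ell\}\cap\R^3$ for every $|\ell|\le\delta$. Applied to Bargmann--Fock, this yields validity at some scale $R$; the $C^k$-closeness on the window $[-R,R]^3$ from the previous step then carries the condition over to $\tilde f_d$ for all $d\ge d_0$, and the conclusion follows by undoing the $\sqrt d$ rescaling.

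The main obstacle is that the monochromatic random wave is \emph{not} positively correlated: its covariance oscillates, so the FKG inequality, used in Theorem \ref{thm:main} through the positivity hypothesis of Assumption \ref{ass1}, is unavailable for $f_d$. The companion paper must therefore isolate the portion of the proof of Theorem \ref{thm:main} in which FKG is genuinely needed --- presumably only in the derivation of the finite-size criterion from planar-type crossing estimates for Bargmann--Fock --- and verify that the subsequent upgrade from the finite-size criterion to the actual unbounded component can be performed using only decoupling or sprinkling arguments based on the rapid decay of correlations of $\tilde f_d$ (of order $|x|^{-(d-1)/2}$, which is sufficiently fast when $d$ is large). Formulating the intermediate result in a form robust to the loss of positivity is the delicate step.
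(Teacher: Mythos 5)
First, a caveat: the paper does not prove this theorem. It is quoted from the companion paper \cite{alejandro}, and the only indication given here of its proof is that it combines ``the main intermediate result of the present paper'' with ``a comparison argument with the Bargmann--Fock model''. Your outline is consistent with that one-line description, and you correctly identify the central obstruction: the monochromatic wave is not positively correlated, so every FKG-based step is unavailable. One computational slip: your rescaling is backwards. The marginal of the uniform measure on $S^{d-1}$ onto $\R^3$ has density proportional to $(1-|\xi|^2)_+^{(d-5)/2}$, which \emph{concentrates} at spectral scale $d^{-1/2}$ (so the restricted field decorrelates only over distances of order $\sqrt d$); to obtain the density $(1-|\xi|^2/d)_+^{(d-5)/2}\to e^{-|\xi|^2/2}$ you must dilate the spectral measure, i.e.\ set $\tilde f_d(x)=f_d(\sqrt d\,x)$, not $f_d(x/\sqrt d)$.

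The substantive gap is the assertion that the proof of Theorem \ref{thm:main} reduces to a ``finite-size criterion'' depending only on the law of the field in one bounded window. The construction of Sections \ref{sec6}--\ref{sec7} is not of this form: besides the seed estimate \eqref{eq:EX_G_bound} at scale $R$, the renormalization needs, at \emph{every} scale $n$, the approximation events $H_n$ comparing the truncations $f_{(n)}$ and $f_{(n-1)}$, and exact independence of the truncated fields at distance $\sigma L_n$; both rest on the representation $f=q\star W$ with $q\ge 0$ truncatable to compact support. The monochromatic wave admits no such representation on $\R^d$ (its spectral measure is singular), and its restriction to $\R^3$ is $q_d\star W$ for a kernel $q_d$ that is neither non-negative nor compactly supported. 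Hence $C^k$-closeness to Bargmann--Fock on a single window $[-R,R]^3$ does not by itself transfer the conclusion: the whole multi-scale decoupling structure must be rebuilt for $f_d$ using only the polynomial decay $|x|^{-(d-1)/2}$, and that rebuilding is precisely the content of the companion paper rather than a routine verification. Note also that the ``main intermediate result'' invoked here is most plausibly Proposition \ref{prop:mw}, which the paper emphasizes is proved \emph{without} FKG or RSW --- a strong hint that the companion argument is organized around the FKG-free ingredient, rather than around pushing the full Theorem \ref{thm:main} through a finite-window comparison as you propose.
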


\subsection{Strategy of the proof}

In this section, we explain the general strategy of the proof that there exists an unbounded component in $\{ f \ge 0 \} \cap (\R^2 \times [0,L])$ if $L$ is sufficiently large. Let us fix some small $a>0$ and some large $\beta=\beta(a)$, and let us assume that $q$ satisfies Assumption \ref{ass1} for such a~$\beta$. Below (and in all the paper), for any $d'\leq d$, we routinely view $D\subset\R^{d'}$ as the subset $D \times \{ 0\}^{d-d'}\subset\R^d$.

\medskip

\textbf{A quasi-planar uniqueness property.} The first important aspect of our proof is an analysis of so-called crossing probabilities. While it is known from Russo--Seymour--Welsh type arguments (see Section \ref{ss:rsw}) that in \textit{planar} rectangles crossing probabilities for $\{f\ge0\}$ are bounded away from 0 and 1, it is not difficult to show that these crossing probabilities tend to 1 as soon as one works in a ``thick'' rectangle. More precisely, for instance by approximating $f$ by an $R^\eps$-dependent field (for some suitable $\eps=\eps(\beta)$) and by looking at the 2D slices $[0,R]^2 \times \{kR^\eps\}$ for all $k \in \{0,\dots,R^{a-\eps}\}$, one can deduce that
\[
\lim_{R\rightarrow\infty}\mathbb P \big[\{f\ge0\}\text{ contains a path from left to right in $[0,R]^2 \times [0,R^a]$} \big]=1.
\]
While this may be interpreted as a sign of supercriticality (dimension 3 is expected to be below the upper critical dimension, so that crossing probabilities should remain bounded away from 1 at criticality, and tend to $0$ exponentially fast below criticality) there is still some work to be done to construct the unbounded connected component of $\{f\ge0\}$ ``by hand''.

\medskip

One difficulty comes from the fact that crossings in thick rectangles are not straightforward to combine into longer paths as one can often do in the planar case. In order to circumvent this difficulty, we will prove (a variant of) the following uniqueness property for crossings of such thick rectangles:
\begin{equation}\label{eq:2armsintro}
\begin{array}{c}
\text{With high probability, for large $R$, any two components of}\\
\text{of $\{f \ge 0 \} \cap [0,R]^2$ with diameter $\ge R/100$}\\
\text{are connected by a path in $\{f \ge 0 \} \cap ([0,R]^2 \times [0,R^a])$.}
\end{array}
\end{equation}

Once \eqref{eq:2armsintro} is proved, one can launch a renormalisation procedure to construct an unbounded connected component in $\{f\ge0\}\cap(\mathbb R^2\times[0,R^a])$ for some $R$ sufficiently large
.

\medskip

\textbf{Setting of the proof of \eqref{eq:2armsintro}.} The previous discussion highlights the fact that the proof of \eqref{eq:2armsintro} is the heart of our paper. We now describe it in some detail. First of all it is a good place to mention that we will work with a finite-range approximation $f_r$ (see the definition in Section \ref{ssec:not}) of $f$ instead of $f$ itself, where $f_r$ has range of dependence~$r$. Our goal is essentially to show that the following holds with very high probability for some well-chosen $\gamma\in(0,a)$:
\begin{equation}\label{eq:2armsintrosuite}
\begin{array}{c}
\text{Let us condition on $f_{R^\gamma}$ restricted to $\R^2=\R^2\times\{0\}^{d-2}$ and let $\calC,\calC'$}\\
\text{be two components of $\{ f_{R^\gamma} \ge 0 \} \cap [0,R]^2$ of diameter at least $R/100$.}\\
\text{Then, the (conditional) probability that $\mathcal{C}$ and $\mathcal{C}'$ are connected by}\\
\text{a path in $\{ f_{R^\gamma} \ge 0 \} \cap ([0,R]^2 \times [0,R^a])$ is very close to $1$.}
\end{array}
\end{equation}

\medskip

\textbf{The proof for Bernoulli percolation.} To explain the proof of \eqref{eq:2armsintrosuite}, let us make a brief detour to present an analogous strategy in the context of Bernoulli percolation at $p=1/2$ on $\Z^3$, for which the argument is maybe more transparent. For this model, we can show the result obtained from \eqref{eq:2armsintro} by replacing $R^a$ by a large (but independent of $R$) number $L$. Let us explain this proof. We recall that in this model each edge is open with probability $1/2$ and closed otherwise, and that the states of the edges are independent of each other.

In order to connect two large connected components $\calC$ and $\calC'$ of the square $[0,R]^2$ in $[0,R^2] \times [0,L]$ for some well-chosen $L$, one may proceed as follows. First, recall that by the Russo--Seymour--Welsh theory, two macroscopic connected sets in some planar domain are connected with probability uniformly bounded away from $0$ and $1$. By looking at Bernoulli percolation in the vertical translates $[0,R]^2 \times \{k\}$ of the square (for $k \in \{1,\dots,L\}$) and by using the independence structure of Bernoulli percolation, one obtains that if $L$ is large then with high probability there exists some height $k$ and a cluster $\calC''$ in $[0,R]^2 \times \{ k \}$ whose projection intersects both $\calC$ and $\calC'$. One can actually further prove that with high probability, the projection of $\calC''$ intersects $\calC$ and $\calC'$ in \textit{many} places. Then, one can try to connect $\mathcal{C}$ and $\mathcal{C}'$ to $\mathcal{C}''$ by using the vertical paths made of $k$ edges at each of these many places, and one obtains that $\mathcal{C}$ and $\mathcal{C}'$ are connected in $[0,R]^2 \times [0,L]$ with high probability.

\medskip

\textbf{Adapting the strategy to smooth Gaussian fields.} Adapting this strategy to smooth Gaussian fields raises substantial challenges. First, the field $f$ itself is not finite range and this is the reason for working with an approximation $f_{R^\gamma}$ of it. Second (and more importantly), a new difficulty emerges from the lack of a fundamental property of Bernoulli percolation and various other dependent percolation models: the {\em finite-energy property}, by which we mean that conditioned on everything outside a finite set $A$, the configuration in $A$ takes any possible value with probability bounded from below uniformly in what happens outside. This seemingly harmless property is in fact extremely powerful. For instance when conditioning on $\mathcal C,\mathcal C',\mathcal C''$, one could use it to create vertical paths connecting the connected components with reasonable probability. Not being able to invoke it substantially complicates our argument. In order to overcome this issue, we will rely on one of the main innovations of this paper and prove that large planar clusters in $\{f_{R^\gamma}\geq 0\}$ typically belong to clusters in $\{f_{R^\gamma}\geq 0\}$ which are not confined to thin slabs. More precisely, we show the following result, which we raise to the level of a proposition to highlight its importance in our argument.

\medskip

For $R\ge0$, let $D(R):=\{ x \in \R^2 : |x| \leq R\}$, $\mathcal{P}_t := \{ x \in \R^3 : x_3=t\}$ and recall that if $d' \le d$ and $D\subset \R^{d'}$ then we identify $D$ with $D \times \{0\}^{d-d'}$. We also refer to the definition of $f_r$ (and $r_q$) in the next section.

\begin{proposition}\label{prop:mw}
Let $d \ge 3$ and let $q$ satisfy Assumption \ref{ass1} for some $\beta>d$. There exist $a,\gamma,c,R_0>0$ such that for every $R\ge R_0$, $r\in [r_q,R^\gamma]$ and $\ell \ge 0$,
\[
\mathbb P\Bigg[\begin{array}{c}\text{Every continuous~path in $\{ f_r \geq \ell \} \cap D(2R)$}\\
\text{from $D(R)$ to $\partial D(2R)$ belongs to a connected component}\\
\text{of $\{ f_r \ge \ell \}\cap (D(2R)\times[0,R^a])$ that intersects $\mathcal P_{R^a}$}\end{array}\Bigg]\ge 1-\exp(-R^c).
\]
\end{proposition}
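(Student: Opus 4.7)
The plan is to combine a vertical slicing argument, exploiting the finite range $r \leq R^\gamma$ of $f_r$, with a sprinkling step whose polynomial margin $R^{-2+\theta_0}$ converts a planar crossing at level $\ell + R^{-2+\theta_0}$ into a three-dimensional cluster at level $\ell$ reaching $\mathcal{P}_{R^a}$. First, I would pick $\gamma \in (0,a)$ small and decompose the vertical extent $[0,R^a]$ into $M = \lfloor R^{a-\gamma}\rfloor$ intervals $I_j = [jR^\gamma,(j+1)R^\gamma]$. For $r \leq R^\gamma$, the restrictions of $f_r$ to $D(2R)\times I_j$ and $D(2R)\times I_{j'}$ are independent whenever $|j-j'|\geq 2$, giving a collection of essentially independent three-dimensional slabs. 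Each can be analysed using the planar Russo--Seymour--Welsh theory of Section~\ref{ss:rsw} applied to its mid-section, since the restriction of $f_r$ to any horizontal plane is a planar Gaussian field still satisfying the analogue of Assumption~\ref{ass1}.

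Next, I would prove a \emph{local lemma}: given a planar box $B\times\{0\}$ of side $R^\gamma$ with $B \subset D(2R)$ containing an $(\ell+R^{-2+\theta_0})$-cluster $\mathcal{K}$ of diameter at least $R^\gamma/100$, with conditional probability at least $1-\exp(-R^{c'})$ given $f_r|_{\R^2}$, $\mathcal{K}$ belongs to a connected component of $\{f_r\geq \ell\}\cap(B\times I_0)$ whose footprint on $\mathcal{P}_{R^\gamma}$ again has diameter at least $R^\gamma/100$. To prove this, I would apply the planar box-crossing estimates at the mid-section $\R^2\times\{R^\gamma/2\}$ to build, using FKG, a dense net of level-$\ell$ crossings of sub-boxes; a Cameron--Martin / Gaussian interpolation then ``glues'' $\mathcal{K}$ to this net by using the vertical degrees of freedom, with the sprinkling $R^{-2+\theta_0}$ absorbing the polynomial decoupling loss one incurs when passing from the conditional field given $f_r|_{\R^2}$ to the unconditional one. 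Iterating this lemma along the $M$ independent slabs and taking a union bound over the polynomially many sub-boxes per slab then yields the proposition: the input path of diameter at least $R \gg R^\gamma$ automatically supplies, in any horizontal slice, a cluster $\mathcal{K}$ of the required size, so the iteration propagates all the way to $\mathcal{P}_{R^a}$.

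The hard part will be the local lemma, specifically the sprinkled vertical-connection step. The difficulty is precisely the absence of a finite-energy property for smooth Gaussian fields: one cannot ``open a vertical edge'' as in Bernoulli percolation to glue $\mathcal{K}$ to the mid-section net. Instead, the extra vertical connection must be extracted from the Gaussian conditional law of $f_r$ on $B\times I_0$ given its trace on $\R^2$, and one must show that this conditional field has, with very high probability, $\ell$-crossings of the right type. The exponent $-2+\theta_0$ is calibrated so that the polynomial sprinkle dominates the polynomial loss in the Gaussian decoupling between $f_r|_{\R^2}$ and the upper portion of the slab, while remaining small enough that the 2D scaffolding at level $\ell$ — built via FKG and planar RSW — is not disturbed and the uniformity in $\ell$ required by the statement is preserved.
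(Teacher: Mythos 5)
Your proposal has a genuine gap at its core, and it sits exactly where the paper locates the main difficulty. Everything hinges on your ``local lemma'': that a given planar cluster $\mathcal K$ of $\{f_r\ge \ell+R^{-2+\theta_0}\}$ connects upward in $\{f_r\ge\ell\}$ through one slab with (conditional) probability $1-\exp(-R^{c'})$. None of the tools you invoke can produce such a bound. RSW applied to the mid-section $\R^2\times\{R^\gamma/2\}$ only gives crossing probabilities bounded away from $0$ and $1$; FKG lets you combine such events but never upgrades a constant to $1-\exp(-R^{c'})$; and the Cameron--Martin/sprinkling estimate (Lemma \ref{lem:Cameron-Martin}) shows that shifting the field by $t=R^{-2+\theta_0}$ changes the probability of a monotone event in a region of diameter $R$ by at most $C|t|R$, i.e.\ only polynomially little --- it cannot convert a planar crossing at level $\ell+R^{-2+\theta_0}$ into a vertical connection at level $\ell$. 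The obstruction is precisely the absence of finite energy that you name: conditionally on $f_r|_{\R^2}$, the cluster $\mathcal K$ may be barely above level and surrounded by deeply negative values, and the conditional Gaussian law above it gives no a priori reason for an upward connection, let alone one failing only with stretched-exponential probability. You have correctly identified the hard step but then asserted it rather than proved it.

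The paper's actual argument for this step is of a completely different nature (and is presented as the paper's main innovation): it is a Mermin--Wagner-type space-deformation argument. One defines the ``confinement'' event $E_{\ell,\ell+2\eps_0}(R)$ that a top-to-bottom crossing cluster at level $\ell+2\eps_0$ fails to reach height $R^a$ at level $\ell$, constructs a family of $\exp(cR^{1-b})$ almost-isometric embeddings $F_\alpha$ (``rippling quads''), shows via a variance computation that precomposing the white noise by $F_\alpha$ changes the field by at most $O(R^{-2+\theta_1})\ll\eps_0$ uniformly (this is where the sprinkling is actually spent), and then uses a Kac--Rice/co-area decomposition of the deformed events into mutually \emph{disjoint} arm events $A^\alpha_\ell(x,R)$ to conclude that $\min_\alpha\Pro[E^\alpha]\le CR/|A|$, which is super-polynomially small. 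There is no slab-by-slab iteration: the connection to $\mathcal P_{R^a}$ is obtained in one shot. (A secondary problem with your iteration, even granting the local lemma: each slab step delivers a cluster at level $\ell$, not at level $\ell+R^{-2+\theta_0}$, so your sprinkling budget is exhausted after one slab and would have to be divided among the $R^{a-\gamma}$ steps, shrinking the per-step margin further.) To repair your proof you would need to replace the local lemma by an argument of this deformation type, or find some other substitute for finite energy.
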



At the risk of repeating ourselves, this proposition will be one of the main novelties of this paper. It uses in a central fashion the fact that $f_r$ is defined on a continuous space, which makes the argument one of the few instances where continuous fields are easier to handle than their lattice analogues. The argument, inspired by the Mermin--Wagner theorem \cite{MerWag} -- see in particular the arguments developed by Pfister \cite{pfister_1981} -- may have other applications of the same kind in the future.

\medskip 

\textbf{End of the proof of \eqref{eq:2armsintro} (for $f_{R^\gamma}$).} With Proposition \ref{prop:mw} at hand, the end of the proof that \eqref{eq:2armsintrosuite} holds with high probability consists in
\begin{itemize}[noitemsep]
\item observing, by Proposition \ref{prop:mw}, with high probability, one can condition on the fact that several random points $u_i$ at a vertical distance $R^a$ from $\calC$ (resp.\ $\calC'$) are likely to be connected to $\calC$ (resp.\ $\calC'$);
\item then trying to connect the $u_i$'s together.
\end{itemize}
While the model is very biased in the neighbourhood of $\calC$ and $\calC'$, we have much more freedom in the neighborhood of the $u_i$'s. This will be crucial for us. Using this property and the previous observation that Russo--Seymour--Welsh type arguments enable to connect such areas with relatively good probabilities will conclude the proof of \eqref{eq:2armsintrosuite} and as a result of \eqref{eq:2armsintro} also (we are omitting a fair amount of details here and postpone the discussion to the corresponding section).

\medskip

\textbf{Organization of the paper.} In Section \ref{sec3}, we give some notations and recall some preliminary results. Section \ref{sec2} contains the proof of Proposition \ref{prop:mw}. Section~\ref{sec4} is devoted to the proof of a Russo--Seymour--Welsh type result with many contact points. At this point, we will have all the tools in hand to prove (an analogue of) \eqref{eq:2armsintro}. This is done in Section~\ref{sec5}. In Section~\ref{sec6}, we state and prove a general renormalization scheme that will be used together with (the analogue of)~\eqref{eq:2armsintro} in the two last sections in order to conclude the proof of our main result Theorem~\ref{thm:main}. The first and second parts $i)$ and $ii)$ of this theorem are proved in Sections \ref{sec7} and \ref{sec8} respectively. 

\medskip

\textbf{Acknowledgments.} The problem was suggested to us by Peter Sarnak. We thank him as well as Vincent Beffara and Damien Gayet for inspiring discussions. We also thank Matthis Lehmkühler very much for providing the proofs of the FKG inequalities from Sect.\ \ref{ssec:app_fkg} to us. Moreover, we thank Damien Gayet who provided the proof of Lem.~\ref{L:top} to us and Thomas Letendre for help with such topological questions. An important part of this work was done while AR and HV were visiting HD-C and P-FR in IHES, that we thank for hospitality.

Finally, we wish to thank an anonymous referee for helpful comments and his or her careful reading of our paper, and we are extremely grateful to David Vernotte for his careful reading of our paper and for drawing our attention to a mistake in Sect.\ \ref{sec5} that is corrected in the present version -- this has actually led to an improvement of the paper (now, we do not need any sprinkling in Prop.\ \ref{prop:mw}).

This project has received funding from the European Research Council (ERC) under the European Union's Horizon 2020 research and innovation programme (grant agreements No.~757296 and No.~851565). The authors acknowledge funding from the SwissMap funded by the Swiss FNS.

\section{Notations and preliminary properties}
\label{sec3}

In this section, we state some notation used in all the paper as well as two preliminary properties, namely the FKG inequalities and the RSW theorem. Let us note that these two properties are not used in Section \ref{sec2}, which contains the proof of Proposition \ref{prop:mw}.

\subsection{Truncation and other notations}\label{ssec:not}

Let $q : \R^d \rightarrow \R^d$ satisfy Assumption \ref{ass1} for some $\beta>d$. Most of our intermediate results deal with a finite-range approximation of $f$ defined by truncating $q$. Below, $|\cdot|$ denotes the Euclidean norm. For $r \geq 1$, let $\chi_r : \R^d \rightarrow [0,1]$ be a smooth isotropic function satisfying
\[
\chi_r(x)=
\begin{cases} 1 \quad \text{if } |x| \leq r/2-1/4,\\
0 \quad \text{if } |x| \geq r/2,
\end{cases}
\]
and whose $k^{th}$ derivatives, for all $k \geq 1$, are uniformly bounded in $x \in \R^d$ and $r \geq 1$. We let $q_r=q\chi_r$ and define $(f,f_r)$ as a continuous modification of the pair
\[
(q\star W,q_r \star W)\, .
\]
The field $f_r$ is $r$-dependent if the sense that $\E [f_r(x)f_r(y)]=0$ if $|x-y| \ge r$. In all the paper, we let
\[
r_q = 1 + \sup\{ r \ge 1 : \text{$q_r$ is identically equal to $0$}\}
\]
(with $\sup \emptyset := 1$). One can note that $q_r$ satisfies Assumption~\ref{ass1} for every $r \ge r_q$ and that the $O$ estimates from this assumption are uniform in $r$. To see that $f$ and $f_r$ are close to each other if $r$ is large, we will use some classical approximation techniques (Cameron--Martin, Kolmogorov and BTIS lemmas), see Section \ref{ssec:approx}.

\medskip

We will use the following notation/conventions in all the paper.

\begin{itemize}[noitemsep]
\item We let $B(R)$ denote the Euclidean (closed) ball of radius $R$ centered at $0$ and we let $B(x,R):=x+B(R)$.
\item If $1 \le d ' \le d$, if we work in $\R^d$ and if $D \subset \R^{d'}$, then we identify $D$ with $D \times \{ 0 \}^{d-d'}$.
\item We let $D(R):=\{ x \in \R^2 : |x| \leq R\}$ and $D(x,R):=x+D(R)=x+(D(R)\times \{0\}^{d-2})$ for any $x \in \R^d$.
\item We let $\mathcal P_t:=\R^2\times\{t\}$.
\item If $U \subset \R^d$, let $\mathcal{F}_U$ be the $\sigma$-algebra on the set of continuous functions $C(\R^d)$ generated by the projections $u \mapsto u(x)$ for $x \in U$. We say that $\phi : C(\R^d) \rightarrow \R$ is measurable if it is $\calF_{\R^d}$-measurable. Let us note that $\calF_{\R^d}$ is the Borel $\sigma$-algebra for the topology of uniform convergence on every compact subset.
\end{itemize}

\subsection{The Gaussian FKG inequalities}\label{ss:FKG}

Let us first state the Fortuin--Kasteleyn--Ginibre (or FKG) inequality for continuous Gaussian fields.

\begin{lemma}[Continuous Gaussian FKG inequality]\label{lem:FKG1*}
Let $\phi,\psi : C(\R^d) \rightarrow \R$ be two non-decreasing bounded measurable functions and let $f$ be a centered continuous Gaussian field on $\R^d$. Assume that $\E [f(x)f(y) ] \ge 0$ for every $x,y \in \R^d$. Then,
\[
\E \left[ \phi(f)\psi(f) \right] \ge \E \left[ \phi(f) \right] \E \left[ \psi(f) \right].
\]
\end{lemma}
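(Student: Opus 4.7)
The plan is to reduce Lemma \ref{lem:FKG1*} to the finite-dimensional Gaussian FKG inequality and then prove the finite-dimensional version via a classical Gaussian interpolation argument. The finite-dimensional statement to be established is: if $X = (X_1, \ldots, X_n)$ is a centered Gaussian vector in $\R^n$ whose covariance matrix $\Sigma$ has non-negative entries, and if $\varphi, \widetilde{\varphi} : \R^n \to \R$ are bounded, coordinate-wise non-decreasing measurable functions, then $\E[\varphi(X)\widetilde{\varphi}(X)] \ge \E[\varphi(X)]\E[\widetilde{\varphi}(X)]$.

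To reduce the continuous statement to this finite-dimensional one, I would first note that by the layer-cake identity and a standard monotone-class argument it suffices to treat the case $\phi = \mathbf{1}_A$, $\psi = \mathbf{1}_B$ for increasing events $A,B \in \calF_{\R^d}$. Fixing a countable dense set $\{x_k\}_{k \ge 1}$ in $\R^d$, each such $A$ would then be approximated by increasing cylinders depending only on $(f(x_1),\dots,f(x_n))$. The approximation I have in mind goes through a non-negative, coordinate-wise monotone interpolation $\Pi_n : \R^n \to C(\R^d)$, $y \mapsto \sum_{k=1}^n y_k \eta_k$, for a suitable partition of unity $\{\eta_k\}$, such that $\Pi_n\bigl(f(x_1),\dots,f(x_n)\bigr) \to f$ in the topology of uniform convergence on compacts; then $A_n := \Pi_n^{-1}(A) \subset \R^n$ is an increasing cylindrical event whose indicator converges to $\mathbf{1}_A(f)$ in $L^2$. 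Since the covariance entries $\E[f(x_i)f(x_j)]$ are non-negative by hypothesis, this reduces Lemma \ref{lem:FKG1*} to the finite-dimensional claim above.

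For the finite-dimensional FKG inequality, I would first mollify $\varphi,\widetilde{\varphi}$ by convolving with a non-negative smooth bump to assume they are smooth with bounded derivatives; monotonicity is preserved and the inequality in the limit follows by dominated convergence. Taking three independent copies $X,X',X''$ of an $N(0,\Sigma)$ vector, I would set
\[
X^t := \sqrt{t}\, X + \sqrt{1-t}\, X', \qquad Y^t := \sqrt{t}\, X + \sqrt{1-t}\, X'', \qquad t \in [0,1],
\]
so that $X^t, Y^t \sim N(0,\Sigma)$ marginally with cross-covariance $\E[X^t_i Y^t_j] = t\, \Sigma^{ij}$. At $t=0$ the vectors $(X^0, Y^0) = (X', X'')$ are independent, giving $\E[\varphi(X^0)\widetilde{\varphi}(Y^0)] = \E[\varphi]\E[\widetilde{\varphi}]$, while at $t=1$ we have $X^1 = Y^1 = X$, giving $\E[\varphi(X^1)\widetilde{\varphi}(Y^1)] = \E[\varphi(X)\widetilde{\varphi}(X)]$. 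The classical Gaussian interpolation formula applied to $(u,v)\mapsto \varphi(u)\widetilde{\varphi}(v)$ then yields
\[
\frac{d}{dt}\E\bigl[\varphi(X^t)\widetilde{\varphi}(Y^t)\bigr] = \sum_{i,j=1}^n \Sigma^{ij}\, \E\bigl[\partial_i \varphi(X^t)\, \partial_j \widetilde{\varphi}(Y^t)\bigr] \ge 0,
\]
where the inequality uses $\Sigma^{ij} \ge 0$ together with $\partial_i \varphi, \partial_j \widetilde{\varphi} \ge 0$ from monotonicity. Integrating from $0$ to $1$ completes the proof of the finite-dimensional inequality, and hence of Lemma \ref{lem:FKG1*}.

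The main obstacle, in my view, is the cylindrical reduction from the functional setting: the conditional expectation of a non-decreasing functional of $f$ with respect to the $\sigma$-algebra generated by finitely many point evaluations is \emph{not} generally a coordinate-wise non-decreasing function of those evaluations, since the Kriging coefficients are not automatically non-negative. The detour through the monotone interpolation map $\Pi_n$ above sidesteps this difficulty by directly producing monotone cylindrical approximations; once this is in place, the Gaussian interpolation step is standard and essentially reproduces Pitt's argument for positively correlated Gaussian vectors.
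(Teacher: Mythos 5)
Your overall strategy is the same as the paper's: reduce to the finite-dimensional Gaussian FKG inequality of Pitt by building coordinate-wise monotone cylindrical approximations through a non-negative interpolation map (the paper's $u\mapsto u_{R,n,\eps}=\theta_R\cdot\frac1{n^d}\sum_v u(v)\chi_\eps(\cdot-v)$ is exactly your $\Pi_n$). The paper simply cites Pitt for the finite-dimensional statement, whereas you reprove it by Gaussian interpolation; that part of your argument is correct and standard (modulo the usual mollification/non-degeneracy care).

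However, there is a genuine gap in your reduction, and it is not the one you flag. You assert that for a general increasing \emph{measurable} event $A$, the indicators $\mathbf{1}_A\bigl(\Pi_n(f(x_1),\dots,f(x_n))\bigr)$ converge to $\mathbf{1}_A(f)$ in $L^2$. The convergence $\Pi_n(f(x_1),\dots,f(x_n))\to f$ in $C(\R^d)$ gives this only when $\mathbf{1}_A$ is continuous at $f$ almost surely, i.e.\ when the law of $f$ does not charge the topological boundary of $A$; for a merely measurable increasing $A$ there is no reason for this, and nothing in monotonicity alone rules it out. This is precisely the step the paper's proof cannot skip: it first replaces the measurable monotone $\phi$ by \emph{continuous} monotone functions $\phi_n$ with $\phi_n(f)\to\phi(f)$ a.s.\ (Claim \ref{cl:approxFKG2}), using inner/outer regularity of the law of $f$ on the Polish space $C(\R^d)$ — compact sets $K_n\subset A$, open sets $O_n\supset A$, finite covers by increasing neighbourhoods $R^{M}(u,\eps)$, and a sandwich $\mathbf{1}_{K_n}\le\phi_n\le\mathbf{1}_{O_n}+\mathbf{1}_{C_n^c}$ — and only then applies the finite-dimensional discretisation (Claim \ref{cl:approxFKG1}), which does require continuity. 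A monotone-class argument does not rescue your shortcut either, since increasing events are not stable under the operations a Dynkin-type argument needs. To repair your proof you must insert this measurable-to-continuous approximation step (or an equivalent regularity argument) before invoking $\Pi_n$.
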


This lemma is proven in Section \ref{sssec:FKG} by relying on the analogous result for Gaussian vectors proven by Pitt \cite{Pit}. In the present paper, we also need a generalization of this lemma to ``locally monotone'' functions. Lemma \ref{lem:localFKG} is such a generalization. In this section, we state Corollary \ref{cor:FKG} which is a consequence of Lemma \ref{lem:localFKG} and which is sufficient for us (to see that Corollary \ref{cor:FKG} is indeed a consequence of Lemma \ref{lem:localFKG}, the reader can note that if $U \subset \R^d$ and $\phi : C(\R^d) \rightarrow \R$ is $\calF_U$-measurable, then $\phi$ does not depend on $U^c$ in the sense that $\phi(u)=\phi(v)$ for every $u,v$ that agree in $U$).

\medskip

We first need some vocabulary/notation. Given some $U \subset \R^d$, let $U^r$ denote the $r$-neighborhood of $U$, i.e.\ $U^r := \{ x \in \R^d : \exists y \in U, |x-y|<r \}$. If $\phi : C(\R^d) \rightarrow \R$, say that $\phi$ is non-decreasing in $U$ if $\phi(u) \ge \phi(v)$ for every $u,v$ that agree outside of $U$ and satisfy $u_{|U} \ge v_{|U}$.

\begin{corollary}[Local FKG inequality]\label{cor:FKG}
Let $\phi,\psi : C(\R^d) \rightarrow \R$ be two bounded measurable functions and let $f$ be a centered continuous Gaussian field on $\R^d$. Also, let $r \in (0,+\infty]$ such that $f$ is $r$-dependent (i.e.\ $\E[f(x)f(y)]=0$ for every $x,y \in \R^d$ satisfying $|x-y|\ge r$). Finally, let $\delta>0$ and $U,V \subset \R^d$ and assume that
\begin{itemize}[noitemsep]
\item $\phi$ is non-decreasing in $V^{r+\delta}$ and $\mathcal{F}_U$-measurable,
\item $\psi$ is non-decreasing and $\mathcal{F}_V$-measurable,
\item for every $x,y \in U^{r+\delta}$, $\E[f(x)f(y)]\ge 0$.
\end{itemize}
Then,
\[
\E \left[ \phi(f)\psi(f) \right] \ge \E \left[ \phi(f) \right] \E \left[ \psi(f) \right].
\]
\end{corollary}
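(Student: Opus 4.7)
The plan is to reduce Corollary~\ref{cor:FKG} to Lemma~\ref{lem:localFKG} by exploiting the remark flagged in the excerpt: because $\phi$ is $\calF_U$-measurable, $\phi(u)=\phi(v)$ whenever $u|_U=v|_U$, and in particular $\phi$ is constant (hence trivially non-decreasing) along any variation supported in $U^c$. The hypothesis that $\phi$ is non-decreasing on $V^{r+\delta}$ can therefore be upgraded for free to the statement that $\phi$ is non-decreasing on the enlarged set $A:=V^{r+\delta}\cup U^c$. This upgrade is what brings the situation into the ambit of the local FKG lemma.

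To make this precise, suppose $u,v$ agree outside $A$ and satisfy $u\ge v$ on $A$. I would introduce the intermediate configuration $w$ that equals $u$ on $V^{r+\delta}$ and $v$ on its complement. The original $V^{r+\delta}$-monotonicity of $\phi$ gives $\phi(w)\ge\phi(v)$ directly, since $w$ and $v$ agree off $V^{r+\delta}$ and $w\ge v$ on $V^{r+\delta}$. Moreover $u$ and $w$ can differ only on $(V^{r+\delta})^c\cap U^c\subset U^c$ (they agree on $V^{r+\delta}$ by construction, and on $(V^{r+\delta})^c\cap U=A^c$ because there $u=v=w$), so by $\calF_U$-measurability $\phi(u)=\phi(w)$. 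Combining gives $\phi(u)\ge\phi(v)$, which is the desired monotonicity on $A$.

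Now observing that the complement of $A$ is $(V^{r+\delta})^c\cap U$, which is contained in $U\subset U^{r+\delta}$, the covariance-positivity hypothesis ``$\E[f(x)f(y)]\ge0$ for every $x,y\in U^{r+\delta}$'' holds throughout the only region where $\phi$'s monotonicity has not been established. The hypotheses of Lemma~\ref{lem:localFKG} are therefore met, with $\phi$ non-decreasing on $A=V^{r+\delta}\cup U^c$ and $\psi$ non-decreasing on all of $\R^d$, and the inequality $\E[\phi(f)\psi(f)]\ge\E[\phi(f)]\E[\psi(f)]$ follows at once.

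The only obstacle I anticipate is purely bookkeeping: one must track the nested regions $U\subset U^{r+\delta}$ and $V\subset V^{r+\delta}$ together with their complements, and verify that extending $\phi$'s monotonicity region to $V^{r+\delta}\cup U^c$ dovetails with the region $U^{r+\delta}$ where non-negative correlations are available. Beyond this, no probabilistic input is required: the entire content sits in Lemma~\ref{lem:localFKG} (and, through it, in Lemma~\ref{lem:FKG1*} and Pitt's theorem).
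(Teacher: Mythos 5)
Your high-level plan --- reduce to Lemma~\ref{lem:localFKG} via the observation that $\calF_U$-measurability means $\phi$ does not depend on $U^c$ --- is exactly the intended route. But the final step, ``the hypotheses of Lemma~\ref{lem:localFKG} are therefore met \dots and the inequality follows at once,'' hides a genuine gap. Lemma~\ref{lem:localFKG} does not take as input a single monotonicity region for $\phi$ and one for $\psi$; its hypothesis must be verified for \emph{every} pair $(x,y)\in(\R^d)^2$, namely that one of four clauses holds on the pair of cubes $Q_{\delta'}(x)\times Q_{\delta'}(y)$ for a suitable $\delta'>0$. Knowing that $\phi$ is non-decreasing on $A=V^{r+\delta}\cup U^c$ and $\psi$ is non-decreasing everywhere does not settle the pairs where $Q_{\delta'}(x)$ meets $A^c=U\setminus V^{r+\delta}$: there $\phi$ is neither known to be monotonic nor independent of the field, so the second and fourth clauses are unavailable. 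For such $x$ paired with $y$ whose cube meets $V$ (so that $\psi$ genuinely depends on $Q_{\delta'}(y)$ and the third clause also fails), the only remaining clause is the \emph{vanishing} of the covariance on $Q_{\delta'}(x)\times Q_{\delta'}(y)$; this holds because such an $x$ lies at distance at least $r+\delta-\delta'\sqrt d$ from $V$ while $y$ lies within $\delta'\sqrt d$ of $V$, whence $|s-t|\ge r$ on the product of cubes once $\delta'\le\delta/(4\sqrt d)$, and one concludes by the $r$-dependence of $f$. Your proposal never invokes the $r$-dependence of $f$ nor the $\calF_V$-measurability of $\psi$, and the observation you substitute --- that covariance \emph{positivity} holds on $A^c\subset U^{r+\delta}$ --- is of no use where the monotonicity of $\phi$ fails. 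One must also check, when the fourth clause is invoked (for $Q_{\delta'}(x)$ meeting $U$, $Q_{\delta'}(y)$ meeting $V$, and $|x-y|<r+2\delta'\sqrt d$), that both cubes lie in $U^{r+\delta}$, so that the sign of $\E[f(s)f(t)]$ is constant (non-negative) on their product; this again requires choosing $\delta'$ small in terms of $\delta$.

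A secondary issue: your proof of the monotonicity upgrade patches $w:=u$ on $V^{r+\delta}$ and $w:=v$ elsewhere, but $u$ and $v$ may differ on $\partial V^{r+\delta}\cap U^c\subset A$, so $w$ need not be continuous, whereas the definition of ``non-decreasing in a set'' quantifies over elements of $C(\R^d)$. This is fixable, but the upgrade is in fact unnecessary: in the four-way case analysis one uses the second clause directly when $Q_{\delta'}(x)\cap U=\emptyset$, the third when $Q_{\delta'}(y)\cap V=\emptyset$, the first when $|x-y|\ge r+2\delta'\sqrt d$, and the fourth otherwise, in which case $Q_{\delta'}(x)\subset V^{r+\delta}$ and the original hypothesis on $\phi$ already applies.
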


\subsection{Russo--Seymour--Welsh theory}\label{ss:rsw}

Box-crossing properties for excursion and level sets of smooth positively correlated Gaussian fields with sufficiently fast decay of correlations have been proven in \cite{BG,BM,bmw,RVa,MV} by relying on \cite{T}. Recently, Köhler-Schindler and Tassion \cite{KT} have developed a strategy that enables to remove the assumption on the speed of decay of the correlations in the case of the excursion sets. Moreover, the constants obtained in \cite{KT} do not depend on the model. In the context of the present paper, this gives Theorem \ref{thm:rsw} below. Note that the theorem also applies to $f_r$ for any $r \ge r_q$. Before stating it, we note that it relies on the following simple lemma, which guarantees planar duality for crossings.

\begin{lemma}\label{lem:mani*}
Assume that $q$ satisfies Assumption \ref{ass1} for some $\beta>d$. Let $g=f_{|\R^2}$, $\ell \in \R$ and let $L \subset \R^2$ be a line. The following holds a.s.:
\begin{itemize}[noitemsep]
\item the sets $\{ g \geq \ell\}$ and $\{ g \leq \ell \}$ are two $\mathcal{C}^1$-smooth $2$-dimensional manifolds with boundary,
\item $\partial \{ g \geq \ell \} = \partial \{ g \leq \ell \} = \{ g = \ell \}$,
\item $\{ g = \ell \}$ intersects $L$ transversally.
\end{itemize}
\end{lemma}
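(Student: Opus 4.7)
The plan is to reduce all three items to a single regularity statement: almost surely, $\nabla g(x)\neq 0$ at every $x\in\R^2$ with $g(x)=\ell$, and moreover the directional derivative $\partial_L g(x)$ is nonzero at every $x\in L$ with $g(x)=\ell$. Once this holds, i)--iii) follow from the implicit function theorem applied respectively in $\R^2$ and along $L$.

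First I would check the Gaussian non-degeneracy needed for a Bulinskaya-type estimate. Since $q$ satisfies Assumption \ref{ass1} with $\beta>d$, the covariance $K:=q\star q$ of $g$ is (at least) $C^4$ on $\R^2$, stationary and isotropic with $K(0)>0$, and $K$ is even, so $g(x)$ and $\nabla g(x)$ are uncorrelated for each $x$. By isotropy, $-\nabla^2 K(0)$ is a positive multiple of the identity on $\R^2$, the positivity coming from the non-triviality of the spectral measure of $g$ (which is rotationally invariant and not identically zero). Hence the Gaussian vector $(g(x),\nabla g(x))\in\R^3$ has a bounded density on $\R^3$, uniformly in $x$. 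The same reasoning applied to $g_{|L}$ shows that $(g(x),\partial_L g(x))\in\R^2$ admits a bounded density, uniformly in $x\in L$.

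I would then carry out a Bulinskaya-type argument. By a countable cover it suffices to work on the unit square $Q=[0,1]^2$. Partition $Q$ into $\eps^{-2}$ subcubes of side $\eps$; the a.s.\ $C^2$-regularity of $g$ combined with the Kolmogorov continuity theorem furnishes a random constant $M$ with finite moments such that
\[
\sup_{x,y\in Q,\,|x-y|\le \eps}\bigl(|g(x)-g(y)|+|\nabla g(x)-\nabla g(y)|\bigr)\le M\eps.
\]
On the event $\{M\le m\}$, any point $x\in Q$ with $g(x)=\ell$ and $\nabla g(x)=0$ forces, on the subcube $Q'$ containing it and centered at $x_{Q'}$, the estimates $|g(x_{Q'})-\ell|\le m\eps$ and $|\nabla g(x_{Q'})|\le m\eps$. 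By the boundedness of the joint density, each such event has probability $O(m^3\eps^3)$ uniformly in $x_{Q'}$. Summing over the $O(\eps^{-2})$ subcubes and letting $\eps\to 0$ shows that on $\{M\le m\}$ there is a.s.\ no such point, and letting $m\to\infty$ removes the conditioning. An identical one-dimensional argument inside $L$ (using the bounded density of $(g,\partial_L g)$ and a cover by $O(\eps^{-1})$ intervals of length $\eps$) shows that a.s.\ $g_{|L}$ has no critical point at level $\ell$.

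Finally I would deduce the three items. From $\nabla g\neq 0$ on $\{g=\ell\}$, the implicit function theorem locally writes $\{g=\ell\}$ as a $C^1$-graph in $\R^2$, so $\{g\ge\ell\}$ and $\{g\le\ell\}$ are $C^1$-smooth manifolds with boundary, with common boundary $\{g=\ell\}$; this gives i) and ii). For iii), if $x\in L\cap\{g=\ell\}$ then $\partial_L g(x)\neq 0$ means that $L$ is not contained in the kernel of $\nabla g(x)$, which is exactly the tangent line to $\{g=\ell\}$ at $x$; hence the intersection is transverse. The only mildly delicate step is the Kolmogorov bound underlying the Bulinskaya estimate; everything else is essentially formal once the non-degeneracy of the relevant Gaussian joint densities has been established.
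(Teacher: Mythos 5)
Your proof is correct, and it is a self-contained version of what the paper delegates to a reference: the paper's proof of this lemma is a one-line citation of Lemma A.9 of \cite{RVa}, whose non-degeneracy hypothesis it verifies by noting that the support of the Fourier transform of $q$ contains an open set (using that $q$ is non-negative, not identically zero and $L^1$). You instead unpack the content of that cited lemma into an explicit Bulinskaya-type argument: you establish boundedness of the joint densities of $(g(x),\nabla g(x))$ and $(g(x),\partial_L g(x))$ directly from the uncorrelatedness of $g$ and $\nabla g$ at a point and the positivity of the second spectral moment, and then run the standard covering estimate to rule out critical points at level $\ell$ in $\R^2$ and along $L$, concluding by the implicit function theorem. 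The two routes verify non-degeneracy slightly differently (Fourier support of $q$ versus positive definiteness of $-\nabla^2 K(0)$), but these are equivalent here; your justification of the latter via ``the spectral measure is rotationally invariant and not identically zero'' is slightly terse, since a priori a nonzero rotationally invariant measure could be a point mass at the origin — one should add that the spectral measure has a continuous density $|\hat q|^2$ (or that $K=q\star q$ decays at infinity, hence is non-constant), which rules this out. Your approach buys a self-contained and more transparent argument; the paper's buys brevity by outsourcing the Bulinskaya step.
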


\begin{proof}
It is for instance a direct consequence of Lemma A.9 of \cite{RVa} (in this lemma, one needs the additional hypothesis that the field is not degenerate, which is actually a consequence of the fact that the support of the Fourier transform of $q$ contains an open set -- which holds since $q$ is non-negative, not identically equal to $0$ and $L^1$ -- see e.g.\ Theorem 6.8 of \cite{Wen}).
\end{proof}

Given a continuous field $f$ in $\R^d$ and $\rho_1,\rho_2 > 0$, we let $\cross(\rho_1,\rho_2)$ be the event that there is a path included in $\{f\ge 0\} \cap ([0,\rho_1] \times [0,\rho_2])$ from the left side of $[0,\rho_1]\times[0,\rho_2]$ to its right side.

\begin{theorem}[\cite{KT}]\label{thm:rsw}
Assume that $q$ satisfies Assumption \ref{ass1} for some $\beta>d$. Then, for every $\rho>0$, there exists a constant $c>0$ that depends only on $\rho$ such that, for every $R>0$,
\[
c<\Pro \left[ \cross(\rho R,R) \right] <1-c.
\]
\end{theorem}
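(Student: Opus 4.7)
The plan is to invoke the general box-crossing theorem of Köhler-Schindler and Tassion, whose strength is that it requires only three structural inputs and delivers crossing bounds with constants that do not depend on quantitative decay-of-correlations hypotheses. Concretely, I would verify the three inputs in the Gaussian setting: (a) positive association (FKG), (b) invariance of the law of $f$ under $\pi/2$-rotations and coordinate reflections, and (c) self-duality at the critical level $\ell=0$.

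For (a), Assumption~\ref{ass1} gives $q\ge 0$, so the covariance $\E[f(x)f(y)]=(q\star q)(x-y)$ is non-negative everywhere, and Lemma~\ref{lem:FKG1*} supplies the FKG inequality for monotone bounded measurable functionals of $f$. For (b), radiality of $q$ makes the law of $f$ invariant under all Euclidean isometries. For (c), I would argue as follows: by the symmetry $f\stackrel{d}{=}-f$ together with Lemma~\ref{lem:mani*} (smoothness of $\{g\ge\ell\}$ and transversality of the nodal line with any line), the failure of a left–right crossing of $[0,\rho_1]\times[0,\rho_2]$ in $\{g\ge 0\}$ is equivalent to the existence of a top–bottom crossing in $\{g\le 0\}$ of the same rectangle, so that
\[
\Pro[\cross(\rho_1,\rho_2)] + \Pro[\cross(\rho_2,\rho_1)] = 1.
\]
Specialising to $\rho_1=\rho_2=R$ yields $\Pro[\cross(R,R)]=1/2$.

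With these three ingredients in place, I would then quote the KT strategy directly: starting from the square-crossing lower bound $\Pro[\cross(R,R)]\ge 1/2$, their interface-exploration argument boosts the crossing probability to an arbitrary aspect ratio, producing a constant $c(\rho)>0$ (depending only on $\rho$) with $\Pro[\cross(\rho R,R)]\ge c(\rho)$. The upper bound $\Pro[\cross(\rho R,R)]\le 1-c(1/\rho)$ follows immediately from the duality identity above applied to the reciprocal aspect ratio. Because $q_r$ also satisfies Assumption~\ref{ass1} uniformly in $r\ge r_q$ (as noted in Section~\ref{ssec:not}), the same three inputs are valid for $f_r$, which explains the remark that the theorem also applies to $f_r$ with the same constants.

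The main obstacle is the rigorous verification of self-duality in a continuous, as opposed to discrete, setting, since a priori level sets could display pathologies (isolated nodal points, tangencies with rectangle sides, etc.) that would spoil the primal/dual dichotomy. This is precisely what Lemma~\ref{lem:mani*} rules out: the excursion sets $\{g\ge\ell\}$ and $\{g\le\ell\}$ are $\mathcal C^1$ manifolds with common boundary $\{g=\ell\}$, and this boundary crosses any given line transversally, so the standard topological argument for planar duality applies without modification. A secondary, more routine, difficulty is to confirm that the interface-exploration mechanism of KT, originally phrased on a lattice, transfers to the continuous Gaussian setting; this can be handled by discretising at a sufficiently fine mesh and controlling the approximation using the a.s.\ $C^4$-regularity of $f$.
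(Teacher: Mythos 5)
Your proposal is correct and follows essentially the same route as the paper: both verify FKG, symmetry, and the self-duality identity $\Pro[\cross(R,R)]=1/2$ via Lemma~\ref{lem:mani*}, and then invoke the Köhler-Schindler--Tassion box-crossing theorem, whose constants depend only on the aspect ratio (hence not on $q$ or the truncation parameter $r$). The only cosmetic difference is that you spell out the duality and the upper bound in more detail than the paper's brief sketch.
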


It is important to note that $c$ does not depend on $q$.

\begin{proof}
Let $a>0$. In \cite{KT}, Köhler-Schindler and Tassion prove a box-crossing property (with constants that only depend on $a$) for any FKG bond percolation model on $\Z^2$ that satisfies the following two properties: the model is invariant under the symmetries of $\Z^2$ and the probability of the left-right crossing of $[0,n]^2$ is at least $a$ for every $n \ge 1$.

The proof directly extends to positively correlated, stationary, isotropic, centered Gaussian fields as soon as one can prove some smoothness properties about the nodal lines. More precisely, by using for instance Lemma \ref{lem:mani*} (with $\ell=0$) one obtains that $\Pro [ \cross(R,R)]=1/2$ for every $R>0$ and that all the geometric constructions from \cite{KT} hold a.s., and one thus obtains Theorem \ref{thm:rsw}.
\end{proof}

For future reference, let us note that Lemma \ref{lem:mani*} also holds in higher dimensions (the proof is the same):
\begin{lemma}\label{lem:mani_d*}
Assume that $q$ satisfies Assumption \ref{ass1} for some $\beta>d$. Let $d' \le d$, $\ell \in \R$ and let $g=f_{|\R^{d'}}$. Then, the following holds a.s.: the sets $\{ g \geq \ell\}$ and $\{ g \leq \ell \}$ are two $\mathcal{C}^1$-smooth $d'$-dimensional manifolds with boundary. Moreover, $\partial \{ g \geq \ell \} = \partial \{ g \leq \ell \} = \{ g = \ell \}$ a.s.
\end{lemma}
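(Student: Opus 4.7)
The plan is to reduce to Lemma A.9 of \cite{RVa} exactly as in the proof of Lemma~\ref{lem:mani*}, since the only two-dimensional ingredient there was planar transversality, which has no analogue here.

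First I would check that $g := f_{|\R^{d'}}$ is itself a smooth, stationary, isotropic, centered Gaussian field on $\R^{d'}$. Its covariance is $(q\star q)(\cdot,0,\dots,0)$, which inherits the $C^{10}$ regularity of $q\star q$; consequently $g$ is a.s.\ $C^4$ (see e.g.\ Sections~A.3 and A.9 of \cite{NS}), in particular $C^1$ with values of $g$ and $\nabla g$ well-defined pointwise.

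Second, I would verify the non-degeneracy hypothesis needed to apply Lemma~A.9 of \cite{RVa}, namely that the spectral measure of $g$ has support containing an open set. Since $q$ is $L^1$, non-negative and not identically zero, $\hat q$ is continuous and $\hat q(0)=\int q>0$, so $|\hat q|^2$ (the spectral density of $f$) is continuous and strictly positive in a neighborhood of $0\in\R^d$. The spectral density of $g$ is obtained from $|\hat q|^2$ by integrating out the last $d-d'$ Fourier variables, and is therefore continuous and strictly positive at $0\in\R^{d'}$. In particular the support of the spectral measure of $g$ contains an open neighborhood of the origin, matching the hypothesis of Lemma~A.9 of \cite{RVa}.

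Third, with smoothness and non-degeneracy in hand, Lemma~A.9 of \cite{RVa} yields that, for the fixed level $\ell\in\R$, almost surely $\nabla g(x)\neq 0$ at every $x$ with $g(x)=\ell$, i.e.\ $\ell$ is a.s.\ a regular value of $g$. The implicit function theorem then provides, near each point of $\{g=\ell\}$, a $C^1$ chart in which $\{g=\ell\}$ becomes a hyperplane and $\{g\ge\ell\}$, $\{g\le\ell\}$ become the two closed half-spaces, giving the claimed $C^1$-smooth $d'$-dimensional manifold-with-boundary structure, together with $\partial\{g\ge\ell\}=\partial\{g\le\ell\}=\{g=\ell\}$. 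There is no real obstacle here: the whole content of the statement sits inside the Bulinskaya-type argument behind Lemma~A.9 of \cite{RVa}, and the present step is purely a verification of hypotheses, which is precisely why the paper asserts that ``the proof is the same''.
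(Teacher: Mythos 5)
Your proposal is correct and follows essentially the same route as the paper: the paper proves the planar case (Lemma \ref{lem:mani*}) as a direct consequence of Lemma A.9 of \cite{RVa}, with the non-degeneracy hypothesis verified via the fact that the support of the Fourier transform of $q$ contains an open set (since $q$ is non-negative, not identically zero, and $L^1$), and then states that the higher-dimensional proof is identical. Your additional verification that the spectral measure of the restriction $f_{|\R^{d'}}$ remains strictly positive near the origin is exactly the point the paper records in the remark following Assumption \ref{ass1}.
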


Using Theorem \ref{thm:rsw} in conjunction with Lemma \ref{lem:FKG1*}, one can easily construct various gluing patterns by considering crossings of suitable rectangles. These gluing constructions are classical in planar percolation theory and will be used freely throughout this article. We briefly list a few simple geometric facts related to these constructions and refer the reader to \cite{Werner_notes} for more details on this matter.
\begin{itemize}[noitemsep]
\item[i)] For each $\delta>0$, there exist $c,N>0$ such that for any $R>0$ there exists a collection of less than $N$ rectangles of size $3c R\times c R$ such that any continuous path in $[-R,R]^2$ of diameter at least $\delta R$ must cross at least one of these rectangles widthwise.
\item[ii)] There exists an integer $k$ and rectangles $Q_1,\dots,Q_k$ such that if $\gamma_i$ is a continuous path crossing $Q_i$ in the long direction for $i=1,\dots,k$, then, $\cup_i \gamma_i$ contains a circuit in $D(2)$ surrounding $D(1)$.
\item[iii)] There exists an integer $k'$ and $x_1,\dots,x_{k'}\in\R^2$ such that if $\eta_i$ is a circuit around $D(x_i,1)$ in $D(x_i,2)$ for $i=1,\dots,k'$, then $\cup_i \eta_i$ contains a crossing in the long direction of $[0,20]\times [0,10]$.
\item[iv)] If $0<r_1\le r_2 \le r_3$ with $r_1\le r_2/2$ and $r_2 \le r_3/2$, if there exist three continuous paths $\gamma_1,\gamma_2,\gamma_3$ that cross the annuli $D(r_2) \setminus D(r_1)$, $D(2r_2) \setminus D(r_2/2)$ and $D(r_3) \setminus D(r_2)$ respectively, and if there exist two circuits $\gamma_4,\gamma_5$ in the annuli $D(2r_2) \setminus D(r_2)$ and $D(r_2) \setminus D(r_2/2)$ respectively, then $\cup_i \gamma_i$ contains a path that crosses the annulus $D(r_3) \setminus D(r_1)$ from inside to outside. The same also holds in a half-plane for instance.
\item[v)] Item ii) together with Theorem \ref{thm:rsw} and Lemma \ref{lem:FKG1*} imply that there exists $\eta>0$ such that for each $R\geq 1$ and $r \ge r_q$ the probability that $\{f_r\geq 0\}$ contains a path connecting $D(R)$ to $\partial D(2R)$ is at most $\eta$. If furthermore $R \ge r$, then by considering a well-chosen family of $O(\log(R/r))$ concentric annuli at mutual distance at least $r$, one deduces that the probability that $D(r)$ is connected to $\partial D(R)$ in $\{f_r\geq 0\}$ is at most $C(r/R)^c$ for some universal $c,C>0$.
\item[vi)] By considering the endpoint of a square crossing, one deduces that there exists $c>0$ such that for every $R>0$ and $r \ge r_q$, the probability that $0$ is connected to $\partial D(R)$ by a path in $\{f_r\geq 0\} \cap \{ x \in \R^2 : x_2 \ge 0 \}$ is at least $cR^{-1}$. In fact, this bound is not optimal at all but it will be sufficient for our purposes.
\end{itemize}

\begin{figure}
\begin{center}
\includegraphics[scale=0.47]{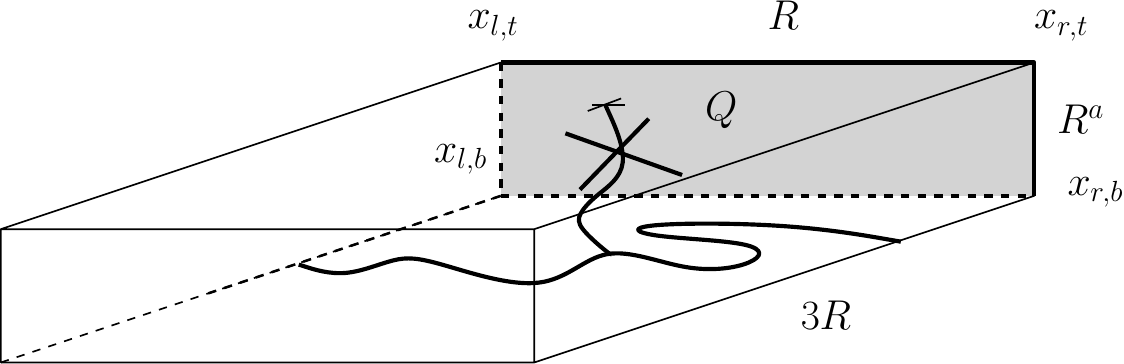}
\vspace{1cm} \includegraphics[scale=0.47]{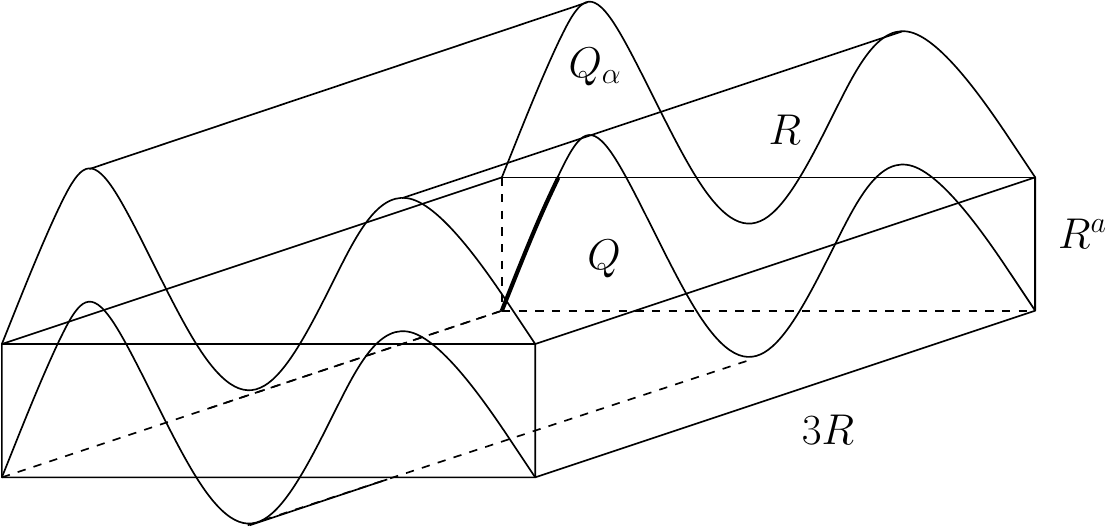}
\caption{i) The quad $Q$ (in grey) and the event $E_\ell(R)$ in $[0,3R] \times Q$; ii) a quad $Q_\alpha$ and the set $[0,3R] \times Q_\alpha$. A segment (in bold) of the bottom side of $Q_\alpha$ connects the left-bottom corner of $Q$ to its top side.\label{fig:Q_BR}}
\end{center}
\end{figure}

\section{Clusters are not confined in slabs}
\label{sec2}


In this section, we prove Proposition \ref{prop:mw}.

\subsection{Setting of the proof}

In this section, we fix some $d \ge 3$ and a function $q : \R^d \rightarrow \R$ that satisfies Assumption~\ref{ass1} for some $\beta>d$.  Also, we work with three numbers $0 < \gamma < a < b < 1$. Throughout the section, $R>0$ and $r \ge r_q$ will denote two scale parameters and $\ell \in \R$. Unless ortherwise stated, the constants below may depend on $q,\gamma,a,b$, but are uniform in $R$, $r$ and $\ell$. Recall that for any $d' \in [1,d]$ and any set $D \subset \R^{d'}$, we identify $D$ with $D \times \{ 0\}^{d-d'}$.

\medskip

We start with a definition.

\begin{definition}\label{def:E_R_H_delta_R}
Let $R>0$ and $\ell \in \R$. We define the event $E_\ell(R)$ as follows: a function $u \in C(\R^d)$ belongs to $E_\ell(R)$ if there exists a connected component $\mathcal C$ of $\{u\geq \ell\}\cap ([0,3R]\times[0,R])$ such that
\begin{enumerate}[noitemsep]
\item[i)] 
$\mathcal C$ contains a crossing from top to bottom in $[0,3R]\times[0,R]$ (i.e.\ from $[0,3R] \times \{0\}$ to $[0,3R] \times \{R\}$) and
\item[ii)] $\calC$ is \textit{not} connected to $[0,3R]\times[0,R]\times\{R^a\}$ by a continuous path in $\{u\geq \ell\}\cap([0,3R]\times[0,R]\times[0,R^a])$.
\end{enumerate}
\end{definition}

As we will see at the end of this subsection, the proof of Proposition~\ref{prop:mw} essentially boils down to bounding the probability of $\Pro[f_r\in E_{\ell}(R)]$. We will do that by introducing a family of events $(E^\alpha_\ell(R) : \alpha \in A)$ for some very large set $A$ that satisfy
\[
\Pro[f_r \in E_\ell(R)] \approx \Pro[f_r \in E^\alpha_\ell(R)]
\]
for every $\alpha$ and some constant $c>0$. If the events $E^\alpha_\ell(R)$ were mutually {\em disjoint}, this would imply -- by summation over $\alpha$ -- that $\Pro[E_\ell(R)]$ is small. What we will actually do is relate the $E^\alpha_\ell(R)$'s to some events $A^\alpha_\ell(x,R)$'s which are mutually disjoint and deduce the desired bound. The events $E^\alpha_\ell(R)$ will be constructed from $E_\ell(R)$ by {\em slightly deforming} the ambient space and proving that the law of the field is not altered too much by this deformation. This is a feature that is specific to this percolation model in the continuum and we are unaware of similar arguments for percolation models on lattices. 

Let us note that similar disjoint events have been used in \cite{Wer95} in order to study Brownian disconnection exponents (see in particular Section 4 therein).

\medskip

The events $(E^\alpha_\ell(R):\alpha\in A)$ are defined as 
\[
\{u\in E^\alpha_\ell(R)\}:=\{u\circ(Id_1 \times F_\alpha \times Id_{d-3}) \in E_\ell(R)\},
\]
where the maps $(F_\alpha,\alpha\in A)$ are introduced in Lemma \ref{lem:rippling_quads} below and, by definition, for $x\in\R^d$,
\[
u\circ(Id_1 \times F_\alpha \times Id_{d-3})(x)=u(x_1,F_\alpha(x_2,x_3),x_4,\dots,x_d)\, .
\]
In a first stage, the reader may skip the details of this lemma and just keep track in their mind of the ``nicknames'' of the three items, and come back to it in the next subsection.

\begin{lemma}[Existence of a large family of rippling quads]\label{lem:rippling_quads}
Recall that we work with some numbers $0<a<b<1$. There exist $c_1,\dots,c_4,R_0 > 0$ such that for every $R\ge R_0$, there exists a family $(F_\alpha)_{\alpha\in A}$ of embeddings (more precisely, the $F_\alpha$'s are $C^2$ and are $C^1$ diffeomorphisms onto their images)
\[
 F_\alpha \, : \, \mathbb R\times[-R^a,2R^a] \rightarrow \R^2
\]
mapping 0 to 0
with the following three properties: 
\begin{enumerate}
\item ($A$ is large) $|A|\geq \exp(c_1R^{1-b})$.
\item (the $F_\alpha$'s are sufficiently different) For each distinct $\alpha,\alpha'\in A$, the following or its analogue obtained by interchanging $\alpha$ and $\alpha'$ holds: there exists a continuous path included in $F_\alpha([0,R]\times\{0\}) \cap F_{\alpha'}([0,R]\times[0,R^a])$ going from  $0$ to $F_{\alpha'}([0,R]\times\{R^a\})$.
\item (the $F_\alpha$'s are almost affine isometries) The $F_\alpha$'s are $C^2$ and for each $\alpha \in A$ and $x \in \mathbb R\times[-R^a,2R^a]$ we have the following:
\begin{itemize}[noitemsep]
\item[i)] ($F_\alpha$ is quasi-affine) $|d^2_xF_\alpha|\leq c_2 R^{2(a-b)}$ (where $d_x^2$ is the Hessian);
\item[ii)] ($F_\alpha$ is a quasi-isometry) there exists a rotation $J^\alpha_x\in O(2)$ such that
\[
|d_xF_\alpha-J^\alpha_x|\leq c_3 R^{2(a-b)};
\]
\item[iii)] ($F_\alpha$ is bi-Lipschitz) for each $y\in \mathbb R\times[-R^a,2R^a]$, 
\[
|x-y|/c_4\leq |F_\alpha(x)-F_\alpha(y)|\leq c_4|x-y|.\]
\end{itemize}
\end{enumerate}
\end{lemma}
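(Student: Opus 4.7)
The plan is to construct each $F_\alpha$ as a parallel-curves (or ``rolled'') deformation of the plane, with the bottom edge $F_\alpha([0,R]\times\{0\})$ encoding a binary string $\alpha$ via localized bumps in its tangent angle. Concretely, partition $[0,R]$ into $N := \lfloor R^{1-b}/10 \rfloor$ cells with centres $y_k := 10kR^b$, set $A := \{0,1\}^N$ so that $|A| = 2^N \geq \exp(c_1 R^{1-b})$ (giving property 1), and fix a smooth odd profile $g : \R \to \R$ supported in $[-1,1]$ with $\int_{-1}^0 g = 2$. For $\alpha \in A$ define the tangent angle
\[
\theta_\alpha(x_2) := R^{a-b} \sum_{k : \alpha_k = 1} g\big((x_2 - y_k)/R^b\big),
\]
whose summands have pairwise disjoint supports, the arc-length curve $\gamma_\alpha(x_2) := \int_0^{x_2}(\cos\theta_\alpha,\sin\theta_\alpha)\,ds$ with unit normal $n_\alpha := (-\sin\theta_\alpha, \cos\theta_\alpha)$, and finally $F_\alpha(x_2, x_3) := \gamma_\alpha(x_2) + x_3\, n_\alpha(x_2)$, which sends horizontal lines to curves parallel to $\gamma_\alpha$; by construction $F_\alpha(0,0) = 0$.

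For property 3, a direct calculation gives $dF_\alpha = R_{\theta_\alpha(x_2)} \cdot \mathrm{diag}(1 - x_3 \theta_\alpha', 1)$, where $R_\theta$ is the planar rotation by $\theta$. Taking $J^\alpha_x := R_{\theta_\alpha(x_2)}$ yields
\[
|dF_\alpha - J^\alpha_x| = |x_3 \theta_\alpha'(x_2)| \leq R^a \cdot R^{a-2b} = R^{2(a-b)},
\]
i.e.\ 3ii). Differentiating once more, the second derivatives of $F_\alpha$ are bounded by a combination of $|x_3 \theta_\alpha''|,|\theta_\alpha'|,|x_3(\theta_\alpha')^2|$, hence by $R^{2a-3b} + R^{a-2b} + R^{3a-4b}$, each $\leq R^{2(a-b)}$ when $0 < a < b$, giving 3i). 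For 3iii), the bound in 3ii) shows $F_\alpha$ is locally bi-Lipschitz with constant $1 + O(R^{2(a-b)})$; global injectivity follows because $|\theta_\alpha| \ll 1$ makes $\gamma_\alpha$ strictly monotone in its first coordinate (hence injective), while the tubular neighbourhood of width $3R^a$ is embedded since the radius of curvature $1/|\theta_\alpha'| \geq R^{2b-a}$ greatly exceeds $3R^a$ for $a < b$ and $R$ large.

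For property 2, let $k^*$ be the smallest index where two distinct $\alpha, \alpha'$ disagree, and, up to swap, assume $\alpha_{k^*} = 1$, $\alpha'_{k^*} = 0$. By disjointness of supports and agreement on indices $j < k^*$, $\theta_\alpha \equiv \theta_{\alpha'}$ on $(-\infty, y_{k^*} - R^b]$, so $\gamma_\alpha \equiv \gamma_{\alpha'}$ there; on $[y_{k^*} - R^b, y_{k^*} + R^b]$ we have $\theta_{\alpha'} \equiv 0$, so $\gamma_{\alpha'}$ is a horizontal segment at height $0$ and $F_{\alpha'}([0,R]\times[0,R^a])$ locally contains the flat rectangle $[y_{k^*} - R^b, y_{k^*} + R^b] \times [0, R^a]$. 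The oddness of $g$ makes each full-bump integral $\int_{y_j - R^b}^{y_j + R^b} \sin\theta_\alpha\,ds$ vanish identically, so the vertical height of $\gamma_\alpha$ at $x_2 = y_{k^*}$ equals $\int_{-R^b}^0 \sin\!\big(R^{a-b} g(u/R^b)\big)\,du = 2R^a + O(R^{3a-2b}) \geq R^a$ for large $R$. Hence there is a first time $t^* \in (y_{k^*} - R^b, y_{k^*})$ at which $\gamma_\alpha$ reaches height $R^a$ while staying in $[0, R^a]$ on $[0, t^*]$. The path $\gamma_\alpha|_{[0, t^*]}$ lies in $F_\alpha([0,R]\times\{0\}) \cap F_{\alpha'}([0,R]\times[0,R^a])$ (it coincides with $\gamma_{\alpha'}$ on $[0, y_{k^*} - R^b]$ and sits in the local flat rectangle afterward), starts at $0$, and ends at $\gamma_\alpha(t^*) \in F_{\alpha'}([0,R]\times\{R^a\})$, giving property 2.

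The main obstacle is achieving the sharp estimate $|dF_\alpha - J^\alpha_x| \leq c_3 R^{2(a-b)}$ in 3ii): a naive shear $(x_2, x_3) \mapsto (x_2, x_3 + h(x_2))$ would only give $|dF - J| = O(h') = O(R^{a-b})$, which is one power of $R^{a-b}$ too large. The parallel-curves construction is chosen precisely so that the leading $O(R^{a-b})$ distortion of $dF_\alpha$ from the identity is exactly absorbed into the rotation $R_{\theta_\alpha}$, leaving only the second-order residual $x_3\theta_\alpha'$; once this geometric structure is identified, the remaining verifications are routine under $0 < a < b$ and $R$ large.
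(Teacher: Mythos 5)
Your construction is sound and is in essence the same normal-tube idea as the paper's: the paper takes $\gamma_\alpha(t)=(t,R^ae_\alpha(t/R^b))$ as a graph with a \emph{cumulative} staircase profile (each block permanently shifts the height by $\pm R^a$) and sets $F_\alpha(t,s)=\gamma_\alpha(t)+s\nu_\alpha(t)$ with $\nu_\alpha$ the rotation of $\gamma_\alpha'$, whereas you use an arc-length/tangent-angle parametrization with unit normals and \emph{return-to-baseline} bumps of height $2R^a$. Both variants deliver Properties 1--3; your verification of 3i), 3ii) and of Property 2 is correct (for Property 2 you should additionally require $g\ge 0$ on $[-1,0]$, so that the climb on the first half-bump is monotone and the path cannot dip below the bottom side of $Q_{\alpha'}$ before reaching its top side; this is clearly what you intend). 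Your observation that a naive shear loses a factor $R^{a-b}$ and that the normal-tube construction absorbs the first-order distortion into a rotation is exactly the point of the paper's construction as well.

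The one genuine gap is in 3iii). You deduce the global bi-Lipschitz bound from ``locally bi-Lipschitz with constant $1+O(R^{2(a-b)})$'' plus ``globally injective,'' but this implication is false in general: a map can be a local near-isometry and injective while bringing far-apart points arbitrarily close together (wrap a strip almost fully around a large circle), so no uniform lower bound $|F_\alpha(x)-F_\alpha(y)|\ge |x-y|/c_4$ follows from these two facts alone. What actually saves you is quantitative and global: since $|\theta_\alpha|=O(R^{a-b})$, the first coordinate of $F_\alpha(t,s)$ equals $t$ up to errors $O(R^{a-b}(|t_1-t_2|+|s_1-s_2|))$ and the second coordinate tracks $s$ up to similar errors, so writing
\[
F_\alpha(t_1,s_1)-F_\alpha(t_2,s_2)=\gamma_\alpha(t_1)-\gamma_\alpha(t_2)+(s_1-s_2)n_\alpha(t_1)+s_2\bigl(n_\alpha(t_1)-n_\alpha(t_2)\bigr)
\]
and distinguishing the cases $|t_1-t_2|\ge |s_1-s_2|$ and $|t_1-t_2|\le |s_1-s_2|$ gives the uniform two-sided bound (this is precisely the decomposition the paper carries out). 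All the estimates needed for this are already in your write-up; you just need to assemble them into the global statement rather than appeal to local bi-Lipschitzness plus injectivity.
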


The third property above implies that $F_\alpha$ is sufficiently close to an affine isometry. Since $\chi_r$ and $q$ are radial, composition by $Id_1\times F_\alpha\times Id_{d-3}$ will almost preserve the law of $f_r$. In the following lemma, we will use this to compare the probability of the events $E_\ell(R)$ and $E^\alpha_\ell(R)$.

\medskip

\begin{lemma}\label{lem:comparison}
There exists $\eta>0$ that depends only on the dimension $d$ such that we have the following as soon as $\gamma$, $a$ and $1-b$ are less than $\eta$: There exist $c,R_0>0$ such that for every $\ell\ge 0$, $R \ge R_0$ and $r \in [r_q,R^\gamma]$, there exists $\ell' \in \R$ such that
\[
\Pro \left[f_r \in E_\ell(R)\right] \le \min_{\alpha \in A}\Pro \left[f_r \in E^\alpha_{\ell'}(R) \right] + R^{-c}.
\]
\end{lemma}

\begin{remark}
In fact, we prove this result with $\ell' \in [\ell,\ell+CR^{-c}]$ for some $C,c>0$.
\end{remark}

In Section~\ref{sec:proof lemma disjoint}, we will relate the events $E^\alpha_\ell(R)$ to certain events $A^\alpha_\ell(x,R)$ which we will show are mutually disjoint for different $\alpha\in A$ using the second property in Lemma~\ref{lem:rippling_quads}. With some work, this will imply the following upper bound on the probability of $E_\ell^\alpha(R)$.

\begin{lemma}\label{lem:disjoint}
There exist $C,R_0>0$ such that for every $\ell \in \R$, $R \geq R_0$ and $r\ge r_q$,
\[
\min_{\alpha \in A} \Pro \left[ f_r \in E_\ell^\alpha(R) \right] \leq \frac{CR}{|A|}.
\]
\end{lemma}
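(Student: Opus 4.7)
The plan is to establish the stronger bound $\sum_{\alpha \in A} \Pro[f_r \in E^\alpha_\ell(R)] \le C R$, whence $\min_\alpha \Pro[f_r \in E^\alpha_\ell(R)] \le CR/|A|$ by the trivial inequality $\min \le \text{avg}$. First, I would introduce a unit-spaced discretization $\Lambda_R$ of the line segment $[0,3R] \times \{0\} \times \{0\}^{d-2} \subset \R^d$ --- this segment is the same subset of $\R^d$ for every $\alpha$ since each $F_\alpha$ fixes the origin --- and for $x \in \Lambda_R$ define $A^\alpha_\ell(x,R) \subset E^\alpha_\ell(R)$ as the sub-event on which the top-to-bottom crossing component in the $F_\alpha$-deformed slice passes within unit distance of $x$ on the bottom face. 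Any top-to-bottom crossing of $[0,3R]\times[0,R]$ must hit the bottom face, which is $1$-covered by $\Lambda_R$ (with $|\Lambda_R| \le CR$), so $\mathbf 1_{E^\alpha_\ell(R)} \le \sum_{x \in \Lambda_R} \mathbf 1_{A^\alpha_\ell(x,R)}$.

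The core of the proof is the disjointness claim: for each fixed $x \in \Lambda_R$, the events $(A^\alpha_\ell(x,R))_{\alpha \in A}$ are pairwise disjoint, which pointwise gives $\sum_\alpha \mathbf 1_{E^\alpha_\ell(R)} \le |\Lambda_R| \le CR$ and the conclusion after taking expectations. To prove pairwise disjointness, let $\alpha, \alpha' \in A$ be distinct and apply Property~2 of Lemma~\ref{lem:rippling_quads} (after possibly exchanging $\alpha$ and $\alpha'$): there is a continuous path $P \subset F_\alpha([0,R]\times\{0\}) \cap F_{\alpha'}([0,R] \times [0,R^a])$ from $0$ to a point of $F_{\alpha'}([0,R]\times\{R^a\})$. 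Lifting $P$ at the $x_1$-coordinate of $x$ yields a curve inside the 2D $F_\alpha$-deformed slice $[0,3R] \times F_\alpha([0,R]\times\{0\})$ starting at $x$ and ending on the top face $[0,3R] \times F_{\alpha'}([0,R]\times\{R^a\})$ of the $F_{\alpha'}$-deformed slab. If $A^\alpha_\ell(x,R) \cap A^{\alpha'}_\ell(x,R)$ were non-empty, the crossing component of $\{f_r \ge \ell\}$ in the $F_\alpha$-deformed slice would pass through $x$ and, by continuing along the lifted $P$, would reach the top face of the $F_{\alpha'}$-deformed slab --- contradicting the defining property of $E^{\alpha'}_\ell(R)$.

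The main obstacle is turning the geometric path $P$ into a genuine \emph{level-set} connection. A priori, $P$ is just a curve of $\R^d$ lying in the $F_\alpha$-deformed slice; there is no reason for $f_r \ge \ell$ to hold along it, so the assertion that the crossing component continues along the lifted $P$ is not automatic. To address this I would strengthen the definition of $A^\alpha_\ell(x,R)$ to require not only that the crossing passes through $x$ but also that it fills a small neighbourhood of the relevant lift of $P$ inside the slice; the near-isometry provided by Property~3 of Lemma~\ref{lem:rippling_quads} (each $F_\alpha$ is a small $C^2$ perturbation of an affine isometry at the relevant scales) combined with standard planar gluing via Theorem~\ref{thm:rsw} and the sprinkling room already built into Lemma~\ref{lem:comparison} should then allow one to thicken the geometric curve $P$ into a bona fide level-set path. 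This thickening is where I expect the main technical work to lie.
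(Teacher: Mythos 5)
Your high-level architecture matches the paper's: arm-type events anchored at points of the segment $[0,3R]\times\{0\}^{d-1}$, pairwise disjointness over $\alpha$ via Property~2 of Lemma~\ref{lem:rippling_quads}, and a count of anchor points giving the factor $R$. But the discretization step destroys the disjointness, and this is not a repairable detail --- it is the reason the paper cannot discretize. The disjointness mechanism requires the \emph{same} point $x$ to lie in both the $\alpha$- and the $\alpha'$-components: then the $\alpha$-crossing, which starts on the segment and sweeps the parameter $t$ of $\gamma_\alpha$ from $0$ to $R$, has an initial portion contained in $[0,3R]\times P\subset[0,3R]\times Q_{\alpha'}$ reaching the top face of the $Q_{\alpha'}$-slab, so the component \emph{of that same anchor point} in the $Q_{\alpha'}$-slab touches the top face, contradicting $A^{\alpha'}_\ell(x,R)$. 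With your ``within unit distance of $x$'' events, the $\alpha$- and $\alpha'$-components may meet the segment only at distinct nearby points $y\neq y'$, which can perfectly well lie in different components of $\{f_r\ge\ell\}$ inside the $Q_{\alpha'}$-slab; no contradiction results, and indeed for a fixed configuration the sets $\mathcal{D}_\alpha\cap B(x,1)\cap([0,3R]\times\{0\}^{d-1})$ are merely pairwise disjoint closed subsets of an interval, of which there can be arbitrarily (even uncountably) many. The paper's resolution is to keep $x$ continuous, write $E^\alpha_\ell(R)=\bigsqcup_{x\in[0,3R]}B^\alpha_\ell(x,R)$ with $B^\alpha_\ell(x,R)$ the ``lowest anchor'' event, and convert this continuum partition into the tractable identity of Claim~\ref{cl:co-area} (a Kac--Rice/co-area formula, conditioning on $f_r(x)=\ell$ and weighting by $|\partial_{x_1}f_r(x)|$); summing that formula over $\alpha$ and using exact-point disjointness yields $1+3R\,\E[|\partial_{x_1}f_r(0)|]\gamma_{f_r(0)}(\ell)=O(R)$. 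That co-area formula is the bulk of the proof (Section~\ref{ssec:claim_co-area}) and is entirely absent from your argument; some substitute for it is unavoidable.

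Your final paragraph also misdiagnoses the remaining difficulty. The level-set connection to the top face of the $Q_{\alpha'}$-slab does not need to be manufactured along the geometric curve $P$: it is supplied for free by the $\alpha$-crossing itself, since that crossing is already a path of $\{f_r\ge\ell\}$ in $[0,3R]\times\gamma_\alpha([0,R])$ whose $\gamma_\alpha$-parameter runs from $0$ to $R$ and must therefore pass through the exit parameter $t^*$ while its earlier portion stays in $[0,3R]\times P$. Your proposed fix --- strengthening $A^\alpha_\ell(x,R)$ to demand that the component fill a neighbourhood of a lift of $P$, justified by Theorem~\ref{thm:rsw} and ``sprinkling room'' --- would moreover break the covering inequality $\mathbf 1_{E^\alpha_\ell(R)}\le\sum_x\mathbf 1_{A^\alpha_\ell(x,R)}$: a configuration in $E^\alpha_\ell(R)$ has no reason to fill such a neighbourhood, and RSW only bounds the probability of that filling from below, which is the wrong direction for a pointwise domination. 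Finally, no sprinkling is available inside Lemma~\ref{lem:disjoint}; the statement is at a single level $\ell$ and the entire sprinkling budget $\eps_0$ is consumed in Lemma~\ref{lem:comparison}.
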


The proofs of the three lemmata are dispatched in the following three subsections. Before diving into these proofs, let us explain how Proposition~\ref{prop:mw} is derived.

\begin{proof}[Proof of Proposition~\ref{prop:mw}]
Let us start by observing that Lemma \ref{lem:comparison} and Lemma \ref{lem:disjoint} (applied to the level $\ell'$ from Lemma \ref{lem:comparison}) and Item 1 of Lemma \ref{lem:rippling_quads} imply that there exist $\gamma,a,\varepsilon,c,R_0>0$ such that, for every $R \ge R_0$, $r \in [r_q,R^\gamma]$ and $\ell\ge 0$,
\begin{equation}\label{eq:h2}
\Pro \left[ f_r \in E_\ell(R^{1-\eps}) \right] \leq R^{-c}.
\end{equation}
We now implement a fairly classical renormalization argument. Introduce the event $E_\ell(R,\tau)$ as $E_\ell(R)$ except that the height is $\tau$ instead of $R^a$.  Let 
\[
p(r,\tau,R)=\mathbb P[f_r\in E_\ell(R,\tau)].
\]
We now claim that there exists $C>0$ such that for every $L \ge R^\gamma$,
\[
p(r,\tau,7L) \leq (C p(r,\tau,L))^2.
\]
To see this, one can proceed as follows: First, due to independence at distance $L$, one bounds $p(r,\tau,7L)$ by the square of the analogous quantity for a $(21L \times 3L)$-rectangle. Now, one covers each $(21L \times 3L)$-rectangle using $40+21$ $(3L \times L)$ rectangles and observes that at least one is crossed in the easy direction with a crossing that does not go up to height $\tau$, thus implying the above with $C=40+21$.

\medskip

By iterating this argument, we obtain that
\[
p(r,\tau,7^kL) \leq C^{2+2^2+\cdots+2^k} p(r,\tau,L)^{2^k} \leq( C^2 p(r,\tau,L))^{2^k}.
\]
We now fix $\tau=R^{a(1-\varepsilon)}$ and $L=R^{1-\eps}$. Also, we choose $k$ so that $\tfrac17R^\eps<7^k\le R^\eps$ and set $\overline R=7^kR^{1-\eps}\in [R/7,R]$. Using \eqref{eq:h2}, we may find $R_1,c'>0$ such that for $R\ge R_1$,
\[
p(r,R^{a(1-\varepsilon)},\overline R) \le (C^2R^{-c})^{2^k} \le \exp(-R^{c'}).
\]
It only remains to observe that for the event in the statement of Proposition~\ref{prop:mw} to occur (with $2R$ instead of $R$), there must be one out of $O(1)$ rectangles of size $3\overline R\times\overline R$ that are crossed by a path which is not in a connected component connected to the top. The claim therefore follows by a union bound.
\end{proof}

\subsection{Proof of Lemma~\ref{lem:rippling_quads}}

Let $R>0$ and introduce the notation $Q:=[0,R]\times [0,R^a]$ and $S:=\R \times [-R^a,2R^a]$.

\medskip

\textbf{Construction of the family $(F_\alpha)_{\alpha\in A}$.} Fix $N=\lfloor R^{1-b}/3 \rfloor$ and $A=\{-1,1\}^N$. 
Fix $e_1:[0,3]\rightarrow[0,1]$ a smooth function which vanishes on $[0,1]$ and is equal to $1$ on $[2,3]$ and write $e_{-1}=-e_1$. 

Given $\alpha=(\alpha_1,\dots,\alpha_N)\in A$ for some $N\geq 1$, define a smooth function $e_\alpha:\R\rightarrow \R$ recursively (note that $e_\alpha$ is piecewise constant outside of $[0,3N]$) as follows: 
\begin{itemize}[noitemsep]
\item[(a)] $e_\alpha(x)=0$ for $x\le 0$, 
\item[(b)] for each $0\le k<N$ and $3k\le x\le 3(k+1)$, $e_\alpha(x):=e_{\alpha_{k+1}}(x-3k)+e_\alpha(3k)$,
\item[(c)] $e_\alpha(x)=e_\alpha(3N)$ for $x\ge 3N$.
\end{itemize}
We then define
\begin{align*}
\gamma_\alpha:\quad \mathbb R&\rightarrow  \R^2,\\
t&\mapsto (t,R^ae_\alpha(t/R^b)).
\end{align*}
In addition, define $\nu_\alpha(t)$ as the $\pi/2$-counterclockwise rotation of $\gamma_\alpha'(t)$ 
and finally $F_\alpha$ as follows:
\begin{align*}
F_\alpha:\quad S\ \ &\rightarrow F_\alpha(S) \subset \R^2,\\
(t,s)&\mapsto \gamma_\alpha(t)+s\nu_\alpha(t).
\end{align*}
For future reference, let $Q_\alpha=F_\alpha(Q)$ (see Figure \ref{fig:Q_alpha_alpha_prime}).

\begin{figure}
\begin{center}
\includegraphics[scale=0.6]{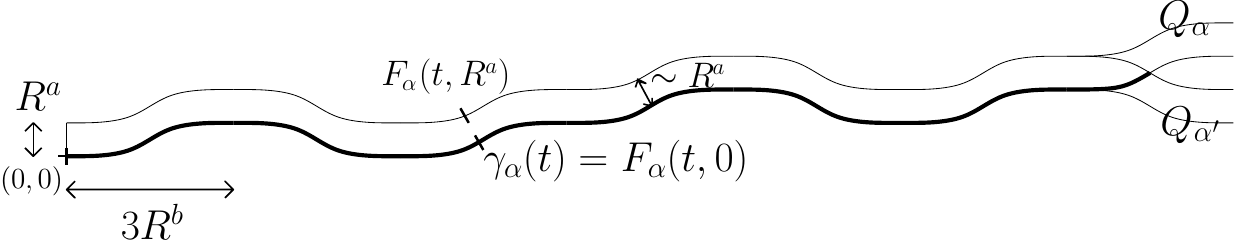}
\caption{Two quads $Q_\alpha$ and $Q_{\alpha'}$ until they differ and (in bold) the path included in the bottom side of $Q_\alpha$ that connects $0$ to the top side of $Q_{\alpha'}$.\label{fig:Q_alpha_alpha_prime}}
\end{center}
\end{figure}

\medskip

\textbf{Checking that the construction satisfies the required properties.} We first note that the first point is obvious as
\[
|A| = 2^N\geq \exp(c_1R^{1-b})
\]
for some constant $c_1>0$ independent of everything else. Next, the second point of the lemma is easily checked to hold for every two distinct $\alpha,\alpha'\in A$ by considering the first letter where the two words differ (see Figure \ref{fig:Q_alpha_alpha_prime}). Let us also note that the third point will imply that the $F_\alpha$'s are embeddings (if $R$ is sufficiently large). Indeed, by 3.iii) they are bijections and by 3.ii) they are local $C^1$-diffeomorphisms.

\medskip

All that remains is to show the third point of the lemma. To show it, we need the following claims whose proofs follow by direct computation and are left for the reader. In these claims, the constants only depend on the choice of the function $e_1$.
\begin{claim}\label{cl:path_regularity_estimates_0}
There exists $C_0>0$ such that the following holds for each $\alpha \in A$ and $t \in \R$,
\begin{itemize}[noitemsep]
\item $1-|\gamma'_\alpha(t)| = 1-|\nu_\alpha(t)|\le C_0R^{2(a-b)}$;
\item the absolute value of the second component of $\gamma'_\alpha(t)$ is no greater than $C_0R^{a-b}$.
\end{itemize}
\end{claim}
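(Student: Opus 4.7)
The plan is essentially a direct computation, leveraging the fact that the derivatives of $e_\alpha$ are bounded uniformly in $\alpha$. First I would observe that since $e_{\pm 1}$ vanishes identically on $[0,1]$ and equals $\pm 1$ identically on $[2,3]$, all derivatives of $e_{\pm 1}$ vanish at both endpoints of $[0,3]$; consequently the piecewise definition (a)--(c) of $e_\alpha$ yields a function of class $C^\infty(\R)$. In particular, for each $k\ge 1$,
\[
\|e_\alpha^{(k)}\|_\infty \le \|e_1^{(k)}\|_\infty,
\]
with a right-hand side that is a constant depending only on the profile $e_1$ (and in particular independent of $\alpha$ and $N$).

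Next, I would differentiate $\gamma_\alpha(t)=(t,R^a e_\alpha(t/R^b))$ using the chain rule to obtain
\[
\gamma'_\alpha(t)=\bigl(1,\, R^{a-b}\, e'_\alpha(t/R^b)\bigr).
\]
The second bullet is then immediate: the second component is bounded in absolute value by $\|e'_1\|_\infty\, R^{a-b}$, so any $C_0\ge \|e'_1\|_\infty$ works.

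For the first bullet, I would square the Euclidean norm,
\[
|\gamma'_\alpha(t)|^2 = 1 + R^{2(a-b)}\,\bigl(e'_\alpha(t/R^b)\bigr)^2,
\]
which shows at once that $|\gamma'_\alpha(t)|\ge 1$, hence $1-|\gamma'_\alpha(t)|\le 0$, so the stated one-sided bound is trivial. The two-sided bound $\bigl||\gamma'_\alpha(t)|-1\bigr|\le C_0 R^{2(a-b)}$, which is the content one will really use in the sequel, follows from the elementary inequality $\sqrt{1+x}-1\le x/2$ applied to $x:=R^{2(a-b)}(e'_\alpha(t/R^b))^2\ge 0$, combined with $\|e'_\alpha\|_\infty\le \|e'_1\|_\infty$. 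Finally, $|\nu_\alpha(t)|=|\gamma'_\alpha(t)|$ since $\nu_\alpha(t)$ is obtained from $\gamma'_\alpha(t)$ by a rigid rotation, which yields the equality in the first bullet.

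There is no serious obstacle here: the entire claim is a one-line chain rule computation. The only point that requires a moment of care is verifying that the recursive definition of $e_\alpha$ produces a globally smooth function whose derivative bounds are independent of the length $N$ of the word and of $\alpha$, which is precisely why one chooses a profile $e_1$ that vanishes to all orders at both endpoints of $[0,3]$.
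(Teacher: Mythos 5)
Your computation is correct and is exactly the direct verification the paper has in mind (it explicitly leaves the proofs of Claims 3.2 and 3.3 to the reader as "direct computation"). The key observations — that $e_1$ is constant near both endpoints of $[0,3]$ so the recursive gluing is smooth with $\|e_\alpha^{(k)}\|_\infty\le\|e_1^{(k)}\|_\infty$ uniformly in $\alpha$ and $N$, the chain rule giving $\gamma_\alpha'(t)=(1,R^{a-b}e_\alpha'(t/R^b))$, and $\sqrt{1+x}-1\le x/2$ for the first bullet — are all present and correctly deployed, and your remark that the literal one-sided inequality is trivial while the useful content is the two-sided bound is a fair reading of the statement.
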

\begin{claim}\label{cl:path_regularity_estimates}
There exists $C_1>0$ such that for each $\alpha\in A$ and each $t\in\R$,
\[
|\gamma_\alpha''(t)|+|\gamma_\alpha'''(t)|\leq C_1 R^{a-2b} \mathbf 1_{0\le t\le R}.
\]
\end{claim}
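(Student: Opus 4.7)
My plan is to reduce the claim to a uniform derivative bound and a support statement on $e_\alpha$, and then verify both using the specific shape of the bump $e_1$. Since $\gamma_\alpha(t)=(t,R^ae_\alpha(t/R^b))$, I would first differentiate to obtain
\[
\gamma_\alpha''(t)=\bigl(0,R^{a-2b}e_\alpha''(t/R^b)\bigr),\qquad \gamma_\alpha'''(t)=\bigl(0,R^{a-3b}e_\alpha'''(t/R^b)\bigr).
\]
Using $R^{a-3b}\le R^{a-2b}$ for $R\ge 1$, it will then suffice to show that $\|e_\alpha''\|_\infty$ and $\|e_\alpha'''\|_\infty$ are bounded by a constant depending only on $e_1$ (in particular uniformly in $\alpha\in A$), and that both functions are supported in $[0,3N]$. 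The support statement on $e_\alpha^{(j)}(t/R^b)$ will then translate to $t\in[0,3NR^b]\subset[0,R]$, since $N\le R^{1-b}/3$ by definition.

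The support claim should follow immediately from items (a) and (c) in the recursive definition: $e_\alpha$ is constant on $(-\infty,0]$ and on $[3N,+\infty)$, so all its derivatives vanish outside $[0,3N]$.

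For the uniform bound, the main point to verify is that $e_\alpha$ is genuinely $C^\infty$ on $\R$, with each derivative bounded by the corresponding derivative of $e_1$. On each interval $[3k,3(k+1)]$, $e_\alpha$ equals a translate of $e_{\alpha_{k+1}}\in\{e_1,-e_1\}$ plus a constant, so $|e_\alpha^{(j)}|\le\|e_1^{(j)}\|_\infty$ pointwise on $[0,3N]$ for every $j\ge 1$. The delicate point is smoothness across the junctions $t=3k$ (including at $t=0$ and $t=3N$, where one transitions to the constant extensions). Here the special property that $e_1\equiv 0$ on $[0,1]$ and $e_1\equiv 1$ on $[2,3]$ will play a crucial role: it forces $e_{\pm1}^{(j)}(0)=e_{\pm1}^{(j)}(3)=0$ for every $j\ge 1$, so all one-sided derivatives of $e_\alpha$ at a junction vanish and therefore agree, yielding $C^\infty$ smoothness.

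Putting these ingredients together will give the claim with $C_1:=\|e_1''\|_\infty+\|e_1'''\|_\infty$ (or a small multiple thereof). The only step with real content is the smoothness-at-junctions observation in the previous paragraph; the rest is direct bookkeeping. I would flag this as the place to be careful, since without the assumption that $e_1$ is locally constant near the endpoints of $[0,3]$ the recursive construction would in general fail to be even $C^1$ at the junctions and the stated bound would be false.
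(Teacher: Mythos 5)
Your proof is correct, and it is exactly the ``direct computation'' that the paper leaves to the reader: differentiate $\gamma_\alpha(t)=(t,R^a e_\alpha(t/R^b))$, use $R^{a-3b}\le R^{a-2b}$, bound $e_\alpha^{(j)}$ piecewise by $\|e_1^{(j)}\|_\infty$, and note that the support condition gives the indicator since $3NR^b\le R$. Your observation that $e_1$ being locally constant near $0$ and $3$ makes $e_\alpha$ locally constant (hence smooth) at the junctions is the right justification and is indeed why the construction chooses $e_1$ that way.
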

Let us now conclude the proof of the third point. Fix $\alpha\in A$. To begin with,
\begin{align*}
d
F_\alpha&=(\gamma_\alpha'(t)+s\nu_\alpha'(t))dt+\nu_\alpha(t)ds,\\ d^2
F_\alpha&=(\gamma_\alpha''(t)+s\nu_\alpha''(t))dt^2+2\nu_\alpha'(t)dsdt .
\end{align*}
By Claim~\ref{cl:path_regularity_estimates}, we get
\begin{align*}
dF_\alpha&=\gamma_\alpha'(t)dt+\nu_\alpha(t)ds+O(R^{2(a-b)}),\\
 d^2F_\alpha&=O(R^{2(a-b)}).
\end{align*}
The second estimate establishes the quasi-affine property i). Moreover, by the first estimate of Claim~\ref{cl:path_regularity_estimates_0} (and since $\nu_\alpha(t)$ and $\gamma_\alpha'(t)$ are orthogonal), this also establishes the quasi-isometry property ii).

\medskip

All that remains is to prove iii). Let $(s_1,t_1),(s_2,t_2) \in S$ and let us note that
\[
F_\alpha((s_1,t_1))-F_\alpha((s_2,t_2))=\gamma_\alpha(t_1)-\gamma_\alpha(t_2)+
(s_1-s_2)\nu_\alpha(t_1)+s_2(\nu_\alpha(t_1)-\nu_\alpha(t_2)).
\]
By Claim \ref{cl:path_regularity_estimates}, 
\[
\|s_2(\nu_\alpha(t_1)-\nu_\alpha(t_2))\|=O(|t_1-t_2|R^{2(a-b)})=o(|t_1-t_2|).
\]
The first component of $\gamma_\alpha(t_1)-\gamma_\alpha(t_2)$ equals $t_1-t_2$ and by Claim \ref{cl:path_regularity_estimates_0} its second component is $O(|t_1-t_2|R^{a-b})=o(|t_1-t_2|)$. Moreover, still by Claim \ref{cl:path_regularity_estimates_0}, the first component of $(s_1-s_2)\nu_\alpha(t_1)$ is $O(|s_1-s_2|R^{a-b})=o(|s_1-s_2|)$ and  the absolute value of its second component is $\Theta(|s_1-s_2|)$. As a result, the first component of $F_\alpha((s_1,t_1))-F_\alpha((s_2,t_2))$ is $\Theta(|t_1-t_2|)+o(|s_1-s_2|)$ and its second component is $\Theta(|s_1-s_2|)+o(|t_1-t_2|)$. This implies the desired bi-Lipschitz property (for instance by distinguishing between the cases $|t_1-t_2|\geq |s_1-s_2|$ and $|t_1-t_2|\leq |s_1-s_2|$).

\subsection{Proof of Lemma \ref{lem:comparison}}

We keep the notation $Q=[0,R]\times [0,R^a]$ and $S=\R \times [-R^a,2R^a]$ from the previous section. Recall that $q_r=q\chi_r$ and let $G_\alpha = Id_{1} \times F_\alpha \times Id_{d-3}$ for every $\alpha \in A$.

\medskip

We first note that $f_r \in E^\alpha_\ell(R)$ if and only if $g^\alpha_r \in E_\ell(R)$, where $g^\alpha_r$ is the Gaussian field defined for $x \in \R \times Q \times \R^{d-3}$ by
\begin{equation}
\label{eq:g_alpha_restricted_integral}
g^\alpha_{r}(x):= f_r(G_\alpha(x)) = \int q_r(G_\alpha(x)-y)dW_y =\int_{G_\alpha(\R \times S \times \R^{d-3})} q_r(G_\alpha(x)-y)dW_y,
\end{equation}
where the last equality is justified for $R$ sufficiently large since $q_{r}$ is supported in $B_{r}$, $r\le R^\gamma$, and by Item 3.iii) of Lemma \ref{lem:rippling_quads}. We wish to prove that $g^\alpha_{r}$ behaves like $f_{r}$. To this purpose, consider the following intermediary field defined for $x\in \R \times S \times \R^{d-3}$ by 
\[
h^\alpha_{r}(x):=\int_{\R \times S \times \R^{d-3}} q_{r}\big(G_\alpha(x)-G_\alpha(y)\big)|\det(\text{Jac}_y G_\alpha)|^{1/2}dW_y .
\]
By \eqref{eq:g_alpha_restricted_integral}, the field ${(h^\alpha_{r})}_{|\R \times Q \times \R^{d-3}}$ has the same law as $g^\alpha_{r}$ if $R$ is sufficiently large (to prove this, compute the covariance of these Gaussian fields and use the change of variables $u=G_\alpha(y)$). Hence, in order to prove the lemma, it is sufficient to prove that (for $\gamma$, $a$ and $1-b$ sufficiently small) there exist $c,R_0>0$ such that for any $R\geq R_0$ and $r\in[r_q,R^\gamma]$, there exists $\ell'\in \R$ such that
\begin{equation}\label{eq:mw_comparison_2}
\Pro [ f_r \in E_\ell(R)] \le \min_{\alpha \in A}\Pro [ h^\alpha_{r} \in E_{\ell'}(R) ] + R^{-c}.
\end{equation}

Let us now introduce some stability events for percolation clusters, that will enable us to overcome the fact that stability results such as Lemma~\ref{lem:Cameron-Martin} cannot be applied to $E_\ell(R)$ (because this event is not a union/intersection of a small number of monotonic events).

If $\ell<\ell'$ are two levels, we let $\text{Stab}_{\ell,\ell'}(R)$ denote the event defined as follows (note that this event only depends on the function restricted to the rectangle $[0,3R] \times [0,R] \subset \R^2=\R^2\times\{0\}^{d-2}$): a function $u \in C(\R^d)$ belongs to $\text{Stab}_{\ell,\ell'}(R)$ if every connected component of $\{u \ge \ell\}\cap([0,3R]\times [0,R])$ that contains a continuous path from $[0,3R] \times \{0\}$ to $[0,3R] \times \{R\}$ also contains such a path $\gamma$ with the further property that $u_{|\gamma}\ge \ell'$.

\smallskip

Let us now make the following observation: For every $\delta>0$ and every $u,v \in C(\R^d)$, at least one of the following properties does not hold:
\begin{itemize}[noitemsep]
\item $u \in E_\ell(R)$;
\item $u \in \textup{Stab}_{\ell,\ell+2\delta}(R)$;
\item $\| u - v \|_{\infty,[0,3R]\times Q \times \{0\}^{d-3}} \le \delta$;
\item $v \notin E_{\ell+\delta}(R)$.
\end{itemize}

As a result, for every $\delta>0$ we have
\begin{multline*}
\Pro [ f_r \in E_\ell(R)] \le \min_{\alpha \in A}\Pro [ h^\alpha_{r} \in E_{\ell+\delta}(R) ]\\
+ \max_{\alpha \in A} \Pro \big[ \| f_r - h_r^\alpha \|_{\infty,[0,3R]\times Q \times \{0\}^{d-3}} \ge \delta \big] + \Pro \big[ f_r \notin \textup{Stab}_{\ell,\ell+2\delta}(R) \big].
\end{multline*}

We thus obtain that Lemma \ref{lem:comparison} is a consequence of the following two lemmas:

\begin{lemma}\label{lem:comp1}
Fix some $\varepsilon \in (0,1)$. There exists a constant $\theta > 0$ that depends only on $\varepsilon$ such that the following holds as soon as $\gamma<1-\varepsilon$: There exists $R_0>0$ such that for every $\ell \ge 0$, $R \ge R_0$ and $r \in [r_q,R^\gamma]$,
\[
\Pro \big[ f_r \notin \textup{Stab}_{\ell,\ell+\delta}(R) \big] \le R^{-\theta},
\]
where $\delta:=R^{-2+\theta}$.
\end{lemma}

\begin{lemma}\label{lem:comp2}
For every $\theta>0$, there exists $\eta>0$ (that depends only on $\theta$ and on the dimension $d$) such that the following holds as soon as $\gamma$, $a$ and $1-b$ are less than $\eta$: There exist $c,R_0>0$ such that for every $R \ge R_0$ and $r \in [r_q,R^\gamma]$,
\[
\max_{\alpha \in A} \Pro \big[ \| f_r - h_r^\alpha \|_{\infty,[0,3R]\times Q \times \{0\}^{d-3}} \ge R^{-2+\theta} \big] \le \exp(-R^c).
\]
\end{lemma}

\begin{proof}[Proof of Lemma \ref{lem:comp1}]
In this proof, we use the notion of stratified gradient of some function $u \in C^2(\R^2)$ with respect to the rectangle $[0,3R] \times [0,R]$. The stratified gradient $\nabla_x^s u$ is defined as the usual ($2$-dimensional) gradient if $x$ does not belong to the boundary of the rectangle; it is defined as the one-dimensional gradient $\nabla_x (u_{|L})$ if $x$ belongs to some side $L$ of the rectangle excluding corners, and $\nabla_x^s u := 0$ if $x$ is a corner of the rectangle.

\medskip

Let $\delta >0$.
\begin{claim}\label{cl:morse}
Assume that $f_r \notin \textup{Stab}_{\ell,\ell+\delta}(R)$. Then, there exist a connected component $C$ of $\{f_r \ge\ell\} \cap ([0,3R] \times [0,R])$ and a point $x \in C$ such that:
\begin{itemize}[noitemsep]
\item $C$ contains a continuous path from $B_R:=[0,3R]\times\{0\}$ to $T_R:=[0,3R]\times\{R\}$;
\item $f_r(x) \in [\ell,\ell+\delta]$ and $\nabla_x^s f_r = 0$.
\end{itemize}
\end{claim}
\begin{proof}
Let $K$ denote the union of all connected components of $\{f_r \ge\ell\} \cap ([0,3R] \times [0,R])$ that contain a continuous path from $T_R$ to $B_R$. Our aim is to prove the following claim: Assume that there is no $x \in K$ such that $\nabla_x^s f_r = 0$ and $f_r(x) \in [\ell,\ell+\delta]$. Then, every connected component of $K$ contains a continuous path $\gamma$ from $T_R$ to $B_R$ such that $f_r{|\gamma} \ge \ell+\delta$.

\medskip

Let us prove this claim. To this purpose, let $K^\varepsilon$ (resp.\ $\overline{K}^\varepsilon$) denote the open (resp.\ closed) $\varepsilon$-neighborhood of $K$. We fix some $\varepsilon>0$ such that $(f_r)_{|K^{2\varepsilon}\setminus K} < \ell$ and, by using smooth Urysohn's lemma (applied to the compact set $(K^{2\varepsilon})^c$ included in the open set $(\overline{K}^\varepsilon)^c$, both seen as subsets of $[0,3R] \times [0,R]$), we construct a function $\widetilde{f}_r \in C^2(\R^2)$ such that
\begin{itemize}[noitemsep]
\item $(\widetilde{f}_r)_{|K^\varepsilon}=(f_r)_{|K^\varepsilon}$ and
\item $(\widetilde{f}_r)_{|K^c} < \ell$.
\end{itemize}
We note that that there is no $x \in [0,3R]\times[0,R]$ such that $\widetilde{f}_r(x)\in [\ell,\ell+\delta]$ and $\nabla_x^s \widetilde{f}_r = 0$, and we apply a result from stratified Morse theory to $\widetilde{f}_r$ as follows: By \cite[Proposition in Section 3.2 of Part I]{morse}, there exists a homeomorphism $\varphi$ from $K = \{\widetilde{f}_r \ge\ell\} \cap ([0,3R] \times [0,R])$ to $L := \{\widetilde{f}_r \ge\ell+\delta\} \cap ([0,3R] \times [0,R])$ such that both $\varphi$ and $\varphi^{-1}$ send a point of $B_R$ (resp.\ $T_R$) on a point of $B_R$ (resp.\ $T_R$). The existence of a homeomorphism between $K$ and $L$ implies that the number of connected components of $L$ is the same as the number of connected components of $K$. Since every connected component of $L$ is included in a connected component of $K$, we obtain that every connected component of $K$ contains a component of $L$. Moreover, the property of $\varphi$ implies that every component of $L$ contains a path from $B_R$ to $T_R$. This concludes the proof of the desired result for $\widetilde{f}_r$, which implies the desired result for $f_r$.
\end{proof}

Let us now tile the rectangle $[0,3R]\times[0,R]$ with $\asymp R^2$ unit squares $S_i$ and let us note that, for every $h>0$, there exists $C_h>0$ that depends only on $h$ and $q$ such that
\begin{equation}\label{eq:ns_avec_tau}
\forall i, \quad \Pro [ \exists x \in S_i, \nabla_x^s f_r=0 \text{ and } f_r(x) \in [\ell,\ell+\delta] \big] \le C_h \delta^{1-h}.
\end{equation}

This is for instance written at the end of the proof of \cite[Lemma 7]{NS} (applied to $m=2$, $\beta=1$, $\tau=\delta$ and $t=h/3$ -- let us note that the fact that the constant $C_h$ above -- of which the reader can find an expression in \cite{NS} -- is uniform in $r$ is a consequence of classical Gaussian estimates such as Dudley’s theorem and the BTIS inequality, both applied to the Gaussian field $(f_r(x),\nabla_x^s f_r,(\nabla^s)^2_x f_r)_{x \in S_i}$, see \cite{adler_taylor,AW}).

\medskip

Let us end the proof by using \eqref{eq:ns_avec_tau} as well as the RSW theorem -- Theorem \ref{thm:rsw} (the use of the RSW theorem here is the only reason why Lemma \ref{lem:comp1} and Proposition \ref{prop:mw} are stated for levels $\ell \ge 0$). By the RSW theorem and the independence between sets at distance greater than $r$, there exists a constant $c>0$ that depends only on $\varepsilon$ such that, if $\gamma<1-\varepsilon$ and $r \in [r_q,R^\gamma]$, we have
\begin{multline}\label{eq:rsw_dans_stab}
\forall i, \quad \Pro \big[ \text{$\exists$ a cont.\ path in $\{f_r \ge \ell\}$ from $S_i^r$ to $T_R$ and such a path from $S_i^r$ to $B_R$} \big]\\
\le R^{-c},
\end{multline}
where $S_i^r$ is the set of all points in $\R^2$ at distance less than $r$ from $S_i$.

\medskip

Let $\delta = R^{-2+\theta}$ as in the statement of the lemma. By using \eqref{eq:ns_avec_tau}, \eqref{eq:rsw_dans_stab} and the independence between sets at distance greater than $r$, we have
\[
\forall i, \quad \Pro \big[ \exists x \in S_i \text{ as in Claim \ref{cl:morse}} \big] \le C_h \delta^{1-h} R^{-c} = C_h R^{(-2+\theta)(1-h)-c}.
\]
Choosing for instance $h=\theta=c/1000$ and summing over $i$ imply the desired result.
\end{proof}

Let us now start the proof of Lemma \ref{lem:comp2}. We are going to prove that the infinite norm of $f_{r}-h^\alpha_{r}$ on $[0,3R] \times Q \times [0,1]^{d-3}$ is typically small. We start with a claim.
\begin{claim}\label{cl:var_est}
There exists $C>0$ such that for every vector  $u \in \R^d$ of Euclidean norm $1$, every $r > 0$ and $x \in \R \times Q \times \R^{d-3}$, 
\[
\forall \alpha \in A, \quad \text{\textup{Var}}(f_{r}(x)-h^\alpha_{r}(x))+\text{\textup{Var}}(d_x(f_{r}-h^\alpha_{r})(u)) \leq C \frac{r^{4+d}}{R^{4(b-a)}}.
\]
\end{claim}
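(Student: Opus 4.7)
The plan is to use Itô's isometry for white-noise integrals to rewrite each variance as an $L^2$ norm of a kernel difference, and then bound this kernel difference pointwise using the quasi-isometric properties of $F_\alpha$ from Lemma~\ref{lem:rippling_quads}.

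First, I would exploit that for $x \in \R \times Q \times \R^{d-3}$ and $r \le R^\gamma \ll R^a$, the support of $y \mapsto q_r(x-y)$ is contained in $\R \times S \times \R^{d-3}$, since $Q = [0,R]\times[0,R^a]$ sits inside $S = \R \times [-R^a,2R^a]$ with a buffer of width at least $R^a$. This lets me write
\[
f_r(x)-h^\alpha_r(x) \;=\; \int_{\R \times S \times \R^{d-3}} K(x,y)\,dW_y,\qquad K(x,y) := q_r(x-y) - q_r(G_\alpha(x)-G_\alpha(y))\,|\det(\mathrm{Jac}_y G_\alpha)|^{1/2}.
\]
By Itô's isometry, $\mathrm{Var}(f_r(x)-h^\alpha_r(x)) = \int K(x,y)^2\,dy$. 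The bi-Lipschitz property 3.iii) of Lemma~\ref{lem:rippling_quads} guarantees that both kernels vanish outside a region where $|x-y| \lesssim r$, giving Lebesgue measure $O(r^d)$.

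Next, I would bound $|K(x,y)|$ pointwise on this support. Taylor expanding $F_\alpha$ at $x_{2,3}$ and using items 3.i)--ii) of Lemma~\ref{lem:rippling_quads}, one finds an orthogonal matrix $\tilde J := \mathrm{Id}_1 \times J_{x_{2,3}}^\alpha \times \mathrm{Id}_{d-3} \in O(d)$ with
\[
G_\alpha(x)-G_\alpha(y) \;=\; \tilde J(x-y) + E, \qquad |E| \;=\; O(r^2 R^{2(a-b)}).
\]
Since $q$ and $\chi_r$ are radial, so is $q_r$, hence $q_r(\tilde J w) = q_r(w)$ for every $w$, and the mean value theorem gives $q_r(G_\alpha(x)-G_\alpha(y)) = q_r(x-y) + O(r^2 R^{2(a-b)})$, the implicit constant involving $\|\nabla q_r\|_\infty = O(1)$. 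Combined with $|\det(\mathrm{Jac}_y G_\alpha)|^{1/2} = 1 + O(R^{2(a-b)})$ (again from item 3.ii)), this yields $|K(x,y)| = O(r^2 R^{2(a-b)})$. Squaring and integrating over a region of measure $O(r^d)$ produces the stated bound $C r^{d+4} R^{-4(b-a)}$ for the first term.

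For the derivative term I would differentiate under the integral: $\mathrm{Var}(d_x(f_r - h^\alpha_r)(u)) = \int M(x,y,u)^2\,dy$ with
\[
M(x,y,u) \;=\; \nabla q_r(x-y)\cdot u \;-\; \nabla q_r(G_\alpha(x)-G_\alpha(y))\cdot (d_x G_\alpha \cdot u)\, |\det(\mathrm{Jac}_y G_\alpha)|^{1/2}.
\]
Radiality of $q_r$ gives the equivariance $\nabla q_r(\tilde J w) = \tilde J \nabla q_r(w)$. The key point is that the \emph{same} rotation $\tilde J$ appears in the Taylor expansion of $G_\alpha(x)-G_\alpha(y)$ and in $d_x G_\alpha = \tilde J + O(R^{2(a-b)})$, so the leading-order terms combine to $\tilde J\nabla q_r(x-y)\cdot\tilde Ju = \nabla q_r(x-y)\cdot u$ and cancel the first summand. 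The residual errors, controlled using $\|\nabla q_r\|_\infty + \|\nabla^2 q_r\|_\infty = O(1)$, are again $O(r^2 R^{2(a-b)})$, giving the same bound upon integration. The only subtle step is this cancellation in the derivative case; apart from that the argument is routine Taylor book-keeping, with the gain $R^{-4(b-a)}$ arising from the quadratic appearance of the $R^{2(a-b)}$ error terms (once squared under the isometry).
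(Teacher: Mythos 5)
Your proposal is correct and follows essentially the same route as the paper: both reduce the variances via the white-noise isometry to $L^2$ norms of kernel differences supported in $\{|x-y|=O(r)\}$ (by the bi-Lipschitz property), Taylor-expand $G_\alpha$ around $x$ using items 3.i)--iii) of Lemma~\ref{lem:rippling_quads}, and cancel the leading term through the radiality of $q_r$ applied to the single rotation $Id_1\times J^\alpha_x\times Id_{d-3}$ --- including, for the derivative term, the equivariance $\nabla q_r(\tilde J w)=\tilde J\nabla q_r(w)$ that you correctly single out as the key cancellation. The paper keeps the error in the form $(1+|x-y|+|x-y|^2)O(R^{2(a-b)})$ before integrating in polar coordinates, which is just your pointwise bound $O(r^2R^{2(a-b)})$ written before restricting to the support; the resulting estimate is identical.
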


Before proving the claim, let us  conclude the proof of Lemma \ref{lem:comp2}.

\begin{proof}[Proof of Lemma \ref{lem:comp2}]
(N.B.: All the constants in this proof are uniform in $\alpha$.) Fix some $\theta>0$ and let $\calK=K\times [0,1]^{d-3} \subseteq  \R \times Q \times [0,1]^{d-3}$ with $K$ a unit cube. Claim \ref{cl:var_est} and Kolmogorov's theorem (see for instance \cite[Appendix A.9]{NS}\footnote{We apply this result to $k=1$ and we use that $\E[\partial^\alpha f(x) \partial^\beta f(y)]=\partial^\alpha_x\partial^\beta_y K(x,y)$ where $K$ is the covariance of $f$, and in particular that $\partial^\alpha_x\partial^\beta_y K(x,y)=0$ if $x=y$ and $|\alpha|+|\beta|$ is odd since $f$ is stationary.}) imply that for all $r\in[r_q,R^\gamma]$,
\[
\E \left[ \|f_r-h^\alpha_{r}\|_{\infty,\calK} \right] = O \big(R^{-2+\theta_1}\big),
\]
where $\theta_1:=\gamma(2+d/2)+2(a+1-b)<\gamma(2+d/2)+3(a+1-b)=:\theta_2$. The BTIS inequality (see for instance \cite[Theorem 2.9]{AW}) gives that for $R$ large enough and some constant $c_0>0$,
\[
\Pro \big[ \|f_r-h^\alpha_{r}\|_{\infty,\calK} \geq R^{-2+\theta_2}\big] \le \exp(-R^{c_0}).
\]
Taking a union bound on unit cubes $\calK$ covering $[0,3R]\times Q \times [0,1]^{d-3}$ gives that for $R$ large enough,
\begin{equation}\label{eq:mw_comparison_1}
\Pro \big[ \|f_r-h^\alpha_{r}\|_{\infty,[0,3R] \times Q \times [0,1]^{d-3}} \geq R^{-2+\theta_2}  \big]\le C_1R^{2+a} \exp(-R^{c_0})\le  \exp(-R^{c_1}),
\end{equation}
for some $C_1,c_1>0$. We obtain the desired result by noting that if $\gamma$, $a$ and $1-b$ are sufficiently small, then $\theta_2\le \theta$.

This leaves the proof of Claim \ref{cl:var_est} to conclude the proof of the lemma.
\end{proof}


\begin{proof}[Proof of Claim \ref{cl:var_est}]
Fix $x \in \R \times Q \times \R^{d-3}$. We start by bounding $\mathrm{Var}(f_{r}(x)-h^\alpha_{r}(x))$. First, we have
\[
f_{r}(x)-h^\alpha_{r}(x)=\int_{\R \times S \times \R^{d-3}} \big[ q_{r}(x-y) - q_{r}(G_\alpha(x)-G_\alpha(y))|\det(\text{Jac}_y G_\alpha)|^{1/2} \big] dW_y
\]
and therefore
\[
\text{Var}(f_r(x)-h^\alpha_{r}(x)) = \int_{\R \times S \times \R^{d-3}} \big[ q_{r}(x-y) - q_{r}(G_\alpha(x)-G_\alpha(y))|\det(\text{Jac}_y G_\alpha)|^{1/2} \big]^2 dy.
\]
We let $L^\alpha_x:=Id_{1}\times J^\alpha_x \times Id_{d-3}$, where $J_x^\alpha \in O(2)$ is the rotation from Lemma \ref{lem:rippling_quads}. Below, we will use several times without mentioning it that the $k^{th}$ derivatives of $q_{r}$ with $k \leq 2$ are bounded. By Items 3.i) and 3.ii) of  Lemma \ref{lem:rippling_quads},  we have
\begin{equation}\label{eq:est_var1}
|\det(\text{Jac}_y G_\alpha)| = 1+O(R^{2(a-b)})
\end{equation}
and
\begin{align}\label{eq:est_var2}
q_{r}(G_\alpha(x)-G_\alpha(y)) &= q_{r}\big(d_xG^\alpha(x-y) + |x-y|^2 O(R^{2(a-b)})  \big)\\
\nonumber&= q_{r}\big(L^\alpha_x(x-y) + |x-y| O(R^{2(a-b)}) + |x-y|^2 O(R^{2(a-b)})  \big)\\
\nonumber&= q_{r}(L^\alpha_x(x-y)) + (|x-y|+|x-y|^2) O(R^{2(a-b)}).
\end{align}
Since $q_{r}$ is radial, $q_{r}(L^\alpha_x(x-y))=q_{r}(x-y)$ and therefore \eqref{eq:est_var1} and \eqref{eq:est_var2} imply that
\begin{align}\label{eq:q-q_circ_G}
q_{r}(x-y) - &q_{r}\big(G_\alpha(x)-G_\alpha(y)\big)|\det(\text{Jac}_y G_\alpha)|^{1/2} \\
\nonumber&=  q_{r}(x-y) - \big[ q_{r} (x-y) + (|x-y|+|x-y|^2) O(R^{2(a-b)}) \big] (1+O(R^{2(a-b)}))\\
\nonumber&= (1+|x-y|+|x-y|^2)O(R^{2(a-b)}).
\end{align}
(The constant 1 comes from the fact that $q_r$ is bounded.) By Item 3.iii) of Lemma~\ref{lem:rippling_quads}, $|G_\alpha(x)-G_\alpha(y)| \asymp |x-y|$ so there exists a constant $C>0$ such that for every $|x-y|\geq Cr$, 
\begin{equation}\label{eq:q_outside}
q_{r}(x-y) = q_{r}(G_\alpha(x)-G_\alpha(y))= 0.
\end{equation}
By combining \eqref{eq:q-q_circ_G} and \eqref{eq:q_outside} (more precisely, by using \eqref{eq:q-q_circ_G} on a ball of radius of order $r$ around $x$ and \eqref{eq:q_outside} on the complement of this ball) we obtain that
\[
\text{Var}(f_{r}(x)-h^\alpha_{r}(x)) = O(R^{4(a-b)}) \int_0^{O(r)} (1+s+s^2)^2s^{d-1}ds
= O \big( \frac{r^{4+d}}{R^{4(b-a)}} \big),
\]
which is the desired result.

\medskip

Let us now estimate $\text{\textup{Var}}(d_x(f_{r}-h^\alpha_{r})(u))$. We have
\[
d_x(f_{r}-h^\alpha_{r})(u)=\int_{\R \times S \times \R^{d-3}}d_{x-y}q_{r}(u)-d_{G_\alpha(x)-G_\alpha(y)}q_{r}\circ d_xG_\alpha(u)|\det(\text{Jac}_y G_\alpha)|^{1/2} dW_y.
\]
A computation similar to \eqref{eq:est_var2} gives
\begin{align}
\nonumber d_{G_\alpha(x)-G_\alpha(y)}q_{r}\circ d_xG_\alpha(u)&= d_{G_\alpha(x)-G_\alpha(y)} q_{r} (L^\alpha_x(u)) + (|x-y|+|x-y|^2)O(R^{2(a-b)})\\
\label{eq:est_var3}&= d_{L^\alpha_x(x-y)} q_{r} (L^\alpha_x(u)) + (|x-y|+|x-y|^2)O(R^{2(a-b)}).
\end{align}
By \eqref{eq:est_var1} and \eqref{eq:est_var3}, we have
\begin{align*}
&d_{x-y}q_{r}(u)-d_{G_\alpha(x)-G_\alpha(y)}q_{r}\circ d_xG_\alpha(u) |\det(\text{Jac}_y G_\alpha)|^{1/2}\\
&= d_{x-y}q_{r}(u)- \big[ d_{L^\alpha_x(x-y)} q_{r} (L^\alpha_x(u)) + (|x-y|+|x-y|^2)O(R^{2(a-b)}) \big] (1+O(R^{2(a-b)}))\\
&= d_{x-y}q_{r}(u)-d_{L_x^\alpha(x-y)}q_{r}(L_x^\alpha(u))+(1+|x-y|+|x-y|^2)O(R^{2(a-b)}) \\
&= (1+|x-y|+|x-y|^2)O(R^{2(a-b)}) .
\end{align*}
(In the last line we use that  $q_{r}$ is radial). From there, the proof is the same as for $\text{Var}(f_{r}(x)-h^\alpha_{r}(x))$.
\end{proof}

\subsection{Proof of Lemma \ref{lem:disjoint}}\label{sec:proof lemma disjoint}

Given $x \in [0,3R] (= [0,3R] \times \{0\}^{d-1})$, we define the events $A^\alpha_\ell(x,R)$ as
\[
\{u\in A^\alpha_\ell(x,R)\}:=\{u\circ(Id_1 \times F_\alpha \times Id_{d-3}) \in A_\ell(x,R)\},
\]
where $A_\ell(x,R) \subset E_\ell(R)$ is as follows: a function  $u \in C(\R^d)$ belongs to $A_\ell(x,R)$ if there exists a connected component $\mathcal C$ of $\{u\geq \ell\}\cap ([0,3R]\times[0,R])$ such that
\begin{enumerate}[noitemsep]
\item[i)] 
$\mathcal C$ contains a crossing from top to bottom in $[0,3R]\times[0,R]$;
\item[ii)] $\mathcal C$ is \textit{not} connected to $[0,3R]\times[0,R]\times\{R^a\}$ by a continuous path in $\{u\geq \ell\}\cap([0,3R]\times[0,R]\times[0,R^a])$;
\item[iii)] $x \in \calC$.
\end{enumerate}

The reader can think about these ``$A$-events'' as the same as the ``$E$-events'' except that they are arm-type events (starting from $x$) rather than crossing-type events. 
We also let
\[
B^\alpha_\ell(x,R) := A^\alpha_\ell(x,R) \setminus \bigcup_{y \in [0,3R] \times \{ 0 \}^{d-1}: \, y_1 < x_1} A^\alpha_\ell(y,R),
\]
which corresponds to the event that $x$ is the ``leftmost point'' for which $A^\alpha_\ell(x,R)$ occurs. In particular, we have $E^\alpha_\ell(R)=\sqcup _{x  \in [0,3R]} B^\alpha_\ell(x,R)$. The following claim translates this decomposition into a tractable formula.

\begin{claim}\label{cl:co-area}
There exists $R_0>0$ such that for every $\ell \in \R$, $R \ge R_0 $ and $r \ge r_q$,
\begin{multline*}
\Pro [ f_{r} \in E^\alpha_\ell(R) ]
=\Pro [ f_{r} \in A^\alpha_\ell(0,R) ]+ \int_0^{3R} \E [ |\partial_{x_1} f_{r}(x)| 1_{B^\alpha_\ell(x,R)} \mid f_{r}(x)=\ell ] \gamma_{f_{r}(x)}(\ell) dx_1,
\end{multline*}
where $\gamma_{f_{r}(x)}$ is the density function of $f_{r}(x)$, and $x:=(x_1,0,\dots,0)$.
\end{claim}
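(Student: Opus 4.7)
The formula is a weighted Kac--Rice identity applied to the one-dimensional Gaussian process $h(t) := f_r(t,0,\ldots,0)$ for $t \in [0,3R]$. The starting point is the decomposition $E^\alpha_\ell(R) = \bigsqcup_{x_1 \in [0,3R]} B^\alpha_\ell((x_1,0,\ldots,0),R)$: disjointness is immediate from the definition of $B^\alpha_\ell$ as $A^\alpha_\ell(x,R)$ minus all $A^\alpha_\ell(y,R)$ with $y_1 < x_1$, while the cover follows from the observation that, for each realization in $E^\alpha_\ell(R)$, the infimum of $x_1$ such that $A^\alpha_\ell((x_1,0,\ldots,0),R)$ occurs is attained (by a short compactness argument applied to the family of witnessing cluster components). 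Isolating the $x_1=0$ contribution, which equals $A^\alpha_\ell(0,R)$, reduces the claim to computing $\prob[f_r \in \calE]$ with $\calE := \bigsqcup_{x_1 \in (0,3R]} B^\alpha_\ell((x_1,0,\ldots,0),R)$.

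The key preliminary observation is that $G_\alpha := Id_1 \times F_\alpha \times Id_{d-3}$ acts as the identity on the first coordinate axis. Indeed, since $e_\alpha$ vanishes on $(-\infty,0]$ by construction, one has $F_\alpha(0,0) = \gamma_\alpha(0) + 0\cdot \nu_\alpha(0) = (0,R^a e_\alpha(0)) = (0,0)$, so for any $x = (x_1,0,\ldots,0)$ we have $(f_r\circ G_\alpha)(x) = f_r(x) = h(x_1)$ and $\partial_{x_1}(f_r\circ G_\alpha)(x) = \partial_{x_1} f_r(x) = h'(x_1)$. On $\calE$, let $x_1^*$ denote the (unique) leftmost value and $\calC$ the witnessing component of $\{f_r\circ G_\alpha \ge \ell\}$. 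Writing $[\underline{x},\overline{x}]$ for the connected component of $\{y_1 \in [0,3R] : h(y_1)\ge \ell\}$ containing $x_1^*$, the horizontal segment $\{(y_1,0,\ldots,0) : y_1 \in [\underline{x},x_1^*]\}$ lies in $\{f_r \circ G_\alpha\ge \ell\}$ and is connected, hence is contained in $\calC$. The leftmost-ness property defining $B^\alpha_\ell$ forces $\underline{x} = x_1^*$, and continuity of $h$ together with $x_1^* > 0$ yields $h(x_1^*) = \ell$ almost surely.

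The proof concludes by invoking the Kac--Rice formula for the a.s.\ $C^4$ Gaussian process $h$, whose pointwise non-degeneracy as well as the joint non-degeneracy of $(h(x_1),h'(x_1))$ follow from $r \ge r_q$ together with Assumption~\ref{ass1}: for any bounded measurable functional $F$ of $f_r$,
\[
\E\Bigg[\sum_{x_1 \in (0,3R]\,:\, h(x_1)=\ell} F(x_1,f_r)\Bigg] = \int_0^{3R} \E\bigl[\,|h'(x_1)|\, F(x_1,f_r) \,\big|\, h(x_1)=\ell\,\bigr]\, \gamma_{h(x_1)}(\ell)\, dx_1.
\]
Choosing $F(x_1,f_r) = \mathbf{1}_{B^\alpha_\ell((x_1,0,\ldots,0),R)}$, the preceding step ensures that the left-hand side equals $\prob[\calE]$, because the sum contains exactly the leftmost term on $\calE$ and vanishes on $\calE^c$; rewriting $h(x_1)$ and $h'(x_1)$ in terms of $f_r(x)$ and $\partial_{x_1}f_r(x)$ for $x = (x_1,0,\ldots,0)$ then yields the formula of the claim. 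The main technical delicacy is the applicability of the weighted Kac--Rice identity to a functional $F$ depending on the entire trajectory of $f_r$ rather than only on $(h(x_1),h'(x_1))$; this extension is standard for smooth non-degenerate Gaussian fields but must be applied with care to the conditional distribution of $f_r$ given $h(x_1)=\ell$.
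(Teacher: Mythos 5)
Your proposal is correct in its overall structure but takes a genuinely different route from the paper's. You reduce the identity to the classical one-dimensional weighted (marked) Kac--Rice formula for the level set $\{t : h(t)=\ell\}$ of $h=f_r(\cdot,0,\dots,0)$, using two sound observations: that $G_\alpha$ fixes the first coordinate axis pointwise (so the deformation is invisible there and no ``curved rectangle'' bookkeeping is needed), and that the weight $\mathbf 1_{B^\alpha_\ell(x,R)}$ selects exactly the leftmost level-set point on $\calE$ and vanishes otherwise, so the weighted number of crossings is $\mathbf 1_{\calE}$; your verification that $h(x_1^*)=\ell$ by sliding left along the axis is exactly right. The paper instead proves the identity from scratch: it applies the co-area formula to the functional $\Phi(u)=X(u)_1$ (the leftmost point itself) on a finite-dimensional Gaussian approximation of the field, after establishing topological stability of the events off an exceptional set $W$ of fields with critical points at level $\ell$ on the faces of the box (Claim \ref{cl:coarea_1}), and then passes to the limit. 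The trade-off is that your whole argument rests on the weighted Kac--Rice identity for a weight that is the indicator of an event depending measurably on the entire field; you correctly flag this as the delicate point, but this is precisely the statement the authors evidently did not find citable and spend most of Section \ref{ssec:claim_co-area} proving. The extension is true and provable (e.g.\ by a monotone-class argument from weights depending on finitely many field values, both sides being finite measures on $[0,3R]\times\Omega$, using non-degeneracy of $(h(t),h'(t))$ and finiteness of moments of the number of crossings), so your route would yield a shorter proof once that lemma is in hand; as written, however, it is complete only modulo that black box, and your ``short compactness argument'' for attainment of the leftmost point also silently uses that there are only finitely many witnessing components, which holds only off a null set such as the paper's $W$.
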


The proof of Claim~\ref{cl:co-area} is given in Section~\ref{ssec:claim_co-area}. Let us now conclude the proof of the lemma. We have
\begin{multline*}
|A|\min_{\alpha \in A} \Pro [ f_{r} \in E^\alpha_\ell(R) ] \leq \sum_{\alpha \in A} \Pro [ f_{r} \in E^\alpha_\ell(R) ]\\
= \sum_{\alpha \in A} \Pro [ f_r\in A^\alpha_\ell(0,R) ] +  \sum_{\alpha \in A}  \int_0^{3R} \E [ |\partial_{x_1} f_{r}(x)| 1_{B^\alpha_\ell(x,R)} \mid f_{r}(x)=\ell ] \gamma_{f_{r}(x)}(\ell) dx.
\end{multline*}

The crucial point is that Item 2 of Lemma \ref{lem:rippling_quads} implies that for every fixed $x$, the $(A^\alpha_\ell(x,R):\alpha\in A)$ are disjoint. This can be seen as follows:
\begin{itemize}
\item Let $\alpha \ne \alpha'$. If $A^\alpha_\ell(x,R) \cap A^{\alpha'}_\ell(x,R)$ holds then there are widthwise crossings of the bottom faces of the ``rippling'' quads $[0,3R] \times F_\alpha([0,R]\times\{0\})$ and $[0,3R] \times F_{\alpha'}([0,R]\times\{0\})$, that both start from $x$. Let $\gamma$ and $\gamma'$ be two such crossings.
\item By Item 2 of Lemma \ref{lem:rippling_quads}, the following (or its analogue obtained by interchanging $(\alpha,\gamma)$ and $(\alpha',\gamma')$) holds: there is a path included in $\gamma \cap ([0,3R] \times F_{\alpha'}([0,R] \times [0,R^a]))$ that intersects $[0,3R] \times F_{\alpha'}([0,R] \times \{R^a\})$. This contradicts ii) in the definition of $A^{\alpha'}(x,R)$.
\item We conclude that the $(A^\alpha_\ell(x,R):\alpha\in A)$ are indeed disjoint.
\end{itemize}
By first using this with $x=0$, we obtain that the first sum is bounded by one. Let us deal with the second sum. Since $B_\ell^\alpha(x,R) \subset A^\alpha_\ell(x,R)$, the $B^\alpha_\ell(x,R)$ are also disjoint and $\sum_{\alpha\in A} 1_{B^\alpha_\ell(x,R)}\leq 1$ so the second sum is at most
\[
\int_0^{3R} \E \left[ |\partial_{x_1} f_{r}(x)| \mid f_{r}(x)=\ell \right] \gamma_{f_{r}(x)}(\ell) dx=3R\E[|\partial_{x_1} f_{r}(0)|]\gamma_{f_r(0)}(\ell)
\]
by stationarity. Since the right-hand side is $O(R)$, all in all, there is some constant $C>0$ such that
\[
|A|\min_{\alpha \in A} \Pro [ f_{r} \in E^\alpha_\ell(R) ] \leq 1 + C R.
\]
(The reader can note that we have used that $r \ge r_q$ in order to obtain that $C$ does not depend on $r$.) This implies the desired result. 

\subsection{Proof of Claim~\ref{cl:co-area}}\label{ssec:claim_co-area}

In this section, we prove Claim~\ref{cl:co-area}. Since in Bernoulli percolation, which is our guiding model, Claim~\ref{cl:co-area} would simply be replaced by a decomposition of a crossing event into arm events, from the point of view of percolation, this section is just a technical obstacle and should be skipped upon first reading. However, we believe that the proof is interesting from the point of view of the geometry of Gaussian fields. It is inspired by the proof of the Kac--Rice formula given in Appendix C of \cite{Letendre}. With this tool in mind, the formula is easily established for fields taking values in finite dimensional function spaces. We must then take the formula to the limit.

\begin{proof}[Proof of Claim~\ref{cl:co-area}]
We prove the claim in a more general setting. Moreover, for brevity, we prove it for $E_\ell(R)$ instead of $E^\alpha_\ell(R)$. The proof in the general case is similar although one must consider curved rectangles instead of flat ones. Throughout the proof, $R>0$ will stay fixed and we consider a compact subset $\calK\subset\R^d$  with smooth boundary that is a neighbourhood of $[0,3R]\times Q=[0,3R] \times [0,R] \times [0,R^a]$. Let $g$ be an a.s.\ $C^3$ centered Gaussian field on $\calK$ with covariance $(x,y) \mapsto K(x,y)=\E[g(x)g(y)]$ in $C^{3,3}(\calK,\calK)$ (i.e., $\partial^\alpha_x\partial^\beta_yK\in C(\calK\times\calK)$ for $|\alpha|,|\beta|\leq 3$) such that
\begin{equation}\label{eq:coarea_1}
\text{$\forall x,y \in\calK$ with $x\ne y$, $(g(x),g(y),d_yg)$ is a non-degenerate Gaussian vector.}
\end{equation}
Our goal is to prove that
\begin{equation}\label{e:arm_decomposition}
\Pro [ g \in E_\ell(R) \setminus A_\ell(0,R) ] = \int_0^{3R} \E \left[ |\partial_{x_1} g(x)| 1_{B_\ell(x,R)} \mid g(x)= \ell \right] \gamma_{g(x)}(\ell) dx_1.
\end{equation}

Before proving \eqref{e:arm_decomposition}, let us use it in order to conclude the proof of the claim.

\begin{proof}[Proof of Claim \ref{cl:co-area}]
In order to prove the claim, we only need to show that $f_r$ satisfies \eqref{eq:coarea_1}. If this were not the case, then there would exist some $x \neq y$ and some real numbers $a,b,c_1,\dots,c_d$ not all equal to $0$ such that $af_r(x)+bf_r(y)+\sum_{i=1}^d c_i \partial_{x_i}f_r(y)=0$. As a result, there would exist some vector $v \in \R^d \setminus \{0\}$ and three real numbers $a,b,c$ not all equal to $0$ such that
\begin{equation}\label{eq:degen_absurdie}
af_r(x)+bf_r(y)+c\partial_vf_r(y)=0.
\end{equation}
To conclude, we need the following lemma.
\begin{lemma}[Lemma A.2 of \cite{bmm}]\label{lem:bmm}
Let $f$ be a centered, stationary and $C^3$ Gaussian field on $\R^2$. If the support of the spectral measure\footnote{The spectral measure is the Fourier transform of the covariance kernel $\kappa : x \mapsto\E[f(y+x)f(y)]$.} of $f$ contains an open set, then for every distinct $x,y \in \R^2$, $(f(x),f(y),d_xf,d_yf)$ is non-degenerate.
\end{lemma}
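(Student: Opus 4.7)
I would argue by contradiction using the spectral representation. Suppose there exist $a,b \in \R$ and $c,d \in \R^2$, not all zero, such that $af(x)+bf(y)+\langle c, d_xf\rangle+\langle d, d_yf\rangle = 0$ almost surely. Writing $f(w) = \int_{\R^2} e^{i\langle \xi, w\rangle}\,dW(\xi)$, where $W$ is a complex Gaussian random measure with control measure equal to the spectral measure $\mu$, and using the corresponding spectral expansion $d_wf = \int i\xi\, e^{i\langle \xi, w\rangle}\,dW(\xi)$ (justified since $f \in C^3$ forces $\int |\xi|^2\,d\mu(\xi) < \infty$), the hypothesis translates into the $L^2(\mu)$ identity $\Phi \equiv 0$, where
\[
\Phi(\xi) := (a + i\langle c,\xi\rangle)\,e^{i\langle \xi, x\rangle} + (b + i\langle d,\xi\rangle)\,e^{i\langle \xi, y\rangle}.
\]

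The key observation is that $\Phi$ extends to an entire function on $\C^2$. Since $\Phi$ is in particular continuous and $\mathrm{supp}\,\mu$ contains a nonempty open set $U$, any point of $U$ at which $\Phi$ were nonzero would produce a neighborhood of positive $\mu$-mass on which $|\Phi|$ is bounded below, contradicting $\Phi = 0$ $\mu$-a.e. Hence $\Phi \equiv 0$ on $U$, and by analytic continuation $\Phi \equiv 0$ on all of $\R^2$.

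It remains to show that this forces $a=b=0$ and $c=d=0$. Set $z := y - x \neq 0$; dividing the identity by $e^{i\langle \xi, x\rangle}$ gives
\[
(a + i\langle c,\xi\rangle) + e^{i\langle \xi, z\rangle}(b + i\langle d,\xi\rangle) = 0 \qquad \forall \xi \in \R^2.
\]
Restricting to $\xi = tz/|z|^2$ so that $\langle \xi, z\rangle = t$, and setting $\gamma := \langle c,z\rangle/|z|^2$, $\delta := \langle d,z\rangle/|z|^2$, this reduces to $(a + it\gamma) + e^{it}(b + it\delta) = 0$ for every $t \in \R$. Expanding in powers of $t$, the coefficient of $t^n$ for $n \geq 2$ equals $\tfrac{i^n}{n!}(b + n\delta)$, which vanishes for every such $n$ only if $b = \delta = 0$; the constant and $t$-coefficients then force $a = \gamma = 0$. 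Thus $a=b=0$ and $\langle c,z\rangle = \langle d,z\rangle = 0$. Substituting this back into $\Phi \equiv 0$ and restricting $\xi = te$ along any direction $e$ with $\langle e, z\rangle \neq 0$ yields $\langle c,e\rangle + e^{it\langle e,z\rangle}\langle d,e\rangle = 0$ for all $t \in \R$, which (by varying $t$ to produce distinct values of the non-constant exponential) forces $\langle c,e\rangle = \langle d,e\rangle = 0$. Since such $e$ span $\R^2$, we conclude $c = d = 0$, the desired contradiction.

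The only non-routine ingredient is the continuity-plus-support step that upgrades ``$\Phi = 0$ $\mu$-a.e.'' to ``$\Phi \equiv 0$ on an open set'' (and thence to analytic continuation); the remainder is elementary separation between polynomial and transcendental contributions. I expect this to be the main, though modest, obstacle.
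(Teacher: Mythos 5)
Your proof is correct, and it follows the standard spectral-measure argument: degeneracy forces the exponential–polynomial symbol $\Phi$ to vanish $\mu$-a.e., hence (by continuity plus the open-set support hypothesis, then real-analyticity) everywhere, and the linear independence of $\{e^{i\langle\xi,x\rangle},\,\xi_j e^{i\langle\xi,x\rangle},\,e^{i\langle\xi,y\rangle},\,\xi_j e^{i\langle\xi,y\rangle}\}$ yields the contradiction. Note that the paper does not prove this lemma itself but cites it as Lemma A.2 of \cite{bmm}; your argument is essentially the one given there, and all the steps (the Taylor-coefficient separation along the direction $z=y-x$ and the subsequent treatment of the transverse directions) check out.
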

Let $\calP \subset \R^d$ denote a plane such that $x,y,x+v \in \calP$. Since the covariance kernel of $(f_r)_{|\calP}$ has compact support and is not identically equal to $0$, we obtain that its spectral measure is continuous and not identically equal to $0$, so Lemma~\ref{lem:bmm} implies that \eqref{eq:degen_absurdie} cannot be true.
\end{proof}

Let us now prove \eqref{e:arm_decomposition}. The proof will follow from the co-area formula applied to a well chosen functional defined and differentiable on the complement of some exceptional subset $W\subset C^2(\calK)$. Before moving forward, let us define $W$ and prove a few facts about this set (and \eqref{eq:coarea_1}). Let $\calF$ be the set whose elements are the corners, open sides, open 2-dimensional faces and the interior of $[0,3R]\times Q$. We define $W$ as the set of $u\in C^2(\calK)$ for which there exists $F\in\calF$ for which $u_{|F}$ has a critical point at height $\ell$.

\begin{claim}\label{cl:coarea_1}
\begin{enumerate}[noitemsep]
\item The set of covariance functions $K\in C^{3,3}(\calK,\calK)$ such that the a.s.\ $C^2$ field $g$ with covariance $K$ satisfies $\eqref{eq:coarea_1}$ is open in the $C^{3,3}$ topology;
\item For every $x \in (0,3R)$, $\Pro [g\in W] = \Pro [ g \in W | g(x) = \ell] = 0$;
\item The boundary of $U:=(E_\ell(R)\setminus A_\ell(0,R)) \setminus W$ in $C^2(\calK)$ is included in $W$;
\item $U$ is open in $C^2(\calK)$;
\item For every $x \in (0,3R)$, the boundary of $B_\ell(x,R) \setminus W$ in $\{ u \in C^2(\calK) : u(x)=\ell\}$ is included in $W$.
\end{enumerate}
\end{claim}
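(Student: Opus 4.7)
For item 1, the entries of the $(d+2)\times(d+2)$ covariance matrix $\Sigma(x,y;K):=\mathrm{Cov}((g(x),g(y),\nabla g(y)))$ are continuous functionals of $K$ in the $C^{1,1}$ topology, involving $K(x,y)$, $\partial_yK(x,y)$ and $\partial_x\partial_yK(y,y)$. Off the diagonal, compactness of $\{(x,y)\in\calK^2:|x-y|\ge\epsilon\}$ yields openness of the non-degeneracy condition. On the diagonal the matrix degenerates since $g(x)\to g(y)$, but Taylor expansion gives $\det\Sigma(x,y;K)\sim c(y;K)|x-y|^{2d}$ as $x\to y$ (using $\mathrm{Var}(g(x)-g(y)-\nabla g(y)\cdot(x-y))=O(|x-y|^4)$), with $c$ continuous in $K$; a uniform positive lower bound on this normalized determinant then persists under small $C^{1,1}$-perturbation. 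I expect this diagonal analysis to be the technically delicate part of the proof.

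For item 2, the natural statement is $\Pro[g\in W]=\Pro[g\in W\mid g(x)=\ell]=0$ (the printed ``$=1$'' appears to be a typographical slip), which is a Bulinskaya/Kac--Rice result. For each $F\in\calF$, the event that $g_{|F}$ has a critical point at level $\ell$ forces both $g(y)=\ell$ and $\nabla_F g(y)=0$, which amount to $1+\dim F$ constraints on the $\dim F$-dimensional set $F$. The joint density of $(g(y),\nabla_F g(y))$ exists and is bounded thanks to \eqref{eq:coarea_1}, so the expected number of such points vanishes; corners reduce to $\Pro[g(\mathrm{corner})=\ell]=0$. Summing over the finitely many strata gives $\Pro[g\in W]=0$. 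The conditional version follows identically, the non-degeneracy of $(g(y),\nabla g(y))$ conditional on $g(x)=\ell$ for $y\ne x$ being an immediate consequence of \eqref{eq:coarea_1}.

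Items 3 and 4 follow from a structural stability argument. Fix $u\in W^c$; then $\ell$ is a regular value of every restriction $u_{|F}$, and since $W^c$ also excludes critical points on the lower-dimensional strata, each $\{u=\ell\}\cap F$ is a smooth submanifold of $F$ meeting every boundary stratum of $F$ transversally. By the implicit function theorem and compactness of $\calK$, any $v$ in a sufficiently small $C^1$-neighborhood of $u$ has $\{v=\ell\}\cap F$ $C^1$-close to $\{u=\ell\}\cap F$ with the same combinatorial type; hence the connected components of $\{v\ge\ell\}\cap([0,3R]\times Q)$ are in bijection with those of $\{u\ge\ell\}\cap([0,3R]\times Q)$, with matching incidences to each boundary stratum. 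The defining conditions of $E_\ell(R)$ (a cluster supporting a top-to-bottom crossing of $[0,3R]\times[0,R]$ and avoiding $[0,3R]\times[0,R]\times\{R^a\}$) and of $A_\ell(0,R)$ (the same cluster containing $0$) depend only on this combinatorial data and are therefore preserved, so $U$ is open within $W^c$. Since $W$ is closed in $C^1(\calK)$ (a $C^1$-limit of functions with a face critical point at level $\ell$ still has one, by compactness of $\bar F$, possibly on a lower-dimensional stratum), $W^c$ is open, and hence $U$ is open in $C^1(\calK)$ and $\partial U\subset W$.

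Item 5 runs along the same lines within the affine hyperplane $\{v\in C^1(\calK):v(x)=\ell\}$. Stability of $v\in A_\ell(x,R)$ is as in the previous paragraph, with $x$ remaining on the level set by the constraint $v(x)=\ell$. The extra ``leftmost contact'' condition is handled using that, since $u\notin W$ and $x$ lies on a one-dimensional edge of the box, $\partial_{x_1}u(x)\ne 0$, i.e.\ $\{u=\ell\}$ meets that edge transversally at $x$. For $v\in C^1$ close to $u$ with $v(x)=\ell$, the same transversality holds and $x$ remains the left endpoint of a contact interval of the cluster through $x$; the cluster bijection then guarantees that if no cluster of $u$ has a contact point with first coordinate smaller than $x_1$, neither does any cluster of $v$, so $v\in B_\ell(x,R)\setminus W$.
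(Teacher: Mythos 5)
Your proposal matches the paper's proof in all essentials: item 1 via continuity of the covariance determinant plus compactness of $\calK$, item 2 via Bulinskaya's lemma together with the observation that conditioning on $g(x)=\ell$ preserves non-degeneracy of $(g(y),d_yg)$ for $y\ne x$, and items 3--5 via the same transversality/structural-stability argument (the paper phrases it as a homotopy retraction of $\{u+v\ge\ell\}$ onto $\{u\ge\ell\}$ preserving each stratum $F\in\calF$ and, for item 5, the point $x$) combined with closedness of $W$. You are also right that the ``$=1$'' in item 2 is a slip for ``$=0$'', and your near-diagonal normalization in item 1 is if anything more careful than the paper's one-line compactness appeal, though the exponent there should be $4$ --- the order of the conditional variance of $g(x)$ given $(g(y),\nabla g(y))$ --- rather than $2d$.
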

\begin{proof}
We prove the points one by one.

For Item 1, proceed as follows. The negation of property \eqref{eq:coarea_1} at $x,y\in\calK$ is equivalent to a polynomial equation in $K(\cdot,\cdot)$ and its derivatives of order up to one in each variable, at $(x,x)$ and $(x,y)$. Since $\calK$ is compact, this condition is open in $C^{3,3}(\calK,\calK)$.

We turn to Item 2. Since $y\mapsto (g(y),d_yg)$ is a.s.\ $C^1$ and we have assumed that for each $y\in\calK$, $(g(y),d_yg)$ is non-degenerate, then, by \cite[Lemma 11.2.10]{adler_taylor}, $g\notin W$ a.s. Moreover, if we condition on the event $\{ g(x) = \ell \}$ then $y \mapsto (g(y),d_yg)$ still satisfies this property on $\calK \setminus \{ x \}$ (indeed, if a Gaussian vector $(X_1,\dots,X_{k+1})$ is non-degenerate, then the law of $(X_1,\dots,X_k)$ is still non-degenerate when we condition on the value of $X_{k+1}$)\footnote{To see this one can use that the variance of $(X_1,\dots,X_k)$ conditioned on the value of $X_{k+1}$ does not depend on this value and that its mean is linear with respect to this value.}. As a result, still by \cite[Lemma 11.2.10]{adler_taylor}, we also have $g \notin W$ a.s.\ under this conditioning.

In order to prove Item 3, let us consider some $u \notin W$ and let us show that there exists $\varepsilon>0$ (that may depend on $u$) such that, if $\|v\|_{C^2} \le \varepsilon$, then $1_{ E_\ell(R)}(u)=1_{ E_\ell(R)}(u+v)$, and similarly for $A_\ell(0,R)$ instead of $E_\ell(R)$. Actually, we will see that a bound on $\|v\|_{C^0}$ will be sufficient. To this purpose, we first note that there exists $\varepsilon>0$ such that, for every $F\in\calF$, $u_{|F}$ has no critical point at any height in $[\ell-\varepsilon,\ell+\varepsilon]$. Let us fix such an $\varepsilon$ and let us apply a result from stratified Morse theory: By \cite[Proposition in Section 3.2 of Part I]{morse}, there exist homeomorphisms $\varphi_\pm$ from $\{ u \ge \ell \} \cap ([0,3R]\times Q)$ to $\{ u \ge \ell\pm\varepsilon \} \cap ([0,3R]\times Q)$ such that, for every $F \in \mathcal{F}$, both $\varphi_\pm$ and $\varphi^{-1}_\pm$ send a point of $F$ to a point of $F$. As a result, 
\begin{equation}\label{eq:E(u+v)}
\text{$1_{ E_\ell(R)}(u)=1_{ E_\ell(R)}(u\pm\varepsilon)$, \quad and similarly for $A_\ell(0,R)$ instead of $E_\ell(R)$.}
\end{equation}
Moreover, by the same arguments as in Claim \ref{cl:morse} (where we use the notations $T_R = [0,3R] \times \{0\}$ and $B_R=[0,3R]\times\{0\}$), all connected components of $\{ u \ge \ell-\varepsilon \} \cap ([0,3R] \times [0,R])$ that contain a path from $B_R$ to $T_R$ also contain such a path $\gamma$ with the further property that $u_{|\gamma} \ge \ell+\varepsilon$, and the same property holds with $B_R$ replaced by $\{0\}$. Let us now consider some $v$ such that $\|v\|_{C^0} \le \varepsilon$ and let us conclude the proof by considering the two following cases.
\begin{itemize}[noitemsep]
\item $u \notin E_\ell(R)$ (resp.\ $A_\ell(0,R)$). In this case, let $C$ be a connected component of $\{ u+v \ge \ell \} \cap ([0,3R] \times [0,R])$ that contains a path from $B_R$ (resp.\ $\{0\}$) to $T_R$. By the above, $C$ also contains a path $\gamma$ from $B_R$ (resp.\ $\{0\}$) to $T_R$ with the further hypothesis that $u_{|\gamma} \ge \ell+\varepsilon$. Since $u$ does not belong to $E_\ell(R)$ (resp.\ $A_\ell(0,R)$), then (by~\eqref{eq:E(u+v)}) we obtain that $\gamma$ is connected to $[0,3R]\times[0,R]\times\{R^a\}$ by a continuous path in $\{u\geq \ell+\varepsilon\}\cap([0,3R]\times[0,R]\times[0,R^a])$. As a result, $C$ is connected to $[0,3R]\times[0,R]\times\{R^a\}$ by a continuous path in $\{u+v\geq \ell\}\cap([0,3R]\times[0,R]\times[0,R^a])$, and we deduce that $u+v \notin E_\ell(R)$ (resp.\ $A_\ell(0,R)$).
\item $u \in E_\ell(R)$ (resp.\ $A_\ell(0,R)$). By \eqref{eq:E(u+v)}, there exists a connected component $C$ of $\{ u \ge \ell-\varepsilon \} \cap ([0,3R] \times [0,R])$ that contains a path from $B_R$ (resp.\ $\{0\}$) to $T_R$ but that is not connected to $[0,3R] \times [0,R] \times \{R^a\}$ by a path in $\{u \ge \ell - \varepsilon \} \cap ([0,3R] \times [0,R] \times [0,R^a])$. We recall that $C$ also contains a path $\gamma$ from  $B_R$ (resp.\ $\{0\}$) to $T_R$ with the further hypothesis that $u_{|\gamma}\ge \ell+\varepsilon$ and we note that $\gamma \subset \{u+v\ge \ell\}$ and that $\gamma$ is not connected to $[0,3R] \times [0,R] \times \{R^a\}$ by a path in $\{u +v \ge \ell \} \cap ([0,3R] \times [0,R] \times [0,R^a])$, and we deduce that $u+v \in E_\ell(R)$ (resp.\ $A_\ell(0,R)$).
\end{itemize}

We now show Item 4. By Item 3, if $u \in U$ then there is a $C^2(\calK)$-neighbourhood of $u$ that is included in $E_\ell(R) \setminus A_\ell(0,R)$. Since $W$ is closed in $C^2(\calK)$ (see e.g.\ the short proof of Lemma C.1 of \cite{BMR}), this implies the desired result.

The proof of Item 5 is essentially the same as the proof of Item 3. The only difference is that one needs to pay attention to what happens close to $x$. One possibility is to declare that two points of $(0,3R)$ that are very close to $x$ are corners of the stratified set and apply Morse results to this new stratified set. \qedhere
\end{proof}

\textbf{A. Proof in the finite-dimensional case.} We are now ready to prove \eqref{e:arm_decomposition}. We start by assuming that $g$ belongs to a finite-dimensional space $V\subset C^2(\calK)$. Thus, there exists a scalar product $\langle\cdot,\cdot\rangle$ on $V$ for which $g$ has the law of the standard Gaussian distribution $\gamma_V(u)du$ on $(V,\langle\cdot,\cdot\rangle)$.

\medskip

For every $u \in E_\ell(R)$, we let $X(u) \in [0,3R] = [0,3R] \times \{0\}^{d-1}$ be the (unique) point such that $u \in B_\ell(X(u),R)$. Recall that we use the notation $U =(E_\ell(R)\setminus A_\ell(0,R)) \setminus W$ and that, by Item 4 of Claim \ref{cl:coarea_1}, $U\cap V$ is open in $V$. The map
\begin{equation}\label{eq:coarea_2}
\Phi:U\cap V\rightarrow (0,3R)
\end{equation}
sending $u$ to the first coordinate of $X(u)$ is well defined and differentiable with differential
\[
d_u\Phi(v)=\frac{-v(X(u))}{(\partial_{x_1}u)(X(u))}.
\]
Thus, the norm of the differential at $u$ is $|\partial_{x_1}u(X(u))|^{-1}$ times the norm of the evaluation map at $X(u)$. By construction of $V$, the norm of the evaluation map is\footnote{Indeed, let $(\Psi_1,\dots,\Psi_{k-1})$ be an orthonormal basis of the kernel of the non-trivial linear form $L\in V^*$ defined by $L(v)=v(x)$ for some $x \in \calK$ and let $\Psi_k\in\textup{Ker}(L)^\perp$ be a unit vector. Then, $$\|L\|=|L(\Psi_k)|=\sqrt{\sum_{j=1}^kL(\Psi_j)^2}=\sqrt{\sum_{j=1}^k \Psi_j(x)^2}=\sqrt{K(x,x)}.$$ In the last equality, we have used that $K(x,y) = \sum_{j=1}^k \Psi_j(x)\Psi_j(y)$ for any $x,y \in \calK$. (A more general form of this result follows from (A.8) and (C.6) from \cite{Letendre}.)\label{foot}}$\sqrt{K(X(u),X(u))}$. Then, the co-area formula (see e.g.\ \cite[Corollary~13.4.6]{BZ}) applied to $\Phi$ yields
\begin{align*}
\Pro [ g \in E_\ell(R) \setminus A_\ell(0,R) ] = \int_U \gamma_V(u) du & = \int_0^{3R} \int_{U_x} \frac{\gamma_V(u)}{\|d_u\Phi\|} \overline{du} dx_1\\
&=\int_0^{3R} \int_{U_x}\frac{|\partial_{x_1}u(x)|\gamma_V(u)}{\sqrt{K(x,x)}} \overline{du} dx_1,
\end{align*}
where the first equality comes from Item 2 of Claim \ref{cl:coarea_1} and
\begin{itemize}[noitemsep]
\item $\gamma_V : U \rightarrow \R_+$ is the density function of $g$;
\item $x = (x_1,0,\dots,0)$ and $U_x:=\Phi^{-1}(x_1)=B_\ell(x,R) \cap (V \setminus W)\subset V_x:=\{u\in V : u(x)= \ell\}$;
\item $\overline{du}$ is the Lebesgue density on $V_x$.
\end{itemize}
As a result, we have
\[
\Pro [ g \in E_\ell(R) \setminus A_\ell(0,R) ] 
=\int_0^{3R} \E [ |\partial_{x_1}g(x)| 1_{g \in B_\ell(x,R)} \mid g(x)= \ell ] \frac{\int_{V_x} \gamma_V(u) \overline{du}}{\sqrt{ K(x,x)}}dx_1.
\]
Next, we observe that $\int_{V_x} \gamma_V(u) \overline{du}$ is the density of $\calN(0,1)$ at $ \ell/\sqrt{K(x,x)}$ (because, by the observations from Footnote \ref{foot}, $V_x$ is a hyperplane of $V$ translated by a vector in $V^\perp_x$ of norm $\ell/\sqrt{K(x,x)}$). This implies that $\int_{V_x} \gamma_V(u) \overline{du}=\sqrt{ K(x,x)}\gamma_{g(x)}(\ell)$ and ends the proof of the claim in the finite dimensional case.

\medskip

\textbf{B. Proof in the infinite-dimensional case.} To cover the general (infinite dimensional) case, we start by arguing that \eqref{e:arm_decomposition} is stable by approximation. To this end, we first show the following claim:

Let $(g_n)_{n\in\N}$ be a sequence of $C^3$ and centered Gaussian fields that converges a.s.\ in $C^3(\calK)$ to $g$ and satisfies \eqref{e:arm_decomposition} for each $n$. Then, $g$ also satisfies \eqref{e:arm_decomposition}.

To show this claim, we use the following three properties:
\begin{itemize}[noitemsep]
\item[i)] The covariance kernels $K_n \in C^{3,3}(\calK,\calK)$ converge in $C^{3,3}(\calK,\calK)$ to $K$;
\item[ii)] The conditioning on $g(x)=\ell$ makes $g$ a Gaussian field with covariance $(y,z) \mapsto K(y,z)-K(y,x)K(x,z)$ and mean $y \mapsto K(x,y)\ell/K(x,x)$;
\item[iii)] For every $x \in \calK$, the law of $g_n$ conditioned on $g_n(x)=\ell$ converges in distribution on the Borel-$\sigma$-algebra of the $C^2$-topology to the law of $g$ conditioned on $g(x)=\ell$.
\end{itemize}
(To prove iii), one can use i), ii) and that the sequence is tight due to Kolmogorov's theorem -- see e.g.\ Appendix A.9 of \cite{NS}.) The points i), ii) and iii) above, Claim \ref{cl:coarea_1} and dominated convergence imply the claim.

\medskip

So let us exhibit such a sequence $(g_n)_n$. (We follow the analogous arguments in the proof of \cite[Lemma 3.1]{MRVK}.) Let us first assume furthermore that $g$ is a.s.\ $C^\infty$ and let $H^{N}(\calK)$ be the $L^2$-Sobolev space of order $N$ on $\calK$ where $N$ is any positive integer such that $H^{N}(\calK) \subset C^3(\calK)$ and the injection is continuous. We observe that $g \in H^{N}(\calK)$ since $g \in C^\infty(\calK)$. Let $(e_k)_k$ be an orthonormal basis for $H^{N}(\calK)$ and let $g_n$ denote the projection of $g$ on the subspace generated by $e_1,\dots,e_n$. Then, $g_n$ converges to $g$ a.s.\ in $H^{N}(\calK)$, and so converges to $g$ a.s.\ in $C^3(\calK)$. This ends the proof in the case where $g$ is $C^\infty$.

\medskip

To finish the argument, we remove the $C^\infty$-smoothness assumption. To this purpose, we use that $g$ is defined on a neighborhood $\calK$ of $[0,3R] \times Q$ rather than on $[0,3R] \times Q$ only (on the contrary, all the above arguments also work with $\calK$ replaced by $[0,3R] \times Q$). Let $\varepsilon>0$ denote the distance between $[0,3R] \times Q$ and $\calK$ and define $g_n$ on $[0,3R] \times Q$ as the convolution between $g$ and a Dirac approximation which is compactly supported in $B(\varepsilon/2)$. Then, $g_n$ is smooth and converges to $g$ a.s.\ in $C^3$, which is the desired property except that $\calK$ is replaced by $[0,3R] \times Q$. But, as we have already observed, all what has been done above also works with $\calK$ replaced by $[0,3R] \times Q$, so this ends the proof.
\end{proof}

\section{A box-crossing property with polynomially many contact points}
\label{sec4}

Let $ d\ge 2$ and let $q$ satisfy Assumption \ref{ass1} for some $\beta>d$. In this section, we only consider $(f_r)_{|\R^2}$ and prove Proposition \ref{prop:polynom_absctract} below. All the constants in this section are universal since they are functions of the RSW constants from Theorem \ref{thm:rsw}. We will often use the gluing constructions described in Section \ref{ss:rsw} in conjunction with the FKG inequality and the RSW theorem without mentioning them explicitely. Recall that we identify all subsets $D \subset \R^2$ with $D\times\{0\}^{d-2} \subset \R^d$ and that $D(R)$ and $D(x,R)$ are Euclidean discs (see Section \ref{ssec:not}).

\begin{proposition}\label{prop:polynom_absctract}\label{prop:poly_many_bis}
There exists some universal $\eta>0$ such that the following holds. For every $r_q \le r \le \rho \le R$ and every continuous path $\mathcal{C}\subset\R^2$ that intersects both $D(R)$ and $\partial D(2R)$, there exists a (deterministic) finite family of points $( y_i)_{i\in I} \subset D(2R)$ at mutual distances at least $30\rho$ and at a distance at most $\rho$ from $\mathcal{C}$ such that
\[
\mathbb P\Bigg[\begin{array}{c} \#\{i\in I:D(y_i,\rho/100)\text{ is connected to $\partial D(3R)$}\\
\text{ in $(D(3R)\cap\{f_r\ge 0\}) \setminus (\cup_{j \ne i} D(y_j,2\rho))$}\} \geq \eta(R/\rho)^\eta \end{array}\Bigg]\ge \eta.
\]
\end{proposition}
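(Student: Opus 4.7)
I would prove this via a Paley--Zygmund-style second moment argument applied to a polynomial collection of one-arm events, one attached to each point of a greedily constructed family $(y_i)_{i\in I}$ along $\calC$. First, parametrize $\calC$ and traverse it starting from its intersection with $D(R)$, marking a new point every time one has moved Euclidean distance $20\rho$ from the previously marked one and stopping when one exits $D(2R)$. Since $\calC$ has diameter at least $R$, this produces $N\geq c R/\rho$ points $y_i\in D(2R)$ at mutual distance at least $20\rho$ and at distance $0\le\rho$ from $\calC$.

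Next, let $A_i$ denote the event in the statement. I would establish $\Pro[A_i]\ge c_1(\rho/R)^{\alpha_1}$ for universal $c_1,\alpha_1>0$. Fix $y_i$ and consider dyadic scales $s_k=2^k\rho$ for $k=0,1,\dots,K$ with $s_K\asymp R$. By RSW (Theorem~\ref{thm:rsw}) combined with item ii) of the list at the end of Section~\ref{ss:rsw}, each annulus $D(y_i,2s_k)\setminus D(y_i,s_k)$ contains a circuit in $\{f_r\ge 0\}$ with probability bounded below by a universal constant. Since every $y_j$ lies within distance $\rho$ of $\calC$ and the $y_j$'s are at mutual distance $\ge 20\rho$, the obstacles $\bigcup_j D(y_j,2\rho)$ are confined to a tubular $O(\rho)$-neighborhood of $\calC$, and in each annulus one can place the RSW rectangles transversally to $\calC$ in regions that miss this neighborhood, forcing the resulting circuits to avoid $\bigcup_{j\neq i} D(y_j,2\rho)$. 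Combining the $K=O(\log(R/\rho))$ annulus circuits with a short-range one-arm from $D(y_i,\rho/100)$ to $\partial D(y_i,\rho)$ (of probability $\ge c$ by RSW) via the FKG inequality (Lemma~\ref{lem:FKG1*}) yields the polynomial lower bound.

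Set $X:=\sum_{i\in I}\mathbf 1_{A_i}$, so that $\E[X]\ge c(R/\rho)^{1-\alpha_1}$ by the preceding step. For the second moment, I would use that $f_r$ is $r$-dependent with $r\le\rho$: the arm for $y_i$ can be arranged to be witnessed by an event measurable with respect to $f_r$ restricted to an explicit tube $T_i$ of width $O(\rho)$ emanating from $y_i$ in a direction chosen so that each point of $D(3R)$ lies in at most $O(1)$ such tubes. The $r$-dependence then yields $\Pro[A_i\cap A_j]\le C\,\Pro[A_i]\Pro[A_j]$ for all but $O(|I|)$ pairs (those with $|y_i-y_j|\lesssim\rho$, which contribute harmlessly), whence $\E[X^2]\le C\,\E[X]^2$; the Paley--Zygmund inequality then gives $\Pro[X\ge\E[X]/2]\ge c$, and choosing $\eta:=\min(c,1-\alpha_1)/2$ concludes.

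The main obstacle is the one-arm lower bound for an \emph{arbitrary} continuous $\calC$: even with the mutual-distance constraint on the $y_j$'s, they can be locally dense in a given annulus around $y_i$, and the RSW construction must be adapted carefully to avoid them at every scale. Routing the resulting arms into tubes of bounded multiplicity---which is what is needed for the second-moment bound---also requires a delicate geometric packing argument, and it is this constraint rather than the raw one-arm exponent of planar Gaussian percolation that determines the universal $\eta$ ultimately appearing in the statement.
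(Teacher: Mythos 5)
Your overall architecture (greedy family of points along $\calC$, first moment, second moment, Paley--Zygmund) is a genuinely different route from the paper's, which instead builds a multi-scale \emph{tree} of ``good pairs'' (Definition~\ref{def:goodpair}, Lemma~\ref{l:first_moment}) and shows that the number of open tree-paths from root to leaves grows supercritically at each scale via Hoeffding plus the local FKG inequality. Unfortunately your version has two gaps that the paper's detour is specifically designed to avoid.

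First, the bound $\Pro[A_i]\ge c_1(\rho/R)^{\alpha_1}$ with $\alpha_1<1$ is not available from the tools at hand. RSW and gluing only give a one-arm lower bound of the form $c^{\log(R/\rho)}=(\rho/R)^{C}$ with an uncontrolled constant $C=\log(1/c)$, which may well exceed $1$; the half-plane bound of item vi) of Section~\ref{ss:rsw} gives exponent exactly $1$, which makes $\E[X]=O(1)$ and kills the polynomial conclusion. The paper never lower-bounds an individual arm probability with a good exponent: Lemma~\ref{l:first_moment} instead lower-bounds the \emph{sum} $\sum_y\Pro[G(y)]$ by a duality argument --- a level line forces $\sum_y\Pro[G(y)\cap G^\star(y)]\ge c_1$, FKG gives $\Pro[G\cap G^\star]\le\Pro[G]\Pro[G^\star]$ since $G^\star$ is decreasing, and the \emph{upper} bound $\Pro[G^\star(y)]\le c_2K^{-c_3}$ (which RSW does provide) yields $\sum_y\Pro[G(y)]\ge c_4K^{c_3}>1$. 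That is the mechanism producing the polynomial growth, and your proposal has no substitute for it. (A smaller point: marking a new point each time you have travelled $20\rho$ from the previous one only separates \emph{consecutive} points; if $\calC$ doubles back, non-consecutive $y_i$'s can coincide, so you need a net-extraction rather than a greedy traversal.)

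Second, the decoupling step fails as stated. An arm from $D(y_i,\rho/100)$ to $\partial D(3R)$ confined to a tube of width $O(\rho)$ and length $\asymp R$ requires $\asymp R/\rho$ consecutive box crossings and therefore has probability exponentially small in $R/\rho$, so you cannot witness $A_i$ inside such a tube without destroying the first moment. The usual repair --- quasi-multiplicativity of arm probabilities, $\Pro[A_i\cap A_j]\le C\,\pi_1(\rho,R)^2/\pi_1(|y_i-y_j|,R)$, followed by summation over dyadic separations --- again needs quantitative control of the one-arm exponent (and quasi-multiplicativity itself, which is not established for these fields in this paper). The paper's tree construction replaces the second moment entirely: independence is obtained for free because the children at each scale are at mutual distance $\ge 20R_j\ge r$ and $f_r$ is $r$-dependent, and the concentration needed is just Hoeffding for bounded independent variables, combined with Corollary~\ref{cor:FKG} to handle the conditioning on the tree's history.
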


Results of a similar flavor have appeared e.g.~in the context of planar random cluster models, see Proposition 7.4 in \cite{duminilcopin2021planar}, cf.~also the proof of Proposition 1.5 therein.

\medskip

In order to prove Proposition~\ref{prop:polynom_absctract}, we introduce the following notion of $R$-good pair.

\begin{definition}[Good pairs] \label{def:goodpair}
Let $x,z\in\R^2$, $R>0$, and $\mathcal C \subset \R^2$ be a continuous path. Then $(x,z)$ is called an $R$-{\em  good  pair} for $\mathcal C$ if it has the following properties:
\begin{enumerate}[noitemsep]
\item $\mathcal C$ intersects both $D(x,2R)$ and $\partial D(x,10R)$;
\item $z \in \partial D(x,10R)$ and is at a distance at least $11R$ from $\mathcal C$.
\end{enumerate}
\end{definition}

Observe for later purposes that if $\mathcal{C}$ is as in the statement of Proposition~\ref{prop:polynom_absctract} and if $R' \le R/100$, then one can always find an $R'$-good  pair $(x,z)$ for $\mathcal C \cap D(1.9R)$ with $x \in \partial D(1.9R+2R')$ and $z \in \partial D(1.9R+12R')$.

\medskip

We start with a first lemma.

\begin{lemma}[First moment estimate]\label{l:first_moment} There exists a universal constant $\eta_0>0$ such that, for all $K\geq 100$, $r_q \le r \le R$, every continuous path $\mathcal{C}$ and every $(KR)$-good pair $(x,z)$ for $\mathcal C$, there exists a finite collection $(y_i,z_i)_{i\in I}$ of $R$-good pairs for $\mathcal C$ such that 
\begin{itemize}[noitemsep]
\item For every $i\in I$, $y_i\in D(x,5KR)$;
\item The $y_i$'s are at mutual distances at least $30R$;
\item Let $H_i$ be the event that there exists a circuit in $\{f_r\ge0\}\cap D(z_i,3R)$ around $D(z_i,2R)$ that is connected to $D(z,2KR)$ in $D(x,9KR)$ by a path in $\{f_r\ge0\}$ staying at a distance at least $12R$ from $\mathcal C$ and at a distance at least $11R$ from the $y_j$'s, $j\neq i$. Then 
\[
\E\left[\#\{i\in I:H_i\text{ occurs}\}\right]\geq \eta_0K^{\eta_0}.
\]
\end{itemize}
\end{lemma}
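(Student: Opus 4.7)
The plan is a first-moment argument: construct a deterministic family $\{(y_i,z_i)\}_{i\in I}$ of $N\gtrsim K$ $R$-good pairs for $\mathcal{C}$ inside $D(x,5KR)$, and lower-bound each $\Pro[H_i]$ by a polynomially small quantity $\gtrsim K^{-C}$ with $C$ a universal constant strictly less than $1$. Linearity of expectation then yields $\E[\#\{i:H_i\text{ occurs}\}]\gtrsim K^{1-C}$, and taking $\eta_0$ smaller than $1-C$ together with the implicit constants gives the claim.

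For the construction, I use that $(x,z)$ is a $(KR)$-good pair, so $\mathcal{C}$ must cross the annulus $D(x,4KR)\setminus D(x,3KR)$. I extract a sub-arc $\gamma$ of $\mathcal{C}$ realising such a crossing, and select successive points $p_1,\dots,p_N\in\gamma$ at mutual distance exactly $30R$, giving $N\geq cK$. Let $\mathcal{U}$ denote the connected component of $\R^2$ minus the $11R$-neighbourhood of $\mathcal{C}$ that contains $z$; note that $D(z,2KR)\subset\mathcal{U}$ by the $(KR)$-good pair property. For each $p_i$, I choose a unit vector $\hat{n}_i$ such that $y_i:=p_i+1.5R\,\hat{n}_i$ and $z_i:=y_i+10R\,\hat{n}_i$ satisfy $z_i\in\mathcal{U}$ and $d(z_i,\mathcal{C})\geq 11R$. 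Such a $\hat{n}_i$ exists on a sector of positive angular measure after possibly discarding a controlled fraction of indices where $\mathcal{C}$ loops tightly around $p_i$ at scale $12R$. The resulting $y_i$'s lie in $D(x,5KR)$ at mutual distance $\geq 20R$, and each $(y_i,z_i)$ is an $R$-good pair: $\mathcal{C}$ meets $D(y_i,2R)$ since $p_i\in\mathcal{C}$ and $|p_i-y_i|=1.5R$; $\mathcal{C}$ meets $\partial D(y_i,10R)$ since it extends from $D(x,2KR)$ to $\partial D(x,10KR)$, well beyond $D(y_i,10R)$; and $z_i$ satisfies the required distance conditions by construction.

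To bound $\Pro[H_i]$ from below, I combine two ingredients via the FKG inequality (Lemma \ref{lem:FKG1*}): (a) a circuit in $\{f_r\geq 0\}\cap(D(z_i,3R)\setminus D(z_i,2R))$, of probability $\geq c_1>0$ by RSW (Theorem \ref{thm:rsw}) applied on an annulus of bounded modulus; and (b) a connecting path in $\{f_r\geq 0\}$ from this circuit to $D(z,2KR)$ lying inside the safe region $\mathcal{S}_i$ obtained from $D(x,9KR)$ by removing the $12R$-neighbourhood of $\mathcal{C}$ and the balls $D(y_j,11R)$ for $j\neq i$. For (b), the key topological point is that both $z_i$ and $D(z,2KR)$ lie in $\mathcal{U}$, and I chain $O(\log K)$ annular crossings of bounded modulus inside $\mathcal{S}_i\cap\mathcal{U}$, at geometrically increasing radii, to bridge the scales $R$ and $KR$; each such crossing has probability $\geq c$ by Theorem \ref{thm:rsw}, and by FKG their conjunction has probability $\geq K^{-C}$ with $C:=\log_2(1/c)$ universal.

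The main obstacle is the geometric analysis underlying (b): for a possibly wild $\mathcal{C}$, the safe region $\mathcal{S}_i$ can have complex topology, and I must ensure that the chain of annuli fits inside $\mathcal{S}_i\cap\mathcal{U}$ with enough room for RSW crossings of bounded modulus at every intermediate dyadic scale. The $(KR)$-good pair property provides ample room at the macroscopic scale around $z$, while the choice of $\hat{n}_i$ into $\mathcal{U}$ provides microscopic access near $z_i$; controlling the width of $\mathcal{U}$ at all dyadic scales between $R$ and $KR$ is the core planar-topological step I would carry out most carefully, using the fact that $z$ and $z_i$ lie in a common component of the complement of the $11R$-neighbourhood of $\mathcal{C}$ which is "thick" at least at the macroscopic end.
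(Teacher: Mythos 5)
Your overall architecture (deterministically place $\sim K$ candidate pairs along $\mathcal C$, lower-bound each $\Pro[H_i]$, sum) differs from the paper's, and it contains a gap at the one step that carries all the quantitative content of the lemma. You need $\E[\#\{i:H_i\}]\ge \eta_0K^{\eta_0}$, i.e.\ a quantity that \emph{grows} with $K$; with $N\asymp K$ candidates this forces $\Pro[H_i]\gtrsim K^{-C}$ for some universal $C<1$. Your justification is a chain of $O(\log K)$ RSW crossings combined by FKG, giving $c^{\,O(\log K)}=K^{-C}$ with $C=\log_2(1/c)$ where $c$ is the RSW constant for an annulus circuit. There is no reason why $c>1/2$: the constants produced by Theorem \ref{thm:rsw} and the gluing constructions are universal but not quantitatively controlled, so $C$ may well exceed $1$, in which case your first moment is $O(K^{1-C})\to 0$. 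Indeed, the only one-arm \emph{lower} bound available in the paper's toolbox (item vi) of Section \ref{ss:rsw}) has exponent exactly $1$, which would give $\E[\#]\asymp K\cdot K^{-1}=O(1)$ — not enough to make the branching construction in Proposition \ref{prop:poly_many_bis} supercritical.

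The paper circumvents precisely this obstacle by never lower-bounding an individual connection probability across scales. Instead it places lattice points $y$ near $\mathcal C$, defines $G(y)$ and a \emph{dual} event $G^\star(y)$ (connection in $\{f_r\le 0\}$), and uses a zero-level path forced by RSW and duality to hit the neighbourhood of $\mathcal C$, yielding $\sum_y\Pro[G(y)\cap G^\star(y)]\ge c_1$. The FKG inequality gives the \emph{negative} correlation $\Pro[G(y)\cap G^\star(y)]\le\Pro[G(y)]\,\Pro[G^\star(y)]$, and the polynomial gain $K^{c_3}$ then comes from the one-arm \emph{upper} bound $\Pro[G^\star(y)]\le c_2K^{-c_3}$ (item v) of Section \ref{ss:rsw}), which is cheap. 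This produces $\sum_y\Pro[G(y)]\ge c_4K^{c_3}$ without ever needing a sharp lower bound on a single $\Pro[G(y)]$. If you want to keep your structure, you would have to import this duality/negative-correlation step; the direct chaining of RSW events cannot deliver an exponent below $1$. (Your geometric construction of the pairs $(y_i,z_i)$ is broadly workable, though the paper's choice — lattice points near $\mathcal C$ with the random zero-level line selecting the relevant ones — avoids the delicate discussion of normal directions and tight loops of $\mathcal C$ that you flag but do not resolve.)
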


\begin{remark}\label{rem:last_step***}
We first note, for future reference, that the following analogous (but much more direct) result holds by the RSW theorem. For all $R>0$, $r \ge r_q$, every continuous path $\calC$ and every $R$-good pair $(x,z)$ for $\calC$, there exists a point $y \in D(x,5R)$ at distance at most $R$ from $\calC$ such that the following holds. Let $\widetilde{H}$ denote the event that there exists a path in $\{ f_r \ge 0\} \cap D(x,9R)$ from $D(y,R/100)$ to $D(z,2R)$. Then, there exists a universal constant $\eta_1>0$ such that $\Pro[\widetilde{H}] \ge \eta_1$.
\end{remark}

\begin{figure}[h!]
\centering
\includegraphics[scale=0.36]{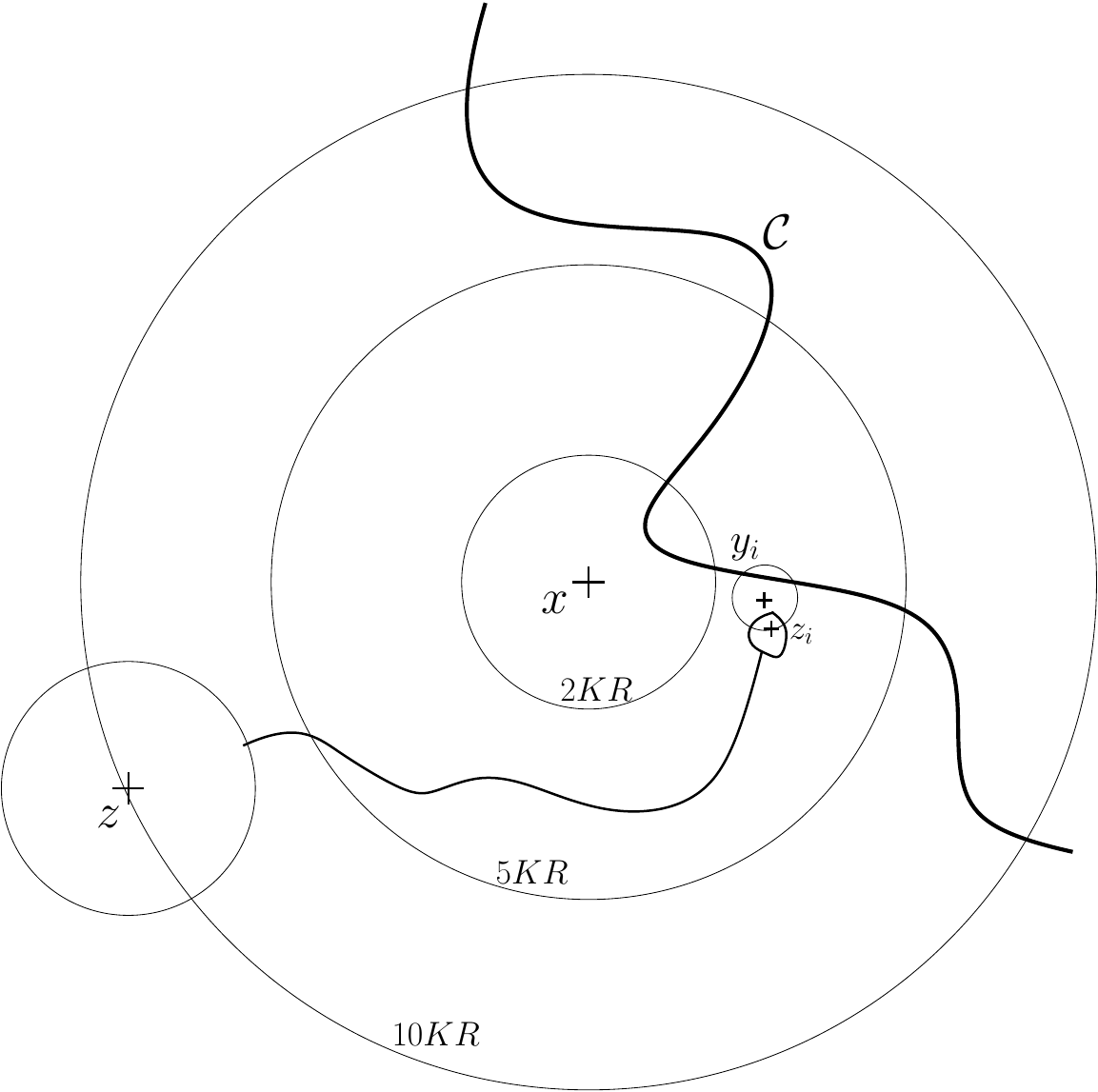}
\includegraphics[scale=0.36]{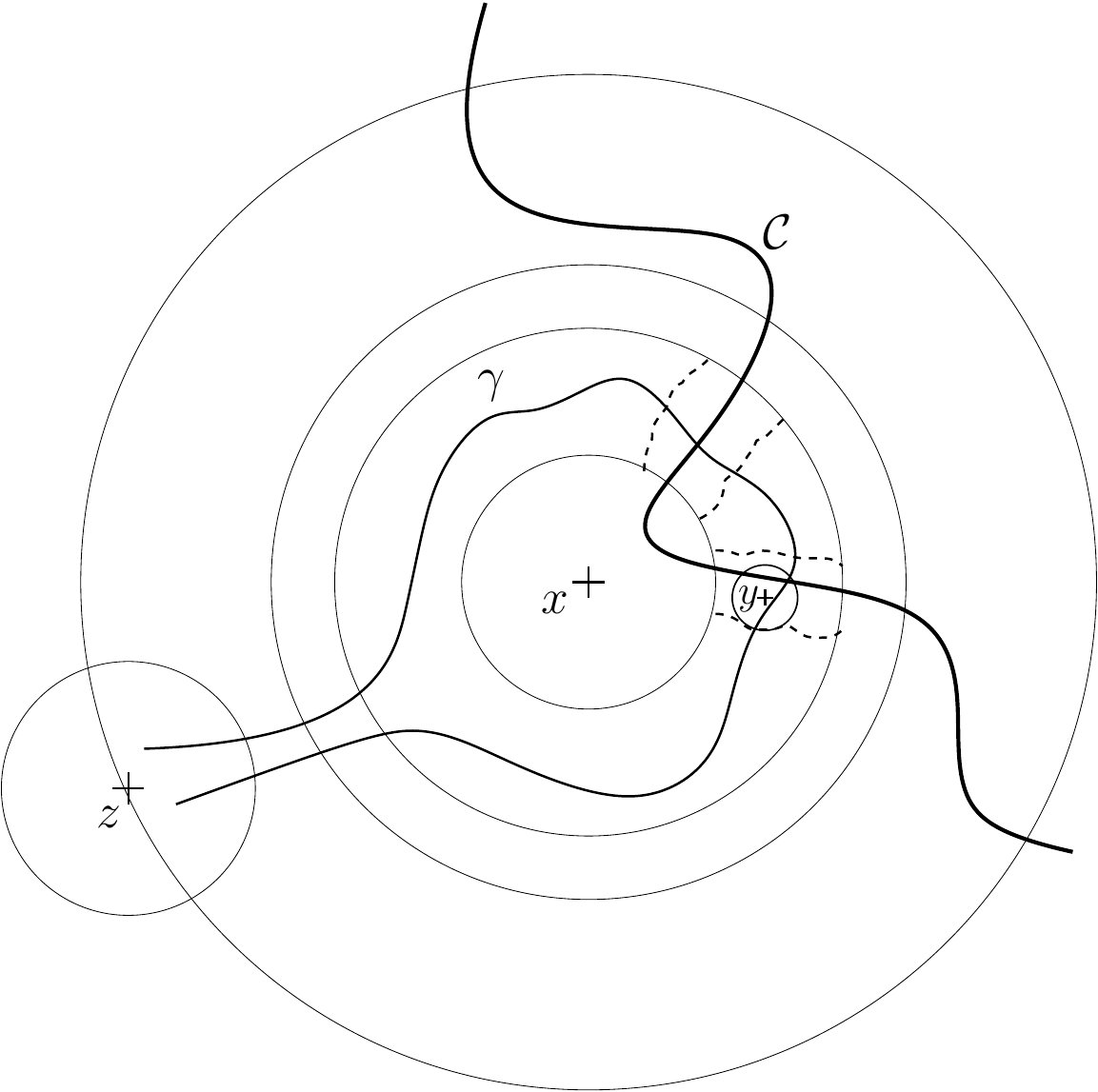}
\caption{(a) The event $H_i$. (b) The path $\gamma$ and (in dashed lines) the set $\Delta$.}\label{fig:tree_expect}
\end{figure}

\begin{proof}[Proof of Lemma \ref{l:first_moment}]
Let $\Delta'$ be the boundary in $D(x,5KR)$ of the union $\Gamma$ of the balls $D(y,11R)$ where $y$ ranges over the set of points in $ D(x,5KR)\cap \tfrac1{100}R\Z^2$ at a distance at most $2R$ from $\mathcal C$. To avoid having to deal with what happens close to $\partial D(x,5KR)$, we also consider $\Delta := \Delta' \cap D(x,4KR)$. Furthermore, let $Y$ be the set of such $y$ for which $\partial D(y,11R)\cap\Delta\neq\emptyset$. 

For each $y\in Y$, let $G(y)$ (resp. $G^\star(y)$) denote the event that $D(y,11R)$ is connected to $D(z,2KR)$ in $D(x,9KR)$ by a path in $\{f_r\ge0\}$ (resp.~$\{f_r\le 0$\}) staying at a distance at least $12R$ from $\mathcal C$ and not intersecting the interior of $\Gamma$.

Suitable applications of the RSW property Theorem~\ref{thm:rsw} (together with Lemma \ref{lem:mani*} and the fact that $f_r$ is $r$-dependent), imply the existence of $c_1>0$ such that with probability at least $c_1$, there exists a path $\gamma$ in $\{f_r=0\}\cap D(x,9KR)$ with endpoints in $D(z,2KR)$ such that $D(z,2KR)\cup \gamma$ contains a circuit in the annulus $D(x,10KR)\setminus D(x,2KR)$ separating its two boundary components. But $\mathcal C$ crosses this annulus and does not intersect $D(z,2KR)$ so it must intersect $\gamma$ (see Figure~\ref{fig:tree_expect}). Considering the first time $\gamma$ intersects $\cup_{y\in Y} D(y,11R)$ gives
\[
\sum_{y\in Y}\Pro [G(y)\cap G^\star(y)]\geq c_1.
\]
By the FKG inequality (see Lemma~\ref{lem:FKG1*}),
\[
\Pro [G(y)\cap G^\star(y)]\leq \Pro [G(y)] \Pro [G^\star(y)]
\]
and by spatial independence (using that $R \geq r$) and the RSW property, as mentioned at the end of Section \ref{ss:rsw} one may infer that $\Pro[G^\star(y)] \leq c_2K^{-c_3},$ so we deduce overall that 
\[
\sum_{y\in Y}\Pro [G(y)]\geq c_4 K^{c_3},
\]
for some $c_2,c_3,c_4>0$. We then extract a family $(y_i)_{i\in I} \subset Y$ of points at a mutual distance at least $30R$ such~that
\[
\sum_{i\in I}\Pro[G(y_i)]\geq c_5 K^{c_3}
\]
for some $c_5>0$. Applying the RSW property again near the starting point of the path described by $G(y_i)$ implies that there exists $c_6>0$ and a family $(z_i)_{i\in I}$ of points such that for each $i\in I$, $z_i$ belongs to $\partial D(y_i,10R)$ and is at a distance at least $11R$ from $\mathcal{C}$ and 
\[\Pro [H_i] \geq c_6 \Pro[G(y_i)],\]
 where $H_i$ is as in the statement of the lemma. The result follows readily.
 \end{proof}

We are now in a position to prove the proposition. We will prove Proposition \ref{prop:poly_many_bis} by constructing a tree whose nodes are the good pairs obtained thanks to Lemma \ref{l:first_moment} and whose edges are open if the corresponding events $H_i$ occur. Then, the existence of many open paths from the root to the leaves will imply the event under consideration in the proposition.

\begin{proof}[Proof of Proposition \ref{prop:poly_many_bis}]
Consider  $\eta_0$ given by Lemma \ref{l:first_moment} and $\eta_1$ given by Remark~\ref{rem:last_step***} and fix $K\geq 100$ such that $\eta_0 K^{\eta_0}>1$. For each integer $j\geq0$, let 
\begin{equation}\label{eq:treescales}
R_j=K^j \rho.
\end{equation}
(So in particular we have $R_0=\rho$.) Define $J$ to be the supremum over $j \geq 0$ such that $R_j \leq R/100$. If $J =  0$ then the result is a direct consequence of the RSW theorem so we assume that $J \ge 1$.

\medskip

We introduce a tree $\mathcal{T}$ of depth $J$ as follows. All the vertices will be pairs $(x,z) \in \R^2$ except leaves which will be points $y \in \R^2$.

\begin{itemize}
\item The root of $\mathcal T$ is a $R_J$-good pair  $(x_{\mathrm{root}},z_{\mathrm{root}})$ for $\mathcal{C} \cap D(1.9R)$ with $x_{\mathrm{root}} \in \partial D(1.9R+2R_J)$ and $z_{\mathrm{root}} \in \partial D(1.9R+12R_J)$ (cf.~the observation following Definition~\ref{def:goodpair} regarding its existence).
\item The vertices of $\mathcal{T}\setminus \{ (x_{\mathrm{root}},z_{\mathrm{root}}) \}$ which are not leaves are defined recursively as follows. For some $0\le j<J$, suppose that we consider a vertex $(x,z) \in \mathcal{T}$ which is a $R_{J-j}$-good pair for $\mathcal{C} \cap D(1.9R)$. Then, we add to $\mathcal{T}$ the vertices $(y_i,z_i)_{i\in I}$ given by Lemma~\ref{l:first_moment} when applied with $(x,z)$, $K$ and $R=R_{J-j}$, and join each of them to $(x,z)$ by an edge. We call the $(y_i,z_i)$'s the {\em descendants} of $(x,z)$. 
Finally, we say that the edge between $(x,z)$ and $(y_i,z_i)$ (where $(y_i,z_i)$ is a descendant of $(x,z)$) is {\em open} if $H_i$ occurs, with $H_i$ as appearing in the statement of Lemma~\ref{l:first_moment}.
\item The leaves are defined as follows. Let us consider a vertex $(x,z) \in \calT$ which is a $\rho$-good pair for $\calC \cap D(1.9R)$ (note that $\rho=R_j$ with $j=0$). Then, we add to $\calT$ the vertex $y$ given by Remark \ref{rem:last_step***} when applied to $(x,z)$, $R=\rho$, and join $y$ to $(x,z)$ by an edge. We say that this edge is \textit{open} if the event $\widetilde{H}$ appearing in Remark \ref{rem:last_step***} occurs.
\end{itemize}

For every $0\le j<J$, let $\mathbf V_j$ be the set of vertices in $\mathcal T$ that are $R_j$-good pairs and are connected by an open path to $(x_{\mathrm{root}},z_{\mathrm{root}})$. Also, let $\mathbf V_J=\{(x_{\mathrm{root}},z_{\mathrm{root}})\}$ and let $\mathbf V_{-1}$ denote the set of leaves which are connected by an open path to $(x_{\mathrm{root}},z_{\mathrm{root}})$.

\medskip

Due to the nested structure of the events $H_i$ and the notion of open edge in $\mathcal T$, a path of open edges between the root and a leaf $y$ 
guarantees the existence of a path from $D(y,\rho)$ to $D(z_{\rm root},2R_J)$ in $\{f_r\ge0\}\cap D(x_{\rm root},10R_J)$. Therefore, upon setting
\[
\mathbf N =\#\Big\{i\in I:D(y_i,\rho) \leftrightarrow \partial D(3 R) \text{ in $\{f_r\ge0\}\cap\big(D(3R)\setminus \cup_{j\ne i}D(y_j,2\rho) \big)$}\Big\},
\]
where $\{ y_i: i \in I\}$ refers to the set of leaves of $\mathcal{T}$, the RSW theorem and the FKG inequality imply that for every $\eta>0$,
\[
\mathbb P[\mathbf N\ge \eta(R/\rho)^\eta
]\ge c_0\mathbb P[|\mathbf V_{-1}|\ge \eta(R/\rho)^\eta],
\]
for some constant $c_0>0$.

\medskip
 
The previous observation implies that it suffices to show the existence of some $\eta>0$ such that
\begin{equation}\label{eq:treegoal}
\Pro [ |\mathbf V_{-1}| \geq \eta(R/\rho)^\eta ] \geq \eta.
\end{equation}
This claim is a direct consequence of repeated applications of the inequality
\begin{equation}\label{eq:h1}
\Pro\left[|\mathbf V_{j-1}|\ge \kappa |\mathbf V_j| \, \big| \, \mathcal{F}_j\right]\geq 1-e^{-c|\mathbf V_j|},
\end{equation}
for all $0 \le j \le J+1$ and some $c>0$ and where
\begin{itemize}[noitemsep]
\item $\mathcal{F}_j$ is the $\sigma$-algebra generated by the events $H_i$ for the $(y_i,z_i)$'s that are $R_i$-good pairs with $i\ge j$ (here and below, we use the convention that $\calF_{J+1}$ is the trivial $\sigma$-algebra);
\item $\kappa := ( 1+ \eta_0 K^{\eta_0})/2$ if $1\le j\le J$ and $\kappa := \eta_1/2$ if $j=0$. Here $\eta_0>0$ is as in Lemma \ref{l:first_moment} and $\eta_1$ is as in Remark \ref{rem:last_step***}.
\end{itemize}

We therefore focus on proving \eqref{eq:h1}. We write the proof for $1\le j\le J$ since the proof in the case $j=0$ is essentially the same (but easier). For each $(x,z)$ in the tree at a graph distance at least $2$ from the leaves, let $\mathbf N_{(x,z)}$ be the number of descendants $(x',z')$ of $(x,z)$ such that the edge of $\mathcal T$ between the two vertices is open.
We claim that, for any $A \in \calF_{j+1}$, any possible outcome $V$ for $\mathbf V_j$, and any $M>0$, the event
\begin{equation}\label{eq:sec4fkglocal***}
\Big\{\sum_{(x,z)\in V} \mathbf N_{(x,z)} > M \Big\}
\end{equation}
is positively correlated with $\{ \mathbf V_j=V \} \cap A$. Before proving this property, let us conclude the proof. By using this property, we have
\[
\Pro\left[|\mathbf V_{j-1}|<\kappa|V|\, \big| \, \mathbf V_j=V, \mathcal{F}_{j+1}\right]\leq \Pro\Big[\sum_{(x,z)\in V} \mathbf N_{(x,z)} <\kappa |V|\Big] \text{ a.s.}
\]
for any possible outcome $V$ for $\mathbf V_j$. By definition of the tree, there exists a constant $C>0$ such that for each $(x,z)\in V$, $0\le \mathbf N_{(x,z)}\le CK^2$. Moreover, the variables $\mathbf N_{(x,z)}$ are independent from each other since the $(x,z)\in V$ are at a distance at least $30R_j$ from each other and $f_r$ is $r$-dependent. Moreover, by Lemma \ref{l:first_moment}, their expectation is at least $\eta_0K^{\eta_0}>\kappa$. By a standard concentration inequality for sums of i.i.d.~random variables (Hoeffding's inequality) applied to the family of variables $(\kappa- \mathbf N_{(x,z)})_{(x,z)\in V}$, we have
\[
\Pro\Big[\sum_{(x,z)\in V} \mathbf N_{(x,z)}<\kappa|V|\Big]\leq \exp\Big(-\frac{2(\eta_0K^{\eta_0}-\kappa)^2}{(CK^2+\kappa)^2}|V|\Big).
\]
Since this is true for any $V$ and since $\mathcal F_j$ is the smallest $\sigma$-algebra containing $\mathcal F_{j+1}$ and $\mathbf V_j$, this implies \eqref{eq:h1}. 

\medskip

It only remains to show that \eqref{eq:sec4fkglocal***} is positively correlated with $\{ \mathbf V_j=V \} \cap A$ (for any $A,V,M$ as above \eqref{eq:sec4fkglocal***}). To this purpose, we observe that
\begin{itemize}
\item $\{ \mathbf V_j = V \} \cap A$ is increasing in the $11R_j$-neighborhood of $\cup_{(x,z) \in V} \{ x \}$ (here we have used that, since $A \in \mathcal{F}_{j+1}$, $\{ \mathbf V_j = V \} \cap A$ can be written as a union of the intersection of the increasing event $\{ \mathbf V_j \supset V \}$ with events that are measurable with respect to the complement of the $11R_j$-neighborhood of $\cup_{(x,z) \in V} \{ x \}$);
\item $\{\sum_{(x,z)\in V} \mathbf N_{(x,z)} > M \}$ is increasing and measurable with respect to $\cup_{(x,z) \in V} D(x,9R_j)$.
\end{itemize}
So the result holds by applying the local FKG inequality Corollary \ref{cor:FKG} to $U=\R^2$, $V=\cup_{(x,z) \in V} D(x,9R_j)$, $\delta=1$, $\phi=1_{\{ \mathbf V_j = V \} \cap A}$ and $\psi=1_{\{\sum_{(x,z)\in V} \mathbf N_{(x,z)} > M \}}$.
\end{proof}

\section{A two-arms estimate in a slab}
\label{sec5}

Let $q : \R^d \rightarrow \R$ satisfying Assumption \ref{ass1} for some $\beta>d$. In this section, we consider two parameters $\gamma,a\in (0,1)$ such that $\gamma<a^2$ and study connectivity properties in $[-4R,4R]^2 \times [0,R^a]$. Our main goal is to prove the following uniqueness quasi-planar result that plays the role of \eqref{eq:2armsintro} from the sketch of proof. Recall that for $d'\leq d$, we routinely view $D\subset\R^{d'}$ as the subset $D \times \{ 0\}^{d-d'}$ and that we let $\calP_t:=\{x \in \R^3 : x_3=t\}$.

\begin{proposition}\label{prop:2arms*}
There exists $b>0$ that depends only on the dimension $d$ such that the following holds if $0<\gamma<a^2<b$. For all $\delta>0$ there exist $R_0,c>0$ such that for every $R \ge R_0$,
\[
\mathbb P\bigg[\begin{array}{c}\text{all the c.c.~of $\{ f_{R^\gamma} \ge R^{-3/2} \} \cap [-2R,2R]^2$ of diameter at least $ \delta R$}\\
\text{belong to the same c.c.\ of $\{ f_{R^\gamma} \ge R^{-3/2} \}\cap[-4R,4R]^2\times[0,R^a]$}\end{array}\bigg] \ge 1-\exp(-R^c).
\]
\end{proposition}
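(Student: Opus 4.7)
The strategy, as outlined in the introduction, is to produce for each of the two macroscopic planar clusters $\mathcal C$ and $\mathcal C'$ a large collection of well-separated ``landing points'' in a plane at height $\le R^a$, and then to connect these landing points via Russo--Seymour--Welsh arguments in that upper plane. Throughout, set $r=R^\gamma$, fix a small $\varepsilon>0$ (depending on $\gamma,a,\delta$) and let $\rho=R^{1-\varepsilon}$. A standard $C^1$-regularity argument bounds the number of components of $\{f_r\ge 2R^{-3/2}\}\cap[-2R,2R]^2$ of diameter $\ge\delta R$ by a polynomial in $R$ on an event of probability at least $1-\exp(-R^{c'})$, so by a union bound over pairs it suffices to fix two such components $\mathcal C$ and $\mathcal C'$ and show that they lie in the same component of $\{f_r\ge R^{-3/2}\}\cap([-4R,4R]^2\times[0,R^a])$ with probability at least $1-\exp(-R^c)$.

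Choose a planar path $P\subset\mathcal C$ of diameter $\ge\delta R$ and apply Proposition~\ref{prop:poly_many_bis} to $P$ at scale $\rho$: it produces a deterministic family $(y_i)_i\subset\R^2$ at mutual distance $\ge 20\rho$, each within distance $\rho$ of $P$, together with a positive-probability event on which at least $\eta R^{\eta\varepsilon}$ of these points satisfy that $D(y_i,\rho/100)$ is planar-connected to $\partial D(3R)$ inside $\{f_r\ge 0\}$, avoiding the other $D(y_j,2\rho)$. As the diameter of $P$ is much larger than $\rho$, for every such good $y_i$ the cluster $\mathcal C$ itself contains a path crossing the annulus $D(y_i,2\rho)\setminus D(y_i,\rho)$ inside $\{f_r\ge 2R^{-3/2}\}$. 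Repeating the construction for $\mathcal C'$ yields contact points $(y'_j)_j$.

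For each good $y_i$, apply the translate of Proposition~\ref{prop:mw} centred at $y_i$ at the ambient scale $\rho$, with $\ell=R^{-3/2}$ and $\theta_0=\theta_0(\varepsilon)$ chosen so that $\rho^{-2+\theta_0}=R^{-3/2}$ (explicitly $\theta_0=1/2-3\varepsilon/(2(1-\varepsilon))$). With probability at least $1-\exp(-\rho^{c_0})$, the crossing of the annulus obtained above lies in a connected component of $\{f_r\ge R^{-3/2}\}\cap(D(y_i,2\rho)\times[0,\rho^{a'}])$ which hits $\mathcal P_{\rho^{a'}}$ at a point $u_i$ located horizontally within $2\rho$ of $y_i$; for $\varepsilon$ small enough (and $\gamma$ small compared to $a'$), $r\le\rho^{a'}\le R^a$. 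A union bound over the polynomially many good $y_i$ and $y'_j$ preserves probability $1-\exp(-\rho^{c_0'})$. The landing points $\{u_i\}\cup\{u'_j\}$ sit in $\mathcal P_{\rho^{a'}}$, at mutual distance $\ge 16\rho$, and inside $[-3R,3R]^2$. Because $\rho^{a'}\ge r$, the restriction $f_r|_{\mathcal P_{\rho^{a'}}}$ is independent of $f_r|_{\R^2\times\{0\}}$, so Theorem~\ref{thm:rsw}, Lemma~\ref{lem:FKG1*} and the gluing constructions of Section~\ref{ss:rsw} applied at the tiny shifted level $R^{-3/2}$ (absorbed by the RSW constants) connect all the $u_i,u'_j$ inside a single component of $\{f_r\ge R^{-3/2}\}\cap\mathcal P_{\rho^{a'}}\cap[-4R,4R]^2$ with probability at least $1-\exp(-R^{c_1})$. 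Intersecting the three events gives the required 3D connection from $\mathcal C$ to $\mathcal C'$.

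The main obstacle is boosting the $\eta$-probability conclusion of Proposition~\ref{prop:poly_many_bis} into a probability $1-\exp(-R^c)$. This is achieved by splitting $P$ into $\Theta(R^\varepsilon)$ macroscopic sub-arcs that are $\gg r$-separated, so that (via the $r$-dependence of $f_r$) the local contact-point events on different sub-arcs are essentially independent, and applying Hoeffding's inequality to the number of successful sub-arcs. Ensuring that such a decomposition is possible regardless of the geometry of the random cluster $\mathcal C$, and carefully tracking the interplay of the scales $r=R^\gamma$, $\rho=R^{1-\varepsilon}$, $\rho^{a'}$ and $R^a$, constitutes the most delicate technical point.
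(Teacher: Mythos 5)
There is a genuine gap, and it sits at the final, decisive step. You claim that all the landing points $u_i,u'_j$ — which lie at mutual distance of order $R^{1-\eps}$ and are spread over a region of diameter of order $R$ — can be joined inside a single component of $\{f_r\ge R^{-3/2}\}\cap\mathcal{P}_{\rho^{a'}}\cap[-4R,4R]^2$ with probability $1-\exp(-R^{c_1})$. This cannot hold: $R^{-3/2}$ is a vanishing perturbation of the planar critical level $0$, so by Theorem \ref{thm:rsw} and the duality of Lemma \ref{lem:mani*} the probability that a fixed horizontal slice even contains a left--right crossing of $[-4R,4R]^2$ is bounded away from $1$ uniformly in $R$; worse, connecting two prescribed $\rho$-discs at distance of order $R$ by RSW$+$FKG gluing costs roughly $c^{(R/\rho)^2}=\exp(-CR^{2\eps})$, i.e.\ the planar connection probability is small, not close to $1$. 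No single plane can carry the connection with stretched-exponential probability. The paper's proof resolves exactly this by stacking $\bar k\asymp R^{a-\gamma}$ planes $\mathcal{P}_{h_k}$ at mutual vertical distance $R^\gamma$, whose associated ``Poly'' events are independent by $R^\gamma$-dependence, so that \emph{some} layer carries the planar structure with probability $1-e^{-cR^{a-\gamma}}$ (see \eqref{eq:layers_poly*}); the price is that one must then connect $\calC$ and $\calC'$ up to the random successful height, which is the content of the sequential Claim \ref{cl:two_arm_1} combined with the conditional local FKG inequality of Remark \ref{rmk:conditioning_and_fkg}.

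Two further points. First, your Hoeffding boosting of Proposition \ref{prop:poly_many_bis} fails as stated: the events ``$D(y_i,\rho/100)$ is connected to $\partial D(3R)$'' reach out to the macroscopic scale $R$, so they are strongly positively correlated across different sub-arcs of $P$; $r$-dependence of $f_r$ does not decouple them, and localizing them destroys the lower bound the proposition provides. Second, by intersecting unconditional high-probability events you are implicitly avoiding any conditioning on the clusters $\calC,\calC'$; but since the planar connection event has at best constant probability, you would be forced to condition on it and on the field near $\calC$, and the absence of finite energy — the central difficulty that the $\textup{Sprouts}$/$\widetilde{\prob}$ machinery of the paper is built to handle — then resurfaces. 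Your identification of Proposition \ref{prop:mw} (to send the planar clusters upward off $\mathcal{P}_0$) and of Proposition \ref{prop:poly_many_bis} (to produce polynomially many well-separated contact points) does match the paper's architecture; what is missing is the multi-layer construction and the iterative conditional argument that turn a constant-probability planar event into a $1-\exp(-R^c)$ bound.
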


\begin{remark}
The same proof gives Proposition \ref{prop:2arms*} with any level in $[0,R^{-3/2}]$ instead of level $R^{-3/2}$.
\end{remark}

Before proving Proposition~\ref{prop:2arms*}, let us state the following elementary planar existence result which, in combination with Proposition \ref{prop:2arms*}, will help us create large components in $\{ f_{R^\gamma} \ge R^{-3/2}\}$ with high probability.

\begin{lemma}[Existence of a macroscopic planar component]\label{lem:macro*}
For every $\delta > 0$ there exists $R_0>0$ such that for every $R \ge R_0$, 
\[
\mathbb{P}\big[\text{$\{ f_{R^\gamma} \ge R^{-3/2} \} \cap [-R,R]^2$ contains a c.c.~of diameter $ \ge \delta R$}\big] \ge 1-\left( \frac{1}{2} \right)^{\lfloor 1/(2\delta) \rfloor^2-1}.
\]
\end{lemma}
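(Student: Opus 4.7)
\emph{Plan.} Tile $[-R,R]^2$ with many disjoint sub-squares on which $f_{R^\gamma}$ is independent, use that in each sub-square an LR crossing at level $0$ has probability exactly $1/2$ (duality plus isotropy), then transfer this to level $2R^{-3/2}$ via a Cameron--Martin sprinkling that costs only $o_R(1)$.

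\emph{Setup and independence.} Set $N:=\lfloor 1/(2\delta)\rfloor$. For $R$ large enough, one may place $N^2$ disjoint axis-aligned closed sub-squares $S_{ij}\subset[-R,R]^2$ of common side $2\delta R$ with pairwise Euclidean separation at least $R^\gamma$: indeed the total extent $N\cdot 2\delta R+(N-1)R^\gamma\le R+NR^\gamma\le 2R$ since $2N\delta\le 1$ and $R^\gamma=o(R)$. Let
\[
A_{ij}:=\bigl\{\{f_{R^\gamma}\ge 2R^{-3/2}\}\cap S_{ij}\text{ admits a left--right crossing}\bigr\}.
\]
Any such crossing is a continuous path of diameter $\ge 2\delta R\ge\delta R$ contained in $\{f_{R^\gamma}\ge 2R^{-3/2}\}\cap[-R,R]^2$, so the lemma is reduced to showing $\mathbb P[\bigcap_{ij}\overline{A_{ij}}]\le (1/2)^{N^2-1}$. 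Moreover, since $q_{R^\gamma}$ is supported in the ball $B(R^\gamma/2)$, $f_{R^\gamma}$ is $R^\gamma$-dependent, so the restrictions $(f_{R^\gamma}|_{S_{ij}})_{ij}$ are jointly independent and so are the events $(A_{ij})_{ij}$.

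\emph{Base probability at level 0.} Lemma \ref{lem:mani*} combined with the $\pi/2$-rotational invariance of the square (coming from isotropy of $q$, which makes the planar field $f_{R^\gamma}|_{\R^2}$ rotationally invariant) and the $f\leftrightarrow-f$ symmetry of the centered Gaussian law yields
\[
\mathbb P\bigl[\text{LR crossing of }S_{ij}\text{ by }\{f_{R^\gamma}\ge 0\}\bigr]=\tfrac12
\]
exactly: the complementary event coincides a.s.\ with the existence of a TB crossing by $\{f_{R^\gamma}\le 0\}$, which has the same probability by the two symmetries.

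\emph{Cameron--Martin sprinkling.} Denote $\ell_R:=2R^{-3/2}$, and consider the planar Gaussian field $f_{R^\gamma}|_{\R^2}$, whose Cameron--Martin space $\mathcal H$ is the RKHS associated to the kernel $K_2(x,y):=(q_{R^\gamma}\star q_{R^\gamma})(x-y)$ on $\R^2$. Let $h:\R^2\to\R$ be a smooth bump with $h\equiv -\ell_R$ on $S_{ij}$ and supported in a slight enlargement. Because $K_2$ has correlation length $R^\gamma\ll R$ and the $2$-dimensional integral $c_2:=\int_{\R^2}K_2(0,y)\,dy$ is positive and bounded away from $0$ uniformly for $R$ large, the formal inverse of $K_2$ behaves on scales $\gg R^\gamma$ like $c_2^{-1}\delta$, whence
\[
\|h\|_{\mathcal H}^2=\Theta\!\bigl(c_2^{-1}\,\ell_R^2\,|S_{ij}|\bigr)=\Theta\!\bigl(R^{-3}\cdot R^2\bigr)=\Theta(R^{-1}).
\]
Since on $S_{ij}$ one has $\{f_{R^\gamma}+h\ge 0\}=\{f_{R^\gamma}\ge\ell_R\}$, the event $A_{ij}$ coincides with the level-$0$ LR crossing event computed under the Cameron--Martin shifted law. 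The standard bound on the Radon--Nikodym derivative $Z$ gives $\|Z-1\|_{L^2}=\sqrt{\exp(\|h\|_{\mathcal H}^2)-1}=O(R^{-1/2})$, and by Cauchy--Schwarz
\[
\bigl|\mathbb P[A_{ij}]-\tfrac12\bigr|\le \|Z-1\|_{L^2}=O(R^{-1/2})\xrightarrow[R\to\infty]{}0.
\]

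\emph{Conclusion.} By independence,
\[
\mathbb P\Bigl[\bigcap_{ij}\overline{A_{ij}}\Bigr]=\prod_{ij}\mathbb P[\overline{A_{ij}}]\le\bigl(\tfrac12+O(R^{-1/2})\bigr)^{N^2}\le 2\cdot\bigl(\tfrac12\bigr)^{N^2}=\bigl(\tfrac12\bigr)^{N^2-1}
\]
for $R\ge R_0(\delta)$, since $N$ is fixed and $(1+O(R^{-1/2}))^{N^2}\to 1$. This yields the claimed lower bound on the probability of the target event.

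\emph{Main obstacle.} The critical quantitative step is the RKHS-norm bound $\|h\|_{\mathcal H}=O(R^{-1/2})$. The heuristic $K_2^{-1}\approx c_2^{-1}\delta$ must be turned into a rigorous estimate, which is most cleanly done by writing $h=q_{R^\gamma}\star g$ for an explicit $g\in L^2(\R^d)$ (e.g., $g\propto\ell_R\cdot\mathbf{1}_{S'_{ij}}/c_q$ on the $R^\gamma$-thickening $S'_{ij}$ of $S_{ij}$ in $\R^d$) and checking that $\|g\|_{L^2}^2=o(1)$; this requires some care (and compatibility with the standing constraint $\gamma<a^2<1$) but is essentially a standard computation for short-range Gaussian fields. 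The remainder of the argument is routine.
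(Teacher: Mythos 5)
Your proposal is correct and follows essentially the same route as the paper: tile $[-R,R]^2$ with $\lfloor 1/(2\delta)\rfloor^2$ sub-squares separated by more than $R^\gamma$ (hence independent for the $R^\gamma$-dependent field), use Lemma~\ref{lem:mani*} plus the duality/symmetry argument to get crossing probability exactly $1/2$ at level $0$, and absorb the shift to level $2R^{-3/2}$ by a Cameron--Martin estimate costing $O(R^{-1/2})$. The only substantive difference is in how the sprinkling is handled: the paper applies its ready-made Lemma~\ref{lem:Cameron-Martin} (Proposition 3.6 of \cite{MV}) \emph{once}, to the global monotone event ``there is a c.c.\ of diameter $\ge\delta R$'', with $t=-2R^{-3/2}$, giving a loss $C|t|R=O(R^{-1/2})$; you instead re-derive the shift estimate per square via an RKHS computation that you explicitly leave heuristic. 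That heuristic is precisely the content of Lemma~\ref{lem:Cameron-Martin}, which is stated for exactly this setting (monotone events, $f_r$, uniform in $r$), so you should simply cite it rather than reprove it. If you do insist on the real-space derivation, note that realizing $h=q_{R^\gamma}\star g$ with $g$ supported on an $R^\gamma$-thickening of $S_{ij}$ in $\R^d$ gives $\|g\|_{L^2}^2=\Theta(\ell_R^2R^2(R^\gamma)^{d-2})$, so the extra volume factor $(R^\gamma)^{d-2}$ forces the additional constraint $\gamma<1/(d-2)$; the paper's lemma avoids this by working on the Fourier side with an $O(1)$-thickening, which is one concrete reason the citation is the safer path.
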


\begin{proof}
For every $\delta>0$, there exist at least $\lfloor 1/(2\delta) \rfloor^2$ squares of side length $\delta R$ which are included in $[-R,R]^2$ and are at a mutual distance larger than $\delta R$. An easy application of Lemma \ref{lem:mani*} and the duality between $\{f_{R^\gamma}>0\}$ and $\{f_{R^\gamma}<0\}$ implies that each square is crossed by $\{f_{R^\gamma}\geq 0\}$ with probability $1/2$. Choose $R_0$ such that $R^\gamma \le \delta R$ for all $R \ge R_0$. Then, for all such $R$,
\[
 \mathbb{P}\big[\text{$\{ f_{R^\gamma} \ge 0\} \cap [-R,R]^2$ contains a c.c.~of diameter at least $\delta R$}\big] \ge 1-\left( \frac{1}{2} \right)^{\lfloor 1/(2\delta) \rfloor^2}.
\]
We conclude by applying Lemma \ref{lem:Cameron-Martin} to the event on the left-hand side with $r=R^\gamma$ and $t=-R^{-3/2}$.
\end{proof}

Before proving Proposition \ref{prop:2arms*}, let us write two remarks that are central in the proof. The first remark is mainly about the use of the local FKG inequality in the proof (which is more subtle than one may expect). The second remark is about the fact that we can use RSW results at level $R^{-3/2}$ instead of $0$.

\begin{remark}\label{rmk:conditioning_and_fkg}
In the proof of Proposition \ref{prop:2arms*}, we will work in $\R^3 \subset \R^d$ and frequently condition the field $f_{R^\gamma}$ with respect to its values on certain subsets. We first observe that, for any subset $U\subseteq\R^3$, the law of $f_{R^\gamma}$ conditioned on $(f_{R^\gamma})_{|U}$ is still Gaussian (to prove this, note for instance that, since the field is a.s.\ continuous, it is determined by its values on a countable number of points).

Moreover, if $V\subseteq\R^3$ is such that for each $x\in U$ and $y\in V$, $\E[f_{R^\gamma}(x)f_{R^\gamma}(y)]=0$, then the law of $(f_{R^\gamma})_{|V}$ is unaffected under the conditioning on $(f_{R^\gamma})_{|U}$. Furthermore, by the local FKG inequality (see Corollary \ref{cor:FKG}), the following property holds, which is tailored to the purposes of the present section:

\medskip 
Fix $W,U_A,U_B\subseteq\R^3$ and let $A \in \mathcal{F}_{U_A}$ and $B \in \mathcal{F}_{U_B}$ (recall the definition of these $\sigma$-algebras from Section \ref{ssec:not}). Assume that $B$ is increasing, that $A$ is increasing on the set of points in $\R^3$ at a distance at most $R^\gamma+1$ from $U_B$ and that $\textup{dist}(U_A,W) \ge 2R^\gamma+1$. Then, applying Corollary \ref{cor:FKG} to the conditional (Gaussian) measure $\prob[\,\cdot\, |\, (f_{R^\gamma})_{|W}]$ with the choices $U=U_A$, $V=U_B$, $\phi=1_A$, $\psi=1_B$ and $\delta=1$, one obtains that
\begin{equation}
\label{eq:conditionalFKGlocal}
\prob[f_{R^\gamma} \in A\cap B \mid (f_{R^\gamma})_{|W}]\geq\prob[f_{R^\gamma} \in A \mid (f_{R^\gamma})_{|W}] \times \prob[f_{R^\gamma} \in B \mid (f_{R^\gamma})_{|W}].
\end{equation}
\end{remark}

\begin{remark}\label{rem:sprinkling_rswetc}
As one can notice in Proposition \ref{prop:2arms*}, our goal in this section is to connect planar components by using paths in $\{ f_{R^\gamma} \ge R^{-3/2} \}$. As a result, it will be very useful to have at our disposal RSW results at level $R^{-3/2}$ instead of $0$. Let us first recall that since the RSW theorem (Theorem \ref{thm:rsw}) holds with universal constants, then it holds with $f_{R^\gamma}$ instead of $f$. Next, by applying Lemma \ref{lem:Cameron-Martin} (to $r=R^\gamma$ and $t=-R^{-3/2}$), we obtain that there exits $R_0>0$ such that the following holds:
\begin{equation}\label{eq:sprinkling_rsw**}
\begin{array}{c}
\text{If $R>R_0$ then Theorem \ref{thm:rsw} holds with $\{ f \ge 0 \}$ replaced by $\{ f_{R^\gamma} \ge R^{-3/2} \}$}\\
\text{in the definition of $\cross(\rho R,R)$.}
\end{array}
\end{equation}
The same argument implies that there exists $R_0>0$ such that we have the following:
\begin{equation}\label{eq:sprinkling_poly**}
\begin{array}{c}
\text{If $R>R_0$ then Proposition \ref{prop:polynom_absctract} holds for any $\rho \in [R^\gamma,R]$}\\
\text{and with $\{ f_r \ge 0 \}$ replaced by $\{ f_{R^\gamma} \ge R^{-3/2} \}$.}
\end{array}
\end{equation}
\end{remark}

Let us now prove Proposition \ref{prop:2arms*}.

\begin{proof}[Proof of Proposition \ref{prop:2arms*}]
Recall that we consider two parameters $0<\gamma<a^2<1$ which are both assumed to be small. Throughout the proof, we will frequently condition on $(f_{R^\gamma})_{|\calP_0}$. We will denote by $\widetilde{\prob}$ the probability law with this conditioning (viewed as a regular conditional probability measure). Consider the event
\begin{equation}\label{eq:mw_def}
\textup{Sprouts}(R):=\left\{\begin{array}{c}\text{every continuous path included in $\{f_{R^\gamma} \geq R^{-3/2} \} \cap [-2R,2R]^2$ of}\\
\text{diameter at least $R^a$ is connected to $\mathcal{P}_{R^{a^2}}$ by a path included in}\\
\text{$\{ f_{R^\gamma} \ge R^{-3/2} \} \cap ([-2R,2R]^2\times[0,R^{a^2}])$ of diameter $\le 3R^a$}\end{array}\right\}.
\end{equation}
At this point, we assume that $a$ and $\gamma/a$ are small enough for Proposition \ref{prop:mw} to apply at scale $R^a$, with truncation exponent $\gamma/a$ instead of $\gamma$. By Proposition~\ref{prop:mw} (at $\ell=R^{-3/2}$) followed by a union bound,  there exist $R_0,c_0>0$ such that if $R\geq R_0$, 
\begin{equation}\label{eq:sprouts_1}
\prob[\textup{Sprouts}(R)]\geq 1-e^{-R^{c_0}}.
\end{equation}
Let $\widetilde{\textup{Sprouts}}(R)$ be the $(f_{R^\gamma})_{|\calP_0}$-measurable event
\begin{equation}\label{eq:sprouts_tilde_def}
\widetilde{\textup{Sprouts}}(R):=\left\{\widetilde{\prob}[\textup{Sprouts}(R)]\geq 1-e^{-R^{c_0}/2}\right\}.
\end{equation}
Then, by \eqref{eq:sprouts_1} and Markov's inequality applied to $\widetilde{\textup{Sprouts}}(R)^c$, one finds for $R\geq R_0$,
\begin{equation}\label{eq:sprouts_2}
\prob[\widetilde{\textup{Sprouts}}(R)]\geq 1-e^{-R^{c_0}/2}.
\end{equation}


Let $\delta>0$. The following lemma is the core of the proof of Proposition \ref{prop:2arms*}. Conditionally on $(f_{R^\gamma})_{|\calP_0}$ and on the (very likely) event $\widetilde{\textup{Sprouts}}$, one may connect two given large connected components of $\{f_{R^\gamma}\geq R^{-3/2}\}\cap[-2R,2R]^2$ by a path in $\{f_{R^\gamma}\geq R^{-3/2}\}\cap([-4R,4R]^2\times[0,R^a])$ with very good probability. To conclude the proof, we will essentially apply this lemma to all pairs of connected components of this set.
\begin{lemma}\label{lemma:two_arms_1}
Suppose that $a$ satisfies $5a/(1-a)<\eta$. There exist $c,R_0>0$ such that the following holds. Assume that $\widetilde{\textup{Sprouts}}(R)$ is satisfied and let $\calC$ and $\calC'$ be two continuous paths included in $\{f_{R^\gamma}\geq R^{-3/2}\}\cap[-2R,2R]^2$ of diameter $\delta R$ and at a mutual distance at least $100\delta R$. If $R \ge R_0$ then
\[
\widetilde{\prob}\left[\calC\leftrightarrow\calC' \text{ in } ([-4R,4R]^2\times[0,R^a])\cap\{f_{R^\gamma}\geq R^{-3/2}\}\right]\geq 1-e^{-R^c}.
\]
\end{lemma}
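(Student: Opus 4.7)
The plan is to build a chain $\mathcal{C} \rightsquigarrow u \rightsquigarrow u' \rightsquigarrow \mathcal{C}'$, where $u$ and $u'$ are ``sprout endpoints'' lying in the horizontal slice $\calP_{R^{a^2}}$ near $\mathcal{C}$ and $\mathcal{C}'$ respectively. The two vertical links are provided by the event $\textup{Sprouts}(R)$, while the horizontal link uses RSW-type constructions at height $R^{a^2}$; this last step is legitimate because $f_{R^\gamma}$ is $R^\gamma$-dependent and $R^{a^2}>R^\gamma$, so that $(f_{R^\gamma})_{|\calP_{R^{a^2}}}$ is independent of the conditioning $(f_{R^\gamma})_{|\calP_0}$. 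The assumption $\widetilde{\textup{Sprouts}}(R)$ guarantees that $\textup{Sprouts}(R)$ itself holds with $\widetilde{\prob}$-probability at least $1-e^{-R^{c_0}/2}$, so I may work on this event throughout.

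Step 1 (many sprouts from each cluster). Since $\mathcal{C}$ has diameter $\delta R \gg R^a$, I first select $N \gtrsim R^{1-a}$ sub-arcs $\pi_1,\dots,\pi_N$ of $\mathcal{C}$, each of diameter exactly $R^a$, whose $3R^a$-neighbourhoods $V_i$ are pairwise disjoint and also disjoint from $\mathcal{C}'$. Each $\pi_i$ is a planar path in $\{f_{R^\gamma}\ge 2R^{-3/2}\}$ of diameter $R^a$; on $\textup{Sprouts}(R)$ it lifts to a slab path in $\{f_{R^\gamma}\ge R^{-3/2}\}\cap (V_i\times[0,R^{a^2}])$ of diameter at most $3R^a$, terminating in a landing disc $D_i\subset \calP_{R^{a^2}}\cap D(x_i,4R^a)$ for some deterministic centre $x_i$. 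The symmetric construction on $\mathcal{C}'$ yields landing discs $(D_j')_{j\leq N'}$ with $N'\gtrsim R^{1-a}$.

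Step 2 (horizontal link and concatenation). In the upper plane $\calP_{R^{a^2}}$, since $(f_{R^\gamma})_{|\calP_{R^{a^2}}}$ is unconditionally distributed, I can freely use the sprinkled RSW theorem \eqref{eq:sprinkling_rsw**} and the polynomial-many-contacts Proposition~\ref{prop:polynom_absctract} (via \eqref{eq:sprinkling_poly**}): tile the horizontal strip between the two clouds $\{D_i\}$ and $\{D_j'\}$ into $\sim R^{c'}$ disjoint corridors of aspect ratio $O(1)$, each crossed by $\{f_{R^\gamma}\ge R^{-3/2}\}$ with constant probability, with mutually independent events by the $R^\gamma$-dependence. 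This yields a horizontal connection $D_i\leftrightarrow D_j'$ with $\widetilde{\prob}$-probability at least $1-e^{-R^c}$. To concatenate this link with the vertical sprouts of Step 1, whose supports live in the disjoint vertical cylinders $V_i\times[0,R^{a^2}]$, I invoke the local FKG inequality (Corollary~\ref{cor:FKG}) in its conditional form \eqref{eq:conditionalFKGlocal}; the separation hypotheses are satisfied because the horizontal corridor lies at vertical distance $\gtrsim R^{a^2}-3R^a\gg R^\gamma$ from $\calP_0$ and the vertical cylinders are horizontally separated by $\gtrsim R^a\gg R^\gamma$ from one another.

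The chief difficulty is producing the $e^{-R^c}$ error in Step 2 rather than a mere constant, since a single RSW crossing between the neighbourhoods of $\mathcal{C}$ and $\mathcal{C}'$ succeeds only with constant probability. Exploiting the polynomial number of candidate landing discs to run essentially independent trials — while ensuring that the independence survives both the planar conditioning (via the $R^\gamma$-dependence of $f_{R^\gamma}$) and the FKG bookkeeping (via Corollary~\ref{cor:FKG} with its proper $U,V,r,\delta$ parameters) — is where Proposition~\ref{prop:polynom_absctract} earns its keep. A secondary subtlety is the threshold gap between $R^{-3/2}$ and $2R^{-3/2}$, reconciled via the sprinkling of \eqref{eq:sprinkling_rsw**} and Lemma~\ref{lem:Cameron-Martin}.
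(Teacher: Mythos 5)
There is a genuine gap in Step 2, and it is precisely at the heart of the lemma. You place the horizontal link between the two clouds of landing discs in the \emph{single} plane $\calP_{R^{a^2}}$ and claim it succeeds with probability $1-e^{-R^c}$. This cannot work: $\calC$ and $\calC'$ are at mutual distance at least $100\delta R$, so any horizontal connection between the two clouds must contain a crossing of a planar region of macroscopic extent $\gtrsim \delta R$, and by the RSW theorem (Theorem \ref{thm:rsw}, which is two-sided) together with duality (Lemma \ref{lem:mani*}), the probability that a fixed plane contains such a crossing of $\{f_{R^\gamma}\ge R^{-3/2}\}$ is bounded away from $1$ uniformly in $R$. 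Your ``$\sim R^{c'}$ disjoint corridors of aspect ratio $O(1)$'' would each have to span length $\gtrsim\delta R$ and hence width $\gtrsim \delta R$, so only $O(1)$ of them fit in $[-4R,4R]^2$; thinning them destroys the constant crossing probability. No amount of independent trials \emph{within one plane} produces a stretched-exponential bound for this step. A tell-tale sign is that your argument never uses the hypothesis $5a/(1-a)<\eta$.

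The paper's proof resolves exactly this obstruction by distributing the horizontal link over $\bar k\sim R^{a-\gamma}$ planes $\calP_{h_k}$ with $h_k=R^a/2+kR^\gamma$: the events $\textup{Poly}^k_{\calC,\calC'}(R)$ are independent across $k$ and each has probability $\ge\eta$, yielding failure probability $e^{-cR^{a-\gamma}}$ in \eqref{eq:layers_poly*}. The price is that the successful height $h_{k_{\max}}$ lies in $[R^a/2,R^a]$, far above $R^{a^2}$ where $\textup{Sprouts}(R)$ delivers the sprout; extending a sprout from $\calP_{R^{a^2}}$ up to a circuit at height $h_{k_0}$ (the event $\textup{Tube}_{l+1}$ in Claim \ref{cl:two_arm_1}) only succeeds with probability of order $R^{-3a}$ per attempt, giving $cR^{-5a}$ per contact point. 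One then needs the $(\delta R^{1-a})^{\eta}$ contact points supplied by Proposition \ref{prop:polynom_absctract} to beat this, and this is where $5a/(1-a)<\eta$ enters. Your Step 1 and your use of local FKG and of the independence of $\calP_{R^{a^2}}$ from the conditioning are sound ingredients, but the quantitative engine of the proof --- many heights for the horizontal link, paid for by a polynomial-per-trial vertical extension compensated by polynomially many trials --- is missing.
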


Before proving the lemma, we briefly conclude the proof of Proposition~\ref{prop:2arms*}. Recall that we have already assumed that $a$ and $\gamma/a$ are small enough. In order to apply Lemma \ref{lemma:two_arms_1}, we assume in addition that $5a/(1-a)<\eta$. This also holds if $a$ is small enough.

\medskip

Let $\delta_0:=\delta/1000$ and observe that if $\mathcal{D}$ and $\mathcal{D}'$ are two connected components of $\{ f_{R^\gamma} \ge R^{-3/2} \} \cap [-2R,2R]^2$ of diameter larger than or equal to $ \delta R$ then there exist two continuous paths $\mathcal{C}$ and $\mathcal{C}'$  of diameter $\delta_0 R$, included in $\mathcal{D},\mathcal{D}'$ respectively and satisfying $\textup{dist}(\mathcal{C},\mathcal{C}') \ge 100\delta_0 R$. As a result, Proposition \ref{prop:2arms*} is a consequence of Lemma \ref{lemma:two_arms_1} and a union bound over all pairs of connected components of $\{ f_{R^\gamma} \ge R^{-3/2} \} \cap [-2R,2R]^2$ of diameter $\ge \delta R$, the number of which can be controlled except on an event of suitably small probability as follows.

\medskip

Let $c>0$ as in Lemma \ref{lemma:two_arms_1}. The probability that there are more than $e^{R^c/2}$ connected components in $\{ f_{R^\gamma}\ge R^{-3/2} \} \cap [-2R,2R]^2$ is less than $e^{-R^c/4}$ if $R$ is sufficiently large. This is a direct consequence of Markov's inequality and the fact that the expectation of the number of connected components of $\{ f_{R^\gamma}\ge R^{-3/2} \} \cap [-2R,2R]^2$ is less than $CR^2$ for some $C>0$ that depends only on $q$, as soon as $R$ is sufficiently large, see Lemma \ref{L:numbercomps}. This completes the proof of Proposition~\ref{prop:2arms*}, subject to the validity of Lemma~\ref{lemma:two_arms_1}.
\end{proof}

In order to prove Lemma~\ref{lemma:two_arms_1}, let us introduce some events and apply Proposition \ref{prop:polynom_absctract} to them. Let $x\in[-2R,2R]^2$ and consider a deterministic path $\mathcal{C}_0\subset D(x,5\delta R)$ of diameter $\delta R$. By Proposition~\ref{prop:polynom_absctract} (that we can apply since $\gamma<a$) and \eqref{eq:sprinkling_poly**}, there exist $R_0>0$, a universal $\eta>0$, and a family $(y_i)_i$ of points at mutual distances at least $30R^a$ and at a distance at most $R^a$ from $\mathcal{C}_0$ such that if $R \ge R_0$ and if we let
\begin{align}\label{eq:poly_def} \nonumber
\text{Conn}_i(R)&:=\Big\{\begin{array}{c} D(y_i,R^a)\text{ is connected~to $\partial D(x,20\delta R)$}\\
\text{ in $(\{f_{R^\gamma} \ge R^{-3/2} \} \cap D(x,20\delta R) ) \setminus ( \cup_{j \ne i} D(y_j,10R^a))$}\end{array}\Big\},\\
\textup{Poly}_{\mathcal{C}_0}(R)&:=\{ \#\{i : \textup{Conn}_i(R) \text{ holds} \} \ge (\delta R^{(1-a)})^\eta \},
\end{align}
then
\begin{equation}\label{eq:poly*}
\Pro [\textup{Poly}_{\mathcal{C}_0}(R)] \geq \eta.
\end{equation}
We are now in shape to prove Lemma \ref{lemma:two_arms_1}.

\begin{proof}[Proof of Lemma~\ref{lemma:two_arms_1}]
Throughout the proof, crucially, the constants do not depend on $\calC$ and $\calC'$. The proof is split into two steps.

\medskip

\textbf{Step 1.} In this step, we consider copies of the event $\textup{Poly}_{\calC}(R)$ at different heights and show, using \eqref{eq:poly*}, that with very good probability, at some height between $R^a/2$ and $R^a$ there exist polynomially many paths that, when projected onto $\calP_0$, connect $\calC$ and $\calC'$ up to distance $O(R^a)$.

\medskip

Let $x,x'\in[-2R,2R]^2$ be such that $\calC\subset D(x,5\delta R)$ and $\calC'\subset D(x',5\delta R)$. Let $(y_i)_i$ (resp.~$(y_j')_j$) be a family of points associated to $\calC$ (resp.~$\calC'$) in the same way as the points associated to $\calC_0$ in the definition of $\textup{Poly}_{\calC_0}(R)$ in \eqref{eq:poly_def} above. For every $i$ (resp.~$j$), let $\mathcal{C}_i$ (resp.~$\mathcal{C}_j'$) be a path included in $\mathcal{C}$ (resp.~$\calC'$) of diameter $R^a$ and at a distance at most $ R^a$ from $y_i$ (resp.~$y_j'$). (These sub-paths exist and are disjoint as $i$ and $j$ vary if $R$ is sufficiently large.)

\medskip

Let $\textup{Circ}_{\mathcal{C},\mathcal{C}'}(R)$ denote the event that
\begin{itemize}[noitemsep]
\item[i)] there is a circuit in $\{ f_{R^\gamma} \ge R^{-3/2} \} \cap ( D(x,20\delta R) \setminus D(x,10\delta R) )$ which surrounds the inner disc $D(x,10\delta R)$, 
\item[ii)] the analogous event holds with $x'$ instead of $x$, and 
\item[iii)] $D(x,10\delta R)$ is connected to $D(x',10\delta R)$ by a path included in $\{ f_{R^\gamma} \ge R^{-3/2} \} \cap [-4R,4R]^2$.
\end{itemize} 
Introduce
\[
\textup{Poly}_{\mathcal{C},\mathcal{C}'}(R):=\textup{Poly}_{\mathcal{C}}(R) \cap \textup{Poly}_{\mathcal{C}'}(R) \cap \textup{Circ}_{\calC,\calC'}(R).
\]

For $0\le k\le \bar k$ with $\bar{k}:=\lfloor R^{a-\gamma}/2 \rfloor$, let $\textup{Poly}_{\mathcal{C},\mathcal{C}'}^k(R)$ (resp.~$\textup{Conn}_i^k(R)$) denote the event $\textup{Poly}_{\mathcal{C},\mathcal{C}'}(R)$ (resp.~$\textup{Conn}_i(R)$) translated by $h_k\textbf{e}_3$, where $h_k:=\frac{R^a}{2}+kR^\gamma$.

\medskip

Note that $\mathcal{P}_{h_0}=\mathcal{P}_{R^a/2}$ and that the planes $\mathcal{P}_{h_k}$ are at mutual distances $R^\gamma$ and are included in $\R^2 \times [R^a/2,R^a]$. Since $f_{R^\gamma}$ is $R^\gamma$-dependent, \eqref{eq:poly*}, the RSW theorem (see \eqref{eq:sprinkling_rsw**}) and standard gluing constructions (as described in Section \ref{ss:rsw}) imply the existence of $c>0$ that depends only on $\delta$ such that for all $R$ sufficiently large
\begin{equation}\label{eq:layers_poly*}
\widetilde{\Pro} \bigg[\bigcup_{ 0\le k\le \bar{k}}\textup{Poly}_{\mathcal{C},\mathcal{C}'}^k(R)\bigg]=\Pro \bigg[\bigcup_{ 0\le k\le \bar{k}}\textup{Poly}_{\mathcal{C},\mathcal{C}'}^k(R)\bigg] \geq 1-e^{-cR^{a-\gamma}}.
\end{equation}

If there exists $k \in \{0,\dots,\bar{k} \}$ such that $\textup{Poly}_{\mathcal{C},\mathcal{C}'}^k(R)$  holds,  let $k_{\rm max}$ be the largest such $k$ and define $\bar{I},\bar{I}'$ by $i \in \bar{I}$ if and only if $\textup{Conn}_i^{k_{\rm max}}(R)$ occurs (and similarly for $\bar{I}'$). For every $k_0 \in \{0,\dots,\bar{k}\}$ and every $\bar{I}_0,\bar{I}_0'$, let
\[
A_{k_0,I_0,I_0'}=\Big(\bigcup_k\textup{Poly}^k_{\calC,\calC'}(R) \Big)\cap\{ k_{\rm max}=k_0,\bar{I}=\bar{I}_0,\bar{I}'=\bar{I}_0'\}.
\]
The event $A_{k_0,I_0,I_0'}$ is measurable with respect to the set
\begin{equation}\label{eq:a_meas}
\{x_3\geq h_{k_0}\}\setminus\Big(\bigcup_i D(y_i,R^a)\cup\bigcup_j D(y_j',R^a)\Big)
\end{equation}
and increasing in the set
\begin{equation}\label{eq:a_increasing}
\Big(\bigcup_{i \in I_0} D(y_i,10R^a)  \cup \bigcup_{j \in I_0'} D(y_j',10R^a)\Big) \times [0,h_{k_0}+R^\gamma/2].
\end{equation}

\medskip

\textbf{Step 2.} In this step we show that, given $k_0\in\{1,\dots,\overline{k}\}$ and $I_0$ a family of indices, it is very likely that there exists some $i\in I_0$ such that $\calC_i$ is connected to a path in $D(y_i,5R^a)\times\{h_{k_0}\}$ of diameter at least $R^a$. Here the difficulty is that we want to do this under $\widetilde{\prob}$, i.e., conditionally on $(f_{R^\gamma})_{|\calP_0}$. In doing so we lose the independence of $f_{R^\gamma}$ restricted to the tubes $D(y_i,5R^a)\times[0,h_{k_0}]$. This lack of independence is replaced with Claim \ref{cl:two_arm_1} below.

\medskip

Fix $k_0\in\{1,\dots,\overline{k}\}$ and $I_0$ a family of indices such that
\begin{equation}\label{eq:index_set_size}
|I_0|\geq (\delta R^{(1-a)})^\eta\, .
\end{equation}
Denote  the elements of $I_0$ by $i_1,\dots,i_{|I_0|}$. For each $l\in\{1,\dots,|I_0|\}$, let $H_l$ be the event that there exists an index $1\leq l'\leq l$ such that $\calC_{i_{l'}}$ is connected by a path in $(D(y_{i_{l'}},5R^a)\times[0,h_{k_0}])\cap\{f_{R^\gamma}\geq R^{-3/2}\}$ to a circuit surrounding $D(y_{i_{l'}},R^a)\times\{h_{k_0}\}$ in $(D(y_{i_{l'}},5R^a)\times\{h_{k_0}\})\cap\{f_{R^\gamma}\geq R^{-3/2}\}$. 

\begin{claim}\label{cl:two_arm_1}
Let $c_0$ as in \eqref{eq:sprouts_1}. There exist $c,R_0>0$ such that the following holds if $R\geq R_0$. Let $l\in\{1,\dots,|I_0|-1\}$ and assume that
\[
\widetilde{\prob}\left[H_l\right]\leq 1-e^{-R^{c_0}/4}.
\]
Then, on $\widetilde{\textup{Sprouts}}(R)$,
\[
\widetilde{\prob}\left[H_{l+1}\, |\, H_l^c\right]\geq cR^{-5a}.
\]
Moreover, on $\widetilde{\textup{Sprouts}}(R)$, we have $\widetilde{\prob}[H_1]\geq cR^{-5a}$.
\end{claim}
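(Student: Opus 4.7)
I plan to prove the claim in three stages: a decoupling step reducing $\widetilde{\prob}[H_{l+1}\mid H_l^c]$ to $\widetilde{\prob}[W]$, where $W$ is the single-index extension event; the use of Sprouts to lift $\calC_{i_{l+1}}$ above $\calP_0$; and a polynomial RSW construction extending this lift to a circuit at $\calP_{h_{k_0}}$. Let $W$ denote the increasing event that $\calC_{i_{l+1}}$ is connected in $(D(y_{i_{l+1}},5R^a)\times[0,h_{k_0}])\cap\{f_{R^\gamma}\geq R^{-3/2}\}$ to a circuit surrounding $D(y_{i_{l+1}},R^a)\times\{h_{k_0}\}$ in the top slice, so that $H_{l+1}=H_l\cup W$ and hence $\widetilde{\prob}[H_{l+1}\mid H_l^c]=\widetilde{\prob}[W\mid H_l^c]$. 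The tubes $D(y_{i_{l'}},5R^a)\times[0,h_{k_0}]$ for $l'\leq l+1$ are at mutual distance $\geq 10R^a\gg R^\gamma$ and the corresponding portions of $\calP_0$ are disjoint, so the $R^\gamma$-dependence of $f_{R^\gamma}$ implies that $W$ and $H_l$ are independent even under $\widetilde{\prob}$. Combined with the hypothesis $\widetilde{\prob}[H_l]\leq 1-e^{-R^{c_0}/4}$ (which ensures $\widetilde{\prob}[H_l^c]>0$), this yields $\widetilde{\prob}[W\mid H_l^c]=\widetilde{\prob}[W]$, and the task reduces to showing $\widetilde{\prob}[W]\geq cR^{-5a}$ on $\widetilde{\textup{Sprouts}}(R)$.

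On $\widetilde{\textup{Sprouts}}(R)$ one has $\widetilde{\prob}[\textup{Sprouts}]\geq 1-e^{-R^{c_0}/2}$ by \eqref{eq:sprouts_tilde_def}, so the cluster of $\calC_{i_{l+1}}$ in $\{f_{R^\gamma}\geq R^{-3/2}\}\cap(D(y_{i_{l+1}},5R^a)\times[0,R^{a^2}])$ contains a point $\widetilde{p}\in D(y_{i_{l+1}},5R^a)\times\{R^{a^2}\}$. This initial lift is indispensable because $\widetilde{\prob}$ fixes the field on $\calP_0$; the final connection is therefore built not by a planar gluing at height $0$ but by one at height $R^{a^2}>R^\gamma$, where the field is independent of the $\calP_0$-conditioning. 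I then introduce an event $G$, measurable in $(f_{R^\gamma})$ restricted to $D(y_{i_{l+1}},6R^a)\times[R^{a^2}+R^\gamma,h_{k_0}]$, combining: (a) a planar circuit in $\calP_{h_{k_0}}\cap\{f_{R^\gamma}\geq R^{-3/2}\}$ surrounding $D(y_{i_{l+1}},R^a)$ inside $D(y_{i_{l+1}},5R^a)$, of constant probability by Theorem~\ref{thm:rsw} with the sprinkling of \eqref{eq:sprinkling_rsw**}; (b) a crossing in $\{f_{R^\gamma}|_V\geq R^{-3/2}\}$ of a rectangle of sides $O(R^a)$ in a vertical reference plane $V$ through $y_{i_{l+1}}$ that links $\calP_{R^{a^2}+R^\gamma}$ to the circuit in (a), of constant probability by RSW applied to the 2D field $f_{R^\gamma}|_V$ (which satisfies Assumption~\ref{ass1}); and (c) a planar connection in $\calP_{R^{a^2}+R^\gamma}$ from any prescribed location of $\widetilde{p}$ to the base of the vertical crossing. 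The ingredient (c) is the origin of the polynomial factor: for each prescribed position of $\widetilde{p}$, a half-plane arm of length $O(R^a)$ occurs with probability at least $cR^{-a}$ uniformly in $\widetilde{p}$ by item vi) of Section~\ref{ss:rsw}, and piecing together a constant number of such arms with a few constant-probability RSW gluings yields the rate $cR^{-5a}$. Combining Sprouts with $G$ via the local FKG inequality (Corollary~\ref{cor:FKG}) then gives $\widetilde{\prob}[W]\geq cR^{-5a}$. The unconditional bound $\widetilde{\prob}[H_1]\geq cR^{-5a}$ follows from the same argument without the decoupling step.

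The main obstacle is the uncontrolled random location of the Sprouts endpoint $\widetilde{p}$ in the 2D disc $D(y_{i_{l+1}},5R^a)\times\{R^{a^2}\}$: any construction completing the connection to $\calP_{h_{k_0}}$ must be uniform over all such positions. The half-plane arm bound of item vi) achieves this uniformity, at the price of a factor $R^{-a}$ per arm; the constant $5$ in the exponent of $R^{-5a}$ reflects the number of arms and gluings required to join $\widetilde{p}$ through the vertical plane $V$ up to the circuit. A secondary technical point is the careful separation of the Sprouts region (heights $\leq R^{a^2}$) from the extension region (heights $\geq R^{a^2}+R^\gamma$); this is clean thanks to the $R^\gamma$-dependence of $f_{R^\gamma}$ and makes the application of local FKG straightforward.
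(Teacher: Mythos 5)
The central reduction in your argument --- that $W$ and $H_l$ are independent under $\widetilde{\prob}$ because the tubes are at mutual distance $\ge 10R^a\gg R^\gamma$ --- is a genuine gap, and it is exactly the difficulty that the paper's proof is engineered to avoid. Unconditionally the two events are indeed independent, but $\widetilde{\prob}$ conditions on $(f_{R^\gamma})_{|\calP_0}$, and both tubes meet the $R^\gamma$-neighbourhood of $\calP_0$ since they start at height $0$. Conditioning independent Gaussian variables on a common family of linear functionals generically couples them: if $Z_1,Z_2,Z_3$ are i.i.d.\ standard Gaussians, then $Z_1$ and $Z_3$ are independent, yet given $(Z_1+Z_2,\,Z_2+Z_3)$ they acquire a nonzero conditional covariance. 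The same mechanism operates here: the white noise in the slab $\R^2\times[0,R^\gamma/2]$ beneath the two tubes is coupled by the conditioning on the full plane (equivalently, the conditional covariance $-\langle P_Vf_{R^\gamma}(x),P_Vf_{R^\gamma}(y)\rangle$, with $V$ the Gaussian space spanned by the field on $\calP_0$, does not vanish for $x,y$ near height $0$ in distinct tubes, because the ``inverse'' of a finite-range kernel is not finite-range). FKG cannot repair this, since $H_l^c$ is decreasing while $W$ is increasing, so the inequality points the wrong way; and an approximate-independence argument would have to contend with the fact that $\widetilde{\prob}[H_l^c]$ is only bounded below by $e^{-R^{c_0}/4}$, so conditioning on $H_l^c$ can amplify even very small correlations.

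The paper circumvents this by splitting the extension event into two pieces handled by different mechanisms. The portion of the connection below height $R^{a^2}$ is controlled via $\textup{Sprouts}(R)$: on $\widetilde{\textup{Sprouts}}(R)$ its conditional failure probability $e^{-R^{c_0}/2}$ is negligible compared with $\widetilde{\prob}[H_l^c]\ge e^{-R^{c_0}/4}$, whence $\widetilde{\prob}[\textup{Sprouts}(R)\mid H_l^c]\ge 1-e^{-R^{c_0}/4}$ with no independence required; a union bound over the $O(R^{2a})$ candidate positions of the lifted endpoint then pins down a fixed, $(f_{R^\gamma})_{|\calP_0}$-measurable target $u_{l+1}$ reached with conditional probability $\ge c_1R^{-2a}$. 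The portion above height $R^{a^2}$ (the event $\textup{Tube}_{l+1}$, of probability $\ge c_2R^{-3a}$ by three applications of the half-plane arm bound) is supported at distance greater than $R^\gamma$ both from $\calP_0$ and from the region $\calE_{l+1}$ that determines $H_l$; only for this piece does one have genuine conditional independence, and the two pieces are combined by the conditional local FKG inequality. Note also that your construction leaves the random Sprouts endpoint $\widetilde p$ unpinned: the event $G$ you build above height $R^{a^2}+R^\gamma$ cannot depend on $\widetilde p$, so the pigeonhole reduction to a deterministic target point is needed in any case --- it is the source of the factor $R^{-2a}$ (and hence of the exponent $5a=2a+3a$) in the paper.
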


Before proving Claim \ref{cl:two_arm_1}, let us complete the proof of Lemma~\ref{lemma:two_arms_1}. By Claim \ref{cl:two_arm_1}, we deduce that on $\widetilde{\textup{Sprouts}}(R)$, the probability of the event $H_{|I_0|}$ that there exists $i\in I_0$ for which $\calC_i$ is connected by a path in $(D(y_i,5R^a)\times[0,h_{k_0}])\cap \{f_{R^\gamma} \ge R^{-3/2} \}$ to a circuit surrounding $D(y_i,R^a) \times \{h_{k_0}\}$ in $(D(y_i,5R^a) \times \{h_{k_0}\})\cap\{f_{R^\gamma}\geq R^{-3/2}\}$ satisfies
\[
\widetilde{\prob}[H_{|I_0|}] \geq 1-(1-c R^{-5a})^{|I_0|}\geq 1-e^{-R^{c'}}
\]
for $R$ large enough and some $c'>0$ since $|I_0|\geq (\delta R^{(1-a)})^\eta$ by \eqref{eq:index_set_size} and since we have assumed that $5a<\eta(1-a)$. Clearly, the same holds for the analogous construction with $\calC'$ instead of $\calC$. In addition to $k_0$ and $I_0$, let $I_0'$ be a set of indices $j$ of the family $(y_j')_j$ satisfying $|I_0'|\geq (\delta R^{(1-a)})^\eta$. Let $B_{k_0,I_0,I_0'}$ be the event that $H_{|I_0|}$ holds and that the analogous event for $\calC'$ holds as well. By union bound, we deduce that on the event $\widetilde{\textup{Sprouts}}(R)$, for $R$ large enough,
\begin{equation}\label{eq:final_up_lower_bound}
\widetilde{\prob}[B_{k_0,I_0,I_0'}]\geq 1-2e^{-R^{c'}}.
\end{equation}
Now this event is increasing and measurable with respect to the set
\[
\Big( \bigcup_{i\in I_0} D(y_i,5R^a)\cup \bigcup_{j\in I_0} D(y_j',5R^a) \Big) \times [0,h_{k_0}].
\]
Since, $A_{k_0,I_0,I_0'}$ is measurable with respect to the set \eqref{eq:a_meas} and increasing on the set \eqref{eq:a_increasing} we deduce by applying \eqref{eq:conditionalFKGlocal} that on the event $\widetilde{\textup{Sprouts}}(R)$, for $R$ large,
\[
\widetilde{\prob}[A_{k_0,I_0,I_0'}\cap B_{k_0,I_0,I_0'}]\geq \widetilde{\prob}[B_{k_0,I_0,I_0'}] \widetilde{\prob}[A_{k_0,I_0,I_0'}] \ge (1-2e^{-R^{c'}})\widetilde{\prob}[A_{k_0,I_0,I_0'}].
\]
Recall now that the disjoint union of the $A_{k_0,I_0,I_0'}$ over all the choices of $k_0$, $I_0$ and $I_0'$ is $\bigcup_{k=1}^{\overline{k}}\textup{Poly}_{\calC,\calC'}^k(R)$. Consequently, summing over all these possible choices, we deduce that, on the event $\widetilde{\textup{Sprouts}}(R)$, for $R$ large enough,
\begin{align}\label{eq:two_arm_proof_1}
\nonumber \widetilde{\prob}\bigg[\bigsqcup_{k_0,I_0,I_0'}A_{k_0,I_0,I_0'}\cap B_{k_0,I_0,I_0'}\bigg]&\geq (1-2e^{-R^{c'}})\widetilde{\prob}[\cup_{k=1}^{\overline{k}}\textup{Poly}_{\calC,\calC'}^k(R)]\\
&\overset{\eqref{eq:layers_poly*}}{\geq} (1-2e^{-R^{c'}})(1-e^{-c_2R^{a-\gamma}})\, .
\end{align}
But notice that the event $\bigsqcup_{k_0,I_0,I_0'}(A_{k_0,I_0,I_0'}\cap B_{k_0,I_0,I_0'})$ implies that $\calC$ and $\calC'$ are connected by a continuous path in $([-4R,4R]^2 \times [0,R^a]) \cap \{ f \ge R^{-3/2}\}$. Therefore, \eqref{eq:two_arm_proof_1} completes the proof of Lemma~\ref{lemma:two_arms_1} (assuming Claim~\ref{cl:two_arm_1} holds true).
\end{proof}

It remains to give the proof of Claim \ref{cl:two_arm_1}.
\begin{proof}[Proof of Claim \ref{cl:two_arm_1}]
Recall the definition \eqref{eq:sprouts_tilde_def} of $\widetilde{\textup{Sprouts}}(R)$. Assume that
\[
\widetilde{\prob}[H_l]\leq 1-e^{-R^{c_0}/4}.
\]
Let $\textup{Up}_{l+1}$ be the event that $\calC_{i_{l+1}}$ is connected to $D(y_{i_{l+1}},5R^a)\times\{R^{a^2}\}$ by a continuous path in $\{f_{R^\gamma}\geq R^{3/2}\}\cap (D(y_{i_{l+1}},5R^a)\times[0,R^{a^2}])$. Then, $\textup{Sprouts}(R)\subset \textup{Up}_{l+1}$ so that, on $\widetilde{\textup{Sprouts}}(R)$, for $R$ large enough,
\[
\widetilde{\prob}[\textup{Up}_{l+1} \mid H_l^c]\geq \widetilde{\prob}[\textup{Sprouts}(R) \mid H_l^c] \ge 1-\frac{\widetilde{\prob}[\textup{Sprouts}(R)^c]}{\widetilde{\prob}[H_l^c]}\geq 1-e^{-R^{c_0}/4}.
\]
In particular, by union bound and FKG, we deduce that there exist a constant $c_1>0$ and a (random $(f_{R^\gamma})_{|\calP_0}$-measurable) $u_{l+1}\in D(y_{i_{l+1}},5R^a)\times\{R^{a^2}\}$ such that the event $\textup{Up}^\star_{l+1}$ that $\calC_{i_{l+1}}$ is connected to $u_{l+1}$ by a continuous path in $\{f_{R^\gamma}\geq R^{-3/2}\}\cap D(y_{i_{l+1}},5R^a)\times[0,R^{a^2}]$ satisfies, for $R$ large enough,
\begin{equation}\label{eq:claim_two_arm_1}
\widetilde{\prob}[\textup{Up}^\star_{l+1}\, |\, H_l^c]\geq c_1R^{-2a}.
\end{equation}
Let 
\[
\textup{Tube}_{l+1}
:=\Bigg\{\begin{array}{c}\exists \text{ a c.c.\ of } \{ f_{R^\gamma} \ge R^{-3/2} \} \cap (D(y_{i_{l+1}},5R^a) \times [R^{a^2},h_{k_0}])\text{ that contains both}\\
\text{$u_{l+1}$ and a circuit in $\{f_{R^\gamma} \ge R^{-3/2} \} \cap (D(y_{i_{l+1}},5R^a)\times \{h_{k_0}\})$}\\
\text{that surrounds $D(y_{i_{l+1}},R^a) \times \{h_{k_0}\}$} \end{array}\Bigg\}.
\]
Note that $\textup{Up}_{l+1}^\star\cap\textup{Tube}_{l+1} \subset H_{l+1}$ so our goal will be to show that, for some constant $c>0$ and $R$ large enough,
\begin{equation}\label{eq:claim_two_arm_2}
\widetilde{\prob}[\textup{Up}_{l+1}^\star\cap\textup{Tube}_{l+1}\, |\, H_l^c]\geq c R^{-5a}.
\end{equation}

Let $\calE_{l+1}=\R^3 \setminus (D(y_{i_{l+1}},10 R^a)\times [0,h_{k_0}])$. Note that $H_l$ is measurable with respect to $(f_{R^\gamma})_{|\calE_{l+1}}$ while $\textup{Tube}_{l+1}$ is measurable with respect to $f_{R^\gamma}$ restricted to $D(y_{i_{l+1}},5 R^a)\times [0,h_{k_0}]$. On the other hand, by the RSW theorem (see \eqref{eq:sprinkling_rsw**}) and by using standard gluing constructions from Section \ref{ss:rsw} (more precisely, by using three times Item vi) of this section at scale $R^a$: once in $\calP_{h_k}$ and twice in a vertical plane in order to connect $u_i$ to a well-chosen point of $\calP_{h_k}$), one can show that $\textup{Tube}_{l+1}$ occurs with probability $\Omega(R^{-3a})$ so that, for some constant $c_2>0$,
\begin{equation}\label{eq:claim_two_arm_3}
\widetilde{\prob}[\textup{Tube}_{l+1} \, | \, (f_{R^\gamma})_{|\calE_{l+1}}]=\prob[\textup{Tube}_{l+1}]\geq c_2 R^{-3a}.
\end{equation}

Now, note that the events $\textup{Up}^\star_{l+1}$ and $\textup{Tube}_{l+1}$ are both increasing and recall that $\gamma<a^2$. Hence, for $R$ large enough,
\begin{align*}
\widetilde{\prob}[\textup{Up}^\star_{l+1}\cap\textup{Tube}_{l+1}\, |\, (f_{R^\gamma})_{|\calE_{l+1}}]&\overset{\eqref{eq:conditionalFKGlocal}}{\geq}\widetilde{\prob}[\textup{Up}^\star_{l+1}\, |\, (f_{R^\gamma})_{|\calE_{l+1}}]\widetilde{\prob}[\textup{Tube}_{l+1}\, |\, (f_{R^\gamma})_{|\calE_{l+1}}]\\
&\overset{\eqref{eq:claim_two_arm_3}}{\geq} c_2 R^{-3a}\widetilde{\prob}[\textup{Up}^\star_{l+1}\, |\, (f_{R^\gamma})_{|\calE_{l+1}}]
\end{align*}
so that
\begin{align*}
\widetilde{\prob}[\textup{Up}^\star_{l+1}\cap\textup{Tube}_{l+1}\, |\, H_l^c]&=\frac{1}{\widetilde{\prob}[H_l^c]}\widetilde{\E}[\widetilde{\prob}[\textup{Up}^\star_{l+1}\cap\textup{Tube}_{l+1}\, |\, (f_{R^\gamma})_{|\calE_{l+1}}]\mathbf{1}_{H_l^c}]\\
&\geq c_2R^{-3a}\frac{1}{\widetilde{\prob}[H_l^c]}\widetilde{\E}[\widetilde{\prob}[\textup{Up}^\star_{l+1}\, |\, (f_{R^\gamma})_{|\calE_{l+1}}]\mathbf{1}_{H_l^c}]\\
&\geq c_2R^{-3a}\widetilde{\prob}[\textup{Up}^\star_{l+1}\, |\, H_l^c]\\
&\overset{\eqref{eq:claim_two_arm_1}}{\geq} c_1c_2R^{-5a},
\end{align*}
which yields \eqref{eq:claim_two_arm_2} as required.

The remaining statement, i.e.~the proof that on $\widetilde{\textup{Sprouts}}(R)$, $\widetilde{\prob}[H_1]\geq c R^{-5a}$, follows from the same construction.
\end{proof}

\begin{figure}
\begin{center}
\includegraphics[scale=0.5]{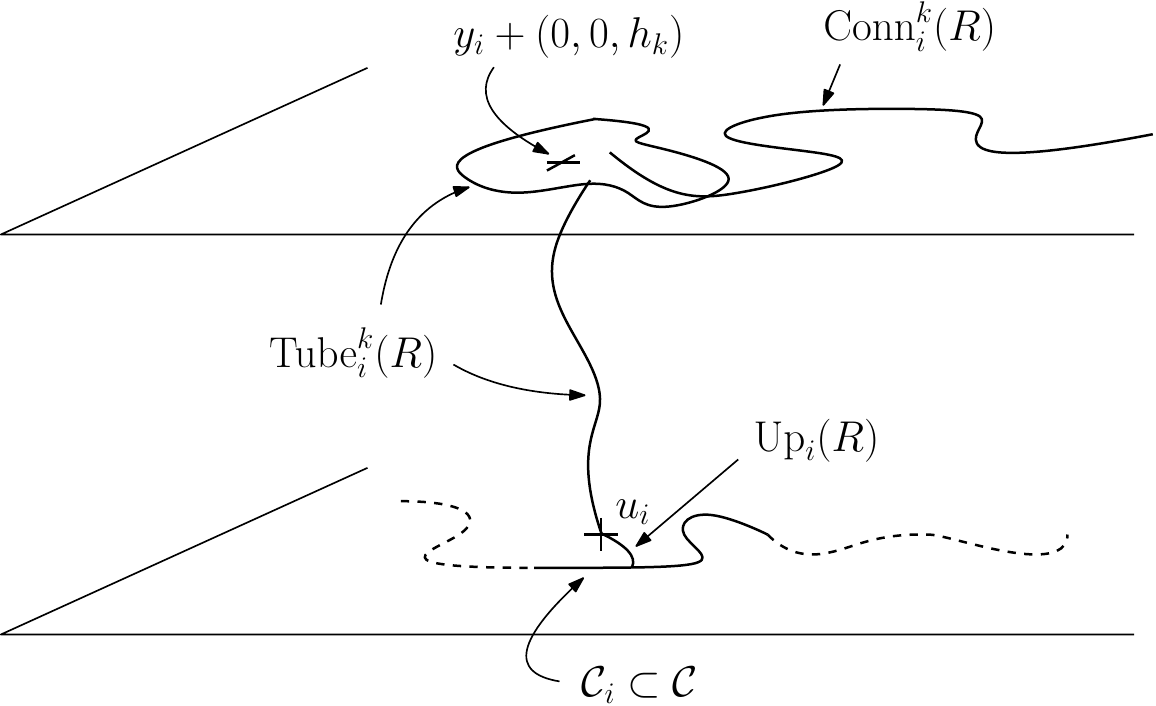}
\caption{The events $\textup{Up}_i(R)$, $\textup{Tube}_i^k(R)$ and $\textup{Conn}_i^k(R)$.\label{fig:2arms}}
\end{center}
\end{figure}

We conclude this section with a result analogous to Proposition \ref{prop:2arms*}. Below, given some $d' \in [3,d]$, $F_R^1,\dots,F_R^{N_{d'}}$ denote all the $2$-dimensional faces of the cube $[-R/2,R/2]^{d'}$, i.e.\ the $F_R^i$'s are the sets obtained from the sets $[-R/2,R/2]^2\times\{\pm R/2\}^{d'-2}$ by permuting the coordinates. Moreover, for every $i \in \{1,\dots,N_{d'}\}$ we let $\tilde{F}_R^i \subset F_R^i$ be the concentric square with side length equal to $R/2$.

\begin{proposition}\label{prop:2armsbis*}
There exists $b>0$ that depends only on the dimension $d$ such that the following holds if $0<\gamma<b$. Let $d' \in [3,d]$. For all $\delta>0$ there exist $R_0,c>0$ such that for every $R \ge R_0$,
\[
\mathbb P\Bigg[\bigcap_{i,j}\Bigg\{ \begin{array}{c}\text{all the c.c.~of $\{ f_{R^\gamma} \ge R^{-3/2} \} \cap \tilde{F}_R^i$ of diameter at least $ \delta R$}\\
\text{and all the c.c.~of $\{ f_{R^\gamma} \ge R^{-3/2} \} \cap \tilde{F}_R^j$ of diameter at least }\\
\text{$ \delta R$ belong to the same c.c.\ of $\{ f_{R^\gamma} \ge R^{-3/2} \}\cap [-R/2,R/2]^{d'}$}\end{array}\Bigg\} \Bigg] \ge 1-e^{-R^c},
\]
where the intersection in the probability is taken over the pairs of indices $1 \le i,j \le N_{d'}$ for which $F_R^i$ and $F_R^j$ share a side.
\end{proposition}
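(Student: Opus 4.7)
My plan is to reduce Proposition \ref{prop:2armsbis*} to two uses of the machinery from Section \ref{sec5}: first a per-face application of Proposition \ref{prop:2arms*}, and then a corner construction along each shared edge of adjacent faces that reuses Proposition \ref{prop:mw}, the sprinkled RSW estimate, and the conditional FKG device already employed in the proof of Lemma \ref{lemma:two_arms_1}. Since $d'$ is fixed, the number $N_{d'}$ of 2-dimensional faces and the number of adjacent pairs are $O_{d'}(1)$, so all union bounds over these objects are harmless.

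For the within-face connections, I would proceed as follows. For each face $F_R^i$, the isotropy of $q$ allows me to apply a rigid motion of $\R^d$ that sends $F_R^i$ onto $\R^2 \times \{0\}^{d-2}$ with its inward normal in $[-R/2,R/2]^{d'}$ pointing along a coordinate direction; this preserves the law of $f_{R^\gamma}$. I then apply Proposition \ref{prop:2arms*} at scale $R/C$, where $C = C(d')$ is chosen so that the output slab $[-4R/C,4R/C]^2 \times [0,(R/C)^a]$ fits inside $[-R/2,R/2]^{d'}$. A union bound over the $N_{d'}$ faces yields an event $\calG_1$ of probability at least $1 - e^{-R^{c_1}}$ on which, for every $i$, all c.c.\ of $\{f_{R^\gamma} \ge 2R^{-3/2}\} \cap \tilde F_R^i$ of diameter $\ge \delta R$ lie in a single c.c.\ $M_i$ of $\{f_{R^\gamma} \ge R^{-3/2}\}$ contained in a slab $S_i$ of thickness $\Theta(R^a)$ extending inward from $F_R^i$.

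For the across-edge connections, let $(i,j)$ be a pair of adjacent faces sharing a 1D edge $e_{ij}$ of length $R$. The slabs $S_i, S_j$ intersect in a corner prism $C_{ij}$ whose cross-section perpendicular to $e_{ij}$ has dimensions $\Theta(R^a) \times \Theta(R^a)$ (with the remaining $d'-3$ cube coordinates free). By Lemma \ref{lem:macro*} applied to a tiling of the width-$R^a$ strip of $\tilde F_R^i$ adjacent to $e_{ij}$ into $\Theta(R^a) \times \Theta(R^a)$ sub-boxes (and analogously for $\tilde F_R^j$), on a further event of probability $\ge 1 - e^{-R^{c_2}}$ I obtain components $N_i \subset \tilde F_R^i$, $N_j \subset \tilde F_R^j$ of $\{f_{R^\gamma} \ge 2R^{-3/2}\}$ of diameter $\Theta(R^a)$ in the $e_{ij}$-direction, which are thus contained in $M_i, M_j$ respectively by the previous step. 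Applying Proposition \ref{prop:mw} to $N_i$ and $N_j$, these extend into 3D subcomponents $\tilde N_i, \tilde N_j$ of $M_i, M_j$ running along $e_{ij}$ inside $C_{ij}$. I then slice $C_{ij}$ by $\Omega(R^{1-\gamma})$ parallel 2D planes perpendicular to $e_{ij}$ at mutual distance $\ge R^\gamma$; in each slice, $\tilde N_i, \tilde N_j$ yield contact points on two adjacent sides of a $\Theta(R^a)\times\Theta(R^a)$ square, which by \eqref{eq:sprinkling_rsw**} and Lemma \ref{lem:FKG1*} are connected in $\{f_{R^\gamma} \ge R^{-3/2}\}$ inside the slice with some fixed probability $c>0$. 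Conditioning on $f_{R^\gamma}$ restricted to the two faces and applying the conditional local FKG inequality as in \eqref{eq:conditionalFKGlocal}, the cross-section events become independent modulo the conditioning, so the probability that no slice succeeds is at most $(1-c)^{\Omega(R^{1-\gamma-a})} \le e^{-R^{c_3}}$ provided $\gamma + a < 1$.

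The main obstacle will be this across-edge step, since Proposition \ref{prop:2arms*} is tailored to connections within a planar face, whereas $F_R^i \cup F_R^j$ is genuinely bent along $e_{ij}$. The corner construction requires carefully managing two sources of randomness: the locations of the contact points $\tilde N_i \cap \mathrm{slice}$ and $\tilde N_j \cap \mathrm{slice}$, which depend on the values of $f_{R^\gamma}$ on the faces, and the cross-section connection events, which involve $f_{R^\gamma}$ in $C_{ij}$ away from the faces. This is handled by conditioning on $f_{R^\gamma}$ on a suitable neighborhood of $F_R^i \cup F_R^j$ and then invoking the conditional FKG device of Remark \ref{rmk:conditioning_and_fkg}, following the template of the proof of Lemma \ref{lemma:two_arms_1}. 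In higher dimensions $d' > 3$, the free cube directions only give more room for the constructions and do not complicate the argument, so the final union bound over the $O_{d'}(1)$ adjacent pairs yields Proposition \ref{prop:2armsbis*}.
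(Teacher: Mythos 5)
Your reduction to a within-face application of Proposition \ref{prop:2arms*} plus a separate corner construction is not the paper's route, and the corner step as you describe it has genuine gaps. First, the components $N_i,N_j$ you produce near the shared edge have diameter $\Theta(R^a)\ll\delta R$, so the within-face uniqueness statement (which only concerns components of diameter $\ge\delta R$) does not place them inside $M_i,M_j$; moreover $\tilde F_R^i$ is the concentric square of side $R/2$ and sits at distance $R/4$ from $e_{ij}$, so ``the width-$R^a$ strip of $\tilde F_R^i$ adjacent to $e_{ij}$'' does not exist as described. Second, and more seriously, the per-slice connection step fails quantitatively: the traces of $\tilde N_i$ and $\tilde N_j$ on a $\Theta(R^a)\times\Theta(R^a)$ cross-section are essentially point-like, and connecting two prescribed boundary points of a square by a path in $\{f_{R^\gamma}\ge R^{-3/2}\}$ has probability decaying polynomially in the side length (arm-type events), not a fixed constant $c>0$. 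This is exactly the absence-of-finite-energy obstruction that the whole Sprouts/Poly/conditional-FKG machinery of Lemma \ref{lemma:two_arms_1} exists to circumvent, so it cannot be bypassed by a bare RSW gluing. Third, the slice events are not independent under conditioning on the faces (since $\tilde N_i,\tilde N_j$ are built from the field in the bulk of $C_{ij}$ via Proposition \ref{prop:mw}), and nothing coordinates the positions of $N_i$ and $N_j$ along the edge, so the exponent $\Omega(R^{1-\gamma-a})$ in your final bound is unjustified; if $N_i$ and $N_j$ sit at different places along $e_{ij}$ there may be no slice meeting both.

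The paper instead keeps the entire structure of the proof of Proposition \ref{prop:2arms*} and Lemma \ref{lemma:two_arms_1}, running it directly with $\calC\subset\tilde F_R^i$ and $\calC'\subset\tilde F_R^j$ and replacing the planar layers $[-4R,4R]^2\times\{h_k\}$ by the corresponding $2$-dimensional faces of the shrunken cube $[-(R/2-h_k),R/2-h_k]^{d'}$. The only new ingredient is the replacement of item iii) in the definition of $\textup{Circ}_{\calC,\calC'}(R)$: to join the two circuits lying in adjacent faces one uses Lemma \ref{lemma:angle_crossing}, a crossing estimate for the union of two orthogonal squares proved by a second-moment method on half-plane arm events, which yields probability only $c/(\log r+R^\gamma)\approx c/R^\gamma$ rather than a constant. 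This polynomial loss is then absorbed because there are $\Theta(R^{a-\gamma})$ independent heights, degrading the bound \eqref{eq:layers_poly*} to $1-e^{-c R^{a-2\gamma}}$, which is still stretched-exponential once $\gamma$ is taken small relative to $a$. If you want to salvage your two-stage plan, the missing piece is precisely such an angle-crossing estimate together with the full contact-point machinery at the corner; as written, the corner step does not go through.
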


\begin{proof}
As in Proposition \ref{prop:2arms*}, we also consider some $a \in (0,1)$ such that $0 < \gamma < a^2 < b$ and let $h_k:=\frac{R^a}{2}+kR^\gamma$. The proof is essentially the same as Proposition \ref{prop:2arms*} but we need to replace the construction in $[-4R,4R]^2 \times \{h_k\}$ by a construction in the $2$-dimensional faces of $[-(R/2-h_k),R/2-h_k]^{d'}$ that correspond to $F_R^i$ and $F_R^j$.

\medskip

The only new technicality (which appears only if $i \ne j$, so let assume this) is about how to adapt point iii) in the definition of $\textup{Circ}_{\mathcal{C},\mathcal{C}'}(R)$ (see the beginning of the proof of Lemma~\ref{lemma:two_arms_1} for the definition of this event). In order to connect two macroscopic paths which belong to two adjacent $2$-dimensional faces -- which is what will replace $\textup{Circ}_{\mathcal{C},\mathcal{C}'}(R)$, one would probably like to use a box-crossing property for $f_{R^\gamma}$ retricted to the union of two orthogonal half-spaces: $(\R \times \{0\} \times [0,+\infty)) \cup (\R \times [0,+\infty)\times \{0\})$. Such a property is probably tractable (at least for $\gamma$ sufficiently small) but it is sufficient for us to use the following weaker result (together with Lemma \ref{lem:Cameron-Martin} as in Remark \ref{rem:sprinkling_rswetc}).
\begin{lemma}\label{lemma:angle_crossing}
Given some $r \ge 1$, consider the following union $\mathcal{S}_r:=([0,r] \times \{0\} \times [0,r]) \cup ([0,r] \times [0,r] \times \{0\})$ of two orthogonal squares. There exists $c>0$ such that, if $R \ge 1$ satisfies $r_q \le R^\gamma \le r$, then
\[
\Pro \Big[ \begin{array}{c}\text{$\exists$ a c.\ path in $\mathcal{S}_r \cap \{f_{R^\gamma} \ge 0 \}$ that connects the}\\
\text{two sides $[0,r]\times \{0\} \times \{r\}$ and $[0,r]\times \{r\} \times \{0\}$}\end{array} \Big] \ge \frac{c}{\log(r)+R^\gamma}.
\]
\end{lemma}
The proof of Lemma \ref{lemma:angle_crossing} is a variation of a standard percolation argument. We present it in Appendix \ref{ss:angle_crossing} below. Using Lemma \ref{lemma:angle_crossing} with $r$ of the order of $\delta R$, one can follow the proof of Proposition \ref{prop:2arms*} in order to prove Proposition \ref{prop:2armsbis*}. The only difference is that we need to replace point iii) in the definition of $\textup{Circ}_{\mathcal{C},\mathcal{C}'}(R)$ by an event of probability at least $c/R^\gamma$. As a result, we need to replace the lower bound from \eqref{eq:layers_poly*} by $1-e^{-c_2R^{a-2\gamma}}$. Taking $a$ small enough followed by $\gamma$ small enough, we obtain that the analogue of Lemma \ref{lemma:two_arms_1} holds with $\mathcal{C} \subset \tilde{F}_R^i$ and $\mathcal{C}' \subset \tilde{F}_R^j$ where $F_R^i$ and $F_R^j$ share a side. The rest of the proof is exactly the same as Proposition \ref{prop:2arms*} so we omit the details.
\end{proof}

\section{Renormalization}\label{s:RS}
\label{sec6}
We now supply a suitable renormalization scheme, with two purposes in mind. First, an application of this scheme and of the $2$-arms estimate Proposition \ref{prop:2arms*} yields the proof of existence of an unbounded nodal component, in a thick (i.e.\ Item i) of Theorem \ref{thm:main}) see Section \ref{sec7}. Later, the scheme will also be used in Section \ref{sec8} in the course of proving Item ii) Theorem~\ref{thm:main} to generate an ambient cluster with good properties.

\begin{definition}[Renormalization scheme]\label{def:renormalization_scheme}
Let $(\Omega,\calF,\prob)$ be a probability space equipped with a measure preserving action of $\Z^d$ which we denote by $(\tau_x)_{x \in \Z^d}$. Call \textit{renormalization scheme} the following data:
\begin{itemize}[noitemsep]
\item A {\em relative scale parameter} $\lambda\in\N$ such that $\lambda \ge 2$. For each $n\in\N$, we let $L_n:=\lambda^n$;
\item An initial event $G_{0,0} \in \mathcal{F}$;
\item For each $n\in\N \setminus \{0\}$, an event $H_n \in \mathcal{F}$;
\item A {\em range parameter} $\rho\in\N \setminus \{0\}$;
\item A {\em separation parameter} $\sigma \in \N \setminus \{0\}$.
\end{itemize}
Given this data, define a family of events $(G_{n,x})$ indexed by $n\in\N$ and $x\in L_n\Z^d$ in the following way (the following approach was pioneered in \cite{MR2680403,MR2891880}). The event $G_{0,0}$ is the initial event; for each $n\in\N$ and $x\in L_n\Z^d$, 
\[
G_{n,x}:=\tau_x^{-1}(G_{n,0})
\] and for each $n \in \N \setminus \{0\}$,
\begin{equation}
\label{eq:cascades}
G_{n,0}:=H_n\cap\bigcap_{x_1,x_2} (G_{n-1,x_1}\cup G_{n-1,x_2})
\end{equation}
where the intersection runs over the pairs $x_1,x_2\in L_{n-1}\Z^d\cap  [-4\rho L_n,4\rho L_n]^d$ such that
\[\textup{dist}(x_1,x_2)\geq \sigma L_{n-1}.
\]
\end{definition}

\begin{remark} In absence of $H_n$, the events $G_{n,0}$ in \eqref{eq:cascades} are called \textit{cascading} in the context of \cite[Section 3]{MR2891880}; for the benefits of adding events $H_n$ at renormalized scales, see \cite[Section 2]{DGRS20}. The events $G_{n,0}$ and $H_n$ will be typical in applications, see \eqref{eq:stretch_1}, \eqref{eq:stretch_2}, \eqref{eq:bound qn} below. One central aspect of the definition of the events $G_{n,x}$ is that if $x_1 \in L_{n-1}\Z^d\cap  [-4\rho L_n,4\rho L_n]^d$ and if $G_{n,0} \setminus G_{n-1,x_1}$ holds, then $G_{n-1,x_2}$ holds for all $x_2 \in L_{n-1}\Z^d\cap  [-4\rho L_n,4\rho L_n]^d$ which are sufficiently far from $x_1$ in the sense that $\textup{dist}(x_1,x_2)\geq \sigma L_{n-1}$. This property propagates down to level $n=0$. As to the events $H_{n}$, if one removes from $[-4\rho L_n,4\rho L_n]^d$ the bad regions (where $G_{k,y}^c$ occurs for some $y \in [-4\rho L_n,4\rho L_n]^d$ and $k < n$), then any point $z$ in the remaining region ``sits'' in a tower of renormalized events (suitable shifts of $H_k$ for $k \leq n$), which all occur. This observation motivates the notion of a black vertex below.
\end{remark}



\begin{proposition}[Stretched-exponential decay of probabilities]\label{lem:stretch_exp_decay}
Fix a renormalization scheme with data $\lambda$, $G_{0,0}$, $(H_n)_n$, $\rho$ and $\sigma$. Assume that for any $n\in\N$ and $x_1,x_2\in L_n\Z^d$ such that $\textup{dist}(x_1,x_2)\geq \sigma L_n$, the events $G_{n,x_1}$ and $G_{n,x_2}$ are independent. For each $n\in\N$, set $q_n:=\prob[G_{n,0}^c]$ and assume that
\begin{equation}\label{eq:stretch_1}
q_0\leq \overline{q}_0:=\frac{1}{4(3\rho\lambda)^{2d}}\, .
\end{equation}
Also, assume that for all $n \ge 1$,
\begin{equation}\label{eq:stretch_2}
\mathbb{P}[H_{n}^{\mathsf{c}}]\leq 
\overline{q}_0 2^{-2^n}.
\end{equation}
Then, for all $n\geq 0$,
\begin{equation}\label{eq:bound qn}
q_n \leq (2\overline{q}_0)2^{-2^{n}}\, .
\end{equation}
\end{proposition}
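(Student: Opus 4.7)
The plan is to prove \eqref{eq:bound qn} by induction on $n \geq 0$. The base case $n=0$ is immediate from \eqref{eq:stretch_1}, since $(2\bar q_0) \cdot 2^{-2^0} = \bar q_0 \geq q_0$. For the inductive step, I would unfold the definition \eqref{eq:cascades} and pass to complements, giving
\[
G_{n,0}^{\mathsf c} \subseteq H_n^{\mathsf c} \cup \bigcup_{x_1,x_2} \bigl( G_{n-1,x_1}^{\mathsf c} \cap G_{n-1,x_2}^{\mathsf c} \bigr),
\]
where the union runs over the same pairs in $L_{n-1}\Z^d \cap [-4\rho L_n, 4\rho L_n]^d$ at mutual distance at least $\sigma L_{n-1}$ as in \eqref{eq:cascades}. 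A union bound, combined with the independence hypothesis applied at scale $n-1$ (valid since $\mathrm{dist}(x_1,x_2) \geq \sigma L_{n-1}$), yields
\[
q_n \leq \Pro[H_n^{\mathsf c}] + N \cdot q_{n-1}^2,
\]
where $N$ denotes the number of admissible pairs. Since $[-4\rho L_n, 4\rho L_n]^d$ contains at most $(8\rho\lambda + 1)^d$ points of $L_{n-1}\Z^d$ (as $L_n/L_{n-1} = \lambda$), one has $N \leq C(\rho\lambda)^{2d}$ for some absolute $C$, which by the choice of $\bar q_0$ in \eqref{eq:stretch_1} satisfies $4N\bar q_0 \leq 1$.

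The key algebraic point is that the doubly-exponential decay $2^{-2^n}$ is stable under squaring: by the induction hypothesis, $q_{n-1}^2 \leq 4\bar q_0^2 \cdot 2^{-2 \cdot 2^{n-1}} = 4\bar q_0^2 \cdot 2^{-2^n}$. Together with the hypothesis \eqref{eq:stretch_2}, this gives
\[
q_n \leq \bar q_0 \cdot 2^{-2^n} + 4N\bar q_0^2 \cdot 2^{-2^n} = \bar q_0 \cdot 2^{-2^n} \bigl(1 + 4N\bar q_0\bigr) \leq 2\bar q_0 \cdot 2^{-2^n},
\]
which closes the induction.

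There is no real obstacle in this proof — the argument is a clean, routine renormalization recursion. The only point to watch is the arithmetic check that the explicit constant $\frac{1}{4(3\rho\lambda)^{2d}}$ in \eqref{eq:stretch_1} is indeed small enough to absorb the pair-counting factor $N$, which is a straightforward (if slightly tedious) verification depending on a precise count of how many separated pairs can appear in the box; the tuning of the constant $3$ versus $8$ reflects this bookkeeping. The conceptual content is entirely captured by the squaring identity $2 \cdot 2^{n-1} = 2^n$, which is exactly what allows a recursion $q_n \lesssim q_{n-1}^2$ to produce the stretched-exponential rate $2^{-2^n}$.
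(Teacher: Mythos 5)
Your proof is correct and follows essentially the same route as the paper: the same induction, the same union bound over separated pairs combined with the independence hypothesis at scale $n-1$, and the same squaring identity $2\cdot 2^{n-1}=2^n$ leading to the recursion $q_n\le \Pro[H_n^{\mathsf c}]+Nq_{n-1}^2$. The constant-bookkeeping caveat you raise (the count of lattice points in $[-4\rho L_n,4\rho L_n]^d$ versus the factor $(3\rho\lambda)^{2d}$) is a fair observation, but it only affects the harmless explicit value of $\overline{q}_0$ and not the structure of the argument.
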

\begin{proof}
We prove the result by induction on $n$. For each $n\in\N$, apply a union bound over the pairs $(x_1,x_2)$ defining $G_{n,0}$ to get
\[
q_{n+1}=\prob[G_{n+1,0}^c]\leq \prob[H_{n+1}^c]+(3\rho\lambda)^{2d} q_n^2\, .
\]
By induction (or \eqref{eq:stretch_1} for $n=0$) and \eqref{eq:stretch_2}, $q_n\leq (2\overline{q}_0) 2^{-2^n}$ and $\prob[H_{n+1}^c]\leq \overline{q}_0 2^{-2^{n+1}}$. Plugging these estimates in the previous displayed equation implies that
\[
q_{n+1}\leq \overline{q}_0 2^{-2^{n+1}}+4(3\rho\lambda)^{2d}\overline{q}_0^2\times 2^{-2^{n+1}}\, .
\]
By \eqref{eq:stretch_1} we obtain the desired bound for $q_{n+1}$.
\end{proof}

We now introduce the notion of black vertex.

\begin{definition}[Black vertex]\label{defi:black_vertex}
This notion depends on a scale parameter $n \in \N$. We call a vertex $x\in\Z^d$ {\em black} (at scale $n$) if $G_{0,x}$ is satisfied and if, for each $m\in\{1,\dots,n\}$, there exists $y \in L_m\Z^d$ such that  $\tau_y^{-1}(H_m)$ holds and $x\in y+[-4\rho L_m,4 \rho L_m]^d$.
\end{definition}

Below, we consider ``connected subsets'' and ``paths''  for the usual hypercubic lattice with vertex-set $\Z^d$.

The following (deterministic) result is an adaptation of \cite[Lemma~8.6]{DPR18.2}. 

\begin{proposition}[Geometric properties of the renormalization scheme]\label{prop:RS_geometry}
Fix a renormalization scheme with data $\lambda$, $G_{0,0}$, $(H_n)_n$, $\rho$ and $\sigma$. Assume that
\begin{equation}\label{eq:RS_cond_1}
\lambda \rho \geq 100 \sigma \text{ and } \rho \ge 2.
\end{equation}
Let $n\geq 1$ and assume $G_{n,0}$ is satisfied. Then, for any connected sets $S_1,S_2\subset \Z^d \cap [-\rho L_n,\rho L_n]^d$ such that $\textup{diam}(S_1),\textup{diam}(S_2)\geq 10 \sigma L_{n-1}$, there exists a path of black vertices in $\Z^d \cap [-4\rho L_n,4\rho L_n]^d$ whose endpoints belong to $S_1$ and $S_2$ respectively.
\end{proposition}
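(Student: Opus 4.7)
I would proceed by induction on $n$; the base case $n = 1$ follows the same pattern as the inductive step, applied directly at scale $0$.

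First, extract the combinatorial content of $G_{n,0}$. By the defining intersection in \eqref{eq:cascades}, the set
\[
\calB_{n-1} := \{y \in L_{n-1}\Z^d \cap [-4\rho L_n, 4\rho L_n]^d : G_{n-1,y}^{\mathsf{c}} \text{ holds}\}
\]
has diameter strictly less than $\sigma L_{n-1}$, otherwise a pair in it would appear in the defining intersection with both events complementary. Moreover $H_n$ holds, and since $y' = 0 \in L_n\Z^d$ witnesses the $m = n$ condition of Definition~\ref{defi:black_vertex} for every $x \in [-4\rho L_n, 4\rho L_n]^d$, any vertex in this box that is black at scale $n-1$ is automatically black at scale $n$. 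The task therefore reduces to producing a path of scale-$(n-1)$-black vertices from $S_1$ to $S_2$ inside $[-4\rho L_n, 4\rho L_n]^d$.

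Next, anchor $S_1, S_2$ in the good-block structure. Write $C_m(y) := y + [-\rho L_m, \rho L_m]^d$. Since $\textup{diam}(\calB_{n-1}) < \sigma L_{n-1}$ whereas $\textup{diam}(S_i) \geq 10\sigma L_{n-1}$, and $[-\rho L_n, \rho L_n]^d$ has side $2\rho\lambda L_{n-1} \geq 200\sigma L_{n-1}$ by $\lambda\rho \geq 100\sigma$, one can extract for $i \in \{1,2\}$ a good center $y_i \in L_{n-1}\Z^d$ (with $G_{n-1,y_i}$ holding) such that $S_i \cap C_{n-1}(y_i)$ contains a connected subset $T_i$ of diameter at least $10\sigma L_{n-2}$; this uses that $\calB_{n-1}$ occupies only a tiny portion of the ambient region while $S_i$ is much longer than $\calB_{n-1}$ is wide. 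I then build a chain of good centers $z_0 = y_1, z_1, \dots, z_k = y_2$ in $L_{n-1}\Z^d \cap [-4\rho L_n, 4\rho L_n]^d$ with $|z_j - z_{j+1}|_\infty \leq L_{n-1}$, so that $C_{n-1}(z_j) \cap C_{n-1}(z_{j+1})$ contains a box of side $(2\rho - 1)L_{n-1} \geq 3L_{n-1}$ by $\rho \geq 2$, large enough to host a ``bridge'' connected set of diameter $\geq 10\sigma L_{n-2}$. Such a chain exists because $\calB_{n-1}$ is a cluster of diameter $<\sigma L_{n-1}$ inside a block lattice of extent $\sim \rho\lambda L_{n-1} \gg \sigma L_{n-1}$, hence easily bypassed.

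Finally, stitch by the induction hypothesis. In each overlap $C_{n-1}(z_j) \cap C_{n-1}(z_{j+1})$ fix a bridge $B_j$ (for instance a straight lattice segment) of diameter $\geq 10\sigma L_{n-2}$. Within $z_j + [-4\rho L_{n-1}, 4\rho L_{n-1}]^d$, applying the induction hypothesis to $G_{n-1,z_j}$ yields a path of scale-$(n-1)$-black vertices between $B_{j-1}$ and $B_j$ (at the endpoints $j = 0$ or $j = k$ the missing bridge is replaced by $T_1$ or $T_2$). Concatenation over $j$ produces a path of scale-$(n-1)$-black vertices from $T_1 \subset S_1$ to $T_2 \subset S_2$ lying in $\bigcup_{j} \bigl( z_j + [-4\rho L_{n-1}, 4\rho L_{n-1}]^d \bigr) \subset [-4\rho L_n, 4\rho L_n]^d$, hence scale-$n$-black by the first paragraph. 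The main obstacle is the geometric bookkeeping in the middle paragraph --- extracting anchored $T_i \subset S_i$ inside good blocks and constructing the good-block chain with sufficient overlap --- both of which rely on $\lambda\rho \geq 100\sigma$ (so the bad cluster is tiny relative to the ambient box) and $\rho \geq 2$ (so adjacent boxes $C_{n-1}$ overlap in a region large enough to carry a scale-$(n-2)$ bridge).
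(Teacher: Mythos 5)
Your overall architecture coincides with the paper's: induction on $n$, the observation that $G_{n,0}\subset H_n$ upgrades scale-$(n-1)$ blackness to scale-$n$ blackness inside $[-4\rho L_n,4\rho L_n]^d$, anchoring of $S_1,S_2$ in good $L_{n-1}$-blocks, a chain of overlapping good blocks, and stitching via the induction hypothesis. Your formulation via the bad cluster $\calB_{n-1}$ of diameter $<\sigma L_{n-1}$ is equivalent to the paper's device of exhibiting two candidate chains at mutual distance $\geq\sigma L_{n-1}$, one of which must then be entirely good.

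There is, however, a genuine gap at the concatenation step. The induction hypothesis applied in the box around $z_j$ produces a black path whose endpoint in $B_j$ is \emph{some} vertex of $B_j$, while the application in the box around $z_{j+1}$ produces a black path meeting $B_j$ at a possibly \emph{different} vertex; since $B_j$ is a deterministic lattice segment carrying no blackness guarantee, the portion of $B_j$ between the two contact points need not be black, and the union of the two black paths is in general not connected. This is exactly why the paper takes its intermediate anchors $T_j$ in the overlaps to be connected sets \emph{made of black vertices} (themselves obtained from the induction hypothesis at scale $n-1$) of diameter at least $10\sigma L_{n-2}$: then the union of $T_j$ with the two black paths attached to it is connected and black, and concatenation over $j$ yields a single black path. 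Your argument is repaired by replacing each deterministic bridge $B_j$ with such a black connected set. Two minor further points: the base case $n=1$ cannot literally ``follow the same pattern'' since $L_{-1}$ is undefined (it must be treated directly, e.g.\ one of two well-separated vertex paths from $S_1$ to $S_2$ is entirely good, hence black); and to guarantee $z_j+[-4\rho L_{n-1},4\rho L_{n-1}]^d\subset[-4\rho L_n,4\rho L_n]^d$ you should keep the chain inside $[-2\rho L_n,2\rho L_n]^d$, as the paper does.
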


\begin{remark}
The proof of the previous proposition is a little easier in the particular case $d=2$ (which is the only case used in the proof that $\ell_c<0$), so the reader only interested in the existence of the unbounded nodal component can use planarity to simplify the proof below. Let us also stress that these methods are rather robust and do not in fact rely on the specific symmetries of $\R^d$ (essentially, as long as balls have polynomial volume growth and the base geometry does not exhibit ``large bottlenecks'', these techniques are likely to apply, cf.~\cite{DPR18.2}).
\end{remark}

\begin{proof}
We proceed by induction on $n$ and prove the property for $G_{n,x}$ and $S_1,S_2\subset x + [-\rho L_n,\rho L_n]^d$, with $x\in L_n\Z^d$ instead of $x=0$. 

\medskip

Let us start with the case $n=1$. For this case, we use neither \eqref{eq:RS_cond_1} nor the fact that $S_1,S_2$ are connected. Let $S_1,S_2\subset \Z^d \cap [-\rho L_1,\rho L_1]^d$ be two sets of diameter at least $10\sigma L_0 = 10\sigma$. There exist two paths of vertices in $\Z^d \cap [-2\rho L_1,2\rho L_1]^d$, at a mutual distance at least $\sigma$, whose endpoints belong to $S_1$ and $S_2$ respectively\footnote{One can prove this as follows: Let $x_i,y_i \in S_i$ such that $\textup{dist}(x_i,y_i) \ge 10\sigma$, $i \in \{1,2\}$ and choose the indexation of the points so that $\textup{dist}(x_1,y_2),\textup{dist}(x_2,y_1)\ge \textup{dist}(x_1,x_2)$. Then, one can let one of the two paths be an approximation of the segment $[x_1,x_2]$, let $U$ be the $5\sigma$-neighborhood  of this path and use that 

i) since $\textup{dist}(x_1,y_1),\textup{dist}(x_2,y_2) \ge 10\sigma$ and $\textup{dist}(x_1,y_2),\textup{dist}(x_2,y_1)\ge \textup{dist}(x_1,x_2)$, we have $y_1,y_2 \notin U$; 

ii) $(\Z^d \cap [-2\rho L_1,2\rho L_1]) \setminus U$ is connected.}. In particular, if $G_{1,0}$ holds then one of these two paths must be entirely black. This proves the property for $n=1$ (and with $[-2\rho L_n,2\rho L_n]^d$ instead of $[-4\rho L_n,4\rho L_n]^d$). The proof for translates of $G_{1,0}$ is identical.

\medskip

Let $n \ge 2$, assume that the proposition holds for $n-1$ and let $S_1,S_2\subset \Z^d \cap [-\rho L_n,\rho L_n]^d$ be two connected sets of diameter at least $10 \sigma L_{n-1}$. By the same reasoning as above, we get that there exist two sequences of vertices $(y_j)_j$ in $L_{n-1}\Z^d \cap [-2\rho L_n,2\rho L_n]^d$ such that for each $j$, \[\sum_{i=1}^d|(y_j)_i-(y_{j+1})_i|=L_{n-1},\] at mutual distances at least $\sigma L_{n-1}$ and whose endpoints are at a distance at most $L_{n-1}$ from $S_1$ and $S_2$ respectively. By definition of $G_{n,0}$, at least one of these, say $(y_j)_{1 \le j \le N}$, must be such that $G_{n-1,y_j}$ is satisfied for each $j$. Without loss of generality, we assume that $y_1$ (resp.\ $y_N$) is at a distance at most $L_{n-1}$ from $S_1$ (resp. $S_2$).

Let us note that since $G_{n,0}\subset H_n$, black vertices at scale $n-1$ in $[-4\rho L_n,4\rho L_n]$ are also black at scale $n$.

Let us first deal with the case where the path is of length 1, i.e.~that $N = 1$. Since $S_1$ and $S_2$ are connected, and since $\lambda\rho \ge 100\sigma$, $\rho \ge 2$, and $y_1$ is at a distance smaller than or equal to $ L_{n-1}$ from $S_1$ and $S_2$, there exist $S_1' \subset S_1$ and $S_2' \subset S_2$ which are connected, of diameter larger than or equal to $ 10\sigma L_{n-2}$ and included in $y_1 + [-\rho L_{n-1},\rho L_{n-1}]^d$. By the induction hypothesis, there exists a path of black vertices in $y_1+[-4\rho L_{n-1},4 \rho L_{n-1}]^d$ that connects $S_1'$ and $S_2'$. This ends the proof in the case $N=1$ since these paths are necessarily included in $[-4\rho L_n,4 \rho L_n]^d$.

Let us now consider the general case of an arbitrary path, i.e.~let us assume that $N \ge 2$. Since $\lambda \rho \ge 100\sigma$ and by the induction hypothesis, for every $j \in \{1,\dots,N-1\}$ there exists a connected set
\[
T_j \subset \Z^d \cap (y_{j+1}+[-\rho L_{n-1},\rho L_{n-1}]^d) \cap (y_j+[-\rho L_{n-1},\rho L_{n-1}]^d)
\]
made of black vertices and of diameter larger than or equal to $10\sigma L_{n-2}$. By the induction hypothesis again, for every $j \in \{1,\dots,N-2\}$, $T_j$ and $T_{j+1}$ are connected by a black path included in $y_{j+1}+[-4 \rho L_{n-1},4\rho L_{n-1}]^d$. Moreover, by reasoning as in the case $N=1$, we obtain that $T_1$ (resp.\ $T_{N-1}$) is connected to $S_1$ (resp.\ $S_2$) by a path of black vertices included in $y_j+[-4 \rho L_{n-1},4\rho L_{n-1}]^d$ with $j=1$ (resp.\ $j=N$). This concludes the proof.
\end{proof}

\section{Existence of an unbounded component in a slab}\label{sec:existence*}
\label{sec7}

We now investigate the existence of unbounded components in $\{f \ge \ell\}$ intersected with a thick slab, for some (small) $\ell > 0$. The main result of this section is the following:

\begin{theorem}\label{thm:existence*}
Let $d \ge 3$. There exists $\beta_0 >0$ such that the following holds. Let $q$ satisfying Assumption~\ref{ass1} for some $\beta > \beta_0$. Then, there exist $\ell,L>0$ such that a.s.\ the set
\[
\{f \ge \ell \} \cap (\mathbb R^2 \times [0,L])
\]
contains an unbounded connected component.
\end{theorem}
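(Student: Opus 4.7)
The plan is to combine the two-arms estimate Proposition~\ref{prop:2arms*}, the macroscopic planar cluster Lemma~\ref{lem:macro*}, and the renormalization scheme of Section~\ref{sec6} to construct almost surely an unbounded connected component of $\{f_r\ge\ell\}\cap(\R^2\times[0,L])$ for a finite-range approximation $f_r$, at fixed positive $\ell,L>0$; the conclusion for $f$ then follows from the approximation tools of Section~\ref{sec3}.

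Fix $b>0$ and $0<\gamma<a^2<b$ as in Proposition~\ref{prop:2arms*}, take a large scale $R$ (to be tuned in terms of $\beta$), and set $r:=R^\gamma$, $\ell:=R^{-3/2}$, $L:=R^a$. For each $x\in R\Z^2\subset\R^2$, declare $x$ \emph{good} if the two following events both occur: (i)~$\{f_r\ge 2\ell\}\cap(x+[-R,R]^2)$ contains a connected component of diameter at least $\delta R$ (as provided by Lemma~\ref{lem:macro*}); (ii)~any two c.c.'s of $\{f_r\ge 2\ell\}\cap(x+[-2R,2R]^2)$ of diameter at least $\delta R/2$ lie in a single c.c.\ of $\{f_r\ge\ell\}\cap\bigl((x+[-4R,4R]^2)\times[0,L]\bigr)$ (as provided by Proposition~\ref{prop:2arms*}). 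For $\delta$ small and $R$ large enough, $\prob[x\text{ good}]\ge 1-e^{-R^c}$. Since $f_r$ is $r$-dependent with $r=R^\gamma\ll R$, the good-site process $(\mathbf 1_{\{x\text{ good}\}})_{x\in R\Z^2}$ is finitely dependent with marginal arbitrarily close to $1$. One may therefore apply the renormalization scheme of Section~\ref{sec6} with $G_{0,0}:=\{0\text{ is good}\}$, $H_n\equiv\Omega$ for every $n\ge 1$, and parameters $(\lambda,\rho,\sigma)$ satisfying \eqref{eq:RS_cond_1}; the hypothesis \eqref{eq:stretch_1} is ensured by taking $R$ large, \eqref{eq:stretch_2} is trivial, so Proposition~\ref{lem:stretch_exp_decay} yields $\prob[G_{n,0}^c]\to 0$ stretched-exponentially and Proposition~\ref{prop:RS_geometry} provides arbitrarily long paths of good sites in $\Z^2$ almost surely.

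The geometric key point is that adjacent good sites $x,x'\in R\Z^2$ have overlapping $[-4R,4R]^2$-windows, so condition~(ii) at either one forces their macroscopic components (from (i)) to lie in a common c.c.\ of $\{f_r\ge\ell\}\cap(\R^2\times[0,L])$. Chaining these connections along the infinite cluster of good sites extracted from the renormalization (combined with translation invariance and Borel--Cantelli) produces an unbounded connected component of $\{f_r\ge\ell\}\cap(\R^2\times[0,L])$ almost surely.

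To transfer from $f_r$ to $f$, recall that $f-f_r$ is a stationary centered Gaussian field with $\textup{Var}(f-f_r)=O(r^{-(\beta-d)})$ (polynomially small for $\beta$ large). By the Kolmogorov and BTIS estimates of Section~\ref{sec3}, for $\beta$ large enough the event $\{\sup_{[-N,N]^2\times[0,L]}|f-f_r|<\ell/4\}$ has probability tending to $1$ along suitable sequences of $N$, and on this event any c.c.\ of $\{f_r\ge\ell\}$ restricted to $[-N,N]^2\times[0,L]$ is contained in $\{f\ge 3\ell/4\}$. Interleaving this approximation with the renormalization argument above (at a scale $R=R(N)$ chosen large enough as a function of $N$) and passing to a Hausdorff limit of the resulting nested macroscopic clusters, one obtains an unbounded connected component of $\{f\ge 3\ell/4\}\cap(\R^2\times[0,L])$ with positive probability, which upgrades to almost-sure existence by the ergodicity statement \cite[Theorem~6.5.4]{Adl10}. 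The main technical obstacle is precisely this final transfer step: existence of an unbounded cluster is not a compact event, and $\sup|f-f_r|$ cannot be made uniformly small over an infinite slab, so the approximation of $f$ by $f_r$ must be carefully coupled to the scale at which the renormalization is constructing the macroscopic clusters rather than applied as a black box at the very end.
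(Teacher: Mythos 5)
Your construction of an unbounded cluster for the truncated field $f_{R^\gamma}$ — good points defined via Lemma~\ref{lem:macro*} and Proposition~\ref{prop:2arms*}, fed into the renormalization scheme of Section~\ref{sec6} — is essentially the paper's argument for that part. The genuine gap is the transfer from $f_{R^\gamma}$ to $f$, which you correctly flag as the main obstacle but do not resolve. Your proposed fix (take $R=R(N)$ growing with $N$ and pass to a Hausdorff limit of macroscopic clusters) does not work: if $R$ varies with $N$ then so do $\ell=R^{-3/2}$ and $L=R^a$, so you lose the fixed $\ell,L$ demanded by the statement; the clusters produced at different scales are clusters of \emph{different} fields $f_{R(N)^\gamma}$ at different levels and are in no sense nested; and Hausdorff limits do not preserve connectivity or unboundedness in the way needed, so even positive probability of the target event is not established by this sketch.

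The paper's resolution keeps $R$ (hence $\ell,L$) fixed and builds the approximation of $f$ by $f_{R^\gamma}$ \emph{into the renormalization scheme itself} rather than treating it as a separate final step. Concretely, a third item is added to the definition of a very good point, namely $\|f-f_{R^\gamma}\|_{\infty,\,x+[-4R,4R]^2\times[0,R^a]}\le \tfrac12 R^{-3/2}$, and this is obtained by telescoping across scales: the events $H_n$ are not trivial but are $H_n=\{\|f_{(n)}-f_{(n-1)}\|_{\infty,B_n}<\eps_n\}$ with $f_{(n)}=f_{(RL_n)^\gamma}$, $\eps_n=(RL_{n-1})^{-\gamma(\beta-d/2)+1}$ and $\sum_n\eps_n\le\tfrac12 R^{-3/2}$ (this is where the constraint $\beta>\tfrac{5}{2\gamma}+\tfrac d2$, i.e.\ the threshold $\beta_0$, enters). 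Each $H_n$ compares two finite-range fields on a box of size comparable to $RL_n$, so all events in the scheme remain measurable with respect to the white noise on bounded regions and the independence hypothesis of Proposition~\ref{lem:stretch_exp_decay} survives — which is exactly why one cannot simply put the condition $\|f-f_{R^\gamma}\|_\infty\le\tfrac12 R^{-3/2}$ into $G_{0,0}$ (it depends on $W$ everywhere). A black vertex then sits inside occurring translates of $H_m$ for every $m$, so the telescoping sum controls $\|f-f_{R^\gamma}\|$ on its box, and the chaining of components along the infinite black path lands directly in $\{f\ge\tfrac12 R^{-3/2}\}\cap(\R^2\times[0,R^a])$ with no limiting procedure needed. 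Without this multiscale mechanism (or an equivalent one), your proof is incomplete.
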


Before proving Theorem \ref{thm:existence*}, let us deduce Item i) of Theorem \ref{thm:main} from it.

\begin{proof}[Proof of Item i) of Theorem \ref{thm:main}]
By Theorem \ref{thm:existence*} (together with translation invariance and equality in law of $f$ and $-f$), there exist $\delta,L>0$ such that, for every $\ell \in [-\delta,\delta]$, almost surely there is an unbounded component in both $\{ f \ge \ell \} \cap (\R^2 \times [1,L-1])$ and $\{ f \le \ell \} \cap (\R^2 \times [1,L-1])$. Now, the result is a direct consequence of Lemma \ref{lem:mani_d*} (applied to $d'=3$) and Lemma \ref{L:top} (applied to $E=\R^2\times (0,L)$ and $\Sigma=\{ f = \ell \} \cap E$).
\end{proof}

\subsection{Definition of (very) good points}

Let $a,\gamma \in (0,1)$ such that $\gamma<a^2$ and such that the hypotheses of Proposition \ref{prop:2arms*} hold.

\begin{definition}[good point] \label{defi:good1*}
Let $\delta,R>0$. A point $x \in \R^2$ is called ($\delta$-){\em good} at scale $R$ if the following three properties occur:
\begin{enumerate}[noitemsep]
\item[i)] there exists a connected component of $\{ f_{R^\gamma}\ge R^{-3/2} \} \cap (x+[-R,R]^2)$ of diameter larger than or equal to $ \delta R$;
\item[ii)] all the connected components of $\{ f_{R^\gamma}\ge R^{-3/2} \} \cap (x+[-2R,2R]^2)$ of diameter larger than or equal to $\delta R$ belong to the same connected component of $\{ f_{R^\gamma} \ge R^{-3/2} \}\cap (x + [-4R,4R]^2\times[0,R^a])$.
\item[iii)] $\|f-f_{R^{\gamma}}\|_{\infty,B} \le \tfrac12R^{-3/2}$, where $B=x+[-4R,4R]^2 \times [0,R^a]$.
\end{enumerate}
\end{definition}

\subsection{The renormalization scheme}

Let $\gamma$ and $a$ as above, assume that
\[
\beta>\frac{5}{2\gamma}+\frac{d}{2}
\]
and define a renormalization scheme (see Definition \ref{def:renormalization_scheme}) depending on two parameters $\delta>0$ and $R \ge 1$ as follows.

\begin{itemize}
\item Consider the probability space used in the rest of the paper with the $\Z^2$ action in which $x\in\Z^2$ acts by translating the field by $Rx$.
\item Fix $\rho = 2$, $\sigma=1000$ and $\lambda=10^{10}$.
\item Set $G_{0,0}$ to be the event that  the point $0$ satisfies Items i) and ii) from Definition~\ref{defi:good1*} (recall that these items depend on $\delta$ and $R$). Since these two items are defined in terms of $f_{R^\gamma}$ on $[-4R,4R]^2 \times \R^{d-2}$, the event $G_{0,0}$ is measurable with respect to the white noise $W$ restricted to $[-5R,5R]^2\times \R^{d-2}$ (if $R$ is large enough). By Lemma \ref{lem:macro*} and Proposition \ref{prop:2arms*}, for every $h>0$ there exist $\delta,R_0$ such that for all $R \ge R_0$,
\begin{equation}\label{eq:EX_G_bound}
\prob[G_{0,0}^c] \le h.
\end{equation}
\item For each $n\ge0$, write $f_{(n)}:=f_{(R L_n)^\gamma}$ and if $n\ge1$,
\[
H_n:=\{\|f_{(n)}-f_{(n-1)}\|_{\infty,B_n}<\eps_n\},
\]
where 
\[
B_n:=[-8\rho RL_n+,8\rho RL_n]^2\times[0,R^a] \quad \text{and} \quad \eps_n:=(R L_{n-1})^{-\gamma(\beta-\frac{d}{2})+1}.
\] 
\item Define the events $G_{n,x}$ for $n\in\N$ and $x\in L_n\Z^2$ as in Definition \ref{def:renormalization_scheme}.
\end{itemize}

We start with the following lemma.

\begin{lemma}\label{lem:ind_renorm*}
Consider the renormalization scheme defined above. For every $n \in \N$, the event $G_{n,0}$ is measurable with respect to the white noise $W$ restricted to
\[
\left[- 18\rho L_n,18\rho L_n\right]^2 \times \R^{d-2}.
\]
By our choice of $\rho$ and $\sigma$, we have $\sigma > 36\sqrt{2}\rho$. As a result, $G_{n,x_1}$ and $G_{n,x_2}$ are independent for all $x_1,x_2\in L_n\Z^2$ such that $\textup{dist}(x_1,x_2)\geq \sigma L_n$.
\end{lemma}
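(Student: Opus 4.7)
The plan is to prove both assertions by induction on $n$, exploiting the fact that $f_r$ is the convolution of $W$ with a function supported in $B(r/2)$, so that $f_r$ restricted to a set $A$ is measurable with respect to $W$ restricted to the $(r/2)$-enlargement of $A$.

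For the base case $n=0$, the event $G_{0,0}$ (Items i) and ii) of Definition~\ref{defi:good1*}) depends only on $f_{R^\gamma}$ on the slab $[-4R,4R]^2\times[0,R^a]$, hence is measurable with respect to $W$ restricted to the $R^\gamma/2$-enlargement of that slab; for $R$ large, this sits inside the $\R^{d-2}$-cylinder over $[-5R,5R]^2$ and, a fortiori, over $[-18\rho RL_0,18\rho RL_0]^2$.

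For the inductive step, write
\[
G_{n+1,0}=H_{n+1}\cap\bigcap_{x_1,x_2}\bigl(G_{n,x_1}\cup G_{n,x_2}\bigr),
\]
with the intersection over pairs in $L_n\Z^2\cap[-4\rho L_{n+1},4\rho L_{n+1}]^2$. The event $H_{n+1}$ depends only on $f_{(n+1)}-f_{(n)}$ on $B_{n+1}=[-8\rho RL_{n+1},8\rho RL_{n+1}]^2\times[0,R^a]$, so is measurable w.r.t.\ $W$ on the $(RL_{n+1})^\gamma/2$-enlargement of $B_{n+1}$, which for large $n$ (or large $R$) lies in the cylinder over $[-9\rho RL_{n+1},9\rho RL_{n+1}]^2$. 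By the induction hypothesis and $G_{n,x}=\tau_x^{-1}(G_{n,0})$, each $G_{n,x}$ with $|x|_\infty\le 4\rho L_{n+1}$ is measurable w.r.t.\ $W$ on $Rx+[-18\rho RL_n,18\rho RL_n]^2\times\R^{d-2}$. Since $L_n=L_{n+1}/\lambda$ and $\lambda=10^{10}\gg 18$,
\[
4\rho L_{n+1}+18\rho L_n=\rho L_{n+1}\bigl(4+18/\lambda\bigr)\le 18\rho L_{n+1},
\]
so the union of these supports, together with that of $H_{n+1}$, fits inside $[-18\rho RL_{n+1},18\rho RL_{n+1}]^2\times\R^{d-2}$, closing the induction.

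For the independence claim, $G_{n,x_i}$ is measurable w.r.t.\ $W$ on $Rx_i+[-18\rho RL_n,18\rho RL_n]^2\times\R^{d-2}$ by the first part. These supports are disjoint as soon as $|x_1-x_2|_\infty>36\rho L_n$; the Euclidean bound $\textup{dist}(x_1,x_2)\ge\sigma L_n$ on $\Z^2$ yields $|x_1-x_2|_\infty\ge\sigma L_n/\sqrt{2}$, and the choice $\sigma=1000$, $\rho=2$ gives $\sigma/\sqrt{2}=1000/\sqrt 2>72=36\rho$, i.e.\ $\sigma>36\sqrt{2}\,\rho$. Independence then follows from the independence of $W$ on disjoint Borel sets. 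The only bookkeeping subtlety is the truncation margin $(RL_n)^\gamma$, which is easily swallowed by the generous prefactor $18\rho$ in the inductive box since $\gamma<1$.
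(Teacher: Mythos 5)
Your proof is correct and follows essentially the same route as the paper: an induction on $n$ tracking the support of the white noise needed to determine $G_{n,0}$ (the paper phrases it via an auxiliary sequence $r_{n+1}=9\rho L_{n+1}+r_n$ summed as a geometric series, while you induct directly on the bound $18\rho L_n$, which is an equivalent bookkeeping). The independence argument via disjoint supports and the comparison of $\ell^\infty$ and Euclidean distances is likewise the paper's.
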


\begin{proof}
We construct by induction
 a sequence of positive real numbers $(r_n)_n$ such that for each $n\in\N$, $G_{n,0}$ is measurable with respect to $W$ restricted to
\[
[-r_nR,r_nR]^2 \times \R^{d-2}.
\]
For $n=0$, as explained after the definition of $G_{0,0}$ we may take $r_0=5$. Assume that we have constructed $r_n$. Then, $G_{n+1,0}$ is measurable with respect to $H_{n+1}$ and all the events $G_{n,x}$ for $x\in L_n\Z^2
\cap [-4\rho L_{n+1},4\rho L_{n+1}]^2$. The event $G_{n,x}$ is the translate of $G_{n,0}$ by $Rx$ so it is measurable with respect to the white noise restricted to 
\[
[-(4\rho L_{n+1}+r_n)R,(4\rho L_{n+1}+r_n)R]^2 \times \R^{d-2}.\] 
The event $H_n$ is measurable with respect to the white noise $W$ on 
\[
[-9\rho L_n R,9\rho L_n R]^2\times \R^{d-2}
\] if $R$ is larger than some constant.
Altogether, we may take
\[
r_{n+1}:=9\rho L_{n+1}+r_n.
\]
Since $r_0 \le 9\rho$, the sequence $(r_n)_n$ satisfies 
\[
r_n \le \sum_{k=0}^n 9\rho\lambda^k \le \frac{9\rho \lambda}{\lambda-1}L_n \le 18\rho L_n.\qedhere
\]
\end{proof}

\begin{corollary}\label{cor:cond_renorm*}
There exist $\delta>0$ and $R\ge 1$ such that
\begin{itemize}[noitemsep]
\item[i)] the renormalization scheme satisfies the conditions of Propositions~\ref{lem:stretch_exp_decay} and~\ref{prop:RS_geometry};
\item[ii)] $\sum_{n\ge 1} \eps_n \le \tfrac{R^{-3/2}}{2}$.
\end{itemize}
\end{corollary}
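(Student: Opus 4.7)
The plan is to verify the hypotheses of Propositions~\ref{lem:stretch_exp_decay} and~\ref{prop:RS_geometry} by choosing $\delta$ small and $R$ large, exploiting the ample slack built into the choices $\lambda = 10^{10}$, $\rho = 2$, $\sigma = 1000$. The arithmetic conditions $\lambda\rho = 2\cdot 10^{10} \ge 10^5 = 100\sigma$ and $\rho \ge 2$ hold by inspection, and the independence of distant $G_{n,x}$'s is exactly the content of Lemma~\ref{lem:ind_renorm*}. The base-case bound $q_0 \le \overline{q}_0$ will come from applying \eqref{eq:EX_G_bound} with $h := \overline{q}_0$, which fixes $\delta > 0$ and a threshold $R_0$; I keep these $\delta, R_0$ for the rest of the argument.

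The main step is to estimate $\prob[H_n^c]$. I would write $g_n := f_{(n)} - f_{(n-1)} = \widetilde q_n \star W$ with $\widetilde q_n := q_{(RL_n)^\gamma} - q_{(RL_{n-1})^\gamma}$, so that $g_n$ is a centered stationary Gaussian field. Using $|q(x)| + |\nabla q(x)| = O(|x|^{-\beta})$ and integrating outside a ball of radius $\sim (RL_{n-1})^\gamma$ yields pointwise variance bounds $\sigma_n^2 \le C(RL_{n-1})^{-\gamma(2\beta - d)}$ for both $g_n$ and its first derivatives. Kolmogorov's theorem then controls $\E \|g_n\|_{\infty, B_n}$ by $C\sigma_n \cdot \mathrm{polylog}(L_n)$, which is $o(\eps_n)$ for $R$ large since the ratio $\eps_n/\sigma_n = RL_{n-1}$ overwhelms the logarithmic correction. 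The BTIS concentration inequality then upgrades this to
\[
\prob[H_n^c] \le 2\exp\!\big(-c(RL_{n-1})^2\big)
\]
for an absolute $c > 0$. Because $\lambda^2 = 10^{20} \gg 2$, one has $L_{n-1}^2 \ge 2^n/2$ uniformly in $n \ge 1$, so $(RL_{n-1})^2 \ge R^2 2^n/2$, and enlarging $R$ so that $cR^2/4 \ge \log 2$ and $cR^2/4 \ge \log(2/\overline{q}_0)$ delivers $\prob[H_n^c] \le \overline{q}_0 2^{-2^n}$, as required.

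For item~ii), I set $\alpha := \gamma(\beta - d/2) - 1$; the hypothesis $\beta > \tfrac{5}{2\gamma} + \tfrac{d}{2}$ gives $\alpha > 3/2$, and summing a geometric series yields
\[
\sum_{n \ge 1}\eps_n \;=\; \frac{R^{-\alpha}}{1 - \lambda^{-\alpha}} \;\le\; 2R^{-\alpha},
\]
which is at most $R^{-3/2}/2$ as soon as $R^{\alpha - 3/2} \ge 4$. Taking $R$ large enough to simultaneously satisfy the thresholds above then completes the proof.

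The only step that is not essentially algebraic is the BTIS-based control of $\prob[H_n^c]$, and that is where the main subtlety will lie: one must balance the decay of $\sigma_n$, the polynomial volume of $B_n$, and the double-exponential target $2^{-2^n}$. The decisive observation will be that the ratio $\eps_n/\sigma_n = RL_{n-1}$ is exponentially larger in $n$ than both the Kolmogorov logarithm $\sqrt{\log|B_n|}$ and the required exponent $2^n$, leaving plenty of slack for the bound to close.
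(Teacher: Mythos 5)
Your proposal is correct and follows essentially the same route as the paper: the arithmetic conditions and independence are checked exactly as in the text, the base case comes from \eqref{eq:EX_G_bound}, and the bound $\prob[H_n^c]\le e^{-c(RL_{n-1})^2}$ (which you re-derive via the variance/Kolmogorov/BTIS chain) is precisely the content of Lemma~\ref{L:rest_C1}, which the paper simply cites. The remaining comparisons with $\overline q_0\,2^{-2^n}$ and the geometric-series bound for $\sum_n\eps_n$ match the paper's argument.
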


\begin{proof}
We begin with i). By Lemma \ref{L:rest_C1}, there exist $c,C>0$ such that if $RL_{n-1} \ge C\log(RL_n)$ then
\begin{equation}\label{eq:EX_H_bound}
\prob[H_n^c]\leq e^{-c (RL_{n-1})^2}.
\end{equation}
As a result, if $R$ is sufficiently large then
\[
\prob[H_n^c]\leq \frac{1}{4(3\rho\lambda)^4} 2^{-2^n}
\]
for every $n\ge 1$. The rest of the assumptions of Propositions~\ref{lem:stretch_exp_decay} and~\ref{prop:RS_geometry} follow from our choices for $\lambda,\rho,\sigma$, Lemma~\ref{lem:ind_renorm*} and \eqref{eq:EX_G_bound}.

\medskip

As for ii), $\beta>\frac{5}{2\gamma}+\frac{d}{2}$ implies that
\[
\sum_{n\geq 1}\eps_n \le \frac{R^{-\gamma(\beta-\frac{d}{2})+1}}{1-\lambda^{-\gamma(\beta-\frac{d}{2})+1}} = o(R^{-3/2}). \qedhere
\]
\end{proof}

\subsection{Proof of the theorem}

We start with a lemma.

\begin{lemma}\label{lem:unbounded_black*}
Let $\delta>0$ and $R \ge 1$. If there is an infinite path $\pi_1,\pi_2,\dots$ of the lattice $\Z^2$ such that for every $i$, $R\pi_i$ is $\delta$-very good, then there is an unbounded component in $\{ f \ge \tfrac12R^{-3/2} \} \cap (\R^2 \times [0,R^a])$.
\end{lemma}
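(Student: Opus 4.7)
The approach is to chain together, along the infinite $\Z^2$-path $(\pi_i)_i$, the macroscopic clusters supplied by the goodness of each $R\pi_i$ (reading ``very good'' as the ``good'' of Definition~\ref{defi:good1*}, the only such notion available). First, at each $x_i := R\pi_i$ I will use item~i) to fix a cluster $\mathcal C_i$ of $\{f_{R^\gamma}\ge 2R^{-3/2}\}\cap (x_i+[-R,R]^2)$ of diameter at least $\delta R$.

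The second step glues $\mathcal C_i$ to $\mathcal C_{i+1}$ inside the slab of thickness $R^a$. Since $\pi_{i+1}-\pi_i$ is a unit vector of $\Z^2$, the two boxes $x_i+[-R,R]^2$ and $x_{i+1}+[-R,R]^2$ both sit inside $x_i+[-2R,2R]^2$. In particular, $\mathcal C_i$ and $\mathcal C_{i+1}$ are two connected components of $\{f_{R^\gamma}\ge 2R^{-3/2}\}\cap (x_i+[-2R,2R]^2)$ of diameter at least $\delta R$, so item~ii) applied at $x_i$ produces a connected set
\[
\mathcal D_i \subset \{f_{R^\gamma}\ge R^{-3/2}\}\cap B_i, \qquad B_i := x_i+[-4R,4R]^2\times[0,R^a],
\]
containing both $\mathcal C_i$ and $\mathcal C_{i+1}$.

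The last step upgrades from $f_{R^\gamma}$ to $f$ via item~iii): on $B_i$ one has $\|f-f_{R^\gamma}\|_\infty\le \tfrac12 R^{-3/2}$, hence $f\ge R^{-3/2}-\tfrac12 R^{-3/2}=\tfrac12 R^{-3/2}$ throughout $\mathcal D_i$. Since $\mathcal D_i\cap \mathcal D_{i+1}\supset \mathcal C_{i+1}\ne\emptyset$, the union $\mathcal D:=\bigcup_{i\ge 1}\mathcal D_i$ is a connected subset of $\{f\ge\tfrac12 R^{-3/2}\}\cap(\R^2\times[0,R^a])$. It is unbounded because $(x_i)_{i\ge 1}$ is unbounded: as vertices of an infinite path in $\Z^2$ the $\pi_i$'s cannot remain in any finite set, and $|x_i-x_j|=R|\pi_i-\pi_j|$.

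The proof is essentially bookkeeping and I do not anticipate a genuine obstacle. The only delicate point is to check that the scales chosen in Definition~\ref{defi:good1*} (the planar boxes $[-R,R]^2$ and $[-2R,2R]^2$ and the slab box $[-4R,4R]^2\times[0,R^a]$) interlock correctly along the path: the doubling between items~i) and~ii) is exactly what lets the large cluster at $x_i$ and the one at the adjacent site $x_{i+1}$ both qualify as macroscopic clusters in the box at stake in item~ii) applied at $x_i$.
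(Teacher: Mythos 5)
Your proof is correct and follows essentially the same route as the paper's: use items i) and ii) of Definition~\ref{defi:good1*} to chain the macroscopic planar clusters at consecutive sites $R\pi_i$ through the slab boxes $R\pi_i+[-4R,4R]^2\times[0,R^a]$, then item iii) to pass from $f_{R^\gamma}$ at level $R^{-3/2}$ to $f$ at level $\tfrac12R^{-3/2}$. The only cosmetic point is that a component of $\{f_{R^\gamma}\ge 2R^{-3/2}\}\cap(x_i+[-R,R]^2)$ is a priori only contained in a component of the same set in the larger box $x_i+[-2R,2R]^2$, but that enclosing component still has diameter $\ge\delta R$, so item ii) applies just as you use it (the paper elides this at the same level of detail).
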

\begin{proof}
Let $\pi_1,\pi_2,\dots$ be such a path. By i) and ii) from Definition \ref{defi:good1*}, for every $j$ there exists a connected component of $\{ f_{R^\gamma} \ge R^{-3/2} \}$ of diameter larger than or equal to $\delta R$ in both $R\pi_j + [-R,R]^2$ and in $R\pi_{j+1} + [-R,R]^2$, and any two such components are connected in $\{f_{R^\gamma}\geq R^{-3/2} \}\cap(R\pi_j + [-4R,4R]^2 \times [0,R^a])$. We obtain that there exists an unbounded component in
\[
\{ f_{R^\gamma} \ge R^{-3/2} \} \cap (\cup_{j \ge 1}  \pi_j + [-4R,4R]^2 \times [0,R^a]).
\]
By iii) from Definition \ref{defi:good1*} there exists an unbounded component in
\[
\{ f \ge\tfrac12R^{-3/2} \} \cap (\cup_{j \ge 1}  \pi_j + [-4R,4R]^2 \times [0,R^a]).\qedhere
\]
\end{proof}

We now have all the tools to prove Theorem \ref{thm:existence*}.

\begin{proof}[Proof of Theorem \ref{thm:existence*}]
Consider the renormalization scheme defined above and $R,\delta$ as in Corollary \ref{cor:cond_renorm*}. We prove the theorem with $\ell=\tfrac12R^{-3/2}$ and $L=R^a$. Let $m_n$ be the largest integer such that $L_{m_n} \le 2^n$, let $\Lambda_n:=L_{m_n}\Z^2 \cap [-2^n,2^n]^2$, and consider the following event:
\[
A_n:=\bigcap_{x \in \Lambda_n} \Big( G_{m_n,x}\cap\bigcap_{k \ge m_n+1} \tau_x^{-1}(H_k) \Big).
\]
We first note that by Corollary \ref{cor:cond_renorm*}, Proposition \ref{lem:stretch_exp_decay} applies and
\[
\Pro [A_n^c] \le C \Big(2^{1-2^{m_n}}+\sum_{k \ge m_n+1} 2^{-2^{k}}\Big)=O(2^{-2^{-{m_n}}})
\]
for some $C>0$ that depends only on $d,\lambda,\rho$. As a result, a.s.\ there exists $n_0$ such that $A_n$ holds for all $n \ge n_0$.

\medskip

\textbf{A first claim: ``black points imply very good points''.}  For some $n \ge 1$, assume that $A_n$ is satisfied and let $x \in \cup_{y \in \Lambda_n} (y+[-4\rho L_{m_n},4\rho L_{m_n}]^2)$ be a black vertex at scale $m_n$ (recall Definition \ref{defi:black_vertex}). We claim that $Rx$ is a very good point. To show this claim, we note that $G_{0,x}$ is satisfied and that for each $m\geq 1$, there exists $y\in L_m\Z^2$ such that $x\in y+[-4\rho L_m,4\rho L_m]^2$ and $\tau_y^{-1}(H_m)$ is satisfied. In particular,
\[
\|f-f_{R^\gamma}\|_{\infty,B}<\sum_{n\geq 0}\eps_n \le \tfrac12R^{-3/2},
\]
where 
\[
B=Rx+([-4R,4R]^2\times [0,R^a]).
\]
The event $G_{0,x}$ implies that $Rx$ satisfies Items i) and ii) of Definition \ref{defi:good1*} and the above implies that it satisfies Item iiii) of Definition \ref{defi:good1*}. Altogether, $Rx$ is a very good point.

\medskip

\textbf{Conclusion via a second claim.} Consider some $n_0 \ge 1$ and let us assume that $A_n$ holds for all $n \ge n_0$. Lemma \ref{lem:unbounded_black*} and the following claim enable us to conclude: There is an infinite path $\pi_1,\pi_2,\dots \in \Z^2$ from $\partial [-2^{n_0-1},2^{n_0-1}]^2$ to infinity such that for every $i$, $R\pi_i$ is very good.

\medskip

To derive this second claim, for all $n \ge n_0$ we construct a path $\theta_n \subset \Z^2 \cap [-2^n,2^n]^2$ made of black vertices at level $m_n$ and such that
\begin{itemize}[noitemsep]
\item $\theta_{n_0}$ connects $\partial [-2^{n_0-1},2^{n_0-1}]^2$ to $\partial [-2^{n_0},2^{n_0}]^2$;
\item if $n \ge n_0$, then $\theta_{n+1}$ connects $\theta_n$ to $\partial [-2^{n+1},2^{n+1}]^2$.
\end{itemize}
The construction of the infinite path $\pi_1,\pi_2,\dots$ then follows from the existence of the paths $\theta_n$ and from the previous claim.

\medskip

We only explain how to construct the path $\theta_{n_0}$ since the construction for other $n$'s follows the same lines. To this purpose, first consider a path $x_1,\dots,x_k \in \Lambda_{n_0}$ in the lattice $L_{m_{n_0}}\Z^d$ such that
\begin{itemize}[noitemsep]
\item $x_1+[-L_{m_{n_0}},L_{m_{n_0}}]^2$ intersects $\partial [-2^{n_0-1},2^{n_0-1}]^2$ and
\item $x_k+[-L_{m_{n_0}},L_{m_{n_0}}]^2$ intersects $\partial [-2^{n_0},2^{n_0}]^2$.
\end{itemize}
By Proposition \ref{prop:RS_geometry}, for every $i \in \{1,\dots,k-1\}$ there exists a connected set
\[
T_i \subset \Z^2 \cap (x_{i+1}+[-\rho L_{m_{n_0}},\rho L_{m_{n_0}}]^2) \cap (x_i+[-\rho L_{m_{n_0}},\rho L_{m_{n_0}}]^2)
\]
made of black vertices and of diameter larger than or equal to $10\sigma L_{m_{n_0}-1}$ (recall that we set $\rho=2,\sigma=1000,\lambda=10^{10}$). By Proposition~\ref{prop:RS_geometry} once more, for every $i \in \{1,\dots,k-2\}$, $T_i$ and $T_{i+1}$ are connected by a black path included in $x_{i+1}+[-4 \rho L_{m_{n_0}},4\rho L_{m_{n_0}}]^2$. Still by Proposition \ref{prop:RS_geometry}, $T_1$ (resp.\ $T_{k-1}$) is connected to $\partial [-2^{n_0-1},2^{n_0-1}]^2$ (resp.\ $\partial [-2^{n_0},2^{n_0}]^2$) by a path of black vertices included in $x_i+[-4 \rho L_{m_{n_0}},4\rho L_{m_{n_0}}]^2$ with $i=1$ (resp.\ $i=k$). This ends the construction of $\theta_{n_0}$ and the proof of the theorem.
\end{proof}

\section{Existence of a giant component ``almost everywhere''}\label{sec:existence*}
\label{sec8}

In this section, we prove the following result.

\begin{theorem}\label{thm:existencebis*}
Let $d \ge 3$ and $d' \in [3,d]$. There exists $\beta_0>0$ such that the following holds. Let $q$ satisfy Assumption \ref{ass1} for some $\beta > \beta_0$. There exist $\ell,N > 0$ such that a.s.\ there is an unbounded component component $\calC$ of $\{ f \ge \ell \} \cap \R^{d'}$ and some (random) $R_0$ such that for every $R \ge R_0$, $\calC$ instersects all the (Euclidean, closed) balls of radius $(\log R)^N$ that are included in $[-R,R]^{d'}$.
\end{theorem}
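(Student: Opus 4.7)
The plan is to adapt the renormalization argument of Section \ref{sec7} to the full dimension $d'$, replacing $\Z^2$ by $\Z^{d'}$ and the in-slab $2$-arms estimate Proposition \ref{prop:2arms*} by its multi-face counterpart Proposition \ref{prop:2armsbis*}, and then to leverage the double-exponential bound of Proposition \ref{lem:stretch_exp_decay} to obtain the polylogarithmic density statement.

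First, I would call a point $x\in\R^{d'}$ $\delta$-good at scale $R$ if the cube $x+[-R/2,R/2]^{d'}$ satisfies: (i) on each of its $2$-dimensional faces there exists a component of $\{f_{R^\gamma}\ge 2R^{-3/2}\}$ of diameter at least $\delta R$; (ii) any two such components on adjacent faces lie in the same component of $\{f_{R^\gamma}\ge R^{-3/2}\}\cap (x+[-R/2,R/2]^{d'})$; (iii) $\|f-f_{R^\gamma}\|_\infty\le \tfrac12R^{-3/2}$ on the cube. Lemma \ref{lem:macro*} applied on each face together with Proposition \ref{prop:2armsbis*} guarantee that (i) and (ii) hold with probability arbitrarily close to $1$ for $R$ large and $\delta$ small, while the BTIS-type arguments of Section \ref{sec7} handle (iii). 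I would set up a $\Z^{d'}$-indexed renormalization scheme following Definition \ref{def:renormalization_scheme} with $G_{0,0}$ saying ``$0$ is $\delta$-good'', $H_n$ controlling $\|f_{(n)}-f_{(n-1)}\|_\infty$ on a cube of side $\Theta(L_n)$, and $\rho,\sigma,\lambda$ as in Section \ref{sec7}. Choosing $\beta_0$ so that $\sum_n\eps_n\le \tfrac12R^{-3/2}$, Proposition \ref{lem:stretch_exp_decay} then yields $q_n:=\prob[G_{n,0}^c]\le 2\bar q_0\,2^{-2^n}$.

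Repeating the argument of Theorem \ref{thm:existence*} verbatim in $\R^{d'}$, with black vertices corresponding to $\delta$-good points via (iii) and their macroscopic components glued together via (ii), gives the existence of an unbounded component $\calC$ in $\{f\ge\ell\}\cap\R^{d'}$ with $\ell=\tfrac12R^{-3/2}$. The new ingredient is a polylogarithmic density statement: fix $N>\log_2\lambda$ and set $n(R):=\lceil\log_\lambda(\log R)^{N-1}\rceil$, so that $L_{n(R)}\le(\log R)^N$ and $2^{n(R)}\ge(d'+2)\log_2 R$ for $R$ large. A union bound over the $O(R^{d'})$ points $x\in L_{n(R)}\Z^{d'}\cap[-3R,3R]^{d'}$ gives
\[
\prob\bigl[\exists\,x\in L_{n(R)}\Z^{d'}\cap[-3R,3R]^{d'}:\,G_{n(R),x}^{\mathsf{c}}\bigr]\le CR^{d'}\,2^{-2^{n(R)}}\le C'R^{-2},
\]
so Borel--Cantelli along $R=2^k$ together with monotonicity in $R$ ensures that almost surely, for $R$ large enough, $G_{n(R),x}$ holds for every such $x$.

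On this event, I would combine the geometric statement of Proposition \ref{prop:RS_geometry} with the goodness at the bottom scale: any two macroscopic components of $\{f\ge\ell\}$ inside $[-R,R]^{d'}$ of diameter at least $10\sigma L_{n(R)-1}$ are linked by a path of black vertices at scale $n(R)$, and this black path yields an actual path in $\{f\ge\ell\}$ via the chain of $\delta$-good points and the connectivity property (ii). In particular, $\calC$, which has macroscopic pieces in $[-3R,3R]^{d'}$ because it is unbounded, equals the unique large component in $[-3R,3R]^{d'}$; and since every closed Euclidean ball of radius $(\log R)^N$ contained in $[-R,R]^{d'}$ contains a full $L_{n(R)}$-translate whose goodness produces a component of diameter at least $\delta L_{n(R)}$, $\calC$ must intersect every such ball. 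The main technical obstacle I anticipate is identifying this unique renormalized cluster with $\calC$ itself; the clean way to deal with it is precisely to work on the slightly enlarged region $[-3R,3R]^{d'}$, so that the unbounded component $\calC$ is forced to cross this enlarged cube macroscopically and thus be swept into the same renormalized cluster as every $(\log R)^N$-box inside $[-R,R]^{d'}$.
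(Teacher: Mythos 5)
Your setup coincides with the paper's: the multi-face notion of good point glued via Proposition \ref{prop:2armsbis*}, the $\Z^{d'}$-indexed renormalization scheme with the same $\rho,\sigma,\lambda$, the doubly-exponential decay from Proposition \ref{lem:stretch_exp_decay}, and the choice of a level $n(R)$ of order $\log\log R$ so that $L_{n(R)}\le(\log R)^N$ while a union bound over the $O(R^{d'})$ boxes still beats $2^{-2^{n(R)}}$ — this is exactly the paper's choice of $m_n$ with $L_{m_n}\le n^N$ along $R=2^n$. (Minor quantitative point: $N>\log_2\lambda$ gives $2^{n(R)}\ge(\log R)^{(N-1)/\log_2\lambda}$ with exponent $<1$, so you need $N>1+\log_2\lambda$, or simply ``$N$ large enough''; and your union bound must also include the higher-level events $\tau_x^{-1}(H_k)$ for $k>n(R)$, as in the event $A_n$ of Section \ref{sec7}, since condition (iii) of very-goodness requires summing $\eps_k$ over \emph{all} $k$.)

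The genuine gap is in your last step. You assert that any two components of $\{f\ge\ell\}$ in $[-3R,3R]^{d'}$ of diameter at least $10\sigma L_{n(R)-1}$ are linked through the black cluster, and use this to identify the unbounded component $\calC$ with the dense renormalized cluster. Proposition \ref{prop:RS_geometry} links connected subsets of the renormalized \emph{lattice} by black paths, and Lemma \ref{lem:unbounded_blackbis*} turns a black path into a field path — but that field path only connects the \emph{distinguished} glued components of the good boxes it traverses (the components of diameter $\ge\delta R$ on the $2$-faces $\tilde F_R^i$, which property (ii) merges inside each box). An arbitrary macroscopic component of $\{f\ge\ell\}$ passing through a good box need not touch any of these distinguished face components: the $\tilde F_R^i$ have codimension $d'-2\ge 1$, so a crossing of the box can avoid them entirely. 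Your claim is therefore essentially the local uniqueness property \eqref{eq:localuniqueness}, which the paper explicitly states as a conjecture and for which Theorem \ref{thm:main}~ii) is only presented as evidence; working in the enlarged region $[-3R,3R]^{d'}$ does not supply it. The paper sidesteps the identification problem entirely: it never matches a pre-existing unbounded component with the black cluster, but instead builds nested black paths $\theta_n$ that simultaneously connect to infinity \emph{and} intersect $x+[-\rho L_{m_n},\rho L_{m_n}]^{d'}$ for \emph{every} $x\in\Lambda_n$ (the third bullet in the proof of Theorem \ref{thm:existencebis*}), and then \emph{defines} $\calC$ as the field component glued along $\cup_n\theta_n$. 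Replacing your identification step by this direct construction — which is available from Proposition \ref{prop:RS_geometry} exactly as in the proof of Theorem \ref{thm:existence*} — repairs the argument.
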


Exactly like in Section \ref{sec7}, Item ii) of Theorem \ref{thm:main} is a direct consequence of Theorem~\ref{thm:existencebis*}. We omit this short argument and refer to Section \ref{sec7}. Let us now prove Theorem~\ref{thm:existencebis*} (the proof is very similar to the proofs of the previous section). We write the proof in the case $d'=d$ to simplify the notations and since the proof in the general case is exactly the same. 

\subsection{Definition of (very) good points}

We start by defining a slightly different notion of good and very good points. The essential difference is that we ask connected components to exist on each ``face'' of the boxes.

\medskip

Below, for every $x \in \R^d$, $F_R^1(x),\dots,F_R^{N_d}(x)$ are the $2$-dimensional faces of the hypercube $x+[-R/2,R/2]^d$ and for every $i \in \{1,\dots,N_d\}$, $\tilde{F}_R^i(x) \subset F_R^i(x)$ is the concentric square with side length equal to $R/2$. Let $\gamma$ be such that Proposition \ref{prop:2armsbis*} holds.

\begin{definition}[Good point] \label{defi:good1bis*}
Let $\delta,R>0$. A point $x \in \R^d$ is called ($\delta$-){\em good} at scale $R$ if the following properties occur:
\begin{enumerate}[noitemsep]
\item[i)] for every $i \in \{1,\dots,N_d\}$, there exists a connected component of $\{ f_{R^\gamma}\ge R^{-3/2} \} \cap \tilde{F}_R^i(x)$ of diameter larger than or equal to $\delta R$;
\item[ii)] all the above connected components belong to the same connected component of $\{ f_{R^\gamma} \ge R^{-3/2} \}\cap (x + [-R/2,R/2]^d)$;
\item[iii)]$\|f-f_{R^{\gamma}}\|_{\infty,x+[-R/2,R/2]^d} \le \tfrac12R^{-3/2}$.
\end{enumerate}
\end{definition}

\subsection{The renormalization scheme}

Concerning the renormalization scheme, it accommodates the fact that we now work with a $d$-dimensional renormalization instead of a planar one. In particular, the renormalization space is $\mathbb Z^d$ instead of $\mathbb Z^2$, $B_n$ is defined differently, and $G_{0,0}$ depends on the white noise in $[-2R,2R]^d$. Finally, the fact that $G_{0,0}^c$ has small probability will be a consequence of Proposition~\ref{prop:2armsbis*}.

\medskip

Let $\gamma$ as above, assume that
\[
\beta>\frac{5}{2\gamma}+\frac{d}{2},
\]
and define a renormalization scheme depending on two parameters $\delta>0$ and $R \ge 1$ as follows.

\begin{itemize}
\item Consider the probability space used in the rest of the paper with the $\Z^d$ action given by $x\mapsto Rx$.
\item Fix $\rho = 2$, $\sigma=1000d$ and $\lambda=10^{10}d$.
\item Set $G_{0,0}$ to be the event that  the point $0$ satisfies Items i) and ii) from Definition~\ref{defi:good1bis*}. The event $G_{0,0}$ is measurable with respect to the white noise $W$ restricted to $[-2R,2R]^d$ (if $R$ is large enough). By Lemma \ref{lem:macro*} and Proposition \ref{prop:2armsbis*}, for every $h>0$ there exist $\delta,R_0$ such that for all $R \ge R_0$,
\begin{equation}\label{eq:EX_G_boundbis}
\prob[G_{0,0}^c] \le h.
\end{equation}
\item For each $n\ge0$, write $f_{(n)}:=f_{(R L_n)^\gamma}$ and if $n\ge1$,
\[
H_n:=\{\|f_{(n)}-f_{(n-1)}\|_{C^1(B_n)}<\eps_n\}
\]
where 
\[
B_n:=[-8\rho RL_n,8\rho RL_n]^d \quad \text{and} \quad \eps_n:=(R L_{n-1})^{-\gamma(\beta-\frac{d}{2})+1}.
\] 
\item Define the events $G_{n,x}$ for $n\in\N$ and $x\in L_n\Z^d$ as in Definition \ref{def:renormalization_scheme}.
\end{itemize}

The proofs of the following results are exactly the same as those of Lemma \ref{lem:ind_renorm*} and Corollary \ref{cor:cond_renorm*} (the statements are also very similar, except the appearance of the dimension $d$ instead of $2$). We omit the proofs.

\begin{lemma}\label{lem:ind_renormbis*}
Consider the renormalization scheme defined above. For $n \in \N$, the event $G_{n,0}$ is measureable with respect to the white noise $W$ restricted to
$
\left[- 18\rho L_n,18\rho L_n\right]^d.
$
By our choice of $\rho,\sigma$, we have $\sigma > 36\sqrt{d}\rho$. As a result, $G_{n,x_1}$ and $G_{n,x_2}$ are independent for all $x_1,x_2\in L_n\Z^2$ such that $\textup{dist}(x_1,x_2)\geq \sigma L_n$.
\end{lemma}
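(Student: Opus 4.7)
The plan is to mirror the proof of Lemma~\ref{lem:ind_renorm*} almost verbatim, with $d$ in place of $2$, adjusting only the dependence region of $G_{0,0}$ (which is now defined via the faces $\tilde F_R^i(0)$ of a $d$-dimensional cube and the bulk cube $[-R/2,R/2]^d$) and the independence conclusion (where we need $L^\infty$--disjointness of $d$-dimensional cubes to be extracted from a Euclidean distance lower bound). Concretely, I construct by induction a sequence $(r_n)_{n \ge 0}$ of positive reals such that $G_{n,0}$ is measurable with respect to $W$ restricted to $[-r_n R,r_n R]^d$, and then verify that $r_n \le 18\rho L_n$.

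For $n=0$: by Definition~\ref{defi:good1bis*}, $G_{0,0}$ only queries $f_{R^\gamma}$ on $[-R/2,R/2]^d$ (all the faces $\tilde F_R^i(0)$ and the ambient box lie there), and since $q_{R^\gamma}$ is supported in $B(R^\gamma/2)$, the field $f_{R^\gamma}$ on this cube only depends on $W$ on its $R^\gamma/2$--neighbourhood. Taking $R$ large enough that $R^\gamma \le R$, we can set $r_0 = 2$. For the induction step, $G_{n+1,0}$ is determined by $H_{n+1}$ and by the events $G_{n,x}$ with $x \in L_n\Z^d\cap[-4\rho L_{n+1},4\rho L_{n+1}]^d$. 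Each such $G_{n,x}$ is a translate of $G_{n,0}$ by $Rx$ and is therefore measurable with respect to $W$ on $xR+[-r_nR,r_nR]^d\subset[-(4\rho L_{n+1}+r_n)R,(4\rho L_{n+1}+r_n)R]^d$. The event $H_{n+1}$ only involves $f_{(n+1)}$ and $f_{(n)}$ on $B_{n+1}=[-8\rho RL_{n+1},8\rho RL_{n+1}]^d$; since the larger truncation is $(RL_{n+1})^\gamma$, and this is at most $\rho RL_{n+1}$ for $R$ large (uniformly in $n$, since $\gamma<1$), $H_{n+1}$ is measurable with respect to $W$ on $[-9\rho RL_{n+1},9\rho RL_{n+1}]^d$. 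Hence the recursion $r_{n+1}:=9\rho L_{n+1}+r_n$ works.

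Unwinding the recursion with $r_0\le 9\rho$ gives
\[
r_n \le \sum_{k=0}^n 9\rho\,\lambda^k \le \frac{9\rho\,\lambda}{\lambda-1}L_n \le 18\rho L_n
\]
since $\lambda\ge 2$. This yields the first claim. For the independence statement, note that $G_{n,x_1}$ and $G_{n,x_2}$ depend on $W$ restricted to the cubes $x_iR+[-18\rho L_nR,18\rho L_nR]^d$ ($i=1,2$), which are disjoint as soon as $|x_1-x_2|_\infty>36\rho L_n$. Using $|x_1-x_2|_\infty\ge|x_1-x_2|/\sqrt d$, the hypothesis $\dist(x_1,x_2)\ge\sigma L_n$ combined with $\sigma>36\sqrt d\,\rho$ delivers this inequality, and independence follows from the fact that $W$ over disjoint Borel sets is independent.

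The only (minor) obstacle is the uniform absorption of the truncation radius $(RL_n)^\gamma$ into a constant multiple of $RL_n$ in the dependence region of $H_n$; this is where $\gamma<1$ is used, and ensures that the constants in the recursion for $r_n$ do not blow up with $n$ — once this is observed, the proof is a direct transcription of Lemma~\ref{lem:ind_renorm*}.
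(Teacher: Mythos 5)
Your proposal is correct and follows exactly the route the paper intends: the paper omits this proof precisely because it is "exactly the same" as that of Lemma~\ref{lem:ind_renorm*}, namely the inductive construction of radii $r_n$ with $r_{n+1}=9\rho L_{n+1}+r_n$, the bound $r_n\le 18\rho L_n$ via $\lambda\ge 2$, and the independence of $W$ over disjoint cubes combined with $|x_1-x_2|_\infty\ge|x_1-x_2|/\sqrt d$ and $\sigma>36\sqrt d\,\rho$. Your adjustments (base case $r_0=2$ from the fact that $G_{0,0}$ only involves $f_{R^\gamma}$ on $[-R/2,R/2]^d$, and the uniform-in-$n$ absorption of $(RL_n)^\gamma$ using $\gamma<1$) are exactly the right ones.
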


\begin{corollary}\label{cor:cond_renormbis*}
We can choose $\delta>0$, $R\ge 1$ such that
\begin{itemize}[noitemsep]
\item[i)] the renormalization scheme satisfies the conditions of Propositions~\ref{lem:stretch_exp_decay} and~\ref{prop:RS_geometry};
\item[ii)] $\sum_{n\ge 1} \eps_n \le\tfrac12R^{-3/2}$.
\end{itemize}
\end{corollary}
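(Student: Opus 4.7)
The plan is to mimic the proof of Corollary \ref{cor:cond_renorm*}, adjusting only at the places where the $d$-dimensional lattice and the $C^1$-based definition of $H_n$ force modifications. First I would verify the purely combinatorial constraints from Proposition \ref{prop:RS_geometry}: with $\rho=2$, $\sigma=1000d$ and $\lambda=10^{10}d$ we trivially have $\rho\ge 2$ and $\lambda\rho=2\cdot 10^{10}d\ge 100\sigma$. Next, to get the initial-scale bound $q_0\le \bar q_0 := 1/(4(3\rho\lambda)^{2d})$ required by Proposition \ref{lem:stretch_exp_decay}, I apply \eqref{eq:EX_G_boundbis} with $h=\bar q_0$ (which depends only on $d$), obtaining $\delta>0$ and $R_0\ge 1$ such that $\prob[G_{0,0}^c]\le\bar q_0$ for all $R\ge R_0$.

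The more delicate step is the bound $\prob[H_n^c]\le\bar q_0\,2^{-2^n}$ for every $n\ge 1$. Unlike Corollary \ref{cor:cond_renorm*}, where $H_n$ was an $L^\infty$ event, here $H_n$ is formulated in terms of the $C^1$-norm on the cube $B_n$ of side $\asymp RL_n$. The increment $f_{(n)}-f_{(n-1)}$ is a centered Gaussian field whose covariance is obtained from $(\chi_{(RL_n)^\gamma}-\chi_{(RL_{n-1})^\gamma})\,q\star((\chi_{(RL_n)^\gamma}-\chi_{(RL_{n-1})^\gamma})\,q)$; using the decay hypothesis from Assumption \ref{ass1} (and its version for the first derivatives), one checks that both the pointwise variance and the variance of the spatial derivatives of the increment are bounded by a constant times $(RL_{n-1})^{-2\gamma(\beta-d/2)}$. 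Combining this with the Kolmogorov and BTIS tools used in the paper (the same tools that produce Lemma \ref{L:rest_C1}) gives
\[
\prob[H_n^c]=\prob\!\left[\|f_{(n)}-f_{(n-1)}\|_{C^1(B_n)}\ge \eps_n\right]\le \exp\!\left(-c(RL_{n-1})^2\right),
\]
provided $RL_{n-1}\ge C\log(RL_n)$, which holds for every $n\ge 1$ once $R$ is large. Since $\exp(-c(RL_{n-1})^2)$ decays much faster than $2^{-2^n}$, enlarging $R$ yields $\prob[H_n^c]\le\bar q_0\,2^{-2^n}$ uniformly in $n$, completing i).

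For ii), the standing hypothesis $\beta>\frac{5}{2\gamma}+\frac{d}{2}$ ensures $-\gamma(\beta-d/2)+1<-3/2$, so
\[
\sum_{n\ge 1}\eps_n=\sum_{n\ge 1}(RL_{n-1})^{-\gamma(\beta-d/2)+1}=\frac{R^{-\gamma(\beta-d/2)+1}}{1-\lambda^{-\gamma(\beta-d/2)+1}}=o(R^{-3/2})
\]
as $R\to\infty$, so a final enlargement of $R$ delivers $\sum_n\eps_n\le\tfrac12 R^{-3/2}$.

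The only genuinely new obstacle compared with Corollary \ref{cor:cond_renorm*} is the $C^1$-control of the increments $f_{(n)}-f_{(n-1)}$ needed to bound $\prob[H_n^c]$. This is why the paper insists in Assumption \ref{ass1} that $q$ be $C^{10}$ with polynomial decay of all derivatives up to order $10$: those hypotheses guarantee that the BTIS-Kolmogorov machinery applies to the derivatives of the truncated field, and the rest of the argument is a dimensional relabelling (replacing $2$ by $d$ everywhere) of the planar case.
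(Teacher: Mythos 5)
Your proposal is correct and follows essentially the same route as the paper, which simply declares the proof identical to that of Corollary \ref{cor:cond_renorm*}: verify the combinatorial constraints on $\rho,\sigma,\lambda$, choose $\delta$ and $R$ via \eqref{eq:EX_G_boundbis} so that $q_0\le\overline{q}_0$, bound $\prob[H_n^c]$ by $e^{-c(RL_{n-1})^2}$ through the Kolmogorov/BTIS estimates underlying Lemma \ref{L:rest_C1}, and sum the geometric series for $\eps_n$ using $\beta>\tfrac{5}{2\gamma}+\tfrac{d}{2}$. Your explicit remark that the $C^1(B_n)$-norm in the definition of $H_n$ requires the derivative version of Lemma \ref{L:rest_C1} (available since its proof already controls $\sup_{|\alpha|\le 1}\mathrm{Var}(\partial^\alpha(f-f_r))$) is a point the paper glosses over, and is handled correctly.
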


\subsection{Proof of the theorem}

We start with a lemma.

\begin{lemma}\label{lem:unbounded_blackbis*}
Let $\delta>0$, $R \ge 1$ and let $\mathcal{C}$ be a connected subset of $\Z^d$ such that, for every $x \in \mathcal{C}$, $Rx$ is $\delta$-very good. Then, there is a connected component of $\{ f \ge \tfrac{R^{-3/2}}{2} \} \cap (\cup_{x \in \mathcal{C}} Rx+[-R/2,R/2]^d)$ that intersects $Rx+[-R/2,R/2]^d$ for every $x \in \mathcal{C}$.
\end{lemma}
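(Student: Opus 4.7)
The plan is to follow the template of Lemma~\ref{lem:unbounded_black*}, adapting the ``face-matching'' argument to the higher-dimensional setting. The crucial geometric observation I would exploit is that, since $d \ge 3$, any two nearest-neighbour hypercubes $Rx + [-R/2, R/2]^d$ and $Ry + [-R/2, R/2]^d$ with $y = x \pm e_k$ share a $(d-1)$-dimensional face, and this shared face contains a common 2-dimensional sub-face $F$ (obtained by fixing the $k$-th coordinate to $R(x_k + y_k)/2$, choosing two free coordinates among the remaining $d-1 \ge 2$, and pinning the other $d-3$ to $\pm R/2$). The concentric sub-square $\tilde F \subset F$ of side $R/2$ depends only on $F$ itself, hence coincides from either cube's viewpoint. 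In particular, the set $\{f_{R^\gamma} \ge 2R^{-3/2}\} \cap \tilde F$ is the \emph{same} subset of $\R^d$ from both perspectives, and by property~(i) of being good, contains a connected component $\mathcal D$ of diameter $\ge \delta R$.

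First, for each $x \in \mathcal{C}$, I would apply property~(ii) of a good point to deduce that the large components lying in all the $N_d$ faces $\tilde{F}_R^i(x)$ belong to a single connected component $\mathcal{K}_x$ of $\{f_{R^\gamma} \ge R^{-3/2}\} \cap (Rx + [-R/2, R/2]^d)$.

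Second, for each pair of adjacent vertices $x, y \in \mathcal{C}$, the common large component $\mathcal D \subset \tilde F$ described above is simultaneously among the large components on a face of cube $x$ and on a face of cube $y$; hence it lies in both $\mathcal{K}_x$ and $\mathcal{K}_y$, and these merge into one connected component of $\{f_{R^\gamma} \ge R^{-3/2}\}$ inside the union of the two cubes. Iterating along any nearest-neighbour path in $\mathcal{C}$ (which exists since $\mathcal{C}$ is connected) yields a single connected component $\mathcal{K}^\star$ of $\{f_{R^\gamma} \ge R^{-3/2}\} \cap \bigcup_{x \in \mathcal{C}} (Rx + [-R/2, R/2]^d)$ that meets every cube $Rx + [-R/2, R/2]^d$ with $x \in \mathcal{C}$. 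Finally, property~(iii) gives $\|f - f_{R^\gamma}\|_{\infty, Rx+[-R/2,R/2]^d} \le R^{-3/2}/2$ for every $x \in \mathcal{C}$, so on the union of these cubes $\{f_{R^\gamma} \ge R^{-3/2}\} \subset \{f \ge R^{-3/2}/2\}$, and $\mathcal{K}^\star$ is therefore contained in a connected component of $\{f \ge R^{-3/2}/2\} \cap \bigcup_{x \in \mathcal{C}} (Rx + [-R/2, R/2]^d)$ with the required property.

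There is no serious obstacle: the proof is purely deterministic and combines the three items in the definition of a good point with the elementary geometry of $\Z^d$-adjacent hypercubes. The only point that requires genuine care (and which is the reason for working with $d' \ge 3$ in Theorem~\ref{thm:existencebis*}) is the geometric claim in the first paragraph that adjacent hypercubes share a $2$-face with coincident concentric sub-squares; for $d \ge 3$ this is immediate from the coordinate description above, but it would fail in dimension~$2$, where neighbouring cells share only edges.
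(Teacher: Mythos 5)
Your proof is correct and is exactly the argument the paper has in mind: the paper omits the proof of Lemma~\ref{lem:unbounded_blackbis*}, referring to Lemma~\ref{lem:unbounded_black*}, and your shared $2$-face observation (valid precisely because $d\ge 3$) is the intended replacement for the box-overlap used there, with items i), ii) and the sup-norm bound of Definition~\ref{defi:good1bis*} playing the same roles. Nothing is missing.
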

The proof is essentially the same as the proof of Lemma~\ref{lem:unbounded_black*} and is omitted here. We now give the proof of the theorem.


\begin{proof}[Proof of Theorem \ref{thm:existencebis*}]
Recall that we write the proof in the case $d'=d$ since the proof in the general case is exactly the same. Consider the renormalization scheme defined above and $R,\delta$ as in Corollary \ref{cor:cond_renormbis*}. We prove the theorem with $\ell=\tfrac12R^{-3/2}$. The proof is essentially the same as that of Theorem~\ref{thm:existence*}, so we leave the details to the reader and only mention the two main differences:
\begin{itemize}
\item We let $m_n$ be the largest integer such that
\[
L_{m_n} \le n^N,
\]
where $N>0$ is any number such that
\[
(2^n/n^N)^d (2^{1-2^{m_n}}+\sum_{k \ge m_n+1} 2^{-2^{k}})
\]
is summable. Moreover, we let $\Lambda_n=L_{m_n}\Z^d \cap [-2^n,2^n]^d$.
\item Given some $n_0$, we construct some paths $\theta_n \subset \Z^d \cap [-2^n,2^n]^d$, $n \ge n_0$, made of black vertices at level $m_n$ and such that
\begin{itemize}[noitemsep]
\item $\theta_{n_0}$ connects $\partial [-2^{n_0-1},2^{n_0-1}]^d$ to $\partial [-2^{n_0},2^{n_0}]^d$;
\item if $n \ge n_0$, then $\theta_{n+1}$ connects $\theta_n$ to $\partial [-2^{n+1},2^{n+1}]^d$;
\item for every $n \ge n_0$, $\theta_n$ intersects $x+[-\rho L_{m_n},\rho L_{m_n}]^d$ for \textit{every} $x \in \Lambda_n$.
\end{itemize}
\end{itemize}
This completes the proof.
\end{proof}

\appendix

\section{The Gaussian FKG inequalities}\label{ssec:app_fkg}

This section is devoted to the proofs of the FKG inequalities Lemma \ref{lem:FKG1*} and Corollary~\ref{cor:FKG} (as well as generalizations and finite dimensional analogues). The proofs have been provided to us by Matthis Lehmkühler and are included here with his permission. Below and in all the appendix, we equip $C(\R^d)$ with the topology of uniform convergence on every compact subset.

\subsection{Proof of the continuous FKG inequality}\label{sssec:FKG}

In this section, we prove Lemma \ref{lem:FKG1*}. The proof is based on the following result by Pitt.

\begin{lemma}[Finite dimensional Gaussian FKG inequality, \cite{Pit}]\label{lem:disFKG1*}
Let $\phi,\psi : \R^n \rightarrow \R$ be two non-decreasing bounded measurable functions and let $X$ be a centered Gaussian vector such that for every $i,j \in \{1,\dots,n\}$, $\E[X_i X_j] \ge 0$. Then,
\[
\E \left[ \phi(X)\psi(X) \right] \ge \E \left[ \phi(X) \right] \E \left[ \psi(X) \right].
\]
\end{lemma}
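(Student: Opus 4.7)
The plan is to prove Pitt's lemma by a classical Gaussian interpolation argument, using integration by parts to reduce the covariance inequality to a sign condition that is directly guaranteed by the hypotheses. First, by a standard approximation argument (mollification with a smooth non-negative kernel, together with a monotone cut-off), one reduces to the case where $\phi, \psi \in C_c^\infty(\R^n)$ are non-decreasing in each coordinate. The key point is that convolution with a non-negative kernel preserves monotonicity, and the inequality then passes to the limit by dominated convergence once it is established in the smooth case.

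Next, introduce an independent copy $Y \sim \mathcal{N}(0,\Sigma)$ of $X$ and interpolate via the rotation
\[
X_t := \cos(t)\,X + \sin(t)\,Y, \qquad Y_t := -\sin(t)\,X + \cos(t)\,Y, \qquad t \in [0,\pi/2].
\]
For each $t$, the pair $(X_t, Y_t)$ is jointly Gaussian with $X_t$ and $Y_t$ independent, each distributed as $\mathcal{N}(0,\Sigma)$. Moreover, $X = \cos(t)X_t - \sin(t)Y_t$ and $\tfrac{d}{dt}X_t = Y_t$. Setting $F(t) := \E[\phi(X_t)\,\psi(X)]$, one checks
\[
F(0) = \E[\phi(X)\psi(X)], \qquad F(\pi/2) = \E[\phi(Y)\psi(X)] = \E[\phi(X)]\,\E[\psi(X)],
\]
using the independence of $X$ and $Y$ for the last equality. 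It thus suffices to show that $F'(t) \le 0$ for all $t \in [0,\pi/2]$.

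Differentiating under the expectation gives $F'(t) = \sum_i \E[\partial_i \phi(X_t)\, Y_{t,i}\, \psi(X)]$. Conditioning on $X_t$ (which is independent of $Y_t$) and then applying Gaussian integration by parts in $Y_t$ to the factor $\psi(X) = \psi(\cos(t) X_t - \sin(t) Y_t)$ yields
\[
F'(t) = -\sin(t)\,\sum_{i,j}\Sigma_{ij}\, \E\!\left[\partial_i \phi(X_t)\, \partial_j \psi(X)\right].
\]
Since $\sin(t)\ge 0$, $\Sigma_{ij} \ge 0$ by hypothesis, and $\partial_i\phi, \partial_j\psi \ge 0$ by monotonicity, we conclude $F'(t) \le 0$, and integrating over $[0,\pi/2]$ gives $F(0) \ge F(\pi/2)$, which is Pitt's inequality.

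The main obstacle is the bookkeeping in the integration-by-parts step: one must track carefully that $\psi$ is evaluated at $X$, not at $X_t$, so that $\psi(X)$ depends on the variable of integration $Y_t$ only through the factor $-\sin(t)$. This is precisely what produces the minus sign in $F'(t)$ and ensures $F$ is non-increasing; without this sign the argument would collapse. The approximation step is otherwise routine, provided one uses convolution with a non-negative kernel so that monotonicity is preserved throughout.
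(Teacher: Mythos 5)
Your interpolation argument is correct, but note that the paper does not prove this lemma at all: it is quoted verbatim from Pitt's 1982 paper \cite{Pit} and used as a black box, so there is nothing in the text to compare against line by line. Your proof is the standard ``smart path'' derivation (equivalent, after the change of variables, to differentiating in the cross-covariance between two copies of $X$ and invoking Gaussian integration by parts), and the computation of $F'(t)$ is right: $(X_t,Y_t)$ are independent with law $\mathcal N(0,\Sigma)$, $X=\cos(t)X_t-\sin(t)Y_t$, and integrating by parts in $Y_t$ conditionally on $X_t$ produces exactly the factor $-\sin(t)\sum_{i,j}\Sigma_{ij}\E[\partial_i\phi(X_t)\partial_j\psi(X)]\le 0$. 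The one place you are too quick is the reduction to smooth $\phi,\psi$. Mollification preserves monotonicity and converges at Lebesgue points, which suffices when $\Sigma$ is non-degenerate (coordinatewise non-decreasing functions are a.e.\ continuous), but for a degenerate $\Sigma$ the law of $X$ is singular and may charge the discontinuity set of a merely measurable monotone $\phi$; one then needs either to perturb by an independent $\varepsilon\mathcal N(0,I)$ and control the limit, or to approximate in $L^1$ of the law of $X$ using inner/outer regularity, as the paper itself does in its Appendix (Claims \ref{cl:approxFKG2} and \ref{cl:approxFKGbis}) when it extends Pitt's inequality to the continuum and to the local setting. With that step made precise, your proof is a complete and self-contained substitute for the citation.
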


\begin{proof}[Proof of Lemma \ref{lem:FKG1*}]
This is a direct consequence of Lemma \ref{lem:disFKG1*} and of the two following claims, where $\phi : C(\R^d) \rightarrow \R$ and $f$ are as in the statement of Lemma \ref{lem:FKG1*}. 
\begin{claim}\label{cl:approxFKG1}
Assume furthermore that $\phi$ is continuous. Then, there exists a sequence of non-decreasing functions $\phi_n : C(\R^d) \rightarrow \R$ bounded by $\| \phi \|_\infty$ that depend on only finitely many points\footnote{I.e.\ for all $n$ there exist $x_1,\dots,x_{m_n} \in \R^d$ and a measurable function $\widetilde{\phi}_n : \R^{m_n} \rightarrow \R$ such that $\phi_n(u)=\widetilde{\phi}_n(u(x_1), \dots, u(x_{m_n}))$.} and such that for every $u \in C(\R^d)$, we have $\phi_n(u)\rightarrow \phi(u)$.
\end{claim}

\begin{claim}\label{cl:approxFKG2}
There exists a sequence of non-decreasing continuous functions $\phi_n : C(\R^d) \rightarrow \R$ bounded by $\| \phi \|_\infty$ such that a.s.\ we have $\phi_n(f)\rightarrow \phi(f)$.
\end{claim}

\begin{proof}[Proof of Claim \ref{cl:approxFKG1}]
Let $\chi_\eps$ be a smooth approximation of the identity with compact support. Also, let $\theta_R$ be a smooth function which is compactly supported and equal to $1$ in $[-R,R]^d$. For every $n,R,\eps$ and for every continuous function $u : \R^d \rightarrow \R$ we let
\[
u_{R,n,\eps}(x) := \theta_R(x) \frac{1}{n^d} \sum_{v \in \frac{1}{n}\Z^d} u(v)\chi_\eps(x-v) \hspace{1em} \text{and}  \hspace{1em} \phi_{n,R,\eps}(u) := \phi(u_{R,n,\eps}).
\]
Then, $u_{R,n,\eps}$ converges to $\theta_R \times (u\star\chi_\eps)$ as $n$ goes to infinity, $\theta_R \times (u\star\chi_\eps$) converges to $u\star\chi_\eps$ as $R$ goes to infinity, and $u\star\chi_\eps$ converges to $u$ as $\eps$ goes to $0$. Moreover, $\phi_{n,R,\eps}$ is non-decreasing in $u$ since $u \mapsto u_{R,n,\eps}$ is non-decreasing.
\end{proof}

\begin{proof}[Proof of Claim \ref{cl:approxFKG2}]
We follow the proof of the lemma from Section 3 of \cite{Pit} (which is the analogous result for functions on $\R^n$ rather than $C(\R^d)$). First, exactly as in \cite{Pit}, we note that we can assume that $\phi$ is the indicator of some event $A$ and we work in this case.

\medskip

By regularity of the law of $f$ (because $C(\R^d)$ is a Polish space), there exist two sequences of compact sets $C_n,K_n \subset C(\R^d)$ and a sequence of open sets $O_n \subset C(\R^d)$ with
\begin{equation}\label{eq:regmeas*}
K_n \subset  C_n \cap A; \hspace{1em} A \subset O_n; \hspace{1em} \Pro [f \notin C_n]<1/n^2; \hspace{1em} \Pro [f \in O_n \setminus K_n]<1/n^2.
\end{equation}
For every $\eps,M>0$ and $u \in C(\R^d$), we define the following non-decreasing neighborhood of $u$:
\[
R^M(u,\eps):=\{ v \in C(\R^d) : \forall x \in [-M,M]^d, v(x) > u(x)-\eps \}.
\]
As in \cite{Pit}, we fix
\begin{itemize}[noitemsep]
\item[i)] two sequences $\eps_n,M_n$ such that, for every $u \in K_n$, $C_n \cap R^{M_n}(u,2\eps_n)\subset O_n$,
\item[ii)] a finite cover $\{ R^{M_n}(u^i,\eps_n)\}_{i=1}^{N_n}$ of $K_n$,
\end{itemize}
and we let
\[
L_n := \bigcup_{i=1}^{N_n} R^{M_n}(u^i,\eps_n).
\]
Then, we define the following continuous and non-decreasing functions:
\[
\phi_n : u \mapsto \Big(1 - \eps^{-1}_n\inf_{v \in L_n} \| v-u \|_{\infty,[-M_n,M_n]^d} \Big)_+.
\]
It remains to show that $\phi_n(f)$ converges to $\phi(f)$ a.s. To prove this, we use \eqref{eq:regmeas*} and we observe that
\begin{multline*}
\mathbf 1_{K_n}(u) \le \mathbf 1_{L_n}(u) \le \phi_n(u) \le \mathbf 1\{\exists i, \inf_{v \in R^{M_n}(u^i,\eps_n)} \| v-u \|_{\infty,[-M_n,M_n]^d} < \eps_n\}\\
\le \mathbf 1\{\exists i, u \in R^{M_n}(u^i,2\eps_n) \} \le \mathbf 1_{O_n}(u) + \mathbf 1_{C_n^c}(u).
\end{multline*}
\end{proof}
This concludes the proof of Lemma~\ref{lem:FKG1*}.\end{proof}

\subsection{The local FKG inequalities}

In this section, we prove discrete and continuous local FKG inequalities (Lemmas \ref{lem:disFKG2*} and \ref{lem:localFKG}). Corollary \ref{cor:FKG} is a direct consequence of Lemma \ref{lem:localFKG}.

\medskip

Let us start with some notations. For every $\phi : \R^n \rightarrow \R$ and for every $i \in \{1,\dots,n\}$ such that $\phi$ is monotonic in $i$, we let $\epsilon_i^\phi=0$ if $\phi$ does not depend on $x_i$ (in the sense that if $x$ and $y$ agree outside of the $i^{th}$ coordinate then $\phi(x)=\phi(y)$); $\epsilon_i^\phi=1$ if $\phi$ depends on $x_i$ and is non-decreasing in $x_i$; $\epsilon_i^\phi=-1$ if $\phi$ depends on $x_i$ and is non-increasing in $x_i$.

\begin{lemma}[Finite dimensional local Gaussian FKG inequality]\label{lem:disFKG2*}
Let $\phi,\psi : \R^n \rightarrow \R$ be two bounded measurable functions and let $X$ be a centered Gaussian vector. Assume that for every $(i,j) \in \{1,\dots,n\}^2$ one of the following properties hold:
\begin{itemize}[noitemsep]
\item $\E[X_i X_j] = 0$,
\item $\phi$ does not depend on $x_i$,
\item $\psi$ does not depend on $x_j$,
\item $\phi$ is monotonic in $x_i$, $\psi$ is monotonic in $x_j$ and $\E[X_i X_j] \epsilon_i^\phi \epsilon_j^\psi \ge 0$.
\end{itemize}
Then,
\[
\E \left[ \phi(X)\psi(X) \right] \ge \E \left[ \phi(X) \right] \E \left[ \psi(X) \right].
\]
\end{lemma}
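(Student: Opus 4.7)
The plan is to adapt the standard interpolation proof of Pitt's inequality (Lemma~\ref{lem:disFKG1*}), but to keep track of which entries of the covariance matrix $K := \mathrm{Cov}(X)$ actually contribute. First, I would assume that $\phi,\psi$ are smooth with bounded derivatives and that $K$ is strictly positive definite, reducing the general case at the end. The key identity is the Price/Chatterjee interpolation formula. Let $Y^{(\rho)}$ be such that $(X,Y^{(\rho)})$ is jointly centered Gaussian with $X\sim Y^{(\rho)}\sim \mathcal N(0,K)$ and $\mathrm{Cov}(X_i,Y^{(\rho)}_j)=\rho K_{ij}$, for $\rho\in[0,1]$. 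Then $\rho\mapsto \E[\phi(X)\psi(Y^{(\rho)})]$ interpolates between $\E[\phi(X)]\E[\psi(X)]$ at $\rho=0$ (independence) and $\E[\phi(X)\psi(X)]$ at $\rho=1$, and Gaussian integration by parts in $\rho$ yields
\[
\mathrm{Cov}(\phi(X),\psi(X)) \;=\; \sum_{i,j=1}^n K_{ij}\int_0^1 \E\bigl[\partial_i\phi(X)\,\partial_j\psi(Y^{(\rho)})\bigr]\,d\rho.
\]

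Next, I would verify that every summand on the right-hand side is non-negative, case by case according to the hypothesis satisfied by the pair $(i,j)$. If $K_{ij}=0$ the term vanishes. If $\phi$ does not depend on $x_i$, then $\partial_i\phi\equiv 0$ and the term vanishes; analogously if $\psi$ does not depend on $x_j$. In the last case, the monotonicity assumptions give $\epsilon_i^\phi\,\partial_i\phi\geq 0$ and $\epsilon_j^\psi\,\partial_j\psi\geq 0$ pointwise, so $\partial_i\phi(X)\,\partial_j\psi(Y^{(\rho)})$ has the sign of $\epsilon_i^\phi\epsilon_j^\psi$, which matches the sign of $K_{ij}$ by hypothesis. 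Hence $K_{ij}\,\partial_i\phi(X)\,\partial_j\psi(Y^{(\rho)})\geq 0$ pointwise, and summing and integrating preserves non-negativity.

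To remove the smoothness assumption, I would mollify: convolve $\phi$ and $\psi$ coordinate-wise with a non-negative, symmetric product kernel $\eta_\varepsilon(x)=\prod_{k=1}^n \eta_\varepsilon^{(1)}(x_k)$ of mass one. The crucial point is that this convolution preserves each of the four clauses of the hypothesis individually: if $\phi$ is independent of $x_i$ then so is $\phi\ast\eta_\varepsilon$, and if $\phi$ is monotonic in $x_i$ with type $\epsilon_i^\phi$ then so is $\phi\ast\eta_\varepsilon$ (with the same type). The mollified functions are smooth and bounded by $\|\phi\|_\infty,\|\psi\|_\infty$, and $\phi\ast\eta_\varepsilon(X)\to \phi(X)$ almost surely along a subsequence (by Lebesgue's differentiation theorem, using that $X$ has a density), so bounded convergence concludes. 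Finally, to handle a possibly degenerate $K$, I would apply the result to $X+\delta Z$ with $Z\sim\mathcal N(0,I)$ independent of $X$: this adds $\delta^2 I$ to the covariance, which does not change any off-diagonal $K_{ij}$ or create any new sign constraint (diagonal entries $K_{ii}+\delta^2$ pair with $\epsilon_i^\phi\epsilon_i^\psi$, which equals $1$ whenever both sides are nonzero, so the sign hypothesis is automatic there), and then let $\delta\downarrow 0$.

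The only genuinely delicate step is the mollification argument, precisely because one must preserve all four hypotheses simultaneously; this is where the product structure of the kernel is essential, since a generic mollification could in principle introduce dependence on additional coordinates. The Price formula itself and its term-by-term positivity are routine once the right interpolation is set up, and the degeneracy regularization is standard.
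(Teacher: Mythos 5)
Your argument is correct in substance but takes a genuinely different route from the paper's. The paper does not reprove the continuous case at all: it observes that Pitt's argument for Lemma \ref{lem:disFKG1*} goes through verbatim under the weaker, pairwise sign hypotheses, and then concentrates on the passage from continuous to bounded measurable functions, which it handles by inner regularity of the Gaussian law together with an explicit construction (Claim \ref{cl:approxFKGbis}) of continuous approximants built from ``monotone rectangles'' $R(x,\eps)$ adapted to the sets $I,J$ of non-decreasing and non-increasing coordinates. You instead give a self-contained proof of the smooth non-degenerate case via the Gaussian interpolation (Price) formula --- whose term-by-term positivity is exactly where the four-case hypothesis enters --- and you replace the regularity-of-measure approximation by mollification with a product kernel, which preserves both coordinate-wise monotonicity and independence of individual coordinates. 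Your route is more elementary and does not use Pitt as a black box; the paper's route avoids any differentiation and applies directly to indicator functions.

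One step of your reduction is imprecise: the removal of the degeneracy of $K$. You assert that on the diagonal ``$\epsilon_i^\phi\epsilon_i^\psi$ equals $1$ whenever both sides are nonzero'', which is false in general ($\phi$ may be increasing and $\psi$ decreasing in $x_i$). What saves the perturbation when $K_{ii}>0$ is that the hypothesis for the pair $(i,i)$ then forces one of conditions two, three or four, each of which survives replacing $K_{ii}$ by $K_{ii}+\delta^2$. The genuine problem is the case $K_{ii}=0$: there $X_i$ is a.s.\ equal to $0$, condition one holds for $(i,i)$ with no structural assumption on $\phi,\psi$ in the variable $x_i$, and after perturbation the pair $(i,i)$ may satisfy none of the four conditions. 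This is repaired by first replacing $\phi$ and $\psi$ by the functions obtained by freezing $x_i=0$ for every such $i$ (extended constantly in $x_i$): this does not change $\phi(X)$ or $\psi(X)$ almost surely, makes both functions independent of $x_i$, and preserves the hypothesis for all remaining pairs. With this preliminary reduction your argument closes; note also that the mollification step uses that $X$ has a density, so the degeneracy reduction must indeed come first, as your ordering of the reductions implicitly requires.
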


\begin{proof}
The proof is essentially the same as Lemma \ref{lem:disFKG1*} (proven in \cite{Pit}). Pitt proves Lemma \ref{lem:disFKG1*} by first showing it with the further hypothesis that $\phi$ and $\psi$ are continuous and then shows an approximation result. The proof of Lemma \ref{lem:disFKG2*} with the further hypothesis that $\phi$ and $\psi$ are continuous is exactly the same as in \cite{Pit}. Then, we can conclude by using the following claim, where $\phi$ and $X$ are as in the statement of Lemma~\ref{lem:disFKG2*}.

\begin{claim}\label{cl:approxFKGbis}
Let $I$ (resp.\ $J$) denote the set of coordinates on which $\phi$ is non-decreasing (resp.\ non-increasing).\footnote{Note that $I \cap J$ is the set of coordinates on which $\phi$ does not depend.} There exists a sequence of continuous functions $\phi_n : \R^n \rightarrow \R$ bounded by $\| \phi \|_\infty$ that are non-decreasing (resp.\ non-increasing) in $I$ (resp. $J$) and such that a.s.\ $\phi_n(X) \rightarrow \phi(X)$. 
\end{claim}

\begin{proof}
The proof is essentially the same as that of Claim \ref{cl:approxFKG2}. As previously, we can assume that $\phi$ is the indicator of some event $A$ and we consider $C_n,K_n \subset \R^n$ two sequences of compact sets and $O_n \subset \R^n$ a sequence of open sets that satisfy \eqref{eq:regmeas*}. We then define the following set for every $x \in \R^d$ and $\eps>0$:
\[
R(x,\eps) := \Bigg\{ y \in \R^k :
\begin{array}{l} \forall i \in I \setminus J, y_i > x_i-\eps,\\
\forall i \in J \setminus I, y_i < x_i+\eps,\\
\forall i \notin I \cup J, |x_i-y_i|< \eps.
\end{array} \Bigg\}.
\]
As previously, we fix
\begin{itemize}[noitemsep]
\item a sequence $\eps_n>0$ such that for every $x \in K_n$, $C_n \cap R(x,\eps_n) \subset O_n$,
\item a finite cover $\{ R(x^i,\eps_n)\}_{i=1}^{N_n}$ of $K_n$,
\end{itemize}
and we let
\[
L_n := \bigcup_{i=1}^{N_n} R(x^i,\eps_n) \quad \text{and} \quad \phi_n(x) := \Big(1 - \eps^{-1}_n\inf_{y \in L_n} \| y-x \|_{\infty} \Big)_+.
\]
As in the proof of Claim \ref{cl:approxFKG2}, the functions $\phi_n$ satisfy the desired properties.
\end{proof}
This concludes the proof of Lemma~\ref{lem:disFKG2*}.\end{proof}

We end the section by showing a general local FKG inequality in the continuum. As previously, we need some notation. If $x \in \R^d$ and $\delta > 0$, we let $Q_\delta(x):=x+[-\delta,\delta]^d$. Monoticity properties for functions on $C(\R^d)$ are defined above Corollary \ref{cor:FKG}. Moreover, we say that a function $\phi : C(\R^d) \rightarrow \R$ does not depend on some set $U \subset \R^d$ if $\phi(u) = \phi(v)$ for every $u,v$ that agree on $U^c$. Let $\phi : C(R^d) \rightarrow \R$ and $U \subset \R^d$. If $\phi$ is monotonic in $U$, we let $\epsilon_U^\phi = 0$ if $\phi$ does not depend on $U$ (in the sense that $\phi(u)=\phi(v)$ for every $u,v$ that agree outside of $U$); $\epsilon_U^\phi=1$ if $\phi$ depends on $U$ and is non-decreasing in $U$; $\epsilon_U^\phi=-1$ if $\phi$ depends on $U$ and is non-increasing in $U$.

\begin{lemma}[Local continuous Gaussian FKG inequality]\label{lem:localFKG}
Let $\phi,\psi : C(\R^d) \rightarrow \R$ be two bounded measurable functions and let $f$ be a centered continuous Gaussian field on $\R^d$. Assume that there exists $\delta>0$ such that for every $(x,y) \in (\R^d)^2$ one of the following properties holds:
\begin{itemize}[noitemsep]
\item $\forall (s,t) \in Q_\delta(x) \times Q_\delta(y), \E[f(s)f(t)] = 0$,
\item $\phi$ does not depend on $Q_\delta(x)$,
\item $\psi$ does not depend on $Q_\delta(y)$,
\item $\phi$ is monotonic in $Q_\delta(x)$, $\psi$ is monotonic in $Q_\delta(y)$, the sign of $\E[f(s)f(t)]$ is constant in $Q_\delta(x) \times Q_\delta(y)$, and
\[
\E[f(x)f(y)]\epsilon_{Q_\delta(x)}^\phi \epsilon_{Q_\delta(y)}^\psi \ge 0.
\]
\end{itemize}
Then,
\[
\E \left[ \phi(f)\psi(f) \right] \ge \E \left[ \phi(f) \right] \E \left[ \psi(f) \right].
\]
\end{lemma}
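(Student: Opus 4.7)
The plan is to reduce Lemma \ref{lem:localFKG} to its finite-dimensional analogue, Lemma \ref{lem:disFKG2*}, by a two-stage approximation argument mirroring the proof of Lemma \ref{lem:FKG1*}: first replace $\phi$ and $\psi$ by continuous, bounded functions that inherit the local monotonicity structure, and then by functions depending on the field at only finitely many points, again with a compatible monotonicity structure. Lemma \ref{lem:disFKG2*} applied to the resulting finite-dimensional vectors will then give the inequality for the approximations, and passing to the limit concludes the proof.

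For the first step, following Claim \ref{cl:approxFKG2}, I would reduce to $\phi = \mathbf{1}_A$ for measurable $A$ and construct continuous approximations using a locally directional version of the neighborhoods $R^M(u,\eps)$: given $u \in C(\R^d)$, declare $v \in R^M(u,\eps)$ whenever, for every $y \in [-M,M]^d$, one has $v(y) > u(y)-\eps$ at those $y$ that lie in some $Q_\delta(x)$ where $\phi$ is non-decreasing, $v(y) < u(y)+\eps$ where $\phi$ is non-increasing, and $|v(y)-u(y)| < \eps$ elsewhere. The regularity argument of Claim \ref{cl:approxFKG2} combined with the Lipschitz cutoff $\phi_n(u) = (1-\eps_n^{-1}\inf_{v\in L_n}\|v-u\|_{\infty,[-M_n,M_n]^d})_+$ then produces continuous, $[0,1]$-valued approximations $\phi_n$ which are non-decreasing (resp.\ non-increasing) in each $Q_{\delta/2}(x)$ where $\phi$ is non-decreasing (resp.\ non-increasing) in $Q_\delta(x)$, and which satisfy $\phi_n(f)\to\mathbf{1}_A(f)$ almost surely. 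The same construction is applied to~$\psi$, producing $\psi_n$.

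For the second step, I would use the sampling-and-convolution device of Claim \ref{cl:approxFKG1}: set $u_{R,m,\eta}(y) := \theta_R(y)\,m^{-d}\sum_{v\in m^{-1}\Z^d} u(v)\chi_\eta(y-v)$ and $\phi_{n,R,m,\eta}(u) := \phi_n(u_{R,m,\eta})$. Because $\chi_\eta\ge 0$ and $\theta_R\ge 0$, perturbing the sampled value $u(v)$ upward only increases $u_{R,m,\eta}$, and only at points of $B(v,\eta)$. Choosing $\eta \ll \delta/2$ forces $B(v,\eta)\subset Q_{\delta/2}(v)$, so the function $\phi_{n,R,m,\eta}$, viewed as a function of the finitely many values $\{u(v) : v\in m^{-1}\Z^d\cap\mathrm{supp}(\theta_R)\}$, is non-decreasing (resp.\ non-increasing) in the coordinate $u(v)$ whenever $\phi_n$ is non-decreasing (resp.\ non-increasing) in $Q_{\delta/2}(v)$, and constant in $u(v)$ when the influence ball does not meet $\mathrm{supp}(\theta_R)$.

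To conclude, for the Gaussian vector $X = (f(v))_v$ with covariance $\E[f(v_1)f(v_2)]$, I would verify the four alternatives of Lemma \ref{lem:disFKG2*} pair-by-pair: for $(v_1,v_2)$, the hypothesis of Lemma \ref{lem:localFKG} applied at $(v_1,v_2)$ gives one of the four parallel alternatives, and by Step~2 the monotonicity signs $\eps^{\phi_{n,R,m,\eta}}_{v_1}$ and $\eps^{\psi_{n,R,m,\eta}}_{v_2}$ inherit those of $\phi,\psi$ on $Q_\delta(v_1)$ and $Q_\delta(v_2)$, while $\E[f(v_1)f(v_2)]$ is a point value of a covariance with prescribed sign on $Q_\delta(v_1)\times Q_\delta(v_2)$. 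Lemma \ref{lem:disFKG2*} then yields the FKG inequality for $\phi_{n,R,m,\eta}(f)$ and $\psi_{n,R,m,\eta}(f)$, and successive passage to the limits $m\to\infty$, $\eta\to 0$, $R\to\infty$, $n\to\infty$ by bounded convergence produces the conclusion. The main obstacle is the first step: tailoring the neighborhoods $R^M(u,\eps)$ so that the heterogeneous local monotonicity pattern of $\phi$ is faithfully transferred to a \emph{continuous} $\phi_n$ on a slightly shrunken collection of $Q_{\delta/2}$-balls, which is the key adaptation over the fully monotone setting of Claim \ref{cl:approxFKG2}.
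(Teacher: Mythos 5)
Your proposal is correct and follows essentially the same route as the paper: a two-stage approximation (first measurable $\to$ continuous via directional neighborhoods $R^M(u,\eps)$ adapted to the local monotonicity pattern, as in the paper's Claim \ref{cl:approxFKG2bis}; then continuous $\to$ finite-dimensional via sampling and convolution with a kernel supported in a ball small compared to $\delta$, as in Claim \ref{cl:approxFKG1bis}), followed by a pairwise verification of the hypotheses of Lemma \ref{lem:disFKG2*}. The only difference is presentational: you spell out the pair-by-pair check of the four alternatives, which the paper leaves implicit.
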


\begin{proof}
Let $\eta>0$, let $\rho : C(\R^d) \rightarrow \R$ be a bounded measurable function and let $U,V \subset \R^d$ defined by $x \in U$ (resp.\ $V$) if $\phi$ is non-decreasing (resp.\ non-increasing) in $Q_\eta(x)$. Lemma~\ref{lem:localFKG} is a direct consequence of Lemma~\ref{lem:disFKG2*} and of the two following claims.

\begin{claim}\label{cl:approxFKG1bis}
Assume furthermore that $\rho$ is continuous. Then, there exists a sequence of functions $\rho_n : C(\R^d) \rightarrow \R$ bounded by $\| \rho \|_\infty$ that depend on only finitely many points and such that:
\begin{itemize}[noitemsep]
\item for every $u \in C(\R^d)$, we have $\rho_n(u)\rightarrow \rho(u)$,
\item for every $x \in U$ (resp.\ $V$), $\rho_n$ is non-decreasing (resp.\ non-increasing) in $Q_{\eta/2}(x)$.
\end{itemize}
\end{claim}

\begin{claim}\label{cl:approxFKG2bis}
There exists a sequence of continuous functions $\rho_n : C(\R^d) \rightarrow \R$ bounded by $\| \rho \|_\infty$ such that
\begin{itemize}[noitemsep]
\item a.s.\ we have $\rho_n(f)\rightarrow \rho(f)$,
\item for every $x \in U$ (resp.\ $V$), $\rho_n$ is non-decreasing (resp.\ non-increasing) in $Q_{\eta/2}(x)$.
\end{itemize}
\end{claim}

\begin{proof}[Proof of Claim \ref{cl:approxFKG1bis}]
The proof is the same as Claim \ref{cl:approxFKG1} with the further hypothesis that the support of $\chi_\eps$ is included in $[-\eta/2,\eta/2]^d$.
\end{proof}

\begin{proof}[Proof of Claim \ref{cl:approxFKG2bis}]
The proof is essentially a combination of the proofs of Claims \ref{cl:approxFKG2} and \ref{cl:approxFKGbis}. More precisely, we define $I,J \subset (\eta/4)\Z^d$ by $x \in I$ (resp.\ $J$) if $\rho$ is non-decreasing (resp.\ non-decreasing) in $Q_{\eta/4}(x)$ and we follow the proof of Claim \ref{cl:approxFKG2bis} but replacing the sets $R^M(u,\eps)$ by
\[
\Bigg\{ v \in C(\R^d) :
\begin{array}{l} \forall x \in (I \setminus J) \cap [-M,M]^d, \; \forall y \in x+[0,\eta/4)^d, \; v(y)> v(x)-\eps,\\
\forall x \in (J \setminus I) \cap [-M,M]^d, \; \forall y \in x+[0,\eta/4)^d, \; v(y) < u(y)+\eps,\\
\forall x \in (I \cup J)^c \cap [-M,M]^d, \; \forall y \in x+[0,\eta/4)^d, \; |v(y)-u(y)|< \eps.
\end{array} \Bigg\}.
\]
The rest of the proof is the same so we omit the details.
\end{proof}
This concludes the proof of Lemma \ref{lem:localFKG}.
\end{proof}

\section{Other properties of Gaussian fields}

\subsection{Approximation by truncation and sprinkling}\label{ssec:approx}

In this section, we present two approximation results that essentially come from \cite{MV}. The first one is based on Kolmogorov and BTIS lemmas and shows that the field does not vary too much under truncation. The second one is based on a Cameron--Martin type argument and shows that the probability of monotone events does not change much under a small sprinkling.

\begin{lemma}\label{L:rest_C1}
Assume that $q$ satisfies Assumption \ref{ass1} for some $\beta>d$. Then, there exist $c,C > 0$ such that, for all $r,R \geq 1$ and $t \ge C \log R$,
\begin{equation}
\mathbb{P}\big[\Vert f-f_r \Vert_{\infty,B(R)}\geq tr^{-(\beta-\frac d2)}\big] \leq e^{-ct^2}.
\end{equation} 
\end{lemma}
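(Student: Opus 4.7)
\medskip
\noindent\textbf{Proof plan.}

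The plan is to reduce the statement on the ball $B(R)$ to a statement on a unit cube via a union bound, and on each unit cube to combine a variance estimate, Kolmogorov's theorem (to control the expected supremum), and the Borell--TIS inequality (to control deviations above the expectation).

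First I would estimate the pointwise variance of $f-f_r$ and of its first derivatives. Since $f-f_r=(q(1-\chi_r))\star W$, for every $x\in\R^d$,
\[
\mathrm{Var}((f-f_r)(x))=\int_{\R^d} q(z)^2(1-\chi_r(z))^2\,dz\leq \int_{|z|\geq r/2-1/4}q(z)^2\,dz,
\]
and using $q(z)=O(|z|^{-\beta})$ from Assumption \ref{ass1} (together with $2\beta>d$), this integral is $O(r^{-(2\beta-d)})$. The same estimate holds for $\mathrm{Var}(\partial_j(f-f_r)(x))$ thanks to the bound $\partial^\alpha q(x)=O(|x|^{-\beta})$ for $|\alpha|\leq 1$, uniformly in $x$ and $r$.

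Second, I would apply Kolmogorov's criterion (see e.g.\ Appendix A.9 of \cite{NS}) on any fixed unit cube $K\subset\R^d$. Combined with the derivative variance bound, it yields
\[
\mathbb{E}\bigl[\|f-f_r\|_{\infty,K}\bigr]\leq C_0\, r^{-(\beta-d/2)},
\]
for some constant $C_0>0$ that depends only on $q$ and $d$ (and in particular not on $r$, $R$, or the location of $K$, by stationarity). Third, I would invoke the BTIS inequality (see e.g.\ \cite[Theorem 2.9]{AW}) with $\sigma^2:=\sup_{x\in K}\mathrm{Var}((f-f_r)(x))=O(r^{-(2\beta-d)})$ to deduce that for every $s\geq 0$,
\[
\mathbb{P}\bigl[\|f-f_r\|_{\infty,K}\geq (C_0+s)r^{-(\beta-d/2)}\bigr]\leq \exp(-c_1 s^2),
\]
for some $c_1>0$.

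Finally, I would cover $B(R)$ by at most $C_2 R^d$ unit cubes and take a union bound: for every $t\geq 2C_0$,
\[
\mathbb{P}\bigl[\|f-f_r\|_{\infty,B(R)}\geq t\, r^{-(\beta-d/2)}\bigr]\leq C_2 R^d\exp(-c_1(t-C_0)^2/4)\leq C_2\exp\bigl(d\log R-c_2 t^2\bigr).
\]
Choosing $C$ large enough that $t\geq C\log R$ forces $d\log R\leq (c_2/2)t^2$ (since $t^2\geq Ct\log R\geq d\log R$ for $C$ large and $t$ bounded below by an absolute constant, which we may ensure by increasing $C$ again if necessary), the right-hand side is bounded by $\exp(-ct^2)$ for some $c>0$. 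There is no genuine obstacle here; the only mildly delicate point is keeping the constant in the final exponent clean, which is why we allow the union-bound slack to be absorbed by the requirement $t\geq C\log R$ (rather than the more efficient $t\gtrsim\sqrt{\log R}$ that the same argument would also yield).
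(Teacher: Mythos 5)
Your proposal is correct and follows essentially the same route as the paper: the paper's proof simply records the uniform bound $\sup_x\sup_{|\alpha|\le1}\E[(\partial^\alpha(f-f_r)(x))^2]=O(r^{d-2\beta})$ (which your white-noise computation establishes) and then invokes Kolmogorov's theorem and the BTIS inequality exactly as you do, following Proposition 3.11 of \cite{MV}, with the same union bound over unit cubes covering $B(R)$ absorbed by the condition $t\ge C\log R$.
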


\begin{proof}
The proof follows the lines of that of Proposition~3.11 in \cite{MV}, noting that Assumption \ref{ass1} implies that
$$
\sup_x \sup_{|\alpha|\leq 1} \mathbb{E}\big[ (\partial^{\alpha} (f-f_r )(x))^2\big] =O(r^{d-2\beta}),
$$
which, upon applying the Kolmogorov and BTIS lemmas, yields the desired bound.
\end{proof}

\begin{lemma}[Proposition 3.6 in \cite{MV}]\label{lem:Cameron-Martin}
Let $q$ satisfying Assumption \ref{ass1} for some $\beta > d$. There exist $C,r_0,R_0 > 0$ such that for every $r \in [r_0,\infty]$, $R \ge R_0$, $t \in \R$ and every monotonic event $A \in \mathcal{F}_{D(0,R) \times [0,1]^{d-2}}$, we have
\[
\left| \Pro [ f_r \in A ] - \Pro [ f_r + t \in A ] \right| \leq C |t| R. 
\]
\end{lemma}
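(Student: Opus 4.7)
The plan is a Cameron--Martin (Girsanov) argument in the white noise representation of $f_r$. Writing $f_r = q_r\star W$ for a standard $L^2$-white noise $W$ on $\R^d$, shifting the noise by a deterministic $h\in L^2(\R^d)$ shifts $f_r$ by the deterministic function $q_r\star h$. The goal is to choose $h$ so that $q_r\star h$ equals $t$ on the ``footprint'' $K := D(0,R)\times[0,1]^{d-2}$ of the event $A$, while having $\|h\|_{L^2}$ of order $|t|R$. Since $A\in\mathcal F_K$ and probabilities are bounded by $1$, it suffices to treat the regime $|t|R \le 1$.

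The first step is the explicit construction of $h$. For finite $r$, set
\[
h := \frac{t}{\int q_r}\,\mathbf{1}_{K^{r/2}},
\]
where $K^{r/2}$ denotes the $r/2$-neighborhood of $K$. Because $q_r$ is supported in the ball of radius $r/2$, for any $x\in K$ the support of $y\mapsto q_r(x-y)$ lies inside $K^{r/2}$, so $(q_r\star h)(x) = t$ exactly. A direct volume estimate gives $\|h\|_{L^2}^2 \le C\,t^2\,\mathrm{vol}(K^{r/2}) = O(t^2 R^2)$ for $R$ large (the disc factor dominates the unit slab factor). For $r=\infty$, one performs a similar construction on a slightly larger region and exploits the polynomial decay of $q$ from Assumption \ref{ass1} to ensure $q\star h$ is close to $t$ on $K$ up to negligible error while keeping $\|h\|_{L^2}=O(|t|R)$.

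Second, I would apply the Cameron--Martin formula. The law of $W+h$ is absolutely continuous w.r.t.\ that of $W$ with Radon--Nikodym derivative $Z := \exp\!\big(\langle h,W\rangle - \tfrac12\|h\|_{L^2}^2\big)$. Since $A$ is $\mathcal F_K$-measurable and $q_r\star h\equiv t$ on $K$,
\[
\prob[f_r + t \in A] = \prob[f_r + q_r\star h\in A] = \E\big[\mathbf{1}_A(f_r)\,Z\big].
\]
Subtracting $\prob[f_r\in A] = \E[\mathbf{1}_A(f_r)]$ and applying Cauchy--Schwarz,
\[
\big|\prob[f_r+t\in A] - \prob[f_r\in A]\big| \le \E[|Z-1|] \le \sqrt{\E[Z^2]-1} = \sqrt{e^{\|h\|_{L^2}^2}-1}.
\]
In the regime $|t|R\le 1$, we have $\|h\|_{L^2}^2 \le C$, so $\sqrt{e^{\|h\|_{L^2}^2}-1} \le C'\|h\|_{L^2} \le C''|t|R$, which is the desired estimate.

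The main obstacle is getting uniformity of the constant $C$ over the full range $r\in[r_0,\infty]$. For compactly supported $q_r$ the construction is clean, but when $r=\infty$ and $q$ is only polynomially decaying the identity $q\star h\equiv t$ on $K$ cannot be achieved with a compactly supported $h$; instead one relies on the monotonicity of $A$ to replace this identity by the one-sided inequality $q\star h\ge t$ on $K$, which permits a more flexible construction with controlled $L^2$-norm. This is essentially where the monotonicity hypothesis enters.
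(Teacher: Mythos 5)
Your overall mechanism (Cameron--Martin in the white-noise representation, the identity $\Pro[f_r+t\in A]=\E[\mathbf 1_A(f_r)Z]$ for an $h$ with $q_r\star h\equiv t$ on $K$, Cauchy--Schwarz giving $\E|Z-1|\le\sqrt{e^{\|h\|_{L^2}^2}-1}$, and the reduction to $|t|R\le1$) is sound, and it takes a more self-contained, spatial-side route than the paper, which simply defers to Proposition 3.6 of \cite{MV} and builds $h$ on the Fourier side from a lower bound on $\mathcal F(q_r\star q_r)$ near $0$ that is uniform in $r$. However, your main construction has a genuine quantitative gap. With $h=\frac{t}{\int q_r}\mathbf 1_{K^{r/2}}$ one gets $\|h\|_{L^2}^2=\frac{t^2}{(\int q_r)^2}\,\mathrm{vol}(K^{r/2})$, and $\mathrm{vol}(K^{r/2})\asymp (R+r)^2(1+r)^{d-2}$. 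This is $O(R^2)$ only when $r=O(1)$. The lemma requires a constant $C$ uniform over the whole range $r\in[r_0,\infty]$: already for $r$ of order $R$ your bound degenerates to $C|t|R^{d/2}$, for $r\gg R$ it is worse, and for $r=\infty$ the set is all of $\R^d$. So the step ``$=O(t^2R^2)$ for $R$ large (the disc factor dominates the unit slab factor)'' fails; thickening $K$ by $r/2$ inflates the slab directions by a factor of order $r^{d-2}$ and the disc by $(1+r/R)^2$, and neither is harmless uniformly in $r$.

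The fix is essentially the construction you reserve for $r=\infty$, applied uniformly in $r$: take $h=\frac{t}{c_q}\mathbf 1_{K^{\rho}}$ with $\rho$ a \emph{fixed} radius depending only on $q$, where $r_0$ and $\rho$ are chosen so that $\chi_r\equiv1$ on $B(\rho)$ and $c_q:=\int_{B(\rho)}q>0$, whence $\int_{B(\rho)}q_r=c_q$ for every $r\in[r_0,\infty]$. Then for $x\in K$ and $t>0$ one only gets the one-sided bound $(q_r\star h)(x)\ge\frac{t}{c_q}\int_{B(x,\rho)}q_r(x-y)\,dy=t$, but now $\mathrm{vol}(K^\rho)=O(R^2)$ uniformly in $r$. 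Monotonicity of $A$ is then exactly what converts the inequality $q_r\star h\ge t$ on $K$ into the desired comparison of probabilities (arguing separately for each sign of $t$ and each direction of monotonicity); it is needed for every $r$, not only for $r=\infty$ as you suggest. With that replacement your argument closes and gives a legitimate alternative to the Fourier-side construction cited in the paper.
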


It is important to note that $C$ does not depend on $r \ge r_0$.

\begin{proof}
The proof is the same as the one of the analogous result Proposition 3.6 in \cite{MV}. Indeed, the conditions $q$ non-identically equal to $0$ and $q\geq 0$ as well as the decay condition of Assuption \ref{ass1} imply that $\mathcal{F}(q_r\star q_r)$ is well-defined and larger than some constant $c$ that does not depend on $r$ in some neighbourhood of $0$ (as soon as $r$ is sufficiently large). The only difference in the proof is that since $f_r$ is defined in $\R^d$, the function $h$ that one should choose is rather
\[
\mathcal{F}[ R^{2+a} c^{-d}\mathbf 1_{[0,cR]^2 \times \times [0,c]^{d-2}} ]. \qedhere
\]
\end{proof}

\subsection{A bound on the number of connected components of the excursion sets}

\begin{lemma}\label{L:numbercomps}
Let $q$ satisfying Assumption \ref{ass1} for some $\beta>d$. There exists $C>0$ such that for every $r \ge r_q$ and $\ell \in \R$, the expectation of the number of connected components of $\{f_r \ge \ell \}\cap \mathbb R^2$ that intersect $D(0,1)$ is less than $C$.
\end{lemma}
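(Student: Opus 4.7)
The plan is to write $g:=f_r|_{\mathbb R^2}$, to denote by $N$ the random quantity to be bounded, and to reduce the problem via a geometric split to two applications of the Kac--Rice formula. By Lemma~\ref{lem:mani*}, almost surely $\{g=\ell\}$ is a smooth $1$-manifold with $\partial\{g\ge\ell\}=\{g=\ell\}$, and $\ell$ is a regular value of $g|_{\partial D(0,2)}$; on this event, I would classify each connected component $\mathcal C$ of $\{g\ge\ell\}\cap\mathbb R^2$ meeting $D(0,1)$ as of type (A) if $\mathcal C\subset D(0,2)$, and of type (B) otherwise (so that $\mathcal C\cap\partial D(0,2)\ne\emptyset$ by connectedness), with counts $N_A$ and $N_B$ adding up to $N$.

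For type (A), $\mathcal C$ is closed and bounded with $\partial\mathcal C\subset\{g=\ell\}$, so either $\mathcal C$ has non-empty interior and $g$ attains its maximum on $\mathcal C$ at an interior local maximum, or else $\mathcal C$ reduces to a single point where $\nabla g=0$; either way $\mathcal C$ contains a critical point of $g$, and distinct components give distinct such points, so $N_A\le\#\{x\in D(0,2):\nabla g(x)=0\}$. For type (B), distinct components intersect $\partial D(0,2)$ in disjoint non-empty (relatively) closed subsets of the arc-union $\{g\ge\ell\}\cap\partial D(0,2)$, whence $N_B$ does not exceed the number of zeros of $g-\ell$ on $\partial D(0,2)$. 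I would then apply the Kac--Rice formula to each count, yielding (writing $\gamma_X(\cdot)$ for the Gaussian density of a random vector $X$)
\[
\E[N_A]\le\int_{D(0,2)}\E\bigl[|\det H_g(x)|\,\big|\,\nabla g(x)=0\bigr]\,\gamma_{\nabla g(x)}(0)\,dx
\]
and an analogous one-dimensional Kac--Rice expression for $\E[N_B]$ given by the integral along $\partial D(0,2)$ of $\E\bigl[|\partial_\tau g(x)|\,\big|\,g(x)=\ell\bigr]\,\gamma_{g(x)}(\ell)$, where $\tau$ is the unit tangent to the circle. The non-degeneracy of $\nabla g(x)$ and of $g(x)$ needed for Kac--Rice follows from Assumption~\ref{ass1} and the remark that the spectral measure of $g$ is continuous and strictly positive at~$0$, already exploited in Lemma~\ref{lem:bmm}.

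The main technical issue -- and really the only genuine obstacle -- is the uniformity of these Kac--Rice integrands in $r\ge r_q$ and $\ell\in\mathbb R$. Uniformity in $\ell$ is immediate from $\gamma_{g(x)}(\ell)\le(2\pi\,\textup{Var}(g(x)))^{-1/2}$. For uniformity in $r$, upper bounds on $\textup{Var}(g(x))$, $\textup{Var}(\nabla g(x))$ and $\textup{Var}(H_g(x))$ follow from the pointwise inequality $|q_r|\le|q|$ together with the regularity and decay of $q$; the lower bound on $\textup{Var}(g(x))=\int q_r^2$ needed to control $\gamma_{g(x)}$ comes from the fact that $\chi_r\equiv1$ on $\{|y|\le r/2-1/4\}\supset\{|y|\le r_q/2-1/4\}$ for all $r\ge r_q$, giving $\textup{Var}(g(x))\ge\int q^2\mathbf 1_{|y|\le r_q/2-1/4}>0$, the positivity being a consequence of the definition of $r_q$ and of the non-triviality and continuity of $q$. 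The same type of monotonicity together with the convergence $q_r\to q$ in $C^{10}$ as $r\to\infty$ produces uniform lower bounds on the conditional covariances entering both Kac--Rice integrands, which yields constants depending only on $q$ and thereby concludes the proof.
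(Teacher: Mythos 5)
Your proof is correct and follows essentially the same route as the paper: both arguments bound the number of components by the number of critical points of $f_r$ in a disc plus a Kac--Rice count on a bounding circle, and then check that the resulting integrands are uniform in $r\ge r_q$ and $\ell$. The only (immaterial) difference is that the paper counts critical points of $f_r$ restricted to the circle, which makes the bound manifestly independent of $\ell$, whereas you count zeros of $g-\ell$ on $\partial D(0,2)$ --- which also works since $g(x)$ and $\partial_\tau g(x)$ are independent by stationarity, up to adding $1$ to handle the event that the whole circle lies in $\{g\ge\ell\}$ (in which case there are no zeros but possibly one boundary-touching component).
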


\begin{proof}
Let $N_2(r)$ (resp.\ $N_1(r)$) denote the number of critical points of the restriction of $f_r$ to $D(0,1)$ (resp.\ $\partial D(0,1)$). Then, the number considered in the statement is less than or equal to $N_1(r)+N_2(r)$. Moreover, by Kac--Rice formula (see e.g.~Theorem 6.2 and Proposition 6.5 of \cite{AW}) and stationarity, we have
\begin{align*}
\E[ N_1(r)]&=\int_0^{2\pi}\frac{1}{\sqrt{2\pi|\partial_\theta^2\kappa_r(0)|}}\E \big[|\partial_\theta^2f_r(e^{i\theta})| \mid \partial_\theta f_r(e^{i\theta})=0 \big] d\theta\\
&= \frac{2\pi}{\sqrt{2\pi|\partial_\theta^2\kappa_r(0)|}}\E [|\partial_\theta^2f_r(0)|],
\end{align*}
 where $\kappa_r=q_r \star q_r$ is the covariance function of $f_r$. Similarly, setting $g_r = (f_r)_{|\R^2}$ and using stationarity, we have
\begin{align*}
\E[N_2(r)]&=\int_{D(0,1)}\frac{1}{\sqrt{(2\pi)^2\det(\textup{Cov}(\nabla g_r(0)))}}\E \big[|\nabla^2 g_r(x)| \mid \nabla g_r(x)=0 \big]dx\\
&= \frac{\pi}{\sqrt{(2\pi)^2\det(\textup{Cov}(\nabla g_r(0)))}}\E[|\nabla^2 g_r(0)|].
\end{align*}
The above quantities are bounded uniformly in $r \ge r_q$.
\end{proof}

\section{A percolation estimate: proof of Lemma \ref{lemma:angle_crossing}}\label{ss:angle_crossing}

In this section we provide a proof of Lemma \ref{lemma:angle_crossing} based on a classical method from planar percolation. We expect that a stronger result could be reached using a deformation estimate as in the proof of Lemma \ref{lem:comparison} to compare the probability of crossing a flat rectangle to the probability of crossing the same rectangle bent into the shape of a quarter of a cylinder. However, this argument relies on full rotational symmetry and on Gaussianity so it does not invite generalization as much as the argument presented here.

\begin{proof}[Proof of Lemma \ref{lemma:angle_crossing}]
The proof is via a second moment method involving arm events in half-planes. Recall that $B(R)$ is the Euclidean ball centered at $0$.

\medskip

An arm from scale $r_1$ to scale $r_2$ (where $0 < r_1 \le r_2$) is a path in $\{ f_{R^\gamma} \ge 0 \}$ from $\partial B(r_1)$ to $\partial B(r_2)$. Let $\alpha_{r_1,r_2}$ denote the probability that there exist two arms from scale $r_1$ to scale $r_2$ that are included in the half planes $\R \times \{0\} \times \R_+$ and $\R \times \R_+ \times \{0\}$ respectively. We first note that by the FKG inequality (Lemma \ref{lem:FKG1*}), the RSW theorem (Theorem \ref{thm:rsw}) and standard gluing properties (see Item iv) at the end of Section \ref{ss:rsw}), for all $r_3 \ge r_2 \ge r_1 > 0$,
\begin{equation}\label{eq:angle_crossing_2}
\alpha_{r_1,r_2}\alpha_{r_2,r_3} \le C\alpha_{r_1,r_3}
\end{equation}
for some universal $C>0$. Similarly, if $0 < r_1 \le 4r_2$, then $\alpha_{r_1/2,2r_2}$ is of the same order as $\alpha_{r_1,r_2}$. We will use the latter below without mentioning it.

\begin{claim}
There exists a universal constant $c>0$ such that if $r_2 \ge R^\gamma$ and $1 \le r_1 \le r_2$ then
\begin{equation}\label{eq:angle_crossing_1}
\alpha_{r_1,r_2} \ge cr_1/r_2.
\end{equation}
\end{claim}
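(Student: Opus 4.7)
The plan is to apply a second moment method to the random measure of points on the shared axis $L = \R \times \{0\} \times \{0\}$ that are simultaneously connected to $\partial B(r_2)$ in each of the two half-planes. Concretely, let $I \subset L$ be a segment of length of order $r_1$ contained in the interior of $B(r_1)$, say $I = [-r_1/2, r_1/2] \times \{0\} \times \{0\}$. For $x \in I$ and $i \in \{1,2\}$, let $E_i(x)$ denote the event that $x \in \{f_{R^\gamma}\ge 0\}$ and $x$ is connected to $\partial B(r_2)$ by a continuous path in $\{f_{R^\gamma}\ge 0\} \cap H_i$, where $H_1 = \R \times \{0\} \times \R_+$ and $H_2 = \R \times \R_+ \times \{0\}$. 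Set
\[
N := \int_I \mathbf{1}_{E_1(x) \cap E_2(x)}\,dx.
\]
Since every $x \in I$ lies in the interior of $B(r_1)$, any path realizing $E_i(x)$ must cross $\partial B(r_1)$, so $\{N > 0\}$ is contained in the two-arm event defining $\alpha_{r_1,r_2}$. By Paley--Zygmund, $\alpha_{r_1,r_2} \ge \Pro[N > 0] \ge \E[N]^2/\E[N^2]$, so it suffices to establish the matching first and second moment bounds.

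The first moment is controlled via Item vi) in Section \ref{ss:rsw} applied to the two-dimensional restrictions $f_{R^\gamma}|_{H_i}$, which are themselves stationary Gaussian fields satisfying Assumption \ref{ass1}. After translation centered at $x$, this yields $\Pro[E_i(x)] \ge c/r_2$ uniformly in $x \in I$. Since both $E_1(x)$ and $E_2(x)$ are increasing events of $f_{R^\gamma}$, the continuous FKG inequality (Lemma \ref{lem:FKG1*}) gives $\Pro[E_1(x) \cap E_2(x)] \ge c^2/r_2^2$, and integration over $I$ yields $\E[N] \ge c' r_1/r_2^2$.

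The second moment $\E[N^2] = \iint_{I\times I} \Pro[E_1(x) E_2(x) E_1(y) E_2(y)]\,dx\,dy$ is estimated by decomposing the joint arm events at multiple scales via quasi-multiplicativity (equation \ref{eq:angle_crossing_2}) and the gluing constructions from Section \ref{ss:rsw}. For points $x, y \in I$ at distance $s = |x-y|$ exceeding the correlation range $R^\gamma$, one separates the contributions into local arm events near $x$ and near $y$ up to scale $s/2$, which are independent thanks to the $R^\gamma$-dependence of $f_{R^\gamma}$, and a joint multi-arm event from scale $s$ to scale $r_2$ that is further decomposed via quasi-multiplicativity and the half-plane one-arm bound from Item vi). After careful bookkeeping, this yields an upper bound of the form $\E[N^2] \le C r_1/r_2^3$, and combining with the first moment gives the desired $\Pro[N > 0] \ge c r_1/r_2$.

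The main obstacle is the second moment estimate: the four-arm probability $\Pro[E_1(x) E_2(x) E_1(y) E_2(y)]$ involves positively correlated arm events in $H_1$ and $H_2$ that interact through the shared axis, and controlling them simultaneously requires combining quasi-multiplicativity, the RSW theorem (Theorem \ref{thm:rsw}), and the local FKG inequality (Corollary \ref{cor:FKG}) in a careful scale-by-scale analysis. Once this is handled, Paley--Zygmund yields the stated lower bound $\alpha_{r_1,r_2} \ge c r_1/r_2$.
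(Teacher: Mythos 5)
Your setup and first-moment bound are fine, but the second-moment estimate $\E[N^2]\le C r_1/r_2^3$ is false, and the whole argument collapses with it. The near-diagonal pairs alone contribute too much: for $|x-y|\le 1$ one has $\Pro[E_1(x)\cap E_2(x)\cap E_1(y)\cap E_2(y)]\ge c\,\Pro[E_1(x)\cap E_2(x)]$ (given the arms from $x$, one pays only a constant, via RSW, Item vi) of Section \ref{ss:rsw} and FKG, for half-circuits at scale $O(1)$ around $x$ in each half-plane together with connections from $y$ to them, which reroot the arms to $y$). Hence
\[
\E[N^2]\ \ge\ \int_I\int_{\{|x-y|\le 1\}}\Pro\big[E_1(x)E_2(x)E_1(y)E_2(y)\big]\,dy\,dx\ \ge\ c\,r_1\,\theta(r_2)\ \ge\ c'\,\frac{r_1}{r_2^2},
\]
where $\theta(r_2):=\Pro[E_1(0)\cap E_2(0)]\ge c/r_2^2$ is your own first-moment input; this exceeds $Cr_1/r_2^3$ by a factor $r_2$. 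Structurally, the same computation shows the Paley--Zygmund ratio of your $N$ is at most $\E[N]^2/(c r_1\theta)\le r_1\theta/c$, so your scheme could only deliver $r_1/r_2$ if $\theta\ge c/r_2$ -- a statement about the point-to-scale two-arm probability that is neither known nor provable from anything in the paper; with the available $\theta\ge c/r_2^2$ you get at best $r_1/r_2^2$. The off-diagonal terms are equally problematic: factorizing $\Pro[E(x)\cap E(y)]$ scale by scale needs quasi-multiplicativity in the hard (upper-bound) direction, whereas \eqref{eq:angle_crossing_2} only gives the super-multiplicative one, and even granting it the dyadic sum produces the $\log(r)+R^\gamma$ loss that the paper's own second-moment argument for Lemma \ref{lemma:angle_crossing} incurs -- an argument which, moreover, uses the present claim as an input, so your route is also essentially circular.

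The paper proves the claim by a completely different and much shorter argument that avoids second moments. It first lower-bounds the one-arm half-plane probability $\pi$ from scale $r_1$ to scale $r_2$ by $c(r_1/r_2)^{1/2}$: by RSW, Lemma \ref{lem:mani*} and duality, a nodal crossing of an $r_2\times r_2$ square exists with probability bounded below; its endpoint on a side carries both a $\{f_{R^\gamma}\ge0\}$-arm and a $\{f_{R^\gamma}\le0\}$-arm to the opposite side; these two events are negatively correlated (FKG for one increasing and one decreasing event), so a union bound over the $O(r_2/r_1)$ balls of radius $r_1$ along the side gives $c\le (r_2/r_1)\pi^2$. Then the continuous FKG inequality applied to the two (increasing) half-plane arm events yields $\alpha_{r_1,r_2}\ge \pi^2\ge c^2 r_1/r_2$. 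The square-root trick in the first step is the key idea you are missing: it converts an order-one crossing probability directly into an arm lower bound of the right power without any control of intermediate scales.
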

\begin{proof}
By RSW and Lemma \ref{lem:mani*} and since $f_{R^\gamma}$ is $R^\gamma$-dependent, for any $r \ge 2R^\gamma$ the probability that an $r \times r$ square is crossed by a path in $\{f_{R^\gamma} = 0 \}$ is larger than some universal positive constant. Any ball centered at the extremity of this path touching a given side is then connected to the opposite side by a path in $\{f_{R^\gamma}\geq 0\}$ and a path in $\{f_{R^\gamma}\leq 0\}$. These two events are negatively correlated by FKG. By using this observation and the union bound, we deduce that the probability that there exists an arm from scale $r_1$ to scale $r_2\geq R^\gamma$ in $\R\times\R_+$ is at least $c(r_2/r_1)^{-1/2}$ for some universal constant $c>0$. Applying FKG once again we obtain, as required,
\[
\alpha_{r_1,r_2}\geq (c(r_2/r_1)^{-1/2})^2=c^2(r_1/r_2). \qedhere
\]
\end{proof}

Now, let $\mathcal{S}_r^1:=[0,r] \times \{0\} \times [0,r]$ and $\mathcal{S}_r^2 := [0,r] \times [0,r] \times \{0\}$ (so that $\mathcal{S}_r= \mathcal{S}_r^1 \cup \mathcal{S}_r^2$), let $I$ be the set of points of integer coordinates that belong to $[r/4,3r/4] \times \{0\} \times \{0\}$ (so in particular $I$ is included in the common side of $\mathcal{S}_r^1$ and $\mathcal{S}_r^2$) and for every $i \in I$ let $A_i$ (resp.~$B_i$) denote the event that there is a path in $\{ f_{R^\gamma} \ge 0 \} \cap \mathcal{S}_r^1$ (resp.~$\{ f_{R^\gamma} \ge 0 \} \cap \mathcal{S}_r^2$) from $i$ to the opposite side of $\mathcal{S}_r^1$ (resp.\ $\mathcal{S}_r^2$). Finally, let $X$ denote the number of points $i \in I$ such that both $A_i$ and $B_i$ hold. Note that if $X>0$ then the event from the lemma holds. We now apply the second moment method to $X$.

\medskip

We first note that by FKG and RSW, $\E[X] \ge c'r\alpha_{1,r}$ for some $c'>0$. In order to estimate $\E[X^2] = \sum_{i,j} \mathbf 1_{A_i \cap B_i}\mathbf 1_{A_j \cap B_j}$ we need to consider separately the cases $|i-j| \ge 2R^\gamma$ and $|i-j| < 2R^\gamma$. When $|i-j| \ge 2R^\gamma$ (which corresponds to the first sum below), three independent events happen: two events between scale $1$ and $|i-j|/2$ and an event between scales $|i-j|$ and $r$), so we obtain that
\[
\E[X^2] \le C'r\sum_{R^\gamma \le k \le r} \alpha_{1,k}^2 \alpha_{k,r} + C'r \sum_{1 \le k \le R^\gamma} \alpha_{1,r},
\]
for some $C'>0$. By the quasi-multiplicativity property \eqref{eq:angle_crossing_2}, the above is at most a positive constant times
\[
r\alpha_{1,r}^2 \sum_{R^\gamma \le k \le r} 1/\alpha_{k,r} + r\alpha_{1,r} R^\gamma.
\]
Moreover, since by \eqref{eq:angle_crossing_1}, $\alpha_{k,r} \ge ck/r$ (so in particular  $r\alpha_{1,r} \ge c$), we obtain that the above is at most a positive constant times
\[
r^2\alpha_{1,r}^2(\log(r) + R^\gamma).
\]
Finally, by the Cauchy--Schwarz inequality,
\[
\Pro[X>0] \ge \frac{\E [X]^2}{\E [X^2]} \ge \frac{c''}{\log(r) + R^\gamma}
\]
for some $c''>0$, which is the desired result.
\end{proof}

\section{A topological lemma}

\begin{lemma}\label{L:top}
Let $E = \R^d$ or $\R^d \times (L,L')$ for some $d \ge 1$ and $L<L'$ and let $\Sigma \subset E$ be a $C^1$-smooth (not necessarily connected nor compact) hypersurface without boundary. Assume that there exist two distinct unbouded connected components of $E\setminus\Sigma$. Then, $\Sigma$ has an unbounded connected component.
\end{lemma}

\begin{proof}
Let $\Sigma_1,\Sigma_2,\dots$ be all the bounded components of $\Sigma$. Our goal is to show that $\Sigma \setminus (\cup_i \Sigma_i)$ is non-empty.

By the Jordan--Brouwer separation theorem, for every $i$, $E_i := E \setminus \Sigma_i$ has two connected components $B_i$ and $U_i$, with $B_i$ bounded and $U_i$ unbounded. Moreover, $\partial U_i=\partial B_i=\Sigma_i$. Let $\overline{B}_i$ be the closure of $B_i$.

Let us show that each $\overline{B}_i$ is included in a finite number of other $\overline{B}_j$'s. To this purpose, fix some index $i$, let $R>0$ sufficiently large so that $B_i \subset [-R,R]^d$ and one of the unbounded components of $\Sigma$ intersects $[-R,R]^d$. Then, all the indices $j$ that are such that $\overline{B}_i \subset \overline{B}_j$ must satisfy that $\Sigma_j \cap [-R,R]^d \ne \emptyset$. Since the hypersurface is ($C^1$)-smooth, there are only finitely many such $j$'s.

We let $\overline{B}_{i_1},\overline{B}_{i_2},\dots$ be all the $\overline{B}_i$'s that are maximal for the partial order given by the inclusion. We note that the $\overline{B}_{i_k}$'s are disjoint and that (as a consequence of the claim from the previous paragraph) $\cup_k \overline{B}_{i_k}=\cup_i \overline{B}_i$.

Let $F = E \setminus (\cup_i \overline{B}_i)$. Then, $F \setminus \Sigma$ is not connected (indeed, $(F \setminus \Sigma) \subset (E \setminus \Sigma)$ and $F \setminus \Sigma$ contains the unbounded components of $E \setminus \Sigma$).

Let us now show that $F$ is a connected set. This will end the proof that $\Sigma \setminus (\cup_i \Sigma_i)$ is non-empty since $F \setminus \Sigma$ is not connected and $F \cap \Sigma_i= \emptyset$ for every $i$.

Let $g : F \rightarrow \{0,1\}$ be a continuous function. We want to show that $g$ is constant. But, by the Jordan--Brouwer separation theorem, we can continuously extend $g$ to all the $\overline{B}_{i_k}$'s\footnote{To prove this, consider $x,y \in \Sigma_k$, consider $U_x$ (resp.\ $U_y$) a neighborhood of $x$ (resp.\ $y$), consider a continuous path $\gamma$ on $\Sigma_{i_k}$ from $x$ to $y$, use the Jordan--Brouwer separation theorem to say that $\gamma$ is included in the boundary of both $F$ and $F^c$, and use local charts to find a continuous path included in $F$ from $U_x \cap F$ to $V_y \cap F$.}, so we can continuously extended $g$ to the connected set $E$. As a result, $g$ is constant.
\end{proof}

\footnotesize
\bibliographystyle{alpha}
\bibliography{sources_final}

\end{document}